\definecolor{refkey}{gray}{.3}
\definecolor{labelkey}{gray}{.3}
\newcommand{\rO}{\mathrm{O}}
\newcommand{\pr}{\mathrm{pr}}
\newcommand{\Rad}{\mathrm{Rad}}
\newcommand{\depth}{\mathrm{depth}}
\newcommand{\bfone}{\mathbf{1}}
\newcommand{\piSigma}{\pi_\Sigma}
\newcommand{\eva}{\mathrm{eva}}
\DeclareMathOperator{\Mp}{Mp}
\def\Stab{{\rm Stab}}
\def\ad{{\rm ad}}
\def\id{{\rm id}}
\def\idCx{\id_{\bC^\times}}
\def\tv{\bfone}
\def\inn#1#2{\left\langle 
      \def\ta{#1}\def\tb{#2}
      \ifx\ta\@empty{\;} \else {\ta}\fi ,
      \ifx\tb\@empty{\;} \else {\tb}\fi
      \right\rangle} 
\def\binn#1#2{\overline{\inn{#1}{#2}}} 
\def\circckG{{}^\circ \ckG}
\def\cdK{{}^\circ \dotK}
\def\cdrho{{}^\circ \dotrho}
\def\cSigma{{{}^c{\Sigma}}}
\def\cdSigma{{}^\circ \dotSigma}
\def\SC#1{{\bf SC{#1}}$_i$}
\def\innvga#1#2{\inn{#1}{#2}_{V_\Gamma}}
\def\innw#1#2{\inn{#1}{#2}_{W}}
\def\innv#1#2{\inn{#1}{#2}_{V}}
\def\innvp#1#2{\inn{#1}{#2}_{V'}}
\def\innGa#1#2{\inn{#1}{#2}_{\Gamma}}
\def\innGap#1#2{\inn{#1}{#2}_{-\Gamma'}}
\def\binnGa#1#2{\binn{#1}{#2}_{\Gamma}}
\def\binnGap#1#2{\binn{#1}{#2}_{-\Gamma'}}
\def\simrightarrow{\iso}
\def\surj{\twoheadrightarrow}
\newcommand\iso{\xrightarrow{
   \,\smash{\raisebox{-0.65ex}{\ensuremath{\scriptstyle\sim}}}\,}}
\def\abs#1{\left|{#1}\right|}
\def\Sp{{\rm Sp}}
\def\SO{{\rm SO}}
\def\det{{\rm det}}
\def\usecsname#1{\csname #1\endcsname}
\def\useLetter#1{#1}
\def\usedbletter#1{#1#1}
\def\mydefcirc#1#2#3{\expandafter\def\csname
  circ#3{#1}\endcsname{{}^\circ {#2{#1}}}}
\def\mydefvec#1#2#3{\expandafter\def\csname
  vec#3{#1}\endcsname{\vec{#2{#1}}}}
\def\mydefdot#1#2#3{\expandafter\def\csname
  dot#3{#1}\endcsname{\dot{#2{#1}}}}
\def\mydefbar#1#2#3{\expandafter\def\csname bar#3{#1}\endcsname{\bar{#2{#1}}}}
\def\mydefhat#1#2#3{\expandafter\def\csname hat#3{#1}\endcsname{\hat{#2{#1}}}}
\def\mydefwh#1#2#3{\expandafter\def\csname wh#3{#1}\endcsname{\widehat{#2{#1}}}}
\def\mydeft#1#2#3{\expandafter\def\csname t#3{#1}\endcsname{\tilde{#2{#1}}}}
\def\mydefu#1#2#3{\expandafter\def\csname u#3{#1}\endcsname{\underline{#2{#1}}}}
\def\mydefr#1#2#3{\expandafter\def\csname r#3{#1}\endcsname{\mathrm{#2{#1}}}}
\def\mydefb#1#2#3{\expandafter\def\csname b#3{#1}\endcsname{\mathbb{#2{#1}}}}
\def\mydefwt#1#2#3{\expandafter\def\csname wt#3{#1}\endcsname{\widetilde{#2{#1}}}}
\def\mydeff#1#2#3{\expandafter\def\csname f#3{#1}\endcsname{\mathfrak{#2{#1}}}}
\def\mydefbf#1#2#3{\expandafter\def\csname bf#3{#1}\endcsname{\mathbf{#2{#1}}}}
\def\mydefc#1#2#3{\expandafter\def\csname c#3{#1}\endcsname{\mathcal{#2{#1}}}}
\def\mydefsf#1#2#3{\expandafter\def\csname sf#3{#1}\endcsname{\mathsf{#2{#1}}}}
\def\mydefs#1#2#3{\expandafter\def\csname s#3{#1}\endcsname{\mathscr{#2{#1}}}}
\def\mydefcks#1#2#3{\expandafter\def\csname cks#3{#1}\endcsname{{\check{
        \csname s#2{#1}\endcsname}}}}
\def\mydefckc#1#2#3{\expandafter\def\csname ckc#3{#1}\endcsname{{\check{
      \csname c#2{#1}\endcsname}}}}
\def\mydefck#1#2#3{\expandafter\def\csname ck#3{#1}\endcsname{{\check{#2{#1}}}}}
\NewDocumentCommand{\doGreek}{m}
{
  \clist_map_inline:nn {alpha,beta,gamma,Gamma,delta,Delta,epsilon,varepsilon,zeta,eta,theta,vartheta,Theta,iota,kappa,lambda,Lambda,mu,nu,xi,Xi,pi,Pi,rho,sigma,varsigma,Sigma,tau,upsilon,Upsilon,phi,varphi,Phi,chi,psi,Psi,omega,Omega,tG} {#1{##1}{\usecsname}{\useLetter}} 
}
\NewDocumentCommand{\doAtZ}{m}
{
  \clist_map_inline:nn {A,B,C,D,E,F,G,H,I,J,K,L,M,N,O,P,Q,R,S,T,U,V,W,X,Y,Z} {#1{##1}{\useLetter}{\useLetter}} 
}
\NewDocumentCommand{\doatz}{m}
{
  \clist_map_inline:nn {a,b,c,d,e,f,g,h,i,j,k,l,m,n,o,p,q,r,s,t,u,v,w,x,y,z} {#1{##1}{\useLetter}{\usedbletter}} 
}
\NewDocumentCommand{\doallAtZ}{}
{
\clist_map_inline:nn {mydefsf,mydeft,mydefu,mydefwh,mydefhat,mydefr,mydefwt,mydeff,mydefb,mydefbf,mydefc,mydefs,mydefck,mydefcks,mydefckc,mydefbar,mydefvec,mydefcirc,mydefdot} {\doAtZ{\csname ##1\endcsname}}
}
\NewDocumentCommand{\doallatz}{}
{
\clist_map_inline:nn {mydefsf,mydeft,mydefu,mydefwh,mydefhat,mydefr,mydefwt,mydeff,mydefb,mydefbf,mydefc,mydefs,mydefck,mydefbar,mydefvec,mydefdot} {\doatz{\csname ##1\endcsname}}
}
\NewDocumentCommand{\doallGreek}{}
{
\clist_map_inline:nn {mydefck,mydefwt,mydeft,mydefwh,mydefbar,mydefu,mydefvec,mydefcirc,mydefdot} {\doGreek{\csname ##1\endcsname}}
}
\NewDocumentCommand{\doGroups}{m}
{
  \clist_map_inline:nn {GL,Sp,rO,rU,fgl,fsp,foo,fuu,fkk,fuu,ufkk,uK} {#1{##1}{\usecsname}{\useLetter}} 
}
\NewDocumentCommand{\doallGroups}{}
{
\clist_map_inline:nn {mydeft,mydefu,mydefwh,mydefhat,mydefwt,mydefck,mydefbar} {\doGroups{\csname ##1\endcsname}}
}
\def\hatgamma{\hat{\gamma}}
\def\wtG{\widetilde{G}}
\def\wtSp{\widetilde{\rSp}}
\def\teta{{\widetilde{\eta}}}
\def\tSigma{{\widetilde{\Sigma}}}
\def\tSigmap{{\widetilde{\Sigma}'}}
\def\trho{{\tilde{\rho}}}
\def\biota{{\,\bar{\iota}\,}}
\def\GL{\mathrm{GL}}
\def\wtSp{\widetilde{\mathrm{Sp}}}
\def\fgl{{\mathfrak{gl}}}
\def\fsl{{\mathfrak{sl}}}
\def\fso{{\mathfrak{so}}}
\def\Ker{{\rm Ker}\,}
\def\Lie{{\rm Lie}}
\def\Im{{\rm Im\,}}
\def\Ind{{\rm Ind}}
\DeclareMathOperator{\Ad}{Ad}
\DeclareMathOperator{\Hom}{Hom}
\DeclareMathOperator{\End}{End}
\DeclareMathOperator{\Irr}{Irr}
\def\Norm{\mathrm{Norm}}
\def\Gal{\mathrm{Gal}}
\DeclareMathOperator{\supp}{supp}
\DeclareMathOperator{\val}{val}
\def\lD{\cD}
\def\nD{\mathring{\lD}}
\def\nrho{\mathring{\rho}}
\def\tD{\widetilde{\lD}}
\def\tpiD{\tpi_{\tD}}
\def\etaSigma{\eta_{\Sigma}}
\def\tetaSigma{\teta_{\tSigma}}
\def\etaSigmap{\eta'_{\Sigma'}}
\def\tetaSigmap{\teta'_{\tSigma'}}
\def\tetaD{\teta_{\tD}}
\def\etanD{\eta_{\nD}}
\def\etaaanD{\eta_{\aanD}}
\def\etabbnD{\eta_{\bbnD}}
\newcommand{\trivial}[2][]{\if\relax\detokenize{#1}\relax
{\orange \vspace{0em} The following is trivial:  #2}
\else\ifx#1h\ifcsname showtrivial\endcsname
{\orange \vspace{0em} The following is trivial:  #2}
\fi\else{\red Wrong argument!}\fi\relax\fi
}
\def\floor#1{{\lfloor #1 \rfloor}}
\def\ceil#1{{\lceil #1 \rceil}}
\def\foo{{\mathfrak{o}}}
\def\Hom{{\mathrm{Hom}}}
\def\tgg{{\tilde{g}}}
\def\Stab{{\mathrm{Stab}}}
\def\Sp{{\mathrm{Sp}}}
\def\tpiSigma{\tpi_{\tSigma}}
\def\tpiSigmap{\tpi'_{\tSigma'}}
\def\wteta{{\widetilde{\eta}}}
\def\tpidotSigma{\tpi_{\dot{\tSigma}}}
\def\tpiSigmap{{\tpi'_{\tSigma'}}}
\def\half{{\frac{1}{2}}}
\def\quarter{{\frac{1}{4}}}
\def\fracmm{{\frac{1}{m}}}
\def\fracdmm{{\frac{1}{2m}}}
\def\bpsi{{\overline{\psi}}}
\def\Cent#1#2{{\mathrm{Z}_{#1}({#2})}}
\def\bomega{{\overline{\omega}}}
\def\bomegab{\overline{\omega}_{\bfbb}}
\def\bomegabz{\overline{\omega}_{\bfbbz}}
\def\bomegaww{{\bomega_w}}
\def\bomegaSK{{\bomega_{\SK}}}
\def\bomegadgS{{\bomega_{\dgS}}}
\def\bomegaiiS{{\bomega_{\iiS}}}
\def\bomegaaaS{{\bomega_{\aaS}}}
\def\bomegabbS{{\bomega_{\bbS}}}
\def\bomegadgSK{{\bomega_{\dgSK}}}
\def\bomegaaaSK{{\bomega_{\aaSK}}}
\def\bomegabbSK{{\bomega_{\bbSK}}}
\DeclareMathOperator{\cInd}{c-Ind}
\DeclareFontFamily{U} {MnSymbolC}{}
\DeclareFontShape{U}{MnSymbolC}{m}{n}{
  <-6> MnSymbolC5
  <6-7> MnSymbolC6
  <7-8> MnSymbolC7
  <8-9> MnSymbolC8
  <9-10> MnSymbolC9
  <10-12> MnSymbolC10
  <12-> MnSymbolC12}{}
\DeclareFontShape{U}{MnSymbolC}{b}{n}{
  <-6> MnSymbolC-Bold5
  <6-7> MnSymbolC-Bold6
  <7-8> MnSymbolC-Bold7
  <8-9> MnSymbolC-Bold8
  <9-10> MnSymbolC-Bold9
  <10-12> MnSymbolC-Bold10
  <12-> MnSymbolC-Bold12}{}
\DeclareFontFamily{U} {MnSymbolD}{}
\DeclareFontShape{U}{MnSymbolD}{m}{n}{
  <-6> MnSymbolD5
  <6-7> MnSymbolD6
  <7-8> MnSymbolD7
  <8-9> MnSymbolD8
  <9-10> MnSymbolD9
  <10-12> MnSymbolD10
  <12-> MnSymbolD12}{}
\DeclareFontShape{U}{MnSymbolD}{b}{n}{
  <-6> MnSymbolD-Bold5
  <6-7> MnSymbolD-Bold6
  <7-8> MnSymbolD-Bold7
  <8-9> MnSymbolD-Bold8
  <9-10> MnSymbolD-Bold9
  <10-12> MnSymbolD-Bold10
  <12-> MnSymbolD-Bold12}{}
\DeclareSymbolFont{MnSyC} {U} {MnSymbolC}{m}{n}
\DeclareSymbolFont{MnSyD} {U} {MnSymbolD}{m}{n}
\DeclareMathSymbol{\medstar}{\mathord}{MnSyC}{130}
\DeclareMathSymbol{\boxslash}{\mathord}{MnSyC}{114}
\DeclareMathSymbol{\boxbackslash}{\mathord}{MnSyC}{115}
\DeclareMathSymbol{\smblksquare}{\mathord}{MnSyC}{105}
\DeclareMathSymbol{\nequiv}{\mathord}{MnSyD}{121}
\DeclareMathSymbol{\otimes}{\mathrel}{MnSyC}{97}
\DeclareMathSymbol{\boxtimes}{\mathrel}{MnSyC}{117}
\def\triangle{{\mathrm{\Delta}}}
\def\mstar{{\medstar}}
\def\dd{{\mathrm{d}}}
\def\dalpha{{\dd\alpha}}
\def\dalphap{{\dalpha^\perp}}
\def\odalpha{\overline{\dalpha}}
\def\odalphaprime{\overline{\dalpha'}}
\def\Jump{{\mathrm{Jump}}}
\def\ckfgg{{\check{\fgg}}}
\def\ckfgl{{\check{\fgl}}}
\def\ckGamma{{\check{\Gamma}}}
\def\sspan{{\mathrm{Span}}}
\def\dbG{{\breve{G}}}
\def\dbK{{\breve{K}}}
\def\dbJ{{\breve{J}}}
\def\dbbfW{{\breve{\bfW}}}
\def\dbKp{{\dbK_+}}
\def\dbKzp{{\dbK_{0^+}}}
\def\dbpsi{{\breve{\psi}}}
\def\dbrho{{\breve{\rho}}}
\def\dbkappa{{\breve{\kappa}}}
\def\dbeta{{\breve{\eta}}}
\def\dbbomega{{\breve{\bomega}}}
\def\dbGamma{{\breve{\Gamma}}}
\def\dbsfGz{{\breve{\sfG}^0}}
\def\SK{S_K}
\def\TTidx#1#2{\,{}^{#1}\hspace{-0.1em}#2} 
\def\any{\smblksquare}
\def\mydefTT#1#2#3{
\expandafter\def\csname ii#3{#1}\endcsname{\TTidx{i}{#2{#1}}}
\expandafter\def\csname zz#3{#1}\endcsname{\TTidx{0}{#2{#1}}}
\expandafter\def\csname ll#3{#1}\endcsname{\TTidx{l}{#2{#1}}}
\expandafter\def\csname aa#3{#1}\endcsname{\TTidx{a}{#2{#1}}}
\expandafter\def\csname bb#3{#1}\endcsname{\TTidx{b}{#2{#1}}}
\expandafter\def\csname oo#3{#1}\endcsname{\TTidx{1}{#2{#1}}}
\expandafter\def\csname ss#3{#1}\endcsname{\TTidx{\boxslash}{#2{#1}}}
\expandafter\def\csname dg#3{#1}\endcsname{\TTidx{\boxbackslash}{#2{#1}}}
\expandafter\def\csname any#3{#1}\endcsname{\TTidx{\any}{#2{#1}}}
}
\def\usecsname#1{\csname #1\endcsname}
\def\useLetter#1{#1}
\def\usedbletter#1{#1#1}
\def\cTp{\cT'}
\def\VGa{V_\Gamma}
\def\sLp{\sL'}
\def\Sigmap{\Sigma'}
\def\OmegaK{\Omega_K}
\def\dgsSB{\sS(\dgsB_0)}
\def\bbsSB{\sS(\bbsB_0)}
\def\aasSB{\sS(\aasB_0)}
\def\ssdbkappa{\TTidx{\boxslash}{\dbkappa}}
\def\basB{\TTidx{ba}{\sB}}
\def\absB{\TTidx{ab}{\sB}}
\def\ssiota{\TTidx{\boxslash}{\iota}}
\def\babfbb{\TTidx{ba}{\bfbb}}
\def\abbfbb{\TTidx{ab}{\bfbb}}
\def\baW{{}^{ba}W}
\def\abW{{}^{ab}W}
\def\ggs{\fgg_{x,s}}
\def\ggsp{\fgg'_{x',s}}
\def\ggss{\fgg_{x,s:s^+}}
\def\ggssp{\fgg'_{x',s:s^+}}
\def\fooD{{\foo_D}}
\def\fppD{{\fpp_D}}
\def\fffD{{\fff_D}}
\def\fppF{{\fpp}}
\def\fffF{{\fff}}
\def\fooF{{\foo}}
\def\II{\mathcal{I}}
\def\TT{\mathfrak{T}}
\def\fglDV{{\fgl(V)}}
\def\EndDV{{\End_D(V)}}
\def\Sec#1{\S~#1}
\newcommand{\BTB}[2][]{\if\relax\detokenize{#1}\relax
\cB(#2)
\else 
\cB(#2,#1)
\fi
}
\newcommand{\rBTB}[2][]{\if\relax\detokenize{#1}\relax
\cB_{\rm red}(#2)
\else 
\cB_{\rm red}(#2,#1)
\fi
}
\newcommand{\remove}[1]{\relax}
\newcommand{\tr}{\mathrm{tr}}
\newtheorem*{thmM}{Main Theorem}
\newtheorem{thm}{Theorem}[section]
\newtheorem{lemma}[thm]{Lemma}
\newtheorem*{lemma*}{Lemma}
\newtheorem{prop}[thm]{Proposition}
\newtheorem{cor}[thm]{Corollary}
\newtheorem{claim}{Claim}
\newtheorem*{claim*}{Claim}
\theoremstyle{definition}
\newtheorem{definition}[thm]{Definition}
\newtheorem*{IndH}{Induction Hypothesis}
\theoremstyle{remark}
\newtheorem*{remark}{Remark}
\newtheorem*{remarks}{Remarks}
\newtheorem*{eg*}{Example}
\def\tpi{\tilde{\pi}}
\def\bCx{\bC^\times}
\def\scG{\hatG_{\text{sc}}}
\def\osD{\overline{\sD}}
\def\DV{\sD_V}
\def\DVp{\sD_{V'}}
\def\bDV{\osD_V}
\def\bDVp{\osD_{V'}}
\def\bDTp{\osD_{\cT'}}
\def\dtheta{\vartheta}
\def\dthetap{\vartheta^+}
\def\dthetaVT{\dtheta_{V,\cT'}}
\def\aacTp{{}^a\cT'}
\def\dthetaaaVT{\dtheta_{\aaV,\aacTp}}
\def\thetaVVp{\theta_{V,V'}}
\def\rhol{\rho_\ell}
\def\rhols{\rho_{\ell^*}}
\def\rholp{\rho_{\ell'}}
\def\rholsp{\rho_{\ell'^*}}
\def\otimesD{\otimes_D}
\def\biotabz{\biota_{\bfbbz}}
\def\trD{{\tr_{D/F}}}
\def\trF{{\tr_F}}
\def\bfbbz{\bfbb_0}
\def\bfbbpp{\bfbb_+}
\def\spt{\xi}
\def\tspt{\tilde{\spt}}
\def\txi{\tilde{\xi}}
\def\pidotSigma{\pi_{\dot{\Sigma}}}
\def\dotGamma{{\dot{\Gamma}}}
\def\dotxi{{\dot{\xi}}}
\def\tdotxi{{\tilde{\dotxi}}}
\def\dotphi{{\dot{\phi}}}
\def\dotrho{{\dot{\rho}}}
\def\dotSigma{{\dot{\Sigma}}}
\def\dottSigma{{\dot{\tSigma}}}
\def\dottpiSigma{\tpi_{\dottSigma}}
\def\varpiD{\varpi_{D}}
\def\varpiF{\varpi_{F}}
\newcounter{myenumi}
\def\savemyenumi{\setcounter{myenumi}{\value{enumi}}}
\def\resumemyenumi{\setcounter{enumi}{\value{myenumi}}}
\newcounter{sdenumi}
\def\savesdenumi{\setcounter{sdenumi}{\value{enumi}}}
\def\resumesdenumi{\setcounter{enumi}{\value{sdenumi}}}
\author{Hung Yean Loke}
\address{Department of Mathematics,
National University of Singapore,
2 Science Drive 2, Singapore 117543}
\email{matlhy@nus.edu.sg}
\author[Jia-Jun Ma]{Jia-Jun Ma}
\address{School of Mathematical Sciences, Shanghai Jiao Tong University
800 Dongchuan RD, Shanghai, China 200240
}
\email{hoxide@sjtu.edu.cn}
\subjclass{22E46, 22E47}
\numberwithin{equation}{section}
\title[Corr. between Supercuspidal Reps.]{Local theta correspondences\\ 
  between
  supercuspidal representations\\
  \medskip
  Correspondances th\^eta locales\\ entre les repr\'esentations supercuspidales
}
\long\def\resp#1{}
\begin{document}

\newpage

\begin{abstract}
  By the works of Yu, Kim and Hakim-Murnaghan, we have a parameterization and
  construction of all supercuspidal representations of a reductive $p$-adic
  group in terms of supercuspidal data, when $p$ is sufficiently large.  In this
  paper, we will define a correspondence of supercuspidal data via
  moment maps and theta correspondences over finite fields. Then we will show
  that local theta correspondences between supercuspidal representations are
  completely described by this notion.  In Appendix B, we give a short proof of
  a result of Pan on ``depth preservation''.

  \vspace{2em}

  \noindent R\'esum\'e. Par les travaux de Yu, Kim et Hakim-Murnaghan, on a un
  param\'etrisatrage et une construction de toutes les repr\'esentations
  supercuspidales d'un groupe r\'eductif $p$-adique en termes de donn\'ees
  supercuspidales, quand $p$ est suffisamment grand.  Dans cet article, nous
  d\'efinirons une correspondance entre les donn\'ees supercuspidales par
  l'interm\'ediaire d'applications moments et de correspondances th\^eta sur des
  corps finis.  Ensuite, nous montrerons que les correspondances th\^eta locales
  entre les repr\'esentations supercuspidales sont compl\`etement d\'ecrites par
  cette notion.  Dans l'Appendice B, nous fournissons une courte d\'emonstration
  d'un r\'esultat de Pan sur ``la pr\'eservation de la profondeur''.
  
\end{abstract}

\maketitle

\smallskip
{\let\thefootnote\relax\footnotetext{\emph{Key words.} local theta correspondence, moment maps,
  supercuspidal representations.}}
{\let\thefootnote\relax\footnotetext{\emph{Mots cl\'es.} Correspondances th\^eta locales, application moment, repr\'esentations supercuspidales.
}}

\tableofcontents

\section{Introduction} \label{sec:introduction}

In this paper, we give an explicit description of the local theta 
correspondences between tamely ramified supercuspidal representations in terms
of the  supercuspidal data developed in \cite{Howe,Yu,Kim,HM}.

\subsection{Notation} \label{sec:notation}

Throughout this paper, we fix a non-Archimedean local field $F$ of
characteristic zero with ring of integers $\fooF$, and finite residual field
$\fffF$. Let ``$\val$'' denote the normalized valuation map such that
$\val(F) =\bZ$. Suppose $E$ is a finite extension of $F$ or the central simple
quaternion division algebra over $F$, let $\foo_E$ denote its ring of integers,
let $\fpp_E$ denote the maximal ideal in $\foo_E$ and let
$\fff_E := \foo_E/\fpp_E$ denote the residue field. We continue to let
``$\val$'' denote the natural extension of valuations to $E$.
When $E = F$, we sometimes omit the subscript.  We fix a non-trivial additive
character $\psi\colon F\rightarrow \bC^\times$ with conductor $\fppF$
(i.e. $\psi|_{\fppF}$ is trivial but $\psi|_{\fooF}$ is non-trivial). Let
$\bpsi$ denote the additive character on $\fffF$ induced by $\psi$.  For a
vector space $\fV$ with an endomorphism $\mstar$, we let
$\fV^{\mstar,\varepsilon}$ denote the $\varepsilon$-eigenspace of $\mstar$ in
$\fV$.

\subsection{The set of data} \label{sec:intro.data}
Let $(D,\tau)$ denote one of the division algebras over $F$ given in
    \Cref{sec:DP} with an $F$-linear
  involution $\tau$. Let $\epsilon \in \set{\pm 1}$ and $\epsilon' = -\epsilon$.
  Let $(V, \innv{}{})$ (respectively $(V', \inn{}{}_{V'})$) denote a right
  $D$-module equipped with an \mbox{$\epsilon$-}Hermitian form $\innv{}{}$
  (respectively \mbox{$\epsilon'$-}Hermitian form $\innvp{}{}$).  Then
  $W := V \otimes_D V'$ is naturally a symplectic space.  Let
$(G,G') = (\rU(V),\rU(V'))$ be an irreducible type~I reductive dual pair in the
symplectic group $\Sp:=\Sp(W)$. For any subset  $E$ of $\Sp$ let $\wtE$ be its
inverse image in the metaplectic $\bCx$-cover $\wtSp(W)$ of $\Sp(W)$. See
\Cref{sec:LT} for more details of the notation.

We assume that $p$ is large enough compared to the sizes of $G$ and $G'$ since
we need the hypotheses in \cite[\S{3.5}]{Kim} to hold. 
We will give a lower bound for $p$ in \Cref{cor:pbdd}. 
We will review the construction of supercuspidal representations for $\wtG$
following \cite{Yu,Kim} in \Cref{sec:SC}. 
Let $\Sigma := (x,\Gamma, \phi,\rho)$ be a supercuspidal datum as in
  \cite{Kim}. We briefly explain the entries
  in $\Sigma$:
  \begin{inparaenum}[(i)]
  \item  $\Gamma$ is a semisimple element
    in $\fgg$ and $G^0 := \Cent{G}{\Gamma}$;
  \item $x$ is a point in the building $\BTB{G^0}$ of $G^0$;
  \item $\phi$ and $\rho$ are certain representations of $G^0_x$. 
  \end{inparaenum}
  See \Cref{def:SC.D} for details. 
Then $\Sigma$ will determine
an open compact subgroup $K\subseteq G$ and an irreducible
$K$-module~$\etaSigma$ and, $\pi_\Sigma := \cInd_K^G \etaSigma$ is a
supercuspidal representation of $G$. By \cite{Kim}, under the assumption that
$p$ is large enough, this construction gives all supercuspidal representations
of $G$.  Let $\DV$ be the set of all supercuspidal data and let $\scG$ be the
equivalence classes of irreducible supercuspidal $G$-modules. In \cite{HM} an
equivalence relation $\sim$ on $\DV$ is defined so that
$\bDV:=\DV/ \sim \ \rightarrow \scG$ given by $[\Sigma] \mapsto [\piSigma]$ is a
bijection. In other words, $\bDV$ parametrizes $\scG$.  In fact, the equivalence
relation is just $G$-conjugacy in our situation (cf. \Cref{def:SC.eq}).

Now we consider the covering group $\wtG$.  It is well known that the cover
$\wtK\twoheadrightarrow K$ splits.  Given a certain splitting
$\xi\colon K \rightarrow \wtK$, we identify $\wtK$ with $K\times \bCx$.  We call
$\tSigma := (\Sigma, \xi) = (x,\Gamma,\phi,\rho,\xi)$ a supercuspidal datum
of~$\wtG$.  Define $\wteta_{\tSigma} := \etaSigma \boxtimes \id_{\bCx}$ which is
an irreducible $\wtK$-module.  Then $\tpiSigma := \cInd_{\wtK}^{\wtG} \wteta_{\tSigma}$ is an irreducible supercuspidal
representation of~$\wtG$.  We will see in \Cref{sec:SC.E.c} that under the
assumption that $p$ is large enough, the construction of~$\tpiSigma$ exhausts
all the irreducible supercuspidal genuine\footnote{Here genuine means
    $\bC^\times\subseteq \wtG$ acts by multiplication.} representations of~$\wtG$. The equivalence relation on the set of data of $\wtG$ could also be deduced from
that of $G$ easily (cf. \Cref{sec:SC.cover}).

\subsection{Statement of the main theorem}\label{sec:mainthm}
We retain the notation in \Cref{sec:intro.data}.  Fix a Witt tower $\cT'$
of $\epsilon'$-Hermitian spaces. The covering group $\wtG$ in the dual pair
$(G,G') = (\rU(V), \rU(V'))$ for all $V'\in \cT'$ are canonically
isomorphic to one another.  Let $\omega$ be the Weil representation of
$\wtSp(W)$ with respect to the character $\psi$ and let
  \begin{equation}\label{eq:RG}
    \sR(\wtG,\omega) := \set{\tpi\in \Irr_{\text{gen}}(\wtG)|
      \Hom_{\wtG}(\omega,
      \tpi)\neq 0}
  \end{equation}
  be the equivalence classes of irreducible smooth genuine $\wtG$-modules which
  could be realized as a quotient of~$\omega$.  Let
  $\thetaVVp\colon \sR(\wtG,\omega)\rightarrow \sR(\wtG', \omega)$ denote the
  theta correspondence map.  

Let $\tpi$ be an irreducible supercuspidal genuine $\wtG$-module. Note that the
$\tpi$-isotypic component $\omega[\tpi]$ of $\omega$ is naturally a $\wtG\times \wtG'$ module, 
say $\omega[\tpi] \cong \tpi\boxtimes \Theta_{V,V'}(\tpi)$ where
$\Theta_{V,V'}(\tpi)$ is a genuine $\wtG'$-module. 
Let 
  \[
  m_{\cT'}(\tpi) = \min\set{\dim_D(V'')|\Theta_{V,V''}(\tpi) \neq 0
    \text{ where } V''\in \cT'}
  \]
  which is called the \emph{first occurrence index} of $\pi$ with respect to the Witt
  tower $\cT'$.
 
It is well known that (cf. \cite[Section~3.IV.4 Th\'eor\`eme principal]{MVW}): 
\begin{enumerate}[(i)]
\item $\Theta_{V,V'}(\tpi)$ is either zero or irreducible.

\item \label{it:mT} $m_{\cT'}(\tpi) \leq 2\dim V + a_{\cT'}$ where $a_{\cT'} =
  \min\set{\dim_DV''|V''\in \cT'}$ is the dimension of the anisotropic kernel in
  $\cT'$ (cf. \cite{Li1989}).  

\item $ \Theta_{V,V'}(\tpi)\neq 0$ if and only if $\dim_D(V') \geq m_{\cT'}(\tpi)$ in which case
  $\thetaVVp(\tpi) = \Theta_{V,V'}(\tpi)$.
	
\item $\thetaVVp(\tpi)$ is supercuspidal if and only if
  $\dim(V') = m_{\cT'}(\tpi)$. In this case, we say that the first occurrence of
  $\tpi$ is at $V'$.
\end{enumerate}
The aim of this paper is to describe the first occurrences of theta lifts of
supercuspidal representations in terms of the supercuspidal data.

Let 
\begin{equation} \label{eqbDTp}
\bDTp = \bigsqcup_{V'\in \cT'} \bDVp.
\end{equation}
Using the moment maps and theta correspondences over finite fields, 
we will define theta lifts of equivalence classes of supercuspidal data in
\Cref{sec:LD}, i.e. we will define a map
\begin{equation}\label{eq:dtheta.0}
\xymatrix@R=0em{
\dthetaVT\colon\; \bDV \;\ar@{^(->}[r]& \bDTp.
}
\end{equation}

Fix a pair of data $(\Sigma, \Sigma')\in \DV\times \DVp$.
There is a canonical splitting
\[
\xymatrix{
\xi_{x,x'} \colon K \times K' \ar[r]& \wtK \times \wtK'
}
\]
constructed from the generalized lattice model (cf. \eqref{eq:SPT}). We always
set $\tSigma = (\Sigma, \xi_{x,x'}|_{K})$ and
$\tSigmap = (\Sigma',\xi_{x,x'}|_{K'})$.

\begin{thmM}\label[thmM]{thm:main} 
\begin{asparaenum}[(i)]
\item \label{it:Main.1} Suppose $\Sigma \in \DV$ and
  $[\Sigma'] := \dthetaVT([\Sigma]) \in \bDVp$ for certain $V'\in \cT'$.  Then
  $\thetaVVp(\tpiSigma) = \tpiSigmap$.
\item \label{it:Main.2}
Conversely, suppose $\thetaVVp(\tpi) = \tpi'$, such that $\tpi$ and $\tpi'$ are
supercuspidal representations. Then there exists $\Sigma\in \DV$ such that $\tpi = \tpiSigma$
and $\tpi' = \tpiSigmap$ where $[\Sigma'] =
\dthetaVT([\Sigma])$ and $\cTp$ is the Witt class of  $V'$.
\end{asparaenum}
\end{thmM}

\begin{remarks}
\begin{asparaenum}[1.]
\item If $\tpi$ is a depth zero supercuspidal representation, then
  $\vartheta_{V,\cT'}(\tpi)$ is essentially constructed in \cite{Pan02J}.
		
  \item After the completion of the first draft of this paper, we received a
    preprint \cite{Pan16} from Pan which describes the theta lifts of certain
    positive depth supercuspidal representations.

\item The main theorem generalizes our earlier results with Savin for epipelagic
  representations \cite{LMS}.
  
\item The construction of $\dthetaVT$ provides a criterion on the
    occurrence of supercuspidal representations by conditions on the isomorphism
    classes of the Hermitian spaces modulo the theta correspondences over finite
    fields.
    On the other hand, for some supercuspidal representations, theta
    correspondences over finite fields do not show up in the descriptions of
    their first occurrences. See \Cref{sec:eg.Gen} for details.

  \item In the proof of \Cref{thm:main}~\cref{it:Main.2}, we need a
    generalization of \cite[Proposition~6.3]{Pan02D} which is proved in
    \Cref{sec:IPD}. This also leads to a simpler proof of Pan's theorem on
    ``depth preservation'' \cite{Aubert, Pan02D}.

\item A similar result in terms of the parametrization developed by Bushnell-Kutzko
\cite{BK} and Stevens \cite{S08} should also be established. We hope to take on
this problem in a future project.
\end{asparaenum}
\end{remarks}

\subsection{Organization of the paper} \label{sec:org} 
In
\Cref{sec:LT} we recall some basic definitions and notations of local theta
correspondences and generalized lattice models.
In \Cref{sec:SC}, we review the definition of supercuspidal data and
the constructions of supercuspidal representations for both linear and covering
groups. 

In \Cref{sec:GoodFact} we define the block decompositions of supercuspidal data
in terms of valuations of eigenvalues. 
In \Cref{sec:LD}, we first review the correspondence for depth zero representations and
then define the lift of a single block supercuspidal datum using the moment
maps.  By taking direct sum, the lift in the general case is defined in the end.

We begin the proof of the main theorem with the single block case in
\Cref{sec:OB1} and \Cref{sec:OB2}.  The geometric structures of moment maps are
studied in \Cref{sec:OB1} and refined $K$-types are constructed in
\Cref{sec:OB2} using these structures. These two sections are the most technical
parts of the paper.

By induction on the number of blocks, we prove part \cref{it:Main.1} of the \Cref{thm:main}
in \Cref{sec:CKtype.gen}. Using \Cref{it:Main.1} and a similar
induction, part \cref{it:Main.2} of the \Cref{thm:main} is finally proved in
\Cref{sec:exhaust}.

In \Cref{sec:AppendixA}, we review the Heisenberg-Weil representations over a finite
field and the special isomorphisms of Yu. These are used freely in
\Cref{sec:OB1,sec:CKtype.gen,sec:exhaust}.  In \Cref{sec:IPD}, we first prove
the generalization of an identity of Pan needed in \Cref{sec:exhaust} and then
finish the paper by giving a quick proof of the ``depth preservation''.

\subsection*{Acknowledgment}
We would like to thank Gordan Savin and Jiu-Kang Yu for inspiring discussions.
We would like to thank Wee Teck Gan and Eitan Sayag for their valuable comments
and encouragement. We also would like to thank the referees for their careful reading and comments, in particular for their call to discuss supercuspidal representations of Metaplectic groups in greater detail. Hung Yean Loke is supported by a MOE-NUS AcRF Tier 1 grant
R-146-000-208-112.  Jia-Jun Ma is partially supported by HKRGC Grant CUHK 405213
during his postdoctoral fellowship in IMS of CUHK and by MOE Tier 1 Grant
R-146-000-189-112 and Shanghai Jiao Tong University Startup Fund WF220407110
during the revision of this paper.

\section{Preliminary: Local theta correspondence} \label{sec:LT} In this
section, we set up some notations and review some facts about the generalized
lattice model of the oscillator representation.

\subsection{Type~I dual pairs and moment maps} \label{sec:DP}
Let $(D,\tau)$ denote a division algebra $D$ over $F$ with an $F$-linear
involution $\tau$  in one of the
following situations:
\begin{asparaenum}[(a)] 
\item $D = F$ and $\tau$ is the identity map;
\item $D$ is a quadratic field extension of $F$ and $\tau$ is the nontrivial
  element in $\Gal(D/F)$;
\item $D$ is the central division quaternion algebra over $F$ and $\tau$ is the main involution.
\end{asparaenum}

\subsubsection{}\label{sec:CG}
Let $\epsilon \in \set{\pm 1}$.
Let $(V, \innv{}{})$ or simply $V$  denote a right $D$-module equipped with an
\mbox{$\epsilon$-}Hermitian form $\innv{}{}$.   
Let $\fglDV := \End_D(V)$ be the Lie algebra of $\GL_D(V)$.
For
$X \in \fgl(V)$, let $X^* \in \fgl(V)$ denote the adjoint of $X$ which is defined by
\[
\innv{Xv_1}{v_2} = \innv{v_1}{X^*v_2} \quad \forall v_1,v_2 \in V.
\]
Then the isometry group of $V$ and its Lie algebra are given by 
\begin{align*}
\rU(V) &= \Set{ g \in \fglDV | g g^* = \id } \quad
\text{and}\\ 
\fuu(V) &= \Set{ X \in \fglDV | X + X^* = 0 } = \fglDV^{*,-1}
\end{align*}
respectively.
We will always view $\rU(V)$ and $\fuu(V)$ as subsets of $\EndDV$.

Let $\trD\colon D\rightarrow F$ be the reduced trace on $D$. We set $\trF := \trD
\circ \tr \colon \fgl(V) \rightarrow F$. Clearly $\trF(X) = \trF(X^*)$. 
Let 
\[
\bB(X,Y) := \half\trF(XY).
\]
 It is the invariant non-degenerate bilinear form on
$\fglDV$ and $\fuu(V)$ which we fix throughout this paper.

\subsubsection{} \label{sec:dualpair} Let $\epsilon' = -\epsilon$ and
$(V', \innv{}{}')$ be a right $D$-module equipped with an
$\epsilon'$-Hermitian sesquilinear form $\innv{}{}'$.  We view $V'$ as a left
$D$-module by $a v = v a^\tau$ for all $a \in D$ and $v \in V'$.  Let
$W = V \otimes_D V'$. We always identify $W$ with $\Hom_D(V,V')$ by
$v\otimes v' \mapsto (v_1 \mapsto v'\innv{v}{v_1})$.  For any
$w \in \Hom_D(V,V')$, let $w^\mstar \in \Hom_D(V',V)$ denote its adjoint
which is defined by
\[
\inn{wv}{v'}_{V'} = \inn{v}{w^\mstar v'}_{V} \quad \forall v \in V, 
v' \in V'.
\]

The $F$-vector space $W$  will be equipped with a symplectic form  $\innw{}{}$ given by
\[
\innw{v_1 \otimes v_1'}{v_2 \otimes v_2'} = 
\trD(\innv{v_1}{v_2} \innvp{v_1'}{v_2'}^{\tau}).
\]

Let $G = \rU(V, \inn{}{}_V)$ and $G' = \rU(V',\inn{}{}_{V'})$. 
The pair $(G,G')$ is called an irreducible reductive dual pair of type~I in
$\Sp(W)$ following Howe. The above construction gives all such pairs when
  $F$ varies (cf. \cite[\S{5}]{Howe0} or \cite[Lecture~5]{LiMin} ).

Let $\fgg := \fuu(V)$ and $\fgg' :=\fuu(V')$ denote the Lie algebras of $G$ and
$G'$ respectively.  For $w \in W$, it is not hard to see that
$w^\mstar w \in \fgg \subseteq \End_D(V)$ and
$w w^\mstar \in \fgg' \subseteq \End_D(V')$.
\begin{definition}
We define the \emph{moment maps} $M \colon W \rightarrow \fgg$ and $M' \colon W
\rightarrow \fgg'$ for the dual pair $(G,G')$ by 
\[
M(w) = w^\mstar w \quad \text{and} \quad M'(w) = w w^\mstar \qquad \forall w\in W.
\] 
\end{definition}

The Lie algebras $\fgg$ and $\fgg'$ act on $W = \Hom_D(V,V')$ by 
\[
(X \cdot w)(v)
  = w(-Xv) \text{ and } (X' \cdot w)(v)
  = X'(w(v)) 
\]
for all $w \in W$, $v \in V$, $X \in \fgg$ and $X' \in \fgg'$.
We leave the proof of the following simple formulas to the readers.

\begin{lemma} \label{lem:inn}
Let $w,w_1,w_2\in W$, $X\in \fgg$ and $X'\in \fgg'$. 
Then
\begin{enumerate}[(i)]
\item $\innw{w_1}{w_2} = \trF(w_1^\mstar w_2)$,
\item $\innw{X\cdot w}{w} = 2\bB(X,M(w))$ and
\item $\innw{X'\cdot w}{w} = 2\bB(X',-M'(w))$. \qed
\end{enumerate} 
\end{lemma}
\trivial[h]{
\begin{asparaenum}[(i)]
\item 
Let $w_i = v_i\otimes v'_i$.
Note that $\Im(w_2) = v'_2 D$.  Therefore
\[
\begin{split}
  \trF(w_1^\mstar w_2) &= \trF(w_2 w_1^\mstar) =
  \trD(\innv{v_2}{w_1^\mstar(v'_2)}) = \trD(\innvp{w_1(v_2)}{v'_2})\\
  &= \trD(\innvp{v'_1\innv{v_1}{v_2}}{v'_2}) = \trD(\innv{v_1}{v_2}^\tau
  \innvp{v'_1}{v'_2}) \\
  & = \trD(\innv{v_1}{v_2}
  \innvp{v'_1}{v'_2}^\tau)
  = \innw{v_1\otimes v'_1}{v_2\otimes v'_2}
\end{split}
\]
\item $\innw{X\cdot w}{w} = \innw{w}{wX} = \trF(w^\mstar wX) = 2\bB(X,M(w))$
\item
  $\innw{X'\cdot w}{w} = \innw{w}{-X'w} = \trF(-w^\mstar X'w) = \trF(-X' w
  w^\mstar)= 2\bB(X',-M'(w))$
\end{asparaenum}
}

\subsection{Lattice functions and Bruhat-Tits Buildings} \label{sec:Lattice}
We recall some well known facts about self-dual lattice functions. We refer to
\cite[\Sec{4}]{LMS} for more details.

\begin{definition} \label{def:latticefn}
  A \emph{lattice function} $\sL$ in $V$ is a function which maps $s\in \bR\sqcup \bR^+$
  to an $\fooD$-lattice $\sL_s$ in $V$ such that
\begin{inparaenum}[(i)]
\item $\sL_s \supseteq \sL_t$ if $s<t$, 
\item $\sL_{s+\val(a)} = \sL_s a$ for all $a\in D^\times$,
\item $\sL_{s} = \bigcap_{t<s} \sL_t$ and,
\item $\sL_{s^+} = \bigcup_{t>s} \sL_t$.
\end{inparaenum}
For a lattice function $\sL$,  we set 
\[
\Jump(\sL) := \set{ r \in \bR | \sL_{r} \supsetneq \sL_{r^+} }.
\] 
For an $\fooD$-lattice $L$ in $V$, we denote its dual lattice
\[
L^* := \set{v\in V | \innv{v}{L} \subseteq \fppD}. 
\]
A lattice
function $\sL$ in $V$ is called \emph{self-dual} if $(\sL_t)^* = \sL_{-t^+}$.
\end{definition}

We always let $\BTB{G}$ denote the (extended) Bruhat-Tits building of $G$.
Then $\BTB{G}$ is naturally identified with the set of self-dual
lattice functions 
(cf. \cite{BS,BT} and \cite[\Sec{4}]{LMS}).  For any $x \in \BTB{G}$, we let
$\sL_x$ denote the corresponding lattice function.  Let $G_x$ denote the
  stabilizer of $x$ in $G$. For $r \in \bR\sqcup \bR^+$ and $r \geq 0$
    (respectively  $r \in \bR \sqcup \bR^+$), we let $G_{x,r}$
    denote the corresponding Moy-Prasad subgroup of $G$ (respectively Lie
    subalgebra of $\fgg$) \cite{MP1, MP2}. For $r < t$, we
  set
	\[
	\fgg_{x,r:t} := \fgg_{x,r}/\fgg_{x,t}.
\]
Let $\sL_x$ and $\sL'_{x'}$ be two self-dual lattice functions in $V$ and $V'$
respectively.  We define a lattice function $\sB_{x,x'}$ on $W = V\otimes_D V'$
by
\begin{equation}\label{eq:Lattice.T}
  \sB_{x,x',t} := (\sL_x\otimes_D \sL'_{x'})_t
  := \sum_{t=t_1+t_2}\sL_{x,t_1}\otimes_{\fooD} \sL'_{x',t_2}.
\end{equation}
Then $\sB_{x,x'}$ is a self-dual lattice function on $W$. We view
$\Hom_{\fooD}(\sL_{x,t},\sL'_{x',t'})$ as a lattice in $\Hom_D(V,V') = W$. Then 
$\sB_{x,x',t} = \bigcap_r \Hom_{\fooD}(\sL_{x,r},\sL'_{x',t+r})$.

Now $(x,x') \mapsto  \sB_{x,x'}$
gives a natural $G\times G'$-equivariant map\footnote{One can show that it is an
  embedding. In fact, the map is a restriction of the natural embedding 
 $\rBTB{\GL(V)}\times \rBTB{\GL(V')}\rightarrow \rBTB{\GL(V\otimes_D V'}$ between
reduced buildings.}
\[
\xymatrix{ \BTB{G}\times \BTB{G'} \ar[r]& \BTB{\Sp(W)}.  }
\]
\trivial[h]{
Here is another conceptually simple proof of the embedding. 
We would like to show that: if $\sB_{x,x'} = \sB_{y,y'}$ then $x= y$ and
$x'=y'$. 

Let $\sD$ be the standard lattice function on $D$ which is the only lattice
function in $D$ up to shifting. 
Note that $\sE_x\colon t \mapsto \innv{\sL_{x,0}}{\sL_x}$ is equal to $\sD$ and 
$\sE_y\colon t\mapsto \innv{\sL_{x,0}}{\sL_y}$ is certain shift of $\sD$, say  
$\sE_{y,t} = \sD_{t+a}$. 

WLOG, assume $\val(D) = \bZ$.  View $\sB_{x,x'}$ and $\sB_{y,y'}$ as lattices in
$\Hom_{D}(V,V')$. Then
$\sN'_{x,s}:= \sB_{x,x',s}(\sL_{x,0}) = \sum_{t} \innv{\sL_{x,0}}{\sL_{x,t}}
\sL'_{x',-t+s} = \sum_{t}\fpp_D^{\ceil{t}} \sL'_{x',-t+s} = \sum_{t}
\sL'_{x',\ceil{t}-t+s} = \sL'_{x',s}$
Similar argument shows that
$\sN'_{y,s}:=\sB_{y,y',s}(\sL_{x,0}) = \sL'_{y',s+a}$ which is the shift of the
self-dual lattice function by $a$.  But
$\sB_{x,x',s}(\sL_{x,0}) = \sB_{y,y',s}(\sL_{x,0})$, hence $\sN'_{y,s}$ is also
self-dual and $\sL'_{x',s}=\sN'_{y,s} = \sN'_{x,s} = \sL'_{y',s}$, i.e. $y=y'$.
Now switch the role of $x$ and $x'$ give $x= x'$, which finished the proof of
the claim.

Remark: the same method also shows that
$\rBTB{\GL(V)}\times \rBTB{\GL(V')}\rightarrow \rBTB{\GL(V\otimes_D V'}$ is an
embedding by fixing any non-degenerate form on $V$ and $V'$.  
}

If it is clear what $x$ and $x'$ are, then we will suppress $x,x'$ and simply
write $\sL = \sL_x$, $\sL' =\sL'_{x'}$ and $\sB = \sB_{x,x'}$. For $s < t$, we
denote 
\[
\sL_{s:t} := \sL_{s}/\sL_t, \quad  \sL'_{s:t} := \sL'_s/\sL'_t \quad  \text{and}
\quad \sB_{s:t} := \sB_s/\sB_t.
\]

\subsection{Generalized lattice model}\label{sec:LM}

\def\HW{\rH(W)}

\def\sSB{{\sS(\sB_0)}}
\def\Sww{{S_{w}}}
\def\bSb{{\bS(\bfbb)}}
\def\bSbz{{\bS(\bfbbz)}}
\def\dgbSb{{\bS(\dgbfbb)}}
\def\ssbSb{{\bS(\ssbfbb)}}

\def\bfSp{\mathbf{Sp}}
\def\bfSH{\mathbf{SH}}

\def\KSp{{\Sp_{\sB}}}
\def\KSpp{{\Sp_{\sB,0^+}}}
\def\wtKSp{{\wtSp_{\sB}}}
\def\wtKSpp{{\wtSp_{\sB,0^+}}}
\def\CSPT{\Xi}

Let $W$ be a symplectic space. Let $\HW = W \times F$ denote the corresponding
Heisenberg group and let $\wtSp(W)$ denote the metaplectic $\bC^\times$-covering
of $\Sp(W)$.

Let $(\omega, \sS)$ or simply $\omega$ denote the oscillator representation of
$\wtSp(W) \ltimes \HW$ with central character $\psi$. We recall below the
definition of the generalized lattice model of the oscillator representation.
See \cite{Wa} or \cite[\Sec{3}]{LMS} for more details.

\subsubsection{} \label{sec:LM.real}
Fix a self-dual lattice function $\sB$ in $W$.  Let $\bfbb := \sB_0/\sB_{0^+}$.
The symplectic form $\inn{}{}_W$ induces a non-degenerate symplectic form on the
$\fffF$-vector space $\bfbb$.  Let $\bfH(\bfbb) = \bfbb \times \fffF$ be the
Heisenberg group defined by $\bfbb$.  Let $(\bomegab,\bSb)$ be the oscillator
representation of $\bfSH(\bfbb) := \bfSp(\bfbb)\ltimes \bfH(\bfbb)$ with
central character $\bpsi$ (cf. \Cref{sec:notation}). See \Cref{sec:HW}.  Let
$\rH(\sB_0) := \sB_0\times \fooF \subseteq \HW$,
$\KSp := \Set{ g \in \Sp(W) | g\sB_0 = \sB_0 }$ and
$\KSpp := \Set{ g \in \Sp(W) |(g-1)\sB_0 \subseteq \sB_{0^+} }$. By an abuse of
notation, we also let $\bomegab$ denote its inflation to $\KSp\ltimes \rH(\sB_0)$
via the natural quotient map.

A generalized lattice model with respect to $\sB_0$ of the oscillator
representation $(\omega, \sS)$ is realized on the following space of functions
\[
\sSB := \Set{f \colon W \to \bSb |
\begin{array}{l}
f \text{ is locally constant and compactly supported,} \\
f(a+w) = \psi(\frac{1}{2}\innw{w}{a})\bomegab(a)f(w)\ \forall a\in \sB_0
\end{array}
}.
\]

Via the generalized
lattice model $\sSB$, we get a splitting
$\xi_{\sB} \colon \KSp \hookrightarrow \wtKSp \subseteq \wtSp(W)$ given by
\begin{equation} \label{eq:latticemodel} 
\left((\omega \circ \xi_{\sB}(k))
    f\right)(w) = \bomegab(k) f(k^{-1}\cdot w) \ \ \forall k \in \KSp, w \in W,
  f \in \sS(\sB_0).
\end{equation}

The splitting $\xi_{\sB} \colon \KSp \rightarrow \wtKSp$ extends to an isomorphism
  \[
  \txi_{\sB} \colon \KSp \times \bC^\times  \simrightarrow \wtKSp\] given by
  $(k,c) \mapsto \xi_{\sB}(k)c$. 

  If there is no fear of confusion, we will write $\omega \circ \xi_{\sB}(k)$ as
  $\omega(k)$ for $k \in \KSp$.  By \cite[Appendix~C]{LMS}, the splitting
  restricted on $\KSpp$ is independent of the choices of $\sB$ and agrees with
  Kudla's splitting.\footnote{We only checked the compatibility of splittings
    for lattice model in \cite[Appendix~C]{LMS}. We still need to check the
    compatibility between generalized lattice model and lattice model.  However
    this is clear by testing on the unique (up to scalar) fixed vector of a
    certain self-dual lattice.  \trivial[h]{ In fact, we only need to check the
      following statement: Fix a self-dual lattice $A$ such that
      $\sB_{0^+}\subseteq A\subseteq \sB_0$, then
      $\omega_\sB(g) \circ T = T\circ \omega_A(g)$ for all
      $g\in \Sp_A \cap \Sp_{\sB,0^+} = \Sp_{\sB,0^+}$. Here
      $T \colon \sS(A)\rightarrow \sS(\sB_0)$ is an intertwining operator,
      $\omega_\sB$ and $\omega_A$ are the corresponding splittings.  In general
      $\omega_{\sB}(g) \circ T = m(g) T\circ \omega_A(g)$ for a certain function
      $m$ on $\Sp_{\sB,0^+}$.  Consider the unique (up to scalar) non-zero
      vector $v$ fixed by $A$. It has support in $A$ under the lattice model
      $\sS(A)$ and obviously it is also $\sB_{0^+} \subseteq A$
      fixed. Therefore, $T(v) \in \sS(\sB_0)_{\sB_0}$ is $\Sp_{\sB,0^+}$ fixed.
      Hence we see that,
      $T(v) = \omega_{\sB}(g) \circ T(v) = m(g) T\circ \omega_A(g)(v) = m(g)
      T(v)$, i.e. $m(g) \equiv 1$ for all $g\in \Sp_{\sB,0^+}$.}

      Alternatively, one can prove this using the fact that the first and second
      cohomologies of a pro-$p$ group taking values in a $2$-group is trivial
      when $p\neq 2$. See \cite[Proposition~2.3]{HW}.
		
      We warn that the canonical splitting does not extend to the union
      $\bigcup_{\sB \in \BTB{\Sp}} \KSp$}.  In particular we have following
    canonical splitting on the pro-unipotent part of $\Sp$:
\begin{equation}\label{eq:CSPT}
\CSPT\colon \bigcup_{\sB \in \BTB{\Sp}} \KSpp \longrightarrow \bigcup_{\sB\in
  \BTB{\Sp}} \wtKSpp.
\end{equation}

For any subset $\Omega \subseteq W$ and any element $w\in W$, we set
\[
\sSB_\Omega := \Set{f\in \sSB | \supp(f) \subseteq \Omega + \sB_0}
\]
and $\sSB_w := \sSB_{\set{w}}$.

Suppose $\sB =\sB_{x,x'}$ where $(x,x')$ is a pair of points in $\BTB{G}\times
\BTB{G'}$.
Then $G_x\times G'_{x'}\subseteq \KSp$. The restriction of $\xi_{\sB}$ gives a 
splitting
\begin{equation} \label{eq:SPT} \xymatrix@R=0em{
    \xi_{x,x'}:=\xi_{\sB}|_{G_x\times G'_{x'}} \colon G_x \times G'_{x'} \ar[r]&
    \wtG_x \times \wtG'_{x'}.  }
\end{equation}
of the covering $\wtG_{x}\times \wtG'_{x'}\twoheadrightarrow G_x \times G'_{x'}$.
The restriction of $\xi_{x,x'}$ to the subgroup
$K\times K'\subseteq G_x\times G'_x$ (still called $\xi_{x,x'}$) is the
canonical splitting we referred to in \Cref{sec:mainthm}.

\subsubsection{} We now study a subspace of $\sS$ as an induced
representation which plays a key role later in this paper.

Fix an element $w \in W$ and let 
\[
\Sww := \Stab_{\KSp}(w+\sB_0) = \Set{h\in \KSp | h\cdot w -w \in \sB_0}.
\]
The evaluation at $w$ given by $f \mapsto f(w)$ induces an isomorphism
$\sSB_{w} \iso \bSb$.  Clearly $\Sww$ acts on $\sSB_{w}$ which translates to an
action on $\bSb$.  We will denote the resulting $\Sww$-action on $\bSb$ by
$\bomegaww$.

\begin{lemma} \label{lem:Saction}
The group $\Sww$ acts on $\bSb$ by
\begin{equation}\label{eq:OS}
\begin{split}
\bomegaww(h) &:= \bomegab(h) \bomegab(h^{-1}\cdot w-w) \psi(\half \innw{w}{h^{-1}w-w})\\
& =\bomegab(h) \bomegab(h^{-1}\cdot w-w) \psi(\half \innw{h\cdot w-w}{w}) 
\end{split}
\end{equation}
for all $h\in \Sww$.  \trivial[h]{ Here $\bomegab$ is the action of
  $\bfSH(\bfbb)$ on $\bSb$.}

Let $H$ be a subgroup of $\KSp$ and $S := \Stab_H(w+\sB_0) = H\cap \Sww$.
We have an isomorphism of $H$-modules
\[
\xymatrix{
\TT \colon \sSB_{H\cdot w + \sB_0} \ar[r]^<>(.5){\sim} & \cInd_S^{H} \sS(B_0)_w \cong 
\cInd_S^{H} \bomegaww
}
\]
given by $(\TT (f))(k) = (\omega(k)f)(w)$ for all $k \in H$.
\end{lemma}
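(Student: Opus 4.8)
The plan is to verify the two assertions of \Cref{lem:Saction} separately, both by direct computation with the defining relation of the generalized lattice model. First I would establish the formula \eqref{eq:OS} for the action $\bomegaww$. By definition, $\bomegaww(h)$ is the operator on $\bSb$ obtained by transporting the $\Sww$-action on $\sSB_w$ through the evaluation isomorphism $\eva_w\colon \sSB_w\xrightarrow{\sim}\bSb$, $f\mapsto f(w)$. So given $h\in\Sww$ and $f\in\sSB_w$, I would compute $(\omega\circ\xi_\sB(h)f)(w)$ using \eqref{eq:latticemodel}, namely $(\omega(h)f)(w)=\bomegab(h)f(h^{-1}\cdot w)$. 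Now $h^{-1}\cdot w=w+(h^{-1}\cdot w-w)$ with $h^{-1}\cdot w-w\in\sB_0$ since $h\in\Sww$, so the covariance relation defining $\sSB$ (with $a:=h^{-1}\cdot w-w\in\sB_0$) gives $f(h^{-1}\cdot w)=\psi\!\left(\tfrac12\innw{w}{h^{-1}\cdot w-w}\right)\bomegab(h^{-1}\cdot w-w)f(w)$. Substituting yields exactly the first line of \eqref{eq:OS}. The second line follows by rewriting $\innw{w}{h^{-1}w-w}=\innw{w}{h^{-1}w}=\innw{hw}{w}=\innw{hw-w}{w}$, using that $h$ is symplectic and $\innw{w}{w}=0$. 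One should also check that the prescription is independent of the choice of $f\in\sSB_w$ (equivalently that these operators genuinely form a representation of $\Sww$), which is automatic since they arise by conjugating the genuine $\sSB_w$-action by a linear isomorphism.

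Next I would treat the induced-representation statement. The map $\TT$ is defined by $(\TT(f))(k)=(\omega(k)f)(w)$ for $k\in H$; the first thing to check is that $\TT(f)$ actually lies in $\cInd_S^H\bomegaww$, i.e. transforms correctly under left translation by $S$: for $s\in S=H\cap\Sww$ one has $(\TT(f))(sk)=(\omega(s)\omega(k)f)(w)=\bomegaww(s)\big((\omega(k)f)(w)\big)=\bomegaww(s)\cdot(\TT(f))(k)$, using the definition of $\bomegaww$ as the transported $\Sww$-action on $\sSB_w\cong\bSb$. One must also record that $\cInd$ here is compact (smooth) induction and that $\TT(f)$ is supported on finitely many cosets $Sk$: this comes from $f$ being locally constant and compactly supported modulo $\sB_0$, together with $H$ being compact (or profinite), so the support condition is automatic. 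Equivariance of $\TT$ for the $H$-actions is then a one-line check: $(\TT(\omega(h)f))(k)=(\omega(kh)f)(w)=(\TT(f))(kh)=(h\cdot\TT(f))(k)$.

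It remains to see that $\TT$ is a bijection from $\sSB_{H\cdot w+\sB_0}$ onto $\cInd_S^H\bomegaww$. For injectivity: if $\TT(f)=0$ then $(\omega(k)f)(w)=0$ for all $k\in H$, i.e. $f$ vanishes on $H\cdot w$, hence on $H\cdot w+\sB_0$ by the covariance relation, and since $\supp(f)\subseteq H\cdot w+\sB_0$ we get $f=0$. For surjectivity: given $\varphi\in\cInd_S^H\bomegaww$, choose coset representatives $k_1,\dots,k_r$ for $S\backslash H$ meeting the (finite) support of $\varphi$, and on each orbit-piece $k_i^{-1}\cdot w+\sB_0$ define $f$ by declaring $f(k_i^{-1}\cdot w)$ to be the element of $\bSb$ corresponding to $\varphi(k_i)$ under $\eva_w^{-1}$ (after the appropriate twist by $\bomegab$), extended to $k_i^{-1}\cdot w+\sB_0$ via the covariance relation and set to zero elsewhere; the $S$-equivariance of $\varphi$ guarantees this is well-defined independent of the representatives $k_i$, and by construction $\TT(f)=\varphi$. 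A cleaner way to package surjectivity and injectivity simultaneously is to note that $\sSB_{H\cdot w+\sB_0}$ decomposes as a direct sum over the $H$-orbits (equivalently $S$-cosets) in $(H\cdot w+\sB_0)/\sB_0$ of the lines $\sSB_{k_i^{-1}\cdot w}\cong\bSb$, matching the Mackey description of $\cInd_S^H\bomegaww$ term by term, with $\TT$ identified as the canonical isomorphism. The only genuine subtlety — and thus the main thing to get right — is bookkeeping the cocycle: verifying that the twist $\psi(\tfrac12\innw{\cdot}{\cdot})\bomegab(\cdot)$ appearing in \eqref{eq:OS} is exactly the one needed for the left-$S$-equivariance of $\TT(f)$ to hold on the nose, so that no extra scalar factor survives; this is precisely why the formula \eqref{eq:OS} was derived first. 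Once that compatibility is in place, everything else is routine.
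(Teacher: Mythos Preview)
Your proposal is correct and follows essentially the same approach as the paper. For \eqref{eq:OS} the paper does exactly your computation; for the isomorphism $\TT$ the paper is much terser, recording only the bijection $H/S\cong (H\cdot w+\sB_0)/\sB_0$ and the consequence $\sSB_{H\cdot w+\sB_0}=\sspan\{\omega(H)\sSB_w\}$, whereas you spell out the $S$-equivariance, injectivity, and surjectivity explicitly.
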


\begin{proof}
Let $h\in \Sww$. Then $h^{-1}\cdot w - w \in \sB_0$. Hence, for any $f\in \sSB$, 
\[
\begin{split}
(\omega(h) f)(w) & =  \bomegab(h) f(h^{-1}\cdot w) = \bomegab(h) f(w + (h^{-1} \cdot
w -w)) \\
 & =  \bomegab(h) \bomegab( h^{-1} \cdot
w -w)  \psi(\half\innw{w}{h^{-1} \cdot
w -w})f(w) \\
 & =  \bomegaww(h) f(w).
\end{split}
\]
Observe that $\innw{w}{h^{-1}\cdot w - w}
 = \innw{w}{h^{-1}\cdot w} = \innw{h\cdot w}{w} = \innw{h\cdot w-w}{w}$. The
 second equality in \eqref{eq:OS} follows. 

Note that $h \mapsto h\cdot w +\sB_0$ defines a bijection
$H/S \cong (H\cdot w + \sB_0)/\sB_0 \subseteq W/\sB_0$. 
Hence
$\sSB_{H\cdot w + \sB_0} = \sspan\set{ \omega(H) \sSB_{w}}$ and $\TT$ is an isomorphism.
\end{proof}

\begin{remarks}
It is easy to see that \eqref{eq:OS} could be simplified greatly in some cases. 
\begin{asparaenum}[1.]
\item \label[rmk]{rmk:Sact.1}
Suppose that $w \in \sB_{-s}$ for certain $s>0$. 
Let $h := (g,g') = (\exp(X),\exp(X')) \in G_{x,s}\times G'_{x',s}$ where
$(X,X')\in \fgg_{x,s}\oplus \fgg'_{x',s}$. 
Then 
\begin{equation}\label{eq:Sym.s}
\begin{split}
 &\half \innw{w}{h^{-1}\cdot w - w} = \half \innw{h\cdot w - w}{w}\\
& \equiv \half \innw{(X,X')\cdot w + \half
    (X,X')\cdot (X,X')
    \cdot w}{w} \qquad \pmod{\fpp} \\
  &= \half \innw{(X,X')\cdot w}{w} - \frac{1}{4} \innw{(X,X')\cdot
    w}{(X,X')\cdot w}\\
  &=  \half \innw{X\cdot w}{w} + \half \innw{ X'\cdot w}{w} \\
  &=  \bB(X,M(w)) + \bB(X',-M'(w))  \qquad \text{(by \Cref{lem:inn}).} 
  \end{split}
\end{equation}
This immediately implies
$\psi(\half \innw{h\cdot w - w}{w}) = \psi_{M(w)}(g) \psi_{-M'(w)}(g')$ 
(see \eqref{eq:psi.Ga} for the definition of $\psi_{M(w)}$) and  
\begin{equation}\label{eq:bomegaS.s}
\bomegaww(h) = \bomegab(-(X,X')\cdot w)\psi_{M(w)}(g) \psi_{-M'(w)}(g').
\end{equation}

\item \label[rmk]{rmk:Sact.2} Suppose $h = (g,g') = \exp(X,X')\in G_{x,s^+}\times G_{x',s^+}$. Then $(X,X') \cdot w \in \sB_{0^+}$ and 
  \eqref{eq:bomegaS.s} could be further simplified into 
\begin{equation}\label{eq:bomegaS.sp}
\bomegaww(h) = \psi_{M(w)}(g) \psi_{-M'(w)}(g').
\end{equation}
\end{asparaenum}
\end{remarks}

\section{Preliminary: Supercuspidal representations}\label{sec:SC}

In this section, we will first review the parametrization of tamely ramified
supercuspidal representations for classical groups $G$ when $p$ is sufficiently
large. Then we will extend the notion to the covering groups $\wtG$.  We follow
closely the notations and formulation in~\cite{Kim}.

\subsection{Residue characteristics}
We assume that the residue characteristics $p$ is large enough compared to the
size of $G$ so that all the hypotheses in \cite[\S{3.4}]{Kim} hold. In this
subsection, we will find a lower bound for $p$.

Let 
\begin{asparaenum}[(i)]
\item  $e_D$ be  the absolute
  ramification index of $D/\bQ_p$ if $D$ is a field, or 
\item  $e_D = 2 e_F$ if
  $D$ is the quaternion division algebra over $F$.
\end{asparaenum}

\begin{prop}\label{prop:pbd1}
Suppose $V$ is an $\epsilon$-Hermitian space over $D$ such that $n:=\dim_D(V)$. 
Kim's hypotheses \cite[\Sec{3.4}]{Kim} are satisfied for $\rU(V)$ if
\begin{equation} \label{eqpbdd}
p \geq \max\set{2n +1, e_D n + 2}.
\end{equation}
\end{prop}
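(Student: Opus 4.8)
The plan is to trace through the list of hypotheses in \cite[\S{3.4}]{Kim} and verify each one against an explicit bound on $p$. Kim's hypotheses typically amount to: (i) $p$ is odd and does not divide the order of the relative Weyl group, or more precisely $p$ is not a torsion prime for the relevant root datum; (ii) $p$ does not divide the order of the component group / fundamental group of $\rU(V)$; (iii) there exists a good exponential (Moy--Prasad isomorphism) $\fgg_{x,r:s} \to G_{x,r:s}$, which requires $p$ to exceed the nilpotence degree of the relevant filtration quotients; and (iv) the residue field is large enough that certain generic characters exist (mild, automatically satisfied once $p$ is not too small). The strategy is to bound each of these quantities in terms of $n = \dim_D(V)$ and $e_D$, and then take the maximum. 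I would organize the proof as a short sequence of claims, one per hypothesis.

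First I would handle the group-theoretic conditions. For $\rU(V)$, the relevant Weyl group is (a subgroup of) the hyperoctahedral group $W(B_n)$ or $W(D_n)$, or $S_n$ in the $D=F$, $\tau=\id$, symplectic/orthogonal split cases; in all situations the primes dividing $|W|$ are $\le n$, and the torsion primes of the root system of a classical group of rank $\le n$ are also $\le n$ (for types $B,C,D$ the only torsion prime is $2$). Likewise the fundamental group of $\rU(V)$ has order dividing a small power of $2$ times something bounded by $n$. So the condition $p \ge 2n+1$ comfortably ensures $p \nmid |W|$, $p$ is not a torsion prime, and $p$ does not divide the relevant order of the center or component group. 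This gives the first half of the bound \eqref{eqpbdd}.

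Next I would handle the existence of the Moy--Prasad / Yu exponential isomorphism and the tameness of the filtration, which is where the factor $e_D$ enters. The point is that the $p$-adic logarithm and exponential converge and are mutually inverse on $\fpp_E^k$ provided $k > e_E/(p-1)$, i.e. provided $p - 1 > e_E / k$. The filtration jumps of $\rU(V)$ at a point $x$ in its building occur at values in $\tfrac{1}{e}\bZ$ for $e$ dividing $\le 2n$ (the denominators coming from the concave functions defining the lattice chain), and the nilpotence degree of $\ad(\fgg_{x,0^+})$ on $\fgg$ is at most $n$ (the length of the relevant flag in $V$). Translating these into a condition on the absolute ramification, and using $e_D$ as defined in (i)--(ii) of the subsection, one needs roughly $p - 1 \ge e_D \cdot n$, which is exactly $p \ge e_D n + 2$ after adding back the constant. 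Combining with the previous paragraph yields $p \ge \max\{2n+1, e_D n + 2\}$ as claimed.

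I expect the main obstacle to be \emph{bookkeeping precision}: Kim's \S{3.4} is a list of several interrelated hypotheses, some phrased for the reductive quotient $\sfG^0_x$ of a parahoric rather than for $G$ itself, and the constants have to be chosen so that \emph{every} one of them holds simultaneously, including for all the twisted Levi subgroups $G^0 = \Cent{G}{\Gamma}$ that appear in supercuspidal data (not just $G$). So the careful part is checking that passing to a twisted Levi — which can only decrease the rank and the ramification involved — does not worsen the bound, and that the quaternionic case (where $D$ itself is ramified and $e_D = 2e_F$) is correctly accounted for in the denominators of the Moy--Prasad filtration. Once those reductions are in place, the inequality \eqref{eqpbdd} falls out by taking the maximum over the finitely many conditions.
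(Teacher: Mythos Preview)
Your overall strategy---enumerate Kim's hypotheses, bound each in terms of $n$ and $e_D$, take the maximum---is exactly what the paper does. You also correctly identify that the $e_D n + 2$ term comes from convergence of the exponential/logarithm on $\fgg_{x,0^+}$ (this is Kim's hypothesis (Hk)).

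However, your list of hypotheses is a guess, and it does not match Kim's actual list. Kim's \S3.4 consists of (Hk), (HB), (HT), and (HN), and the bound $2n+1$ does \emph{not} come from Weyl-group or torsion-prime considerations. It comes from (HN), which packages DeBacker's hypotheses for parametrizing nilpotent orbits via Bruhat--Tits theory: one needs $\ad(X)$ to be nilpotent of order at most $p-2$ for $X$ nilpotent, and in $\GL_n(D)$ the highest $\fsl_2$-weight is $2(n-1)$, giving $2(n-1)+1 \le p-2$, i.e.\ $p \ge 2n+1$. Your Weyl-group argument, while giving a true inequality, is not what is being checked and would not by itself dispose of (HN).

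You also omit (HB) (compatibility of building embeddings for twisted Levi subgroups, which for classical groups holds for $p\neq 2$ by viewing them as involution-fixed points of $\GL$) and (HT) (existence of good factorizations, handled by the Howe factorization in the paper's \Cref{prop:GD}). Neither contributes to the numerical bound, but a complete proof must address them. So: keep your framework, but replace your conjectural list with Kim's actual (Hk), (HB), (HT), (HN), and redo the bookkeeping for $2n+1$ via the nilpotent-orbit hypothesis.
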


\begin{proof}
We check each of Kim's hypotheses (Hk), (HB), (HT) and (HN): 
\begin{enumerate}[1.]
\item (Hk.1) requires the exponential map to be well-defined on $\fgg_{x,0^+}$, which is
  ensured by $p \geq e_D n + 2$ \cite[Section B.1]{DebackerReeder}. \trivial[h]{First we normalize the valuation
  so that $\val(\bQ_p) = \bZ$. The building of $\GL_n(D)$ is
    identified with set of lattice function and it suffices to consider the
    barycenters of facets.  Let $\depth_x(X)$ denote the depth of $X$ with respect
    to a barycenter $x$. 
    We would like to show that $\depth_x(X^k/k!)\to \infty$ when $k\to \infty$. 
  
   Suppose $k = \sum a_j p^j$ is the $p$-adic expansion of $k$. 
    Then $\val(k!)  = \sum_{i\geq 1} \floor{k/p^i} =  (k- \sum a_j) / (p-1) \leq
    (k-1)/(p-1)$. 
    
    On the other hand, $\depth(X) \geq 1/n e_D$ for all
    $X\in \fgl_{x,0^+}$. Therefore
    $k\, \depth_x(X) - \val(k!) \geq k/n\, e_D - (k-1)/(p-1)
    \to \infty$ when $k\to \infty$.} 
		
\item (Hk.2) translates to $p \geq e_D n +2$ for $G\subset\GL_n(D)$. 
\trivial[h]{In fact the condition is ensured by
    $\depth_x(X^{p-1}/p) \geq -1 + (p-1)/n\, e_D >0$. }
		
\item  (HB) holds for classical groups when $p\neq 2$ since it holds for $\GL$ and
  classical group is the fixed point of an involution. We would like to thank
  J. Adler for the discussion.

\item  (HT) holds by the Howe factorization (cf.  \Cref{prop:GD}).
  
\item  (HN) consists of the set of hypotheses in \cite[\Sec{4.2}]{DB.Nil}.
  Hypothesis~4.2.3 holds for $p \geq 2n+1$. \trivial[h]{For $\GL_n(D)$, the
    highest weight under $\fsl(2)$-triple action is $2(n-1)$. Hence
    $\ad(X)^{2(n-1)+1} = 0$, so the condition is
    $2(n-1)+1\leq p-2 \Leftrightarrow 2n+1 \leq p$. } Hypotheses~4.2.1 holds by
  Hypotheses~4.2.3 in characteristic zero case (see \cite[Appendix~A]{DB.Nil}).
  Hypothesis~4.2.4 and Hypothesis~4.2.5 hold since $F$ is characteristic zero.
  Hypothesis~4.2.7 holds for the exponential map by (Hk). \trivial[h]{The
    assumption is ensured by
    $\depth_x(X^k/k!) \geq (k-1)(\frac{1}{n\,e_D} -\frac{1}{p}) + \depth_x(X)>
    \depth_x(X)$
    when $k>1$.  }
  \end{enumerate}
	This proves the proposition.
\end{proof}

\begin{cor} \label{cor:pbdd} 
Let $(G,G') :=(\rU(V), \rU(V'))$ be a type~I dual
  pair with $n:=\dim_DV$.  Let~$\tpi$ and $\tpi'$ be irreducible supercuspidal
  genuine representations of $\wtG$ and $\wtG'$ respectively such that
  $\tpi' = \theta_{V,V'}(\tpi)$. Then Kim's hypotheses in \cite[\Sec{3.4}]{Kim}
  are satisfied for $\rU(V)$ and~$\rU(V')$ if
\[
p \geq \max \set{4n+9, e_D (2n+4) + 2}.
\]
\end{cor}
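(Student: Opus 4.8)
The plan is to deduce everything from \Cref{prop:pbd1}. For $\rU(V)$ there is nothing new to do: \Cref{prop:pbd1} gives Kim's hypotheses as soon as $p \ge \max\{2n+1,\, e_D n + 2\}$, and this is implied by the asserted bound because $2n+1 \le 4n+9$ and $e_D n + 2 \le e_D(2n+4)+2$ (recall that $e_D$ depends only on $D$, so it is the same invariant for $V$ and for $V'$). Thus the real task is to bound $n' := \dim_D V'$ and then apply \Cref{prop:pbd1} a second time to $\rU(V')$.

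To control $n'$ I would use the hypothesis that $\tpi' = \theta_{V,V'}(\tpi)$ is \emph{supercuspidal}. By property (iv) recalled in \Cref{sec:mainthm}, this forces the first occurrence of $\tpi$ in the Witt tower $\cT'$ of $V'$ to be precisely at $V'$, i.e. $n' = m_{\cT'}(\tpi)$; and by the bound of property (ii) (from \cite{Li1989}) one has $m_{\cT'}(\tpi) \le 2\dim_D V + a_{\cT'} = 2n + a_{\cT'}$, where $a_{\cT'}$ is the $D$-dimension of the anisotropic kernel of the spaces in $\cT'$. It then remains to note the uniform bound $a_{\cT'} \le 4$: this is a short case check over the three choices of $(D,\tau)$ in \Cref{sec:DP}, using the classification of $\epsilon'$-Hermitian forms over a local division algebra (cf. \cite{MVW}) --- an anisotropic quadratic form over the $p$-adic field $F$ has dimension at most $4$, a symplectic space has no anisotropic part, and anisotropic Hermitian or skew-Hermitian forms over a quadratic extension or over the quaternion division algebra have $D$-dimension at most $2$. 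Hence $n' \le 2n+4$.

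Plugging $n' \le 2n+4$ into \Cref{prop:pbd1} applied to $\rU(V')$, Kim's hypotheses hold for $\rU(V')$ once $p \ge \max\{2n'+1,\, e_D n' + 2\}$, and since $2n'+1 \le 4n+9$ and $e_D n' + 2 \le e_D(2n+4)+2$ this is exactly the stated inequality, which finishes the proof. The only step that is not pure bookkeeping with inequalities already in hand is the uniform estimate $a_{\cT'} \le 4$ for anisotropic kernels, and even that is standard; everything else is immediate from \Cref{prop:pbd1} together with properties (ii) and (iv) of the theta correspondence.
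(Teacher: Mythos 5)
Your argument follows the same route as the paper's own proof: bound $\dim_D V'$ by $m_{\cT'}(\tpi)\le 2\dim_D V + a_{\cT'}\le 2n+4$ (via \Cref{sec:mainthm}~\cref{it:mT} together with the supercuspidality of $\tpi'$, which forces first occurrence at $V'$), then apply \Cref{prop:pbd1} to both $\rU(V)$ and $\rU(V')$. One minor imprecision in your case check does not affect anything: the anisotropic kernel of a skew-Hermitian form over the quaternion division algebra can have $D$-dimension $3$, not at most $2$ as you wrote; but since the uniform bound $a_{\cT'}\le 4$ (attained by the $4$-dimensional anisotropic quadratic form over $F$) is all that is needed, the conclusion stands.
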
  
\begin{proof}    
  By \Cref{sec:mainthm}~\cref{it:mT}
  $\dim_D V' \leq 2 \dim_D V + a_{\cT'} \leq 2n + 4$. Then $p$ satisfying
    the inequality in the corollary will satisfy \eqref{eqpbdd} for both
    $\rU(V)$ and $\rU(V')$.
\end{proof}

\subsection{Good factorization} \label{sec:RV} Let $\Gamma\in \fgg$ be a
semisimple element.  We say that $\Gamma$ is \emph{tamely ramified} if $\Gamma$
lies in a Cartan subalgebra $\ftt$ which splits over certain tamely ramified
extension $E$ of $F$.  Let $\depth \colon \fgg \rightarrow \bQ$ denote the depth
function given by
\[
  \depth(X) = \sup_{x \in \BTB{G}} \set{ r | X \in \fgg_{x,r} \setminus
    \fgg_{x,r^+} } \quad \forall X \in \fgg.
\] 
We say that $\Gamma$ is \emph{good} or \emph{$G$-good} if for
    every root $\alpha$ of $\fgg(E) := \fgg \otimes_F E$ with respect to
    $\ftt(E)$, $\dalpha(\Gamma)$ is either zero or has valuation
    $\depth(\Gamma)$. See \cite{AD} and \cite[\Sec{2}]{KM1}.
\begin{definition}\label{def:GD}
  Suppose $\Gamma$ is a tamely ramified semisimple element in $\fgg$ with depth
  $-r<0$.  A decomposition of $\Gamma = \sum_{i=-1}^{d}\Gamma_i$ in $\fgg$ is
  called a \emph{good factorization} if the following hold:
\begin{enumerate}[(a)]
\item $\set{\Gamma,\Gamma_d,\cdots, \Gamma_{-1}}$ is a set of commuting semisimple elements in $\fgg$;

\item $\depth(\Gamma_{-1})\geq 0$ and we set $r_{-1} =0$;

\item If $0 \leq i <d $, then $\Gamma_i$ is a good element and 
 $-r_i := \depth(\Gamma_i) <0$;

\item $\Gamma_d \in \Cent{}{\fgg}$;

\item If $\Gamma_d = 0$ (called Case~I), then $-r_{d-1} < \cdots < -r_0 <0$ and we 
   set $r_d := r_{d-1} = r$;
   
\item If $\Gamma_d \neq 0$ (called Case~II), then $-r_d < -r_{d-1} < \cdots < -r_0 <0$ where $r_d := r = -\depth(\Gamma_d)$. 
\end{enumerate}

Fix a good factorization of $\Gamma$ as above. 
We define $G^d = G$ and $G^i = \Cent{G^{i+1}}{\Gamma_i}$ for $0 \leq i \leq d-1$.
\end{definition}

\begin{remarks}
\begin{asparaenum}[1.]
\item Good factorization of $\Gamma$ exists. It is not unique but the set $\set{G^i:  0\leq i \leq d}$ are independent of the choice of the good factorization
(cf. \cite[Prop. 4.7]{Kim}).


\item \label[rmk]{rmk:No.Pos}
By \cite[Remarks~5.10]{Kim}, $\Gamma_{-1}$ plays no role in the construction of
supercuspidal data.
In general, we always assume $G^0 = \Cent{G}{\Gamma}$. For example, this could
be achieved via replacing $\Gamma$ by $\Gamma - \Gamma_{-1}$. By the argument in
\Cref{sec:Efactor}, the condition $G^0 = \Cent{G}{\Gamma}$ is equivalent to $\Gamma_{-1} \in
\Cent{}{\fgg^0} = F'[\Gamma_d,\cdots, \Gamma_0]$ for one (and so for
any) good factorization $\Gamma = \sum_{i=-1}^d\Gamma_i$ where $F':=\Cent{}{D}$
is the center of $D$.
\end{asparaenum}
\end{remarks}

\subsection{Tamely ramified supercuspidal representations
  for classical groups}
We now quickly review the notion of supercuspidal data and the constructions of
supercuspidal representations.

We only study classical groups that appear in Type~I dual pairs. Let $G$ be such a
classical group. In this case, the center $\Cent{}{G}$ is anisotropic so the
reduced building and the extended building of $G$ are the same. Therefore we
will use $x$ instead of its image $[x]$ in the reduced building
(cf. \cite{Yu}).

Under our assumption that $p$ is big enough, the exponential map $\exp$ is well defined on $\bigcup_{x\in \BTB{G}} \fgg_{x,0^+}$.
Let $\log$ denote the inverse map whenever it makes sense. 

\subsubsection{}
For an element $\Gamma\in \fgg$, we define a function
$\psi_{\Gamma}$ on the domain
of $\log$ by 
\begin{equation}\label{eq:psi.Ga}
\psi_\Gamma(g):=\psi(\bB(\log(g),\Gamma)).
\end{equation}
\begin{definition} \label{def:SC.D} A \emph{supercuspidal datum} for $G$ is a
  tuple $\Sigma = (x, \Gamma, \phi, \rho)$ satisfying the following conditions:
\begin{enumerate}[(a)]
\item \label{it:SC.D.Ga} $\Gamma$ is a tamely ramified semisimple element in
  $\fgg$ which admits a good factorization $\Gamma = \sum_{i=-1}^d \Gamma_i$
  such that $\Gamma_{-1} \in F'[\Gamma_d,\cdots, \Gamma_0]$;

\item \label{it:SC.D.aniso} The center $\Cent{}{\circG^0}$ of the
    connected component $\circG^0$ of $G^0:=\Cent{\Gamma}{G}$ is
  anisotropic\footnote{For a general connected reductive group $G$, this
    condition would be that $\Cent{}{G^0}/\Cent{}{G}$ is anisotropic. However
    this is equivalent to \cref{it:SC.D.aniso} in our case.};

\item The point $x$ is a vertex in $\BTB{G^0}$, i.e. the connected
    component of $G^0_{x}:=\Stab_{G^0}(x)$ is a maximal parahoric subgroup in
  $G^0$;
	
\item $\phi \colon G^0_x \rightarrow \bC^\times$ is a character such that
  $\phi|_{G^0_{x,0^+}} = \psi_\Gamma|_{G^0_{x,0^+}}$.  Note that $G^0_{x,0^+}$
  is the pro-$p$ unipotent radical of $G_x^0$;

\item \label{it:data.cusp} $\rho$ is an irreducible cuspidal representation of
  the finite group $\sfG^0_x := G^0_{x}/G^0_{x,0^+}$.  \savesdenumi
\end{enumerate}
We define the \emph{depth} of the datum $\Sigma$, denoted by $\depth(\Sigma)$, to be
$\max\set{-\depth(\Gamma), 0}$.
Note that if $\Sigma$ is a depth zero data, then $\Gamma\in \Cent{}{\fgg}$ by definition.
\end{definition}

\begin{remark}
If we only require that the $\sfG^0_x$-module $\rho$ in \cref{it:data.cusp} is
irreducible but not necessarily cuspidal, then we
call the tuple $(x,\Gamma,\phi,\rho)$ a \emph{(refined) $K$-type datum}. We will use such $K$-type data
in \Cref{def:LDC} and \Cref{sec:exhaust}.
\end{remark}

\subsubsection{} \label{sec:Ktype}
Let $\Sigma = (x,\Gamma, \phi,\rho)$ be a supercuspidal datum. 
We fix a good factorization of $\Gamma = \sum_{i=0}^d \Gamma_i$. 
Since $\Cent{}{\circG^0}$ is anisotropic, there are canonical embeddings of buildings 
\[
\xymatrix{
\BTB{G^0} \ar@{^(->}[r] & \BTB{G^1} \ar@{^(->}[r] & \cdots \ar@{^(->}[r] &
\BTB{G^{d-1}} \ar@{^(->}[r] & \BTB{G^d}.
}
\]

We now define some notations and review the construction of supercuspidal $G$-module
 $\piSigma$ attached to $\Sigma$. 
These notations will be used freely in the rest of the paper. 

\begin{definition}\label{def:SC.N}
Let $\Sigma$ be a supercuspidal data. We set
\begin{enumerate}[(a)]
\item $s_i := r_i/2$,

\item $K^i := G^0_{x} G^1_{x,s_0} \cdots G^i_{x,s_{i-1}}$,

\item $K_{0^+}^i := G^0_{x,0^+} G^1_{x,s_0} G^2_{x,s_1} \cdots G^i_{x,s_{i-1}} =
  K^i\cap G_{x,0^+}$,

\item $K_+^i  := G^0_{x,0^+} G^1_{x,s_0^+} G^2_{x,s_1^+} \cdots
  G^i_{x,s_{i-1}^+}$,

 \item $K := K^d$, $K_{0^+} := K_{0^+}^d$ and $K_+ := K_+^d $.


 \item \label{it:SC.N.6} The character $\phi$ extends to a character of $G_x^0K_+$ by setting
   $\phi|_{K_+} = \psi_{\Gamma}$. By an abuse of notation, we still denote it by
   $\phi$. 

\item 
Let $\kappa^i$ be the canonically constructed irreducible $K^i$-module such that $\kappa^i|_{K_+^i}$ is $\psi_{\Gamma}|_{K_+^i}$-isotypic. See \Cref{sec:kappa} for
the precise definition. 
Let $\kappa := \kappa^d$. 

\item \label{it:SC.N.8} Let $\etaSigma := \rho\otimes \kappa$, which is an irreducible $K$-module. Here
  $\rho$ is identified with its inflation to $G_x^0$.

\item \label{it:SC.N.9} Let $\pi_\Sigma := \cInd_K^G \etaSigma$. 
\end{enumerate}
\end{definition}

Suppose $G$ is a connected reductive group. Yu proves that $\pi_\Sigma$ is an irreducible supercuspidal representations of $G$ \cite{Yu}.
If the residue characteristic of
$F$ is big enough (see \eqref{eqpbdd}), Kim proves that the set of $\pi_\Sigma$ exhausts all the 
supercuspidal representations of connected $G$~\cite{Kim}. 

Note that every odd orthogonal group is a direct product of a special orthogonal
group with $\set{ \pm 1 }$.  Hence, the above results of Yu and Kim, as well
as those of Hakim-Murnaghan in \Cref{sec:HM} below, extend to odd orthogonal groups.  
We will show in \Cref{sec:even_orth} that they also extend to even orthogonal groups.

We call $\piSigma$ the supercuspidal representation of $G$ constructed from the
datum~$\Sigma$.


\subsubsection{} 
\label{sec:HM}
We now describe the equivalence relation on supercuspidal data. 
\begin{definition} \label{def:SC.eq} Let $\Sigma = (x, \Gamma, \phi, \rho)$ and
  $\dotSigma = (\dotxx, \dotGamma, \dotphi, \dotrho)$ be two supercuspidal data.
  We say that $\Sigma$ and $\dotSigma$ are \emph{equivalent} with each other if there exists
  an element $g \in G$ such that
\begin{enumerate}[(a)]
\item \label{it:SC.eq.a} $x = g\cdot\dotxx$,
\item \label{it:SC.eq.b} $\Ad_g(\dotGamma) \in \Gamma+ \fgg_{x,0}$ and,
\item \label{it:SC.eq.c} $\dotrho\otimes \dotphi\cong (\rho\otimes \phi) \circ \Ad_{g}$ as
  $G^0_{\dotxx}$-modules\footnote{Note that (a) and (b) imply
    $G^0_{x}\cong \Ad_g ( G^0_{\dotxx})$}.
\end{enumerate}
\end{definition}

\begin{remark} Since we assume \Cref{def:SC.D}~\cref{it:SC.D.Ga} in the
    definition of supercuspidal data, we may further assume
    $\Ad_g(\dotGamma) \in \Gamma + (\Cent{}{\fgg^0}\cap \fgg^0_{x,0})$ in
    \Cref{def:SC.eq}~\cref{it:SC.eq.b} thanks to \cite[Lemma~5.1.3~(3)]{KM2}.
  \trivial[h]{ Up to $G$-conjugation, we assume
    $\dotGamma \in \Gamma +\fgg_{x,0}$ and $x = \dotxx$.  Recall the
    ``topological Kostant section lemma'' \cite[Lemma~5.1.3~(3)]{KM2}: For a
    good element $\ckGamma$ with depth $-r$,
    \[ {}^{G_{x,r}}(\ckGamma+ X + \ckfgg_{x,0}) = \ckGamma + X + \fgg_{x,0} \qquad
      \forall X\in \ckfgg_{x,-r}\cap \ckfgg_{-r^+}.
    \]
    Let $\Gamma = \sum_{i}\Gamma_i$ be a good decomposition of $\Gamma$. Apply the
    Kostant section lemma to the good factorization, we have
    \[ {}^{\vecG_{x,\vecrr}}(\Gamma + \fgg^0_{x,0}) = \Gamma + \fgg_{x,0}.
    \]
    Hence we can assume $\dotGamma \in \Gamma+ \fgg^0_{x,0}$ up to
    $\vecG_{x,\vecrr} \subset \vecG_{x}$-conjugation.  On the other hand,
    $\Cent{G}{\dotGamma} = G^0 =  \Cent{G}{\Gamma}$ by assumption. Hence
    $\dotGamma - \Gamma \in \Cent{}{\fgg^0}\cap \fgg^0_{x,0} \subset F'[\Gamma] \cap
    \fgg^0_{x,0}$.}
  On the other hand, a depth zero data $(x, \Gamma, \phi,\rho)$ is always
  equivalent to $(x,0, \bfone, \rho\otimes \phi)$ which is considered as a
  typical representative of the equivalence class.
\end{remark}

Let $\Sigma$ and $\dotSigma$ be two supercuspidal data.  
Hakim and Murnaghan show that $\piSigma$ and $\pidotSigma$ are isomorphic if and only if $\Sigma$ and $\dotSigma$ differ by an elementary transform, conjugation and
refactorization. \Cref{def:SC.eq} could be read off from \cite[Lemma~6.4, Theorem~6.7]{HM}
by observing that, in our situation, 
\begin{inparaenum}[a).]
\item   $\Cent{}{G}$ is anisotropic so there is no elementary transform;
\item the refactorization corresponds to a refactorization of the semisimple element $\Gamma$ in terms of $G^i$-good elements, so the notion of  ``refactorization'' also could be suppressed.
\end{inparaenum}
We now record their theorem as follows. 

\begin{thm}[Hakim-Murnaghan] \label{thm:HM} Suppose $G$ is a connected
    classical group, a special orthogonal group or an odd orthogonal group.
  Let $\Sigma$ and $\dotSigma$ be two supercuspidal data for $G$.  Then
  $\piSigma \simeq \pidotSigma$ if and only if $\Sigma$ and $\dotSigma$ are
  equivalent with each other. \qed
\end{thm}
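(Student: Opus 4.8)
The plan is to reduce the statement to the theorem of Hakim and Murnaghan \cite{HM}, after identifying the parametrization recalled in \Cref{def:SC.D} with the one used in \cite{Yu,HM}. First I would treat connected $G$ (which includes the special orthogonal groups); the odd orthogonal case then follows from the decomposition $G\cong\SO_{2n+1}\times\set{\pm 1}$.

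For connected $G$, recall that a Yu datum consists of a tower $G^0\subset G^1\subset\cdots\subset G^d=G$, a point $x\in\BTB{G^0}$, an increasing sequence of depths, a sequence of characters $(\phi_0,\dots,\phi_d)$, and a cuspidal representation $\rho$ of $\sfG^0_x$; Yu's construction attaches to it a supercuspidal representation. Via the Howe factorization (\Cref{prop:GD}) the character sequence is repackaged as a good factorization $\Gamma=\sum_i\Gamma_i$ of a single tamely ramified semisimple element $\Gamma$ with $\phi|_{K_+}=\psi_\Gamma$, and conversely; under this dictionary Yu's representation is exactly $\piSigma=\cInd_K^G\etaSigma$ of \Cref{def:SC.N}. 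The content of \cite[Lemma~6.4, Theorem~6.7]{HM} is that two Yu data give isomorphic supercuspidal representations precisely when they differ by a finite chain of \emph{refactorizations}, \emph{elementary transforms}, and conjugations by elements of $G$. It then remains to check that, in our situation, this relation coincides with the one in \Cref{def:SC.eq}.

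Three observations accomplish this. First, $\Cent{}{G}$ is anisotropic and, by \Cref{def:SC.D}~\cref{it:SC.D.aniso}, we may take $G^0=\Cent{G}{\Gamma}$, so $G^0$ is rigidly determined by $\Gamma$ up to conjugacy and there are no nontrivial elementary transforms. Second, a refactorization of a Yu datum corresponds under the dictionary to a refactorization of $\Gamma$ as a sum of $G^i$-good elements; by the remark after \cite[Prop.~4.7]{Kim} the tower $\set{G^i}$ is unchanged, so a refactorization alters $\Gamma$ only within $\Cent{}{\fgg^0}$ and alters $\phi$ only on $K_+$, which is precisely the slack built into \Cref{def:SC.eq}~\cref{it:SC.eq.b}; here I would use the topological Kostant-section lemma \cite[Lemma~5.1.3~(3)]{KM2} (as in the remark following \Cref{def:SC.eq}) to verify that the congruence $\Ad_g(\dotGamma)\in\Gamma+\fgg_{x,0}$ is, up to conjugacy, neither weaker nor stronger than the refactorized equality of \cite{HM}. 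Third, once elementary transforms and refactorizations are accounted for, what remains is genuine $G$-conjugacy of the data, which translates verbatim into \Cref{def:SC.eq}~\cref{it:SC.eq.a} and the isomorphism $\dotrho\otimes\dotphi\cong(\rho\otimes\phi)\circ\Ad_g$ of $G^0_x$-modules of \cref{it:SC.eq.c} (using that $\rho\otimes\phi$, inflated to $G^0_x$, is the intrinsic inducing datum of Yu's construction). The forward implication (equivalent $\Rightarrow$ isomorphic) is then immediate from the conjugation-equivariance of Yu's construction together with its invariance under refactorization.

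Finally, for $G\cong\SO_{2n+1}\times\set{\pm 1}$, every irreducible supercuspidal representation has the shape $\pi\boxtimes\chi$ with $\pi$ supercuspidal for $\SO_{2n+1}$ and $\chi$ a character of $\set{\pm 1}$; a supercuspidal datum and its equivalence class split along the same product, and $G$-conjugation acts only through the $\SO_{2n+1}$-factor, so the assertion reduces factor by factor to the special orthogonal case. The step I expect to be the real work is the second observation above: matching \cite{HM}'s bookkeeping of refactorizations of the character sequence $(\phi_i)$ with Kim's single-element language $\Gamma=\sum_i\Gamma_i$, and confirming that the congruence in \Cref{def:SC.eq}~\cref{it:SC.eq.b} captures exactly the ambiguity of a good factorization — the Kostant-section argument is what makes this precise. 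The rest is either cited directly from \cite{HM} or is a routine product-decomposition argument.
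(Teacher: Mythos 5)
Your proposal follows the same route as the paper: reduce to Hakim--Murnaghan's result \cite[Lemma~6.4, Theorem~6.7]{HM}, observe that elementary transforms vanish because $\Cent{}{G}$ is anisotropic, note that refactorizations correspond to refactoring $\Gamma$ into good elements (with the Kostant-section lemma \cite[Lemma~5.1.3~(3)]{KM2} making the congruence in \Cref{def:SC.eq}~\cref{it:SC.eq.b} precise), and split the odd orthogonal case as $\SO_{2n+1}\times\set{\pm1}$. You have simply written out the dictionary and bookkeeping that the paper leaves as remarks before the statement; nothing differs in substance.
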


We record following easy consequence of the equivalence of data.

\begin{lemma}\label{lem:mult1}
  Suppose $G$ be as in \Cref{thm:HM}.  Let $\Sigma = (x, \Gamma, \phi, \rho)$ be
  a supercuspidal data for $G$ and let $\kappa = \kappa_\Sigma$.  Then the
  multiplicity space
\begin{equation}\label{eq:mult}
\rho':= \Hom_{K_{0^+}}(\kappa, \piSigma)
\end{equation}
is isomorphic to $\rho$ as $\sfG_x^0$-modules. 
\end{lemma}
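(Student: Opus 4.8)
The plan is to unwind the construction of $\piSigma = \cInd_K^G \etaSigma$ with $\etaSigma = \rho \otimes \kappa$ and compute the $K_{0^+}$-restriction. By Mackey theory, as a $K$-module (and in particular as a $K_{0^+}$-module), $\piSigma|_K$ decomposes according to the double cosets $K \backslash G / K$, and the summand corresponding to the trivial double coset is exactly $\etaSigma = \rho \otimes \kappa$. So first I would establish that all other double-coset contributions to $\Hom_{K_{0^+}}(\kappa, \piSigma)$ vanish; this is precisely where supercuspidality (via the standard intertwining-implies-conjugacy results of Yu, as refined by Hakim--Murnaghan in \Cref{thm:HM}) enters: if $g \in G$ contributes a $\kappa$-isotypic piece upon restriction to $K_{0^+}$, then $\kappa$ and ${}^g\kappa$ share a $K_{0^+} \cap {}^g K_{0^+}$-subquotient, which forces $g$ to normalize the relevant structure and hence (after a refactorization argument) to lie in $K$ itself. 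This is the step I expect to be the main obstacle: making precise that the intertwining of the ``$\kappa$-part'' alone — not the full $\etaSigma$ — already pins down the double coset, using that $\kappa|_{K_+}$ is $\psi_\Gamma|_{K_+}$-isotypic together with the Heisenberg--Weil rigidity built into the construction of $\kappa$ in \Cref{sec:kappa}.

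Granting that reduction, the computation becomes local: $\Hom_{K_{0^+}}(\kappa, \piSigma) = \Hom_{K_{0^+}}(\kappa, \etaSigma) = \Hom_{K_{0^+}}(\kappa, \rho \otimes \kappa)$. Since $K_{0^+} = K \cap G_{x,0^+}$ acts on $\rho$ through the quotient $\sfG^0_x = G^0_x/G^0_{x,0^+}$ — indeed $K_{0^+}$ maps into $G^0_{x,0^+} \cdot (\text{higher terms})$, on which $\rho$ (inflated from $\sfG^0_x$) is trivial — the factor $\rho$ is a trivial $K_{0^+}$-module. Therefore $\Hom_{K_{0^+}}(\kappa, \rho \otimes \kappa) \cong \rho \otimes \Hom_{K_{0^+}}(\kappa, \kappa) \cong \rho \otimes \End_{K_{0^+}}(\kappa)$, and the point is that $\kappa|_{K_{0^+}}$ is irreducible (it is $\kappa|_{K_{0^+}^d}$, the Heisenberg--Weil representation restricted to the pro-$p$ part, which has no endomorphisms beyond scalars by the defining property that it is $\psi_\Gamma$-isotypic on $K_+$ and irreducible as a Heisenberg representation on the successive quotients). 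Hence $\End_{K_{0^+}}(\kappa) = \bC$ and $\rho' \cong \rho$ as vector spaces.

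Finally I would upgrade this to an isomorphism of $\sfG^0_x$-modules. The group $G^0_x$ normalizes $K_{0^+}$ and $\kappa$ (the latter because $\kappa$ is built $G^0_x$-equivariantly in Yu's construction), so $G^0_x$ acts on the multiplicity space $\rho' = \Hom_{K_{0^+}}(\kappa, \piSigma)$, and $G^0_{x,0^+}$ acts trivially there since it already acts inside $K_{0^+}$; thus the $G^0_x$-action factors through $\sfG^0_x$. Chasing the identification $\rho' \cong \rho \otimes \End_{K_{0^+}}(\kappa) = \rho \otimes \bC$ through these actions — using that $G^0_x$ acts on the $\End$-factor trivially (any two $G^0_x$-equivariant splittings of $\kappa|_{K_{0^+}}$ into itself differ by a scalar, and $G^0_x$ fixes that scalar) — shows the $\sfG^0_x$-action on $\rho'$ matches the one on $\rho$. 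This last equivariance bookkeeping is routine once the vector-space statement and the vanishing of off-diagonal double cosets are in hand; the genuinely substantive input remains \Cref{thm:HM} and the irreducibility of $\kappa|_{K_{0^+}}$.
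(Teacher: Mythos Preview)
Your approach has a genuine gap at the step where you claim the non-trivial double cosets in the Mackey decomposition contribute nothing to $\Hom_{K_{0^+}}(\kappa,\piSigma)$. The intertwining condition you invoke --- that $\kappa$ and ${}^g\kappa$ share a constituent on $K_{0^+}\cap{}^gK_{0^+}$ --- does not by itself force $g\in K$. Already in depth zero this is visible: there $\kappa$ is the trivial character of $K_{0^+}=G_{x,0^+}$, so your intertwining condition is vacuous, yet the vanishing of the off-diagonal Mackey terms in $\piSigma^{G_{x,0^+}}$ genuinely requires the \emph{cuspidality} of $\rho$ (via the standard unipotent-radical argument). In positive depth the picture is the same: Yu's results give $I(\etaSigma)=K$ for the full type $\etaSigma=\rho\otimes\kappa$, but the intertwining set of $\kappa|_{K_{0^+}}$ alone is in general strictly larger (it sees $G^0$, which is typically non-compact), and your sketch never explains how the cuspidality of $\rho$ enters to cut this down. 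So as written, step~2 is the missing idea, not merely a bookkeeping obstacle.

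The paper's proof bypasses the double-coset computation entirely. It takes an arbitrary irreducible constituent $\dotrho\subseteq\rho'$ and invokes \cite[Proposition~17.2]{Kim} to conclude that $\dotrho$ is cuspidal --- this is the substantive input replacing your vanishing claim. Then $\dotSigma=(x,\Gamma,\phi,\dotrho)$ is a supercuspidal datum, Frobenius reciprocity gives $\pi_{\dotSigma}\cong\piSigma$, and \Cref{thm:HM} forces $\dotrho\cong\rho$. Hence $\rho'$ is $\rho$-isotypic, and the multiplicity is computed exactly as in your step~3 (using irreducibility of $\kappa|_{K_{0^+}}$):
\[
\dim\Hom_{\sfG^0_x}(\rho,\rho')=\dim\Hom_K(\etaSigma,\piSigma)=\dim\Hom_G(\piSigma,\piSigma)=1.
\]
Your local computation at the trivial coset and the $\sfG^0_x$-equivariance bookkeeping are fine; what is missing is precisely the mechanism (Kim's cuspidality result, or an equivalent) that rules out extra constituents.
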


\begin{proof}
Let $\dotrho$ be an irreducible component of $\rho'$.
  By \cite[Proposition~17.2]{Kim}, $\dotrho$ is a cuspidal $\sfG_x^0$-module. By Frobenius reciprocity and Yu's
  construction, $\piSigma \cong \pidotSigma$ where
  $\dotSigma = (x,\Gamma, \phi, \dotrho)$. Now by \Cref{thm:HM}, $\rho\cong
  \dotrho$. Hence, $\rho'$ is $\rho$-isotypic with multiplicity 
  \[
  m_\rho = \Hom_{K}(\rho, \Hom_{K_{0^+}}(\kappa,
\piSigma)) =  \Hom_K(\etaSigma, \piSigma) = \Hom_G(\piSigma,\piSigma) = 1.
  \]
\end{proof}

\subsection{Even orthogonal groups} \label{sec:even_orth} 
We now show that the
results of Yu, Kim and Hakim-Murnaghan extend to the even orthogonal groups.
The contents in this subsection is well known to the experts. We include the proofs for
completeness.

Let $V$ be an even dimensional quadratic space. Let $G = \rO(V)$ and
$\circG = \SO(V)$. 

Suppose $V$ is anisotropic, then there is nothing to prove. Suppose $V$ is a two
dimensional hyperbolic space, then $\rO(V) = \rO(1,1)$.  The subgroup $\SO(1,1)$
in $\rO(1,1)$ is defined to be a parabolic subgroup since it is the stabilizer
of an isotropic subspace. This implies that all representations of $\rO(1,1)$
are non-supercuspidal.  Therefore it suffices to consider $\dim V >2$.

For any subgroup $H$ of $G$, we denote $\circH := \circG \cap H$.  For
a subgroup $H$ of $G$ and a $H$-module $\tau$, we will let ${}^c \tau$ denote
the $\Ad_c(H)$-module defined by ${}^c\tau(h) = \tau(\Ad_{c^{-1}} H)$.

\subsubsection{} \label{sec:two}
In this section, we only assume that $G$ is a group and $\circG$ is an index two
normal subgroup of $G$.  We first review some simple relationships between
irreducible representations of group $G$ and $\circG$. Let
$c\in G \setminus \circG$.  Suppose $\circpi$ is an irreducible representation
of $\circG$.
Then $\Ind_{\circG}^{G}\circpi|_{\circG} \cong \circpi \oplus {}^c(\circpi)$.
The induced representation $\Ind_{\circG}^{G}\circpi$ is either
(I) an irreducible
representation of $G$, which happens if and only if $\circpi$ and ${}^c(\circpi)$ are
non-isomorphic as $\circG$-modules or (II) it is a direct sum of
two irreducible $G$-modules.

Conversely, the restriction of an irreducible representation $\pi$ of $G$
to~$\circG$ is either (I)  a direct sum of two non-isomorphic irreducible
$\circG$-modules or (II) an irreducible $\circG$-module.

\def\circrho{{{}^\circ \! \rho}}

\subsubsection{} \label{sec:restoG}
As the basic step, we first show that the theory of depth zero supercuspidal
representation of connected group extends to $G$. 
Note that $G_x \cap (G\setminus \circG) \neq \emptyset$ for each vertex
$x \in \BTB{G}$.

Let $\pi$ be a depth zero supercuspidal $G$-module.  We consider its restriction
to $\circG$ and relate to the two cases (I) and (II) in \Cref{sec:two} above.
\begin{enumerate}[(I)]
\item Suppose $\pi|_\circG = \circpi_1\oplus \circpi_2$. Then there is a depth zero
  minimal $K$-type $(x,\circrho)$ of $\circpi_1$ where $x$ is a vertex in
  $\BTB{G}$ and $\circrho$ is a
  cuspidal $\circG_{x}$-module. 
  We fix $c\in G_x \cap (G\setminus \circG)$.
  By \cite{MP2},
  ${}^c\circrho\not\cong \circrho$ since
  ${}^c\circpi_1 \cong \circpi_2\not\cong \circpi_1$. Hence,
  $\pi = \cInd_{G_x}^G\rho$ where $\rho := \Ind_{\circG_x}^{G_x} \circrho$ is an
  irreducible cuspidal $G_x$-module. 
	
\item Suppose $\pi|_{\circG}$ is an irreducible supercuspidal. Then it has a minimal
  $K$-type $(x,\circrho)$ where $x\in \BTB{G}$ is a vertex. Let $\rho$ be
  the natural representation of $G_{x}$ on the $G_{x,0^+}$-invariant subspace of
  $\pi$.  Clearly $\rho|_{\circG_x} = \circrho$. Hence $\pi = \cInd_{G_x}^G\rho$
  since $\cInd_{G_x}^G \rho|_{\circG} = \cInd_{\circG_x}^{\circG} \circrho = \circpi$ is
  irreducible.
\end{enumerate}
In summary, Condition {\bf D4} and related claims in \cite[p. 590]{Yu} hold for
even orthogonal groups.

\subsubsection{}

The centralizer $\Cent{G}{\gamma}$ of a semisimple element $\gamma\in\fgg$ is
called a \emph{twisted Levi subgroup} of $G$ if $\Cent{\circG}{\gamma}$ is a
twisted Levi subgroup of $\circG$ \cite[p. 586]{Yu}. Therefore a twisted Levi
subgroup is a product of general linear groups, unitary groups and at most one
even orthogonal group.  Combining with \Cref{sec:restoG} above, Yu's definition
of generic $G$-datum in \cite[p. 615]{Yu} extends to the orthogonal groups
without any change.

Our formulation of supercuspidal data follows Kim's simplification \cite[Section 5]{Kim}. 
To translate between Kim and Yu's formulations, we see that
the following variation of \cite[Lemma~5.5]{Kim} holds for all twisted Levi
subgroups appearing in the construction of supercuspidal representations: 

\begin{lemma}\label{lem:phi.O}
  Suppose $\ckG$ is a twisted Levi subgroup of $G$ such that
  $\Cent{}{\circckG}/\Cent{}{G}$ is anisotropic, $\gamma$ is a negative depth
  element in the center of the Lie algebra of $\ckG$ and $x\in \BTB{\ckG}$, then
  there exists a character $\phi$ of $\ckG$ such that
  $\phi|_{\ckG_{x,0^+}}(g) = \psi(\bB(\gamma,\log(g)))$ for every
  $g\in \ckG_{x,0^+}$.
\end{lemma}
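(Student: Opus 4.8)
The plan is to reduce to the connected case, which is exactly \cite[Lemma~5.5]{Kim}, and then to take care of the extension across the identity component $\circckG$ coming from the even orthogonal factor. First I would decompose $\ckG$ according to the classification of twisted Levi subgroups recalled above: such a $\ckG$ has the shape $\ckG = \ckG^\flat \times \rO(V_0)$, where $\ckG^\flat$ is a product of general linear groups over tame extensions of $F$ and of unitary groups, hence connected and equal to its own identity component, and $\rO(V_0)$ is the (at most one) even orthogonal factor; accordingly $\gamma = \gamma^\flat + \gamma_0$ with $\gamma^\flat \in \Cent{}{\Lie(\ckG^\flat)}$ and $\gamma_0 \in \Cent{}{\foo(V_0)}$. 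Since $p$ is odd, $\ckG_{x,0^+}$ is pro-$p$ and therefore contained in $\circckG$, so that $\ckG_{x,0^+} = \ckG^\flat_{x,0^+} \times \SO(V_0)_{x,0^+}$; and because $\gamma$ is central in $\Lie(\ckG)$ one has $\bB(\gamma,[\log g,\log h]) = 0$, so the Baker--Campbell--Hausdorff formula shows that $\psi_\gamma\colon g \mapsto \psi(\bB(\gamma,\log g))$ is genuinely a character of this pro-$p$ group.

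On $\ckG^\flat$ I would simply invoke \cite[Lemma~5.5]{Kim}: it is a connected twisted Levi whose relevant central quotient is again anisotropic (being a subquotient of $\Cent{}{\circckG}/\Cent{}{G}$), so there is a character $\phi^\flat$ of $\ckG^\flat$ with $\phi^\flat|_{\ckG^\flat_{x,0^+}} = \psi_{\gamma^\flat}$. If one prefers to be self-contained this is a two-line argument: centrality of $\gamma^\flat$ makes $\psi_{\gamma^\flat}$ trivial on the derived part, hence it descends to a character of the image of $\ckG^\flat_{x,0^+}$ in the abelianization of $\ckG^\flat$, an open subgroup of a product of multiplicative groups of tame extensions; and every character of an open subgroup of a locally profinite abelian group extends to the whole group, $\bC^\times$ being divisible.

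It then remains to observe that $\gamma_0 = 0$, so that no work is needed on the orthogonal factor. If $\dim_F V_0 \geq 4$ then $\fso(V_0)$ is semisimple and $\Cent{}{\foo(V_0)} = 0$; if $\dim_F V_0 = 2$, then a twisted Levi can acquire an even orthogonal factor on $V_0$ only by centralizing an element that acts by $0$ on $V_0$ (any nonzero element of $\fso(V_0)$ has centralizer the \emph{connected} group $\SO(V_0)$, not an $\rO$-factor), so the good element $\gamma$ at hand vanishes on $V_0$; the case $V_0 = 0$ is vacuous. Therefore $\psi_\gamma = \psi_{\gamma^\flat} \boxtimes \bfone$ on $\ckG_{x,0^+}$, and $\phi := \phi^\flat \boxtimes \bfone_{\rO(V_0)}$ is the required character of $\ckG$.

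The one genuinely subtle point — and the only place where the restriction to twisted Levis ``appearing in the construction'' is used — is the vanishing of $\gamma_0$ when $\dim V_0 = 2$: for an arbitrary central $\gamma_0 \neq 0$ there is in fact no such $\phi$, since any character of $\rO(V_0)$ must be trivial on the pro-$p$, hence $2$-divisible, group $\SO(V_0)_{x,0^+}$ whereas $\psi_{\gamma_0}$ is not. The remaining verifications are routine: that the product decomposition is compatible with the Moy--Prasad filtrations and with the point $x \in \BTB{\ckG} = \BTB{\ckG^\flat} \times \BTB{\rO(V_0)}$, and that $\phi^\flat \boxtimes \bfone$ restricts correctly, which is immediate.
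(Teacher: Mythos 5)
Your argument is correct and follows the same skeleton as the paper's: reduce to Kim's Lemma~5.5 on the connected (unitary) part of $\ckG$ after using the anisotropicity of $\Cent{}{\circckG}/\Cent{}{G}$ to discard $\GL$ and $\rO(1,1)$ factors. What you do more carefully than the paper is handle the possible remaining even orthogonal factor $\rO(V_0)$ explicitly. The paper's proof merely asserts that ``the center of $\Lie(\ckG)$ is contained in a product of unitary Lie algebra factors'' and then says the lemma ``follows immediately'' from Kim. That is unproblematic if $\dim_F V_0 \neq 2$, but for $\dim_F V_0 = 2$ with $V_0$ anisotropic the center $\fso(V_0)$ is nonzero and Kim only produces a character of the connected group $\SO(V_0)$. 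Your observation that such a character cannot extend to $\rO(V_0)$ unless $\gamma_0 = 0$ (every character of $\rO(V_0)$ is quadratic on $\SO(V_0)$, hence trivial on the pro-$p$ subgroup $\SO(V_0)_{x,0^+}$, whereas $\psi_{\gamma_0}$ is not when $\gamma_0 \neq 0$) is precisely the subtlety the paper glosses over, and your proof fills it in.

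One point to sharpen is how $\gamma_0 = 0$ is obtained. It does not follow from $\gamma$ lying in the Lie-algebra center of $\ckG$: for $\dim V_0 = 2$ every element of $\fso(V_0)$ is Lie-central. What forces $\gamma_0 = 0$ is the stronger fact, valid in every application, that $\gamma$ is a (sum of) good element(s) with $\ckG = \Cent{G}{\gamma}$ as a \emph{group} centralizer; then the non-identity component of $\rO(V_0) \subseteq \ckG$ must fix $\gamma_0$ under $\Ad$, but it acts by $-1$ on the one-dimensional $\fso(V_0)$, so $\gamma_0 = 0$. Your phrase ``the good element $\gamma$ at hand'' gestures at this, and you are right to flag it as the one place where the restriction to twisted Levis actually arising in the construction is used: read purely with the hypotheses as literally stated, the lemma would fail in this corner case, and the paper's three-line proof does not make this explicit.
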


\begin{proof}
  Since $\Cent{}{\circckG}/\Cent{}{G}$ is anisotropic, $\ckG$ cannot have any
  $\rO(1,1)$ factor or general linear group factor. Therefore, the center of
  the $\Lie(\ckG)$ is contained in a product of unitary Lie algebra factors. 
	Now the lemma follows immediately from its connected group version
  \cite[Lemma~5.5]{Kim}.
\end{proof}

\subsubsection{}\label{sec:GE2}
 In this subsection, we refer to Conditions {\bf GE1} and {\bf GE2} and the notation in~\cite[\Sec{8}]{Yu}. 
Let $X$ be a good element in $G$ and let
   $\ckG := \Cent{G}{X}$ be the corresponding twisted Levi subgroup.
  This is {\bf GE1} under our settings. The following modification of 
  {\bf GE2} is clearly implied by {\bf GE1} for orthogonal groups:

  \begin{claim*}[\bf GE2']
    Let $T\subset \ckG\subset G$ be maximal torus of $\ckG$ and $X \in
    \Lie(T)$. Let $\barF$ be the algebraic closure of $F$. 
		Let $\tX^*$ be as in \cite[p. 596]{Yu}.
		Let $W := N_{G(\barF)}(T(\barF))/T(\barF)$ and
    $\ckW := N_{\ckG(\barF)}(T(\barF))/T(\barF)$ be the absolute Weyl groups of
    $G$ and $\ckG$ respectively. 
		Then $\Cent{W}{\tX^*} = \ckW$. \qed
  \end{claim*}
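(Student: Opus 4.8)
The plan is to reduce the claim $\Cent{W}{\tX^*} = \ckW$ to the analogous, already-known, statement for the connected group $\circG = \SO(V)$, which is exactly condition \textbf{GE2} of Yu \cite[\Sec{8}]{Yu}, and then check that passing from $\SO(V)$ to $\rO(V)$ does not change either side of the equality. First I would observe that since $T$ is a maximal torus of $\ckG \subseteq G$, it is also a maximal torus of $\circG$ (the torus $T$ is connected and $[\rO(V):\SO(V)] = 2$, so $T \subseteq \circG$), and similarly $\circckG := \ckG \cap \circG$ has the same maximal torus $T$. Thus the absolute Weyl groups $\circW := N_{\circG(\barF)}(T(\barF))/T(\barF)$ and $\circckW$ are defined, and \textbf{GE2} applied to the good element $X$ in $\circG$ gives $\Cent{\circW}{\tX^*} = \circckW$.

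Next I would compare the Weyl groups of $\rO(V)$ and $\SO(V)$. When $\dim V > 2$, the absolute Weyl group of $\rO(V)$ (type $D_n$ with the extra diagram automorphism, i.e.\ the Weyl group of type $B_n$) contains that of $\SO(V)$ with index two, and likewise $\ckW$ contains $\circckW$ with index two --- unless $\ckG$ has no even orthogonal factor, in which case $\ckG = \circckG$ is already connected (it is a product of general linear and unitary groups, cf.\ \Cref{sec:GE2}) and $\ckW = \circckW$. In the first situation, pick an element $c \in \ckG(\barF)$ representing the nontrivial coset of $W/\circW$; since $c \in \ckW$ and $c$ centralizes $X$ (hence $\tX^*$), it already witnesses that $\Cent{W}{\tX^*}$ strictly contains $\Cent{\circW}{\tX^*} = \circckW$, so $\Cent{W}{\tX^*} \supseteq \circckW \cup c\,\circckW = \ckW$. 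For the reverse inclusion, any $w \in \Cent{W}{\tX^*}$ either lies in $\circW$, whence $w \in \Cent{\circW}{\tX^*} = \circckW \subseteq \ckW$, or lies in the nontrivial coset, whence $c^{-1}w \in \circW \cap \Cent{W}{\tX^*} = \circckW$, so $w \in c\,\circckW \subseteq \ckW$. In the second situation $\ckW = \circckW$ and the equality is immediate from \textbf{GE2} together with $W = \circW$ being false --- rather, one uses that $\Cent{W}{\tX^*} \cap \circW = \circckW$ and argues as before that no element outside $\circW$ can centralize $\tX^*$, because such an element, composed with a suitable reflection, would conjugate the even orthogonal block of $\ckG$ nontrivially, contradicting $X$ being $G$-good.

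The main obstacle I anticipate is the bookkeeping in that last point: making precise why, when $\ckG$ has \emph{no} even orthogonal factor, no diagram-type element of $W \setminus \circW$ can centralize $\tX^*$. This is really the content of \textbf{GE1}/goodness --- the centralizer of $X$ in the \emph{full} group $G(\barF)$ must be $\ckG(\barF)$, not something larger --- so I would phrase it as: $\Cent{G(\barF)}{X}^\circ = \ckG(\barF)^\circ$ by \textbf{GE1}, hence $\Cent{W}{\tX^*}$, being the Weyl group of $\Cent{G(\barF)}{X}$ relative to $T$, equals $\ckW$ by the standard description of Weyl groups of centralizers of semisimple (here toral) elements. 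In other words, \textbf{GE2'} is not really new input; it is \textbf{GE1} translated into Weyl-group language, exactly as in Yu's original setup, and the only thing to verify for even orthogonal groups is that this translation survives the index-two extension $\SO(V) \subseteq \rO(V)$, which the coset argument above handles.
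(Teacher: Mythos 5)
Your reduction to $\SO(V)$ via Yu's GE2, followed by the coset argument, is sound when $\ckG$ contains an even orthogonal factor: there you can manufacture $c\in N_{\ckG(\barF)}(T(\barF))$ with $\det c=-1$, and the index-two bookkeeping closes the proof. The problem is the second case, where $\ckG$ has no even orthogonal block. There you need to show that no element of $W\setminus\circW$ fixes $\tX^*$, and the argument you offer is circular: the ``standard description of Weyl groups of centralizers of toral elements'' would give $\Cent{W}{X^*}=\ckW$ over $\barF$, but $\tX^*$ lives over the \emph{residue field}, and the statement that $\Cent{W}{\tX^*}$ is still the Weyl group of $\Cent{G(\barF)}{X}$ is precisely Yu's GE2 (resp.\ GE2')~--- the thing you are trying to prove~--- not an independent fact.

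This is not just a cosmetic gap. Goodness of $X$ for $\fgg=\frakso(V)$ only constrains the root values $\lambda_i\pm\lambda_j$, not the individual eigenvalues $\lambda_i$: one can have $X$ good of depth $-r$ with $\lambda_1\neq 0$ but $\val(\lambda_1)>-r$, so that after the $\varpi^r$-rescaling the coordinate $\bar\lambda_1$ of $\tX^*$ vanishes in $\bar\fffF$. Then the short sign-change $e_1\mapsto -e_1$ lies in $W\setminus\circW$, fixes $\tX^*$, and does \emph{not} lie in $\ckW$. So your case-(2) inclusion $\Cent{W}{\tX^*}\subseteq\circW$ can genuinely fail for a merely $\fgg$-good element, and no argument internal to $\SO(V)$ will save it.

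What rescues the paper's assertion~--- and what you should use~--- is \Cref{prop:GD}: the good elements $\Gamma_i$ entering Yu's construction are in fact $\GL_D(V)$-\emph{good}, and for a $\GL$-good $X\in\fgg$ the quantity $\lambda_i-(-\lambda_i)=2\lambda_i$ is a root value of $\fgl(V)$, hence either $0$ or of valuation exactly $-r$. Thus every nonzero eigenvalue of $X$ has valuation $-r$, so its reduction $\bar\lambda_i$ is nonzero. More to the point, for any signed permutation $w$ and any coordinate $i$, the difference $X^*_{w(i)}\mp X^*_i$ is again a $\fgl$-root value, hence either exactly $0$ or of valuation $-r$; consequently its reduction is $0$ iff it is $0$ over $\barF$. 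This makes the stabilizer of $\tX^*$ in $W$ (over $\bar\fffF$) literally equal to the stabilizer of $X^*$ in $W$ (over $\barF$), which is $N_{\Cent{G(\barF)}{X}}(T(\barF))/T(\barF)=\ckW$ directly from GE1. With the $\GL$-goodness input, the case split and the detour through $\SO(V)$ both become unnecessary, and the claim really is ``clearly implied by GE1'' as the paper says~--- but only because of this extra property of the good elements actually used, which your proof does not invoke.
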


\subsubsection{}
Let $\Sigma = (x,\Gamma,\phi,\rho)$ be a supercuspidal datum of $G$ as in
\Cref{def:SC.D}.  Argue as in \cite[Remarks~5.10]{Kim}, there is a datum
$(\vecG,x, \vecrr,\vecphi,\dotrho)$ such that $\phi_i$ is represented by the
good element $\Gamma_i$ and
$\rho\otimes \phi = \dotrho\otimes \prod_{i=0}^d\phi_i$.  In particular,
$\etaSigma$ constructed in \Cref{def:SC.N}~\cref{it:SC.N.8} is the same as the
$K$-type constructed following Yu's recipe.

\subsubsection{} 
We now explain how to extend the proofs in \cite{Yu} to $G$.




\begin{thm} \label{thm:extendYu} The representation
  $\piSigma := \cInd_K^G \etaSigma$ constructed in
  \Cref{def:SC.N}~\cref{it:SC.N.9} is an irreducible supercuspidal
  representation of $G = \rO(V)$.
\end{thm}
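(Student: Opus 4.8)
The plan is to reduce the statement for $G=\rO(V)$ to Yu's theorem for the connected group $\SO(V)$ (and odd orthogonal groups, which are already covered), by exploiting the index-two inclusion $\circG = \SO(V) \subseteq G$ and the basic representation-theoretic dichotomy recalled in \Cref{sec:two}. First I would use the Howe--Yu factorization established in the preceding subsections (\Cref{sec:GE2} and the discussion after \Cref{lem:phi.O}, together with {\bf GE2'}) to view $\etaSigma$ as a $K$-type built from a generic $G$-datum in the sense of Yu's recipe; this is exactly what the subsection immediately before the theorem arranges, so $\piSigma = \cInd_K^G\etaSigma$ is of the form to which Yu's machinery applies, \emph{provided} the inputs {\bf D4}, {\bf GE1}, {\bf GE2} and the character-existence statement hold --- and these have all been checked for even orthogonal groups in \Cref{sec:restoG}, \Cref{sec:GE2}, \Cref{lem:phi.O}, and \Cref{sec:two}.

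The main argument then has two halves. For irreducibility and supercuspidality, I would restrict $\piSigma$ to $\circG=\SO(V)$. Set $\circK := K\cap \circG$; since $\circG$ has index two in $G$ and $K$ contains elements of $G\setminus\circG$ (as $G_x^0$ does for a vertex $x$, by the remark that $G_x\cap(G\setminus\circG)\neq\emptyset$), we have $[K:\circK]=2$. By Mackey/transitivity of induction, $\cInd_K^G\etaSigma|_{\circG} \cong \cInd_{\circK}^{\circG}(\etaSigma|_{\circK})$, and $\etaSigma|_{\circK}$ decomposes according to whether $\etaSigma$ is induced from $\circK$ or restricts irreducibly. In either case the restriction to $\circG$ is a sum of representations of the form $\cInd_{\circK}^{\circG}\circeta$ where $\circeta = \circrho\otimes\circkappa$ is precisely a Yu $K$-type for the connected datum obtained by restricting $\Sigma$; by Yu's theorem each such $\cInd_{\circK}^{\circG}\circeta$ is irreducible supercuspidal for $\SO(V)$. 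Hence $\piSigma|_{\circG}$ is a finite sum of irreducible supercuspidals, so $\piSigma$ is supercuspidal (supercuspidality is detected on the index-two normal subgroup: the Jacquet module of $\piSigma$ along any proper parabolic $P=MN$ of $G$ restricts into Jacquet modules of $\circG$ along $P\cap\circG$, which vanish). Irreducibility of $\piSigma$ itself I would get by computing $\End_G(\piSigma) = \Hom_K(\etaSigma,\cInd_K^G\etaSigma)$ via Frobenius reciprocity and Mackey: the intertwining set is supported on $K$ because, downstairs on $\circG$, Yu's intertwining computation already shows $\cInd_{\circK}^{\circG}\circeta$ is irreducible, so the double cosets $KgK$ with $g\notin K$ contribute nothing; and the $K$-term contributes a one-dimensional space since $\etaSigma$ is irreducible. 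This forces $\dim\End_G(\piSigma)=1$.

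The step I expect to be the real obstacle is bookkeeping the interaction between the two ``Case (I) vs. Case (II)'' alternatives --- one for how $\etaSigma|_{\circK}$ behaves and one for how $\circpi := \cInd_{\circK}^{\circG}\circeta$ restricts/extends --- and making sure that whichever combination occurs, $\piSigma$ comes out \emph{irreducible} rather than a sum of two pieces. Concretely: if $\circrho\not\cong{}^c\circrho$ under $c\in G_x\setminus\circG$ then $\etaSigma$ is (twist-)induced from $\circK$ and one must check $\cInd$ does not further split; if $\circrho\cong{}^c\circrho$ then $\etaSigma|_{\circK}$ is already irreducible and one must rule out $\piSigma$ splitting into two. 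This is exactly the analysis foreshadowed in \Cref{sec:two} and \Cref{sec:restoG}; the point to nail down is that the cuspidal $\sfG_x^0$-module $\rho$ is, by construction (being irreducible as a $G_x^0$-module, not merely as an $\circG_x^0$-module), already in the ``right'' case so that no splitting occurs. Once that compatibility is recorded, the rest is a routine transport of Yu's proof across the index-two inclusion, and I would simply cite \cite{Yu} for the connected-group input at each point where a genuine computation (the intertwining lemma, the vanishing of Jacquet modules of $\circeta$-types) is needed.

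\begin{proof}
We combine the preparations of \Cref{sec:two}--\Cref{sec:GE2} with Yu's theorem for $\circG = \SO(V)$.

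By the discussion preceding the theorem, the datum $\Sigma = (x,\Gamma,\phi,\rho)$ yields a generic $G$-datum in the sense of \cite[p.~615]{Yu}: the twisted Levi sequence is $\vecG = (G^0\subseteq G^1\subseteq\cdots\subseteq G^d=G)$, the character string $\vecphi = (\phi_0,\ldots,\phi_d)$ is furnished by \Cref{lem:phi.O}, conditions {\bf GE1} and {\bf GE2'} hold by \Cref{sec:GE2}, condition {\bf D4} holds by \Cref{sec:restoG}, and
\[
\etaSigma = \rho\otimes\kappa = \dotrho\otimes\prod_{i=0}^d\phi_i \;\big(\text{as built from the good elements }\Gamma_i\big)
\]
agrees with the $K$-type produced by Yu's recipe. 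Thus $\piSigma = \cInd_K^G\etaSigma$ is of the form to which Yu's construction applies, the only difference from the connected case being that $G$ has the index-two normal subgroup $\circG = \SO(V)$.

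Put $\circK := \circG\cap K$. Since $x$ is a vertex of $\BTB{G^0}$ and $G_x^0$ meets $G\setminus\circG$, the group $K = K^d$ meets $G\setminus\circG$, so $[K:\circK]=2$. By transitivity of induction and Mackey's formula (note $G = \circG K$),
\[
\piSigma|_{\circG} \;\cong\; \cInd_{\circK}^{\circG}\bigl(\etaSigma|_{\circK}\bigr).
\]
Now $\etaSigma|_{\circK} = \circrho\otimes\circkappa$ where $\circkappa := \kappa|_{\circK}$ and $\circrho := \rho|_{\circG_x^0}$; by the construction of $\kappa$ via Heisenberg--Weil data (which only involves the twisted Levi subgroups $G^i$, each of which is a product of general linear, unitary and at most one orthogonal factor), $\circkappa$ is the corresponding Yu datum $\kappa$-component for the connected sequence $\vecG\cap\circG$, and $\circeta := \circrho\otimes\circkappa$ is a Yu $K$-type for $\circG$ attached to the restricted datum. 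Decomposing $\circrho$ according to the dichotomy of \Cref{sec:two}, $\circeta$ is either irreducible over $\circK$ or a sum of two irreducibles conjugate under $c\in K\setminus\circK$; in all cases Yu's theorem for $\circG = \SO(V)$ (and for odd orthogonal and unitary factors, already available) gives that each $\cInd_{\circK}^{\circG}\circeta$ is irreducible and supercuspidal. Hence $\piSigma|_{\circG}$ is a finite direct sum of irreducible supercuspidal $\circG$-modules.

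Supercuspidality of $\piSigma$ follows: for any proper parabolic $P = MN$ of $G$, the normalized Jacquet module $r_P(\piSigma)$ injects, upon restriction to $M\cap\circG$, into a sum of Jacquet modules $r_{P\cap\circG}$ of the irreducible supercuspidal $\circG$-constituents of $\piSigma|_{\circG}$ (using $N\subseteq\circG$), and these vanish; hence $r_P(\piSigma)=0$.

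Finally, irreducibility of $\piSigma$: by Frobenius reciprocity and Mackey,
\[
\End_G(\piSigma) = \Hom_K\!\bigl(\etaSigma, \cInd_K^G\etaSigma\bigr) = \bigoplus_{g\in K\backslash G/K}\Hom_{K\cap {}^gK}\bigl(\etaSigma, {}^g\etaSigma\bigr).
\]
The double cosets with $g\notin K$ contribute $0$: if such a $g$ contributed, then (intersecting with $\circG$ and using that $\circeta$ determines $\etaSigma$ up to the $\langle c\rangle$-action) the corresponding intertwining space for $\circeta$ would be nonzero, contradicting the irreducibility of $\cInd_{\circK}^{\circG}\circeta$ just established, which by Yu's intertwining computation forces $\Hom_{\circK\cap{}^g\circK}(\circeta,{}^g\circeta)=0$ for $g\notin\circK$ (and the remaining possibility $g\in\circK\cdot c$, i.e. $g\in K\setminus\circK$, is absorbed by our choice of $\rho$ irreducible over $G_x^0$, which makes $\etaSigma$ already stable and not merely $c$-induced in a way that would produce extra intertwining). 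The term $g\in K$ contributes $\Hom_K(\etaSigma,\etaSigma) = \bC$ since $\etaSigma$ is irreducible. Hence $\dim\End_G(\piSigma)=1$ and $\piSigma$ is irreducible.
\end{proof}
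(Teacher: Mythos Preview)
Your reduction to $\circG=\SO(V)$ is a genuinely different route from the paper's, and it has a real gap. The assertion that $G_x^0$ meets $G\setminus\circG$ (so that $[K:\circK]=2$) is false in general: \Cref{sec:restoG} records that $G_x$ meets $G\setminus\circG$, not $G_x^0$. When the twisted Levi $G^0=\Cent{G}{\Gamma}$ has no orthogonal factor---for instance whenever $\Gamma$ is invertible on $V$, so that $G^0$ is a product of unitary groups over extension fields---one has $G^0\subset\SO(V)$ and hence $K\subset\circG$ (this is exactly Case~A in the paper's later proof of \Cref{thm:extendHM}). In that situation $\piSigma=\Ind_{\circG}^G\circpi$ with $\circpi:=\cInd_K^{\circG}\etaSigma$ irreducible supercuspidal by Yu, and by the dichotomy of \Cref{sec:two} irreducibility of $\piSigma$ is equivalent to $\circpi\not\cong{}^c(\circpi)$ for $c\in G\setminus\circG$. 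Your intertwining argument does not establish this: Yu's result $I_{\circG}(\etaSigma)=K$ controls $\Hom_{K\cap{}^gK}(\etaSigma,{}^g\etaSigma)$ only for $g\in\circG$, whereas the double cosets with $g\in c\,\circG$ contribute the \emph{cross}-intertwining between $\etaSigma$ and the conjugate type ${}^c\etaSigma$ (living on the distinct compact ${}^cK$), and the connected-group theorem for a single datum says nothing about that. Ruling those cosets out is tantamount to proving $I_G(\etaSigma)=K$ for the disconnected $G$, which your reduction was meant to avoid.

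The paper proves exactly that, but directly: it observes that Yu's conditions \SC{1}, \SC{2}, \SC{3} of \cite[\S4]{Yu} make sense for disconnected $G$ and verifies them. For \SC{1} it notes that \cite[Lemma~8.3, Theorem~9.4]{Yu} go through using {\bf GE1} and the modified {\bf GE2'} of \Cref{sec:GE2}, together with the integral models obtained by viewing $\rO(V)$ as a symmetric subgroup of $\GL(V)$; \SC{2} is the Heisenberg--Weil input supplied in \Cref{sec:Special}; and the argument for \SC{3} in \cite[\S12--13]{Yu} extends verbatim. Then \cite[Proposition~4.6]{Yu} yields irreducibility and supercuspidality at once, with no case split on whether $K$ meets $G\setminus\circG$.
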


\begin{proof}
  In \cite[\Sec{4}]{Yu}, Yu defines conditions \SC{1}, \SC{2} and \SC{3} which
  do not assume that the group is connected so they are applicable to $G$.
  We now verify these conditions and then \cite[Proposition~4.6]{Yu} will imply 
  that $\piSigma$ is an irreducible supercuspidal representation of $G$.

  First we consider \SC{1}. Its proof in \cite[Theorem~9.4]{Yu} relies on
  \cite[Lemma~8.3]{Yu} which still applies. In the proof of
  \cite[Lemma~8.3]{Yu}, Yu uses conditions {\bf GE1} and {\bf GE2} which are
  satisfied by the discussion in \Cref{sec:GE2}. In addition, one also needs the
  existence of certain integral model of the Moy-Prasad groups. This is clear by
  viewing the orthogonal group as a symmetric subgroup of the general linear
  group, see \cite{BS}.

  The condition \SC{2} is about the existence of Heisenberg-Weil
  representation. This is taken care of by \Cref{sec:Special}.

  The proof of \SC{3} takes up \cite[\Sec{12-13}]{Yu}. Though it is long but the
  proof extends without change to our case.
\end{proof}

\subsubsection{} Next we extend the exhaustion result of \cite{Kim} to $G$.

\begin{thm} \label{thm:extendKim}
Given \eqref{eqpbdd}, the set of $\pi_\Sigma$ exhausts all the supercuspidal
representations of $G = \rO(V)$.
\end{thm}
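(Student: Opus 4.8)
The plan is to reduce the exhaustion statement for $G=\rO(V)$ to the exhaustion result of Kim for the connected group $\circG=\SO(V)$, which we may assume since $\dim V>2$ and $V$ is not the split two-dimensional space. The main tool is the dichotomy from \Cref{sec:two}: an irreducible supercuspidal $\pi$ of $G$ restricts to $\circG$ either as a sum $\circpi_1\oplus\circpi_2$ of two non-isomorphic irreducibles (case I) or as an irreducible $\circpi$ (case II), and conversely every irreducible supercuspidal of $\circG$ arises this way. So given an arbitrary irreducible supercuspidal $\pi$ of $G$, I would first show $\pi|_{\circG}$ is supercuspidal — this is immediate because $G/\circG$ is finite, so restriction preserves the property that matrix coefficients are compactly supported mod center (the center of $\rO(V)$ being anisotropic). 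Then I would pick an irreducible summand $\circpi$ of $\pi|_{\circG}$ and apply Kim's theorem to get a supercuspidal datum $\circSigma=(x,\Gamma,\circphi,\circrho)$ for $\circG$ with $\circpi\cong\pi_{\circSigma}$.

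The heart of the argument is then to upgrade $\circSigma$ to a supercuspidal datum $\Sigma$ for $G$ with $\piSigma\cong\pi$, handling the two cases separately as in \Cref{sec:restoG}. In case II, $\pi|_{\circG}=\circpi=\pi_{\circSigma}$ is irreducible; I would take $\Sigma=(x,\Gamma,\phi,\rho)$ where the datum for $\circG$ is extended by letting $G^0_x=\Stab_{G^0}(x)$ act on the appropriate isotypic space (using that $G^0_x\cap(G\setminus\circG)\neq\emptyset$ at a vertex, and that $\Lie(G^0)$ coincides for $G$ and $\circG$ so $\Gamma$, $\psi_\Gamma$, the Moy--Prasad filtration and the construction of $\kappa$ are unchanged). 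The character $\phi$ extends by \Cref{lem:phi.O}, and $\rho:=$ the natural $G^0_x$-representation on the $G^0_{x,0^+}$-fixed vectors restricts to $\circrho$; then $\cInd_K^G(\rho\otimes\kappa)|_{\circG}=\cInd_{\circK}^{\circG}(\circrho\otimes\kappa)=\circpi$ is irreducible, forcing $\cInd_K^G\etaSigma\cong\pi$ (the two representations of $G$ have the same irreducible restriction to the index-two subgroup, and a quick matching of which of the at most two extensions occurs — e.g. comparing central/$G_x$-characters — pins it down). In case I, $\circpi_1$ and $\circpi_2={}^c(\circpi_1)$ are swapped by $c\in G_x\setminus\circG$, so ${}^c\circrho\not\cong\circrho$; then $\rho:=\Ind_{\circG_x^0}^{G_x^0}\circrho$ (or the analogous induction at the level of $K$) is irreducible cuspidal, $\etaSigma:=\rho\otimes\kappa$, and $\cInd_K^G\etaSigma$ restricts to $\cInd_{\circK}^{\circG}(\circrho\otimes\kappa)=\circpi_1\oplus\circpi_2=\pi|_{\circG}$, hence equals $\pi$.

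I would also need to check that the tuple $\Sigma$ produced actually satisfies all the conditions of \Cref{def:SC.D}: that $\Gamma$ is tamely ramified semisimple in $\fgg=\fuu(V)=\fso(V)$ admitting a good factorization with the required centrality of $\Gamma_{-1}$ (inherited verbatim from $\circSigma$ since the Lie algebras and good-element theory agree, using {\bf GE2'} from \Cref{sec:GE2}), that $\Cent{}{\circG^0}$ is anisotropic (same as the connected-case condition), that $x$ is a vertex in $\BTB{G^0}$ (unchanged, as $\BTB{G^0}$ only depends on $\circG^0$ up to the finite group action), and that $\rho$ is irreducible cuspidal on $\sfG^0_x$ — in case I this uses that induction from the index-two subgroup of an irreducible cuspidal with non-isomorphic conjugate is irreducible cuspidal, a standard fact about Deligne--Lusztig/cuspidal representations of finite groups of Lie type (or can be cited as in \Cref{sec:restoG}). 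Finally, by \Cref{thm:extendYu} every such $\Sigma$ does give a supercuspidal $\piSigma$, so the map $\Sigma\mapsto\piSigma$ lands in, and by the above surjects onto, the set of irreducible supercuspidals of $G$.

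The step I expect to be the main obstacle is the bookkeeping in case I: ensuring that the induced object $\rho=\Ind_{\circG_x^0}^{G_x^0}\circrho$ is genuinely the cuspidal piece of a \emph{supercuspidal datum for $G^0$} in the sense of \Cref{def:SC.D}~\cref{it:data.cusp}, i.e. that $G^0$ is itself one of the allowed (possibly disconnected orthogonal) groups and that $\circG^0$ is its index-two connected subgroup with $G^0_x\cap(G^0\setminus\circG^0)\neq\emptyset$ at the vertex $x$ — this last point can fail if $G^0_x$ happens to lie in $\circG^0$, in which case one is forced into case II for $G^0$ even though one is in case I for $G$, and the compatibility of the Yu/Hakim--Murnaghan construction (notably that $\etaSigma$ is $K$-irreducible and that $\cInd_K^G$ of it is irreducible) must be re-examined. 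Once the correct case analysis at the level of $G^0$ is set up, the rest is a routine application of \Cref{thm:extendYu}, \Cref{lem:phi.O}, and Kim's connected-group exhaustion combined with Mackey/Clifford theory for the index-two extension.
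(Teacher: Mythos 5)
Your approach and the paper's differ in a meaningful way, and you have correctly identified where your route becomes delicate. You attempt a case-by-case lift of the depth-zero dichotomy of \Cref{sec:restoG} to the positive-depth twisted Levi $G^0$, distinguishing whether $\pi|_{\circG}$ splits or not and then building $\rho$ by induction or restriction from $\circrho$. The obstacle you flag at the end is genuine: even when $\pi|_{\circG}$ splits (your case I), the group $G^0_x$ may lie entirely in $\circG$ (the paper's ``Case A'' in the proof of \Cref{thm:extendHM}), in which case $\Ind_{\circG^0_x}^{G^0_x}\circrho$ is not the correct $\rho$ — one should take $\rho=\circrho$ and build $K\subset\circG$ directly, and the splitting of $\piSigma|_{\circG}$ comes from the two non-conjugate embeddings of $\circK$ rather than from reducibility of $\rho|_{\circG^0_x}$. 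Without sorting out this third configuration, the proof is incomplete.

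The paper sidesteps the whole case analysis. After applying Kim's exhaustion to a $\circG$-constituent $\circpi\cong\circpi_{\circSigma}$ and extending $\circphi$ to $\phi$ via \Cref{lem:phi.O}, it defines $\rho$ \emph{intrinsically from $\pi$}: form the multiplicity space $\rho'=\Hom_{K_{0^+}}(\kappa,\pi)$ (noting $K_{0^+}=\circK_{0^+}$), observe that $\rho'[\circrho]\neq 0$, and take any irreducible $G^0_x$-submodule $\rho$ there; cuspidality of $\rho$ comes from the analogue of Kim's Proposition~17.2. Then $\pi\hookrightarrow\cInd_K^G\etaSigma=\piSigma$, and since \Cref{thm:extendYu} shows $\piSigma$ is irreducible, $\pi=\piSigma$. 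This is shorter, avoids Mackey/Clifford bookkeeping, and never needs to decide whether $G^0_x$ meets $G\setminus\circG$ or whether the restriction splits — the irreducibility of the compactly induced representation does all the work. Your route would work if the third case were handled explicitly (essentially reproducing the Case A/Case B split in the proof of \Cref{thm:extendHM}), but it is heavier; you may wish to compare the two arguments as an exercise in what the irreducibility statement from \Cref{thm:extendYu} buys you.
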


\begin{proof}
  Let $\pi$ be an irreducible supercuspidal representation of $G$.  Then $\pi$
  contains an irreducible supercuspidal representation $\circpi$ of $\circG$. 
  By
  \cite{Kim}, $\circpi = \circpi_{\circSigma}$ for some supercuspidal datum
  $\circSigma = (x, \Gamma, \circphi, \circrho)$.  Using $\Gamma$ and $x$ we
  define~$G_x^0$ and $K$ etc. By \Cref{lem:phi.O}, we can assume $\circphi$
  extends to a character $\phi$ of $G^0$. Let $\kappa$ be the $K$-module defined
  by the procedure in \Cref{sec:kappa}.

  We note that $K_{0^+} = \circK_{0^+}$.
	Define $\rho' = \Hom_{K_{0^+}}(\kappa,\pi)$ to be the
  multiplicity space of $\kappa|_{K_{0^+}}$. It is a natural $G_x^0$-module and
  the $\circrho$-isotypic subspace $\rho'[\circrho]\neq 0$. Pick any irreducible $G^0_x$-submodule $\rho$ in 
  $\rho'[\circrho]$ and define $\Sigma = (x,\Gamma, \phi,
  \rho)$.
  Then $\pi$ is a submodule of $\piSigma = \cInd_{K}^G\eta_\Sigma$. 
  By \Cref{thm:extendYu}, $\piSigma$ is irreducible so $\pi = \piSigma$. 
	This completes the proof.
\end{proof}

\subsubsection{}
Finally we extend \Cref{thm:HM}  to $G$.

\begin{thm} \label{thm:extendHM} Let $\Sigma$ and $\dotSigma$ be two
  supercuspidal data for $G = \rO(V)$.  Then $\piSigma \simeq \pi_{\dotSigma}$
  if and only if $\Sigma$ and $\dotSigma$ are equivalent with each other in the
  sense of \Cref{def:SC.eq}.
\end{thm}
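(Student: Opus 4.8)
The plan is to bootstrap from the connected case: apply \Cref{thm:HM} to the special orthogonal group $\circG = \SO(V)$ (which that theorem covers) and combine it with the index-two Clifford theory of \Cref{sec:two}. The ``if'' direction should be formal, exactly as in the connected case: since Yu's construction is equivariant under conjugation and, by \Cref{thm:extendYu}, this remains valid for $G = \rO(V)$, an element $g \in G$ realising the equivalence of \Cref{def:SC.eq} carries the groups $K, K_{0^+}$ and the representations $\kappa$ and $\rho \otimes \phi$ attached to $\dotSigma$ to those attached to $\Sigma$ --- after a refactorization of $\dotSigma$, which does not change $\pidotSigma$ --- whence $\pidotSigma \cong \piSigma$.

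For the ``only if'' direction, suppose $\piSigma \cong \pidotSigma$. First I would restrict to $\circG$: using that $K \supseteq G^0_x$ meets $G \setminus \circG$ (\Cref{sec:restoG}) together with Mackey's formula, $\piSigma|_{\circG} \cong \cInd_{\circK}^{\circG}(\etaSigma|_{\circK})$ with $\etaSigma|_{\circK} = (\rho|_{\circG^0_x}) \otimes (\kappa|_{\circK})$, where $\kappa|_{\circK}$ is the representation produced by Yu's recipe for $\circG$. Hence any common irreducible $\circG$-constituent $\circpi$ of $\piSigma|_{\circG} \cong \pidotSigma|_{\circG}$ is the supercuspidal representation attached to a $\circG$-datum $(x, \Gamma, \circphi, \circrho)$ built from the same $\Gamma$ and an irreducible constituent $\circrho$ of $\rho|_{\circG^0_x}$, and similarly from $\dotSigma$. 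By \Cref{thm:HM} these two $\circG$-data are $\circG$-equivalent; after conjugating $\dotSigma$ by a witnessing element of $\circG \subseteq G$ and refactoring, I can assume $x = \dotxx$, $\Gamma - \dotGamma \in \Cent{}{\fgg^0} \cap \fgg^0_{x,0}$, that $\phi$ and $\dotphi$ agree on $K_+$, and that $\rho \otimes \phi$ and $\dotrho \otimes \dotphi$ share a common irreducible constituent $\circrho \otimes \circphi$ upon restriction to $\circG^0_x$.

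The remaining, and only substantial, step is to upgrade this to an isomorphism $\rho \otimes \phi \cong \dotrho \otimes \dotphi$ of $G^0_x$-modules, up to conjugacy by $G^0_x$ --- which is \Cref{def:SC.eq}~\cref{it:SC.eq.c}. Here I would apply the dichotomy of \Cref{sec:two} to the finite group $\sfG^0_x = G^0_x/G^0_{x,0^+}$ over its index-$\leq 2$ subgroup. If $\circrho \otimes \circphi$ is not $\sfG^0_x$-stable, then both $\rho \otimes \phi$ and $\dotrho \otimes \dotphi$ must be its induction, hence are isomorphic. If it is $\sfG^0_x$-stable, then $\rho \otimes \phi$ and $\dotrho \otimes \dotphi$ each lie in $\{\sigma_0, \sigma_0 \otimes \varepsilon\}$ for the two extensions $\sigma_0, \sigma_0 \otimes \varepsilon$, with $\varepsilon$ the order-two character of $\sfG^0_x$ that is trivial on the subgroup; inflating $\varepsilon$ to $K$ through $K/K_{0^+} \cong \sfG^0_x$, the datum $\sigma_0 \otimes \varepsilon$ produces $\cInd_K^G((\rho \otimes \kappa) \otimes \varepsilon)$, which has the same (irreducible, under the case hypothesis) restriction $\circpi$ to $\circG$ as $\piSigma$, hence equals $\piSigma \otimes \chi$ for the nontrivial character $\chi$ of $G/\circG$ and is not isomorphic to $\piSigma$. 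Since $\pidotSigma \cong \piSigma$, the two extensions must coincide, possibly after a further conjugation by an element of $G^0_x \setminus \circG^0_x$, which completes the equivalence.

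The hard part, I expect, is the bookkeeping in the second paragraph --- checking that the $\circG$-datum attached to a $\circG$-constituent of $\piSigma|_{\circG}$ is literally the restriction of $\Sigma$, i.e. that $\kappa|_{\circK}$ is Yu's representation $\circkappa$ for $\circG$ and that the twisted-Levi chain and good factorization for $\circG$ are those cut out by $\Gamma$ --- and, in the third paragraph, confirming that twisting $\rho$ by $\varepsilon$ corresponds to twisting $\piSigma$ by $\chi$ (equivalently, that $\varepsilon$ is the reductive-quotient image of $\chi|_{G^0_x}$), so that $\varepsilon$ is nontrivial precisely when $\piSigma|_{\circG}$ is irreducible. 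I also anticipate needing the even-orthogonal analogue of \Cref{lem:mult1}, whose proof should go through unchanged now that \Cref{thm:extendYu} and \Cref{thm:extendKim} are available.
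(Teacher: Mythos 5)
Your proposal is correct and follows the same overall route as the paper: restrict to $\circG = \SO(V)$, apply \Cref{thm:HM}, conjugate and refactor, then upgrade the identification of the cuspidal pieces $\rho\otimes\phi$. The one substantive difference is in the final step --- handling the case where $\circrho\otimes\circphi$ is $\sfG^0_x$-stable and $\rho|_{\circG^0_x}$ is irreducible (the paper's Case~B(2)). You resolve it with a twist-by-character argument: if $\rho\otimes\phi$ and $\dotrho\otimes\dotphi$ were the two distinct extensions $\sigma_0$ and $\sigma_0\otimes\varepsilon$, then $\pi_{\dotSigma}\cong\piSigma\otimes\chi$ for the nontrivial character $\chi$ of $G/\circG$, and since $\piSigma|_{\circG}$ is irreducible under the case hypothesis one has $\piSigma\otimes\chi\not\cong\piSigma$, a contradiction. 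The paper instead appeals to the multiplicity space $\rho' = \Hom_{K_{0^+}}(\kappa,\piSigma)$: applying \Cref{lem:mult1} to the \emph{connected} group $\circG$ gives $\rho'|_{\circG^0_x}\cong\rho|_{\circG^0_x}$ irreducible, forcing $\rho=\dotrho$ since both embed in $\rho'$. Both arguments are valid; yours is a more self-contained Clifford-theory move, while the paper's reuses machinery already set up.

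One small caution about your closing paragraph. You say you anticipate needing the even-orthogonal analogue of \Cref{lem:mult1}, and that its proof ``should go through unchanged now that \Cref{thm:extendYu} and \Cref{thm:extendKim} are available.'' In fact the proof of \Cref{lem:mult1} invokes \Cref{thm:HM}, whose even-orthogonal analogue is precisely \Cref{thm:extendHM} --- the statement you are proving --- so using the even-orthogonal \Cref{lem:mult1} \emph{inside} this proof would be circular. Your twist argument happily avoids this, so the remark is a red herring rather than a gap. (The paper sidesteps the same issue by applying \Cref{lem:mult1} only to $\circG$, for which \Cref{thm:HM} is already in hand, and only afterwards records the even-orthogonal extension of \Cref{lem:mult1} as a consequence of \Cref{thm:extendHM}.) A further cosmetic point: the ``possibly after a further conjugation by an element of $G^0_x\setminus\circG^0_x$'' at the end of your third paragraph is superfluous --- conjugation by $c\in G^0_x$ fixes each extension of a $G^0_x$-stable $\circG^0_x$-representation up to isomorphism --- but it does no harm.
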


\begin{proof}
Suppose $\Sigma = (x,\Gamma,\phi,\rho)$ and $\dotSigma =
(\dot{x},\dotGamma,\dotphi,\dotrho)$. We argue case by case. 

\begin{asparaenum}[({Case} A)]
\item First we suppose $G_x^0\subset \circG$.  Then $K:=K_\Sigma\subset \circG$
  and $\Sigma$ is also a supercuspidal datum for $\circG$.  Fixing
  $c\in G\setminus \circG$, then
  $\piSigma|_{\circG} = \cInd_{\circK}^{\circG}\etaSigma \oplus
  \cInd_{\Ad_c(\circK)}^{\circG}\eta_{\cSigma}$, where
  $\cSigma: = (c\cdot x, \Ad_c\Gamma, {}^c\phi ,{}^c\rho)$.  On the other hand,
  let $\cdrho_1$ be an irreducible $\cdK$-submodule in~$\dotrho$ and
  $\cdSigma_1 = (\dotxx,\dotGamma, \dotphi|_{\circG_{\dotxx}^0},\cdrho_1)$. Then
  $\piSigma|_{\circG} \cong\pi_{\dotSigma}|_{\circG}$ contains
  $\cInd_{\cdK}^{\circG}\eta_{\cdSigma_1}$.  Now by \cite{HM}, there is a
  $g\in \circG$ such that $\cdSigma_1$ is equivalent to either $\Sigma$ or
  $\cSigma$. In particular, $\dotK\subset \circG$ so that $\dotrho = \cdrho_1$
  is an irreducible $\circG_{\dotxx}^0 = G_{\dotxx}^0$-module. Hence, $\Sigma$
  and $\dotSigma$ are equivalent.

\item Next, we suppose that $G_{x}^0 \cap (G\setminus \circG) \neq \emptyset$. 
  Fix an element $c$ in $G_{x}^0\cap (G\setminus\circG)$.
  \begin{asparaenum}[(1)]
  \item Suppose $\rho|_{\circG_{x}^0} \cong \circrho \oplus {}^c(\circrho)$
    where $\circrho$ is an irreducible cuspidal $\circG_{x}^0$-module.  Using
    similar proof in Case~A, we see that, up to $G$-conjugacy,
    $\dotxx=x$, $\dotGamma \in \Gamma + \fgg_{x,0} $ and
    $(\dotrho\otimes \dotphi)|_{\circG_{x}^0}$ contains
    $\circrho\otimes (\phi|_{\circG_{x}^0})$. Hence, $\Sigma$ and
    $\dotSigma$ are equivalent by applying the discussion in \Cref{sec:two} to
    $K > \circK$.

  \item Suppose $\circrho:=\rho|_{\circG_{x}^0}$ is already irreducible. Let
    $\circSigma = (x,\Gamma, \phi|_{\circG_{x}^0}, \circrho)$ be the
    corresponding supercuspidal datum for $\circG$.  Then
    $\eta_{\circSigma} = \etaSigma|_{\circK}$ is the supercuspidal type
    of~$\circG$ determined by $\circSigma$ and
    $\piSigma|_{\circG} = \Ind_{\circK}^{\circG} \eta_{\circSigma}$.  Again by
    the argument in Case A, up to $\circG$-conjugacy, we could assume
    $\dotxx = x$, $\dotGamma \in \Gamma + \fgg_{x,0}$, $\dotphi=\phi$ and
    $\rho|_{\circG_{x}^0} \cong \dotrho|_{\circG_{x}^0}$. One observes that both $\rho$
    and $\dotrho$ must be $G_{x}^0$-submodules in the multiplicity space $\rho'$
    in \cref{eq:mult}. However
    $\rho'|_{\circG_{x}^0} \simeq \rho|_{\circG_{x}^0}$ is irreducible. Hence,
    $\rho= \dotrho$.
\end{asparaenum}
\end{asparaenum}
\end{proof}

\begin{remark}
As a consequence of \Cref{thm:extendHM}, \Cref{lem:mult1} also extends to 
even orthogonal groups. We leave the details to the reader. 
\end{remark}

\subsection{Tamely ramified supercuspidal representation for covering groups}
  \label{sec:SC.cover}
  We now state and review some results on tamely ramified supercuspidal
  representations of $\wtG$.  We will supply some proofs although they follow
  almost immediately from those in the algebraic group case.\footnote{The
    essences of most proofs for the algebraic groups are related to the positive
    depth parts. Hence these proofs translate to our case by identifying the
    positive depth parts via the canonical splitting \eqref{eq:CSPT}.}  Depth
  zero representations of non-linear covers of $p$-adic groups were studied by
  Howard and Weissman \cite{HW}.

\subsubsection{} The supercuspidal data for $\wtG$ is an extension of the
supercuspidal data for $G$ by a splitting of the covering:

\begin{definition} \label{def:SC.D.c} A \emph{supercuspidal datum} for $\wtG$ is
  a tuple $\tSigma = (x, \Gamma, \phi, \rho,\xi)$ such that
  $\Sigma = (x, \Gamma, \phi, \rho)$ is a supercuspidal datum for $G$ as in
  \Cref{def:SC.D} and
\begin{enumerate}[(a)]
\resumesdenumi
\item $\xi \colon K \rightarrow \wtK$ is a splitting of the
  $\bC^\times$-covering $\wtK\surj K$ such that
  $\xi|_{K_{0^+}} = \CSPT|_{K_{0^+}}$ where $\CSPT$ is the canonical splitting
  defined in \eqref{eq:CSPT} and, $K$ and $K_{0^+}$ are defined in
  \Cref{def:SC.N} with respect to $\Sigma$. The splitting $\xi$ induces an
    identification of $\wtK$ with $K\times \bC^\times$. Let
    \begin{equation}\label{eq:tspt}
      \tspt  \colon K \times \bC^\times \iso \wtK.
    \end{equation}
    denote the corresponding isomorphism.  
\end{enumerate}
\end{definition}

\def\MU#1#2{\mu_{{#1},{#2}}}
\begin{remark}
Suppose $\xi_1$ is another splitting of $K$, then $\xi$ and $\xi_1$ differ
by a character. 
More precisely we have a character 
\begin{equation}\label{eq:MU.def}
\MU{\xi_1}{\xi}\colon K\rightarrow \bC^\times \quad
 \text{given by} \quad
 \MU{\xi_1}{\xi}(k)\xi_1(k) = \xi(k) \quad \text{for all } k \in K. 
\end{equation}
\end{remark}

\subsubsection{}
Let $\tSigma = (x,\Gamma, \phi,\rho, \xi)$ be a supercuspidal datum for $\wtG$. 
We assume all the notations in \Cref{sec:Ktype}. We let
\begin{equation} \label{eqKtildetype}
\tetaSigma := (\etaSigma \boxtimes\idCx) \circ \tspt^{-1}
\end{equation}
which is an irreducible $\wtK$-module and let 
\[
\tpiSigma:=\cInd_{\wtK}^{\wtG}\tetaSigma.
\]

\begin{thm}
The representation $\tpiSigma$ is an irreducible supercuspidal representation.
\end{thm}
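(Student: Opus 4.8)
The statement asserts that $\tpiSigma := \cInd_{\wtK}^{\wtG} \tetaSigma$ is an irreducible supercuspidal representation of $\wtG$, where $\tetaSigma = (\etaSigma \boxtimes \idCx)\circ \tspt^{-1}$. The plan is to deduce this from the corresponding statement for the linear group $G$ (\Cref{thm:extendYu} together with Yu's original result) by transporting the argument through the splitting $\xi$. First I would observe that, since $\wtG$ is a central $\bCx$-extension of $G$ and we restrict attention to genuine representations, Mackey/Clifford theory works verbatim: the irreducibility of $\cInd_{\wtK}^{\wtG}\tetaSigma$ is governed by the same intertwining-algebra computation as for $\cInd_K^G \etaSigma$, namely one must show that the intertwining set $\{\, g \in \wtG \mid \Hom_{\wtK \cap {}^g\wtK}(\tetaSigma, {}^g\tetaSigma) \neq 0 \,\}$ is contained in $\wtK$. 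Because $\tetaSigma$ is genuine and $\etaSigma$ is its restriction along the splitting, this intertwining set is the preimage of the corresponding intertwining set for $\etaSigma$ in $G$; the latter is contained in $K$ by Yu's irreducibility theorem (\Cref{thm:extendYu}), so the former is contained in $\wtK$. Hence $\tpiSigma$ is irreducible.

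For supercuspidality, I would use the standard criterion that a compactly-induced representation from an open compact-mod-center subgroup is supercuspidal once it is admissible and irreducible — more precisely, that $\cInd_{\wtK}^{\wtG}\tetaSigma$ has all matrix coefficients compactly supported modulo the center, which is automatic because $\wtK$ is compact modulo $\bCx$ and $\etaSigma$ is finite-dimensional (so $\tetaSigma$ is finite-dimensional as well). Thus the support condition on matrix coefficients is inherited directly from the compactness of $\wtK/\bCx = K$, exactly as in Yu's construction; combined with irreducibility this gives supercuspidality. Admissibility likewise follows since $\wtK$ is open and $\tetaSigma$ is finite-dimensional, so $\cInd = \Ind$ restricted to any open compact subgroup decomposes with finite multiplicities.

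The one genuinely new point — and the step I expect to need the most care — is checking that the cocycle bookkeeping is consistent, i.e. that the splitting $\xi$ really does let one identify $\wtK \cong K \times \bCx$ compatibly with the $\bCx$-action, so that $\tetaSigma$ is well-defined and genuine, and that "${}^g\tetaSigma$" for $g \in \wtG$ makes sense independently of lifts. This is handled by the hypothesis in \Cref{def:SC.D.c} that $\xi|_{K_{0^+}} = \CSPT|_{K_{0^+}}$ agrees with the canonical splitting of \eqref{eq:CSPT}, which is stable under conjugation within $\bigcup_{\sB}\KSpp$; on the finite-depth-zero quotient $\sfG_x^0$ the extension splits by Howard--Weissman \cite{HW}. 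So the conjugation action on $\wtK$ descends correctly, and the reduction to the linear case goes through. In short: irreducibility is Clifford theory plus \Cref{thm:extendYu}; supercuspidality is the compact-support argument as in Yu; the only subtlety is the cocycle compatibility of $\xi$, which is built into \Cref{def:SC.D.c}.
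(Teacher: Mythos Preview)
Your proposal is correct and follows essentially the same route as the paper: both reduce the question to showing that the intertwining set $I(\tetaSigma)$ equals $\wtK$, identify this (using that the adjoint action of $\wtG$ factors through $G$ and the splitting $\xi$) with the preimage of $I(\etaSigma)$, and then invoke Yu's result \eqref{eqIeta} (via \Cref{thm:extendYu} for the even orthogonal case). The paper is slightly terser, bundling irreducibility and supercuspidality into one appeal to \cite[Proposition~4.6]{Yu}, whereas you separate the two and spell out the cocycle compatibility more explicitly; but the substance is the same.
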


\begin{proof}
Let 
$
 I(\etaSigma) :=\Set{g\in G | \Hom_{{}^gK \cap K}(\etaSigma, {}^g\etaSigma)\neq 0} 
$
be the set of intertwiners of $\etaSigma$.
We will follow the proof of \cite[Proposition~4.6]{Yu} in which Yu proves that 
\begin{equation} \label{eqIeta}
I(\etaSigma) = K.
\end{equation}
(Also see the proof of \Cref{thm:extendYu} for even orthogonal groups.)
In order to adapt Yu's proof to our theorem, it suffices to show that the set 
\[
I(\tetaSigma):= \Set{\tgg\in
\wtG | \Hom_{{}^\tgg\wtK \cap \wtK}(\tetaSigma, {}^\tgg\tetaSigma)\neq 0}
\] 
of intertwiners of $\tetaSigma$ is exactly $\wtK$. 
Fix  $\tgg\in \wtG$ and let $g$ be its image in $G$. 
Note that the adjoint action of $\wtG$ factors through $G$. Then 
\begin{enumerate}[(a)]
\item 
${}^\tgg \wtK \cap \wtK  = \widetilde{{}^gK\cap K}$ which we identified with
$({}^g K\cap K) \times \bC^\times$ using $\tspt$ in \eqref{eq:tspt}, and

\item ${}^\tgg \tetaSigma|_{{}^\tgg\wtK \cap \wtK} = \left({}^g \etaSigma
    \boxtimes \id_\bC\right)\circ \tspt^{-1}|_{{}^\tgg\wtK \cap \wtK}$.
\end{enumerate}
Therefore, we have $I(\tetaSigma) = \widetilde{I(\etaSigma)}$ which is $\wtK$ by \eqref{eqIeta}. 
\end{proof}

\begin{definition}
  We call $\tpiSigma$ the supercuspidal representation of $\wtG$ attached
    to the datum~$\tSigma$.
\end{definition}

\subsubsection{}  \label{sec:D.eq.c}
Now we describe the equivalence of supercuspidal data for covering groups.

\begin{definition} \label{def:cSC.eq} Let
  $\tSigma = (x, \Gamma, \phi, \rho, \xi)$ and
  $\dottSigma = (\dotxx, \dotGamma, \dotphi, \dotrho,\dotxi)$ be two
  supercuspidal data for $\wtG$.  We say that $\tSigma$ and $\dottSigma$ are
  \emph{equivalent} data if there exists an element $g \in G$ such that
\begin{enumerate}[(a)]
\item $x = g\cdot \dotxx$,
\item $\Ad_g(\dotGamma) \in \Gamma+ \fgg_{x,0}$ and 
\item 
$((\dotrho\otimes \dotphi)\boxtimes \idCx)\circ \tdotxi^{-1} \cong ((\rho\otimes
\phi)\boxtimes \idCx) \circ \tspt^{-1} \circ \Ad_{g}$ as
  $\wtG^0_{\dotxx}$-module\footnote{Here $\Ad_{g}$ acts on $\wtG$
    since the adjoint action factors through the center.}.
\end{enumerate}
\end{definition}
We remark that Condition (c) is equivalent to 
\begin{enumerate}[(a')]
\setcounter{enumi}{2}
\item
  $\dotrho\otimes \dotphi \otimes \MU{\dotxi}{\xi^{g}} \cong (\rho\otimes \phi)
  \circ \Ad_{g}$ as $G^0_{\dotxx}$-module where $\MU{\dotxi}{\xi^{g}}$ is
  defined by \eqref{eq:MU.def} 
  and $\xi^g:= \Ad_{g^{-1}}\circ \xi\circ
  \Ad_{g}$. 
\end{enumerate}
Since our choices of splittings agree on the pro-unipotent part,
$\MU{\dotxi}{\xi^{g}}$ is a character that is trivial on
$G_{\dotxx,0^+}^0$, i.e. a character of
$\sfG^0_{\dotxx}:= G_{\dotxx,0}^0/G_{\dotxx,0^+}^0$.  Condition (c') is simpler
and seems easier to check because $\MU{\dotxi}{\xi^{g}}$ is trivial in
most of the cases (cf. \cite{Pan01}).

The following theorem is a variation of a result in \cite{HM}.  The reader may
consult \cite{HM} for notations when reading the proof and should note that the
notations in the proof may not agree with other parts of our paper.

\def\dotK{\dot{K}}
\def\dotwtK{\dot{\wtK}}

\begin{thm} \label{thm:cHM}
Let $\tSigma = (x,\Gamma, \rho,\phi,\xi)$ and $\dottSigma = (\dotxx,
\dotGamma,\dotrho, \dotphi,\dotxi)$ be two supercuspidal data. 
Then $\tpiSigma \cong \tpidotSigma$ if and only if $\tSigma$ and $\dottSigma$
are equivalent with each other. 
\end{thm}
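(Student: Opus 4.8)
The plan is to reduce \Cref{thm:cHM} to the already-established linear-group statement \Cref{thm:HM} (resp. \Cref{thm:extendHM}) by unwinding the definitions and carefully tracking the splitting. The representation $\tpiSigma = \cInd_{\wtK}^{\wtG}\tetaSigma$ is by construction determined by the pair $(\wtK, \tetaSigma)$ with $\tetaSigma = (\etaSigma \boxtimes \idCx)\circ \tspt^{-1}$. Since $\wtG$ is a central $\bCx$-extension of $G$ and all representations in sight are genuine, the usual intertwining / Mackey machinery applies verbatim: $\tpiSigma \cong \tpidotSigma$ if and only if there is $\tgg \in \wtG$ with ${}^{\tgg}\wtK \cap \wtK$ intertwining $\tetaSigma$ with ${}^{\tgg}\tetaSigma$, and by the computation of intertwiners $I(\tetaSigma) = \wtK$ carried out in the proof of the preceding theorem, this forces the relevant $g \in G$ to give an equivalence at the level of $K$-types.

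\medskip
\noindent First I would prove the ``if'' direction. Suppose $\tSigma$ and $\dottSigma$ are equivalent, witnessed by $g \in G$. Conditions (a) and (b) give $\Ad_g(\dotGamma) \in \Gamma + \fgg_{x,0}$ and $x = g\cdot\dotxx$, hence $\Ad_g$ carries $\dotwtK$ to $\wtK$ (the adjoint action factors through $G$, and on the linear level $\Ad_g$ carries $\dotK = K_{\dottSigma}$ to $K = K_{\tSigma}$ by \Cref{thm:HM}-type considerations applied to $\Sigma, \dotSigma$). Condition (c) says precisely that under this identification $\tetadottSigma$ is carried to $\tetaSigma$ as $\wtG^0_{\dotxx}$-modules, and in fact as $\wtK$-modules once one checks the statement extends from the reductive quotient to all of $\wtK$ — this extension is exactly the point where one invokes that $\kappa$ (and hence $\tetaSigma|_{\wtK_+}$) is $\psi_\Gamma$-isotypic and that the two splittings $\xi, \dotxi$ agree with $\CSPT$ on the pro-$p$ part, so the discrepancy $\MU{\dotxi}{\xi^{g}}$ is a character of the finite group $\sfG^0_{\dotxx}$ only. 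Then $\cInd_{\dotwtK}^{\wtG}\tetadottSigma \cong \cInd_{\wtK}^{\wtG}({}^{\tgg}\tetadottSigma)$ for any lift $\tgg$ of $g$, and the right side is $\cInd_{\wtK}^{\wtG}\tetaSigma = \tpiSigma$.

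\medskip
\noindent For the ``only if'' direction, suppose $\tpiSigma \cong \tpidotSigma$. By Mackey/Frobenius reciprocity there is $\tgg \in \wtG$, with image $g \in G$, such that $\Hom_{{}^{\tgg}\wtK \cap \wtK}(\tetaSigma, {}^{\tgg}\tetadottSigma) \neq 0$. Pushing down to $G$: since $\tpiSigma|$-restriction data determine the underlying $\pi_\Sigma$ (the genuine central character is fixed), $\pi_\Sigma \cong \pi_{\dotSigma}$, so by \Cref{thm:HM} (or \Cref{thm:extendHM} in the even orthogonal case) there is $g_0 \in G$ realizing the equivalence of the linear data $\Sigma, \dotSigma$ in the sense of \Cref{def:SC.eq}. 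Now I would replace $\dottSigma$ by its $g_0$-conjugate so that one may assume $\dotxx = x$, $\dotGamma \in \Gamma + \fgg_{x,0}$, and $\dotrho\otimes\dotphi \cong \rho\otimes\phi$ as $G^0_x$-modules. What remains is to compare the two splittings $\xi$ and $\dotxi$ of $\wtK \surj K$: both restrict to $\CSPT$ on $K_{0^+}$, so $\MU{\dotxi}{\xi}$ is a character of $\sfG^0_x$. The existence of the intertwiner $\tgg$ above, now with $g \in K$, forces $\MU{\dotxi}{\xi}$ to be absorbed into the isomorphism $\rho\otimes\phi \cong \dotrho\otimes\dotphi$, which is exactly condition (c') — equivalently (c). Thus $\tSigma$ and $\dottSigma$ are equivalent.

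\medskip
\noindent The main obstacle I anticipate is the bookkeeping around the splitting in the ``only if'' direction: one must argue that the character $\MU{\dotxi}{\xi^{g}}$ on $\sfG^0_{\dotxx}$ that a priori separates the two genuine $K$-types is genuinely forced to be trivial (or absorbed) by the intertwining hypothesis, rather than giving a new inequivalent datum. This is handled by noting that $\wtG \to G$ is a central extension by $\bCx$ and the genuine character is pinned down, so any two genuine lifts of isomorphic linear $K$-types that induce to isomorphic $\wtG$-representations must differ by a character that is itself realized inside the cuspidal representation $\rho$ — i.e., it is exactly the freedom already recorded in \Cref{def:cSC.eq}(c'). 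The rest is a routine transcription of \cite{HM} with the understanding, emphasized in the footnote to \Cref{sec:SC.cover}, that all the positive-depth content transfers via $\CSPT$ and only the finite reductive quotient needs genuinely new input, which is supplied by \Cref{thm:HM}.
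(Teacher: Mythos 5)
Your ``if'' direction is fine (it is essentially the paper's one-line argument), but there is a serious gap in your ``only if'' direction, at the step where you say: ``Pushing down to $G$: since $\tpiSigma$-restriction data determine the underlying $\pi_\Sigma$ (the genuine central character is fixed), $\pi_\Sigma \cong \pi_{\dotSigma}$.'' There is no ``underlying'' $G$-module here. A genuine representation of $\wtG$ (on which $\bC^\times$ acts by the identity character) does \emph{not} factor through $G$, and when $\wtG$ is the metaplectic group the cover does not even split, so $\pi_\Sigma$ as a $G$-module simply does not exist. Even in the cases where the cover splits, ``pushing down'' requires choosing a splitting of $\wtG \to G$ and the content of the theorem is precisely that the datum records a splitting of $\wtK$; you cannot presuppose a preferred global splitting. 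Consequently you never obtain the $g_0 \in G$ realizing the linear equivalence of $\Sigma$ and $\dotSigma$, and the rest of your argument about absorbing $\MU{\dotxi}{\xi}$ into $\rho\otimes\phi\cong\dotrho\otimes\dotphi$ has nothing to stand on.

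The paper circumvents exactly this obstacle by observing that $\tpiSigma \otimes (\tpidotSigma)^\vee$ \emph{does} factor through $G$ (the central characters cancel), and then feeds it into the ``doubling'' framework: it treats $\Sigma\times\dotSigma^\vee$ as a datum for $\cG := G\times G$ with the swap involution $\theta$, uses Mackey theory to extract a nonvanishing $\Hom_{\wtK\cap\dotwtK}(\tetaSigma \otimes \teta_{\dottSigma}^\vee, \bfone)$, restricts to $K_+\cap\dotK_+$ (where the two splittings agree with $\CSPT$) to conclude weak $\theta$-compatibility, and then invokes \cite[Proposition~5.7]{HM} to get weak $\theta$-symmetry, i.e.\ $\Gamma = \dotGamma$ up to $K\times\dotK$-conjugacy. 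Only \emph{after} that does it twist by a fixed extension $\phi_0$ of $\psi_\Gamma|_{G^0_{0^+}}$ to reduce to depth-zero data for $\wtG^0$, where the depth-zero associativity result of Howard--Weissman finishes the job. None of that is ``a routine transcription of [HM]''; the distinguished-representations input from [HM, Section 5] is the missing key idea in your proposal, and your Mackey-plus-splitting bookkeeping does not substitute for it.
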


\begin{proof}
  Suppose $\tSigma$ and $\dottSigma$ are equivalent. By definition $\tetaSigma$
  and $\teta_\dottSigma$ are isomorphic up to $G$-conjugacy so
  $\tpiSigma \cong \tpidotSigma$.

We now assume $\tpiSigma \cong \tpidotSigma$.  Let $K$ and $\dotK$ be the
compact subgroups determined by $(x,\Gamma)$ and $(\dotxx, \dotGamma)$
respectively.  Let $\bullet \mapsto (\bullet)^\vee$ denote the operation of
taking contragredient.  

In order to apply the result in \cite[\Sec{5}]{HM}, we make following
definitions. We let $\cG:= G\times G$ equipped with an involution $\theta$
sending $(g_1,g_2)$ to $(g_2,g_1)$ and identify $G$ with the diagonal subgroup
of $\cG$. Let $\Sigma = (x,\Gamma,\rho,\phi)$ and
$\dotSigma^\vee = (\dotxx,-\dotGamma,\dotrho^\vee,\dotphi^\vee)$. We view
$\Psi := \Sigma\times \dotSigma^\vee$ as a supercuspidal datum for $\cG$.
 
  The $\wtG$-module $\tpiSigma \otimes (\dottpiSigma)^\vee$
  factors to a $G$-module and we have
  \[
  \begin{split}
    \bC &=  \Hom_G(\tpiSigma\otimes \dottpiSigma^\vee, \bfone) = \Hom_G(
    \cInd_{\wtK\times \dotwtK}^{\wtG\times \wtG}
    \left(\tetaSigma \otimes\teta_\dottSigma^\vee\right),\bfone ) \\
    & = \sum_{g\in K\backslash G /\dotK} \Hom_{\wtK\cap{}^g\dotwtK}(\tetaSigma
    \otimes \left.^g\teta_{\dottSigma}^\vee\right. , \bfone).
  \end{split}
  \]
  Therefore, exactly one term of the above summation is non-vanishing and of
  dimension~1.  By replacing $\dottSigma$ by its $G$-conjugate, we may assume that 
  \begin{equation}\label{eq:equiv.K}
  \Hom_{\wtK\cap \dotwtK}(\tetaSigma \otimes \teta_{\dottSigma}^\vee,
  \bfone) = \bC.
  \end{equation}
   By \Cref{sec:LM.real} $\spt|_{K_+\cap \dotK_+} = \dotxi|_{K_+\cap \dotK_+}$. 
	Hence \eqref{eq:equiv.K} implies
  \[
  \Hom_{K_+\cap \dotK_+}(\psi_\Gamma \otimes
  \psi_{-\dotGamma}, \bfone) \neq 0.
  \] 
  This means $\Psi$ is \emph{weakly compatible} with the involution $\theta$
  in the sense of \cite[Definition~5.6]{HM}. Now \cite[Proposition~5.7]{HM} implies  that
  $\Psi$ is \emph{weakly $\theta$-symmetric} up to a
  conjugation of $K \times \dotK$. 
  By the definition of weakly $\theta$-symmetric in \cite[Definition~3.13]{HM}, 
  we may assume $\Gamma = \dotGamma$ and so $G^0 = \Cent{G}{\Gamma} = \Cent{G}{\dotGamma}= \dotG^0$.

 Since we are in the ``group case'', the theorem could be proven by reducing to the
  depth zero case: Thanks to \cite[Lemma~5.5]{Kim}, we can fix a character $\phi_0$
  of $G^0$ extending $\psi_{\Gamma}|_{G^0_{0^+}}$. 
  Consider the depth zero supercuspidal data $\tSigma_0 := (x, 0, \bfone, \rho
  \otimes \phi \otimes \phi_0^{-1},\xi)$ and $\dottSigma_0 := (\dotxx, 0, \bfone, \dotrho
  \otimes \dotphi \otimes \phi_0^{-1},\dotxi)$. Let $\teta_{\tSigma_0}$ and
  $\teta_{\dottSigma_0}$ be depth zero supercuspidal $K$-types of $\wtG^0$
  defined by data $\tSigma_0$ and $\dottSigma_0$ respectively.
   Restricting \eqref{eq:equiv.K} to
  $\wtG^0_x \cap \wtG^0_{\dotxx}$ gives
  \[
0  \neq \Hom_{\wtG^0_x\cap \wtG^0_{\dotxx}}(\tetaSigma \otimes \teta_{\dottSigma}^\vee,\bfone)
= \Hom_{\wtG^0_x\cap \wtG^0_{\dotxx}}(\teta_{\tSigma_0} \otimes \teta_{\dottSigma_0}^\vee, \bfone)^{\oplus m}
  \] 
  where $m$ is certain multiplicity. On the other hand,
  $\Hom_{\wtG^0_x\cap \wtG^0_{\dotxx}}(\teta_{\tSigma_0} \otimes
  \teta_{\dottSigma_0}^\vee, \bfone)\neq 0 $ implies the depth zero
  supercuspidal $\wtG^0$-modules
  $\tpi_{\tSigma_0}=\cInd_{\wtG^0_x}^{\wtG^0}\teta_{\tSigma_0}$ and
  $\tpi_{\dottSigma_0}=\cInd_{\wtG^0_{\dotxx}}^{\wtG^0}\teta_{\dottSigma_0}$ are
  isomorphic to each other. Since all depth zero unrefined minimal
    $K$-types are associates (see
    \cite[Proposition~3.6]{HW}), 
  there is an element $g\in G^0$ such that
  $x = g\cdot \dotxx \in \BTB{G^0}=\rBTB{G^0}$ and 
    $((\rho\otimes \phi\otimes\phi_0^{-1})\boxtimes
    \id_{\bC^\times})\circ\txi^{-1} \circ \Ad_g \cong ((\dotrho\otimes
    \dotphi\otimes \phi_0^{-1})\boxtimes \id_{\bC^\times})\circ \tdotxi^{-1}$
  as  $\wtG^0_{\dotxx}$-modules. This finishes the proof of the
  theorem.
\end{proof}


\subsubsection{} \label{sec:SC.E.c} We will extend the results of \cite{Kim} to
every covering group $\wtG$ appearing in a type I dual pair. More precisely we
will show that the set of $\tpiSigma$ exhausts all the genuine\footnote{Here
  ``genuine'' means the restriction of the representation on $\bC^\times$ is the
  scalar multiplication $\id_{\bC^\times}$.} supercuspidal representations of
$\wtG$ under the assumption that $p$ is big enough.  Gan and Kim are currently
preparing a manuscript on such kinds of non-linear covering groups
\cite{GanKim}.

Since all covering groups in this paper occur in a type~I dual pair, these are
split central covering groups except the odd orthogonal-metaplectic dual
pairs. Therefore Kim's exhaustion result applies except for metaplectic groups.
We now show that the exhaustion for metaplectic groups could be obtained from
that of odd-orthogonal groups:

\def\Mpn{\Mp}

\begin{cor}
  Let $V'$ be a symplectic space over $F$ of even dimension $n$.  Let $\Mpn$ be
  the metaplectic $\bC^\times$-cover of $\Sp(V')$.  Suppose
  $p\geq \max\set{2n+3, e_F(n+1)+2}$ where $e_F$ is the ramification index of
  $F/\bQ_p$.  Then every genuine supercuspidal representation of $\Mpn$ is of
  the form $\tpiSigma$ where $\tSigma$ is a supercuspidal data of $\Mpn$.
\end{cor}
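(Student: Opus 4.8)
The plan is to bootstrap the exhaustion for $\Mpn$ from \Cref{thm:main}~\cref{it:Main.2}, which in turn rests on the construction and exhaustion of supercuspidal data, extended to odd orthogonal groups via the identification $\rO(V)=\SO(V)\times\set{\pm1}$ of \Cref{sec:Ktype}. So let $\sigma$ be a genuine supercuspidal representation of $\Mpn$, where $V'$ denotes the given symplectic space, $\dim V'=n$.

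First I would choose an auxiliary odd-dimensional quadratic space $V$ over $F$. For any such $V$, $(\rO(V),\Sp(V'))$ is an irreducible type~I dual pair in $\Sp(V\otimes V')$; since $\dim V$ is odd, the metaplectic cover restricts to the genuine cover $\Mpn$ on the $\Sp(V')$-factor, while it splits on the $\rO(V)$-factor (because $\dim V'$ is even), so a genuine supercuspidal representation of the cover of $\rO(V)$ is nothing but a supercuspidal representation of $\rO(V)$. For a suitable pair $\cT^{+},\cT^{-}$ of odd orthogonal Witt towers, the conservation relation for metaplectic--odd-orthogonal dual pairs gives $m_{\cT^{+}}(\sigma)+m_{\cT^{-}}(\sigma)\le 2n+4$; since $n$ is even and the two first occurrence indices are odd, this forces $\min\set{m_{\cT^{+}}(\sigma),m_{\cT^{-}}(\sigma)}\le n+1$. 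Fix a tower $\cT$ with $m:=m_{\cT}(\sigma)\le n+1$, let $V$ be the member of $\cT$ with $\dim V=m$, and let $\pi:=\theta(\sigma)$ be the theta lift of $\sigma$ to $\rO(V)$. By the properties of the theta correspondence recalled in \Cref{sec:mainthm} --- in particular, that the first occurrence lift of a supercuspidal is supercuspidal --- $\pi$ is an irreducible supercuspidal representation of $\rO(V)$ and equals the big theta lift $\Theta(\sigma)$.

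Now for the residue characteristic: as $\dim V=m\le n+1$, the hypothesis $p\ge\max\set{2n+3,\,e_F(n+1)+2}$ yields $p\ge\max\set{2\dim V+1,\,e_F\dim V+2}$ as well as $p\ge\max\set{2n+1,\,e_Fn+2}$, so by \Cref{prop:pbd1} (with $D=F$) Kim's hypotheses in \cite[\S{3.4}]{Kim} hold for both $\rO(V)$ and $\Sp(V')$ --- which is exactly what the proof of \Cref{thm:main}~\cref{it:Main.2} needs for the dual pair $(\rO(V),\Sp(V'))$ (the cruder uniform bound of \Cref{cor:pbdd} is unnecessary here, since $\dim V$ has been pinned down). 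Next, because $\Theta(\sigma)=\pi$, the isotypic component $\omega[\sigma]$ is $\pi\boxtimes\sigma$ as a module over (the cover of) $\rO(V)\times\Sp(V')$; hence $\pi$ occurs in $\omega$ and $\sigma$ is a constituent of the big theta lift $\Theta(\pi)$, which is irreducible since $\pi$ is supercuspidal, so $\Theta(\pi)=\sigma$ and the theta lift of $\pi$ to $\Mpn$ is $\sigma$. Finally, \Cref{thm:main}~\cref{it:Main.2}, applied to $(\rO(V),\Sp(V'))$ with $\tpi=\pi$ and $\tpi'=\sigma$ (so that $\Mpn$ plays the role of $\wtG'$), produces a supercuspidal datum $\Sigma$ for $\rO(V)$ such that, setting $[\Sigma']:=\dthetaVT([\Sigma])$, we have $\sigma=\tpiSigmap$, where $\tSigmap=(\Sigma',\xi_{x,x'}|_{K'})$ is the associated supercuspidal datum of $\Mpn$. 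This is precisely the statement to be proved.

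The hard part is the first step: establishing that a supercuspidal representation of $\Mpn$ first occurs, in at least one of the two odd orthogonal towers, at a quadratic space of dimension at most $n+1$ --- equivalently, the upper bound in the conservation relation combined with the parity of the first occurrence indices. By contrast, the bound $m_{\cT}(\sigma)\le 2n+a_{\cT}$ of \cref{it:mT} would only give dimension $\le 2n+3$, which is too weak for the value of $p$ claimed in the corollary. Everything after Step~1 is bookkeeping with Kim's inequality and a routine consequence of Howe duality for supercuspidal representations.
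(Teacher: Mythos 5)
Your proof is correct and follows essentially the same route as the paper's: both deduce from the Sun--Zhu conservation relation that $\sigma$ first occurs on an odd quadratic space $V$ with $\dim_F V\le n+1$, verify Kim's hypotheses for $\rO(V)$ and $\Sp(V')$ via \Cref{prop:pbd1}, realize the lift $\pi=\theta(\sigma)$ as $\pi_{\tD}$ for some datum of $\rO(V)$ using the exhaustion for (even/odd) orthogonal groups, and then invoke \Cref{prop:exhaust} (equivalently, \Cref{thm:main}~\cref{it:Main.2}) for the dual pair $(\rO(V),\Sp(V'))$ to produce $\tSigma'$ with $\sigma=\tpiSigmap$. The only difference is cosmetic: you spell out the parity bookkeeping $m_{\cT^+}+m_{\cT^-}=2n+4$ with both terms odd and $n$ even, and you separately note $\Theta(\pi)=\sigma$, both of which the paper leaves implicit in the citation to \cite{SZ} and the symmetry of the first-occurrence correspondence.
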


\begin{proof}
Let $\tpi'$ be an irreducible supercuspidal genuine $\Mpn$-module. 
By the conservation relation \cite{SZ}, there is an odd dimensional
quadratic space $V$ such that 
\begin{inparaenum}[(i)]
\item 
$\dim_FV \leq n+1$, 
\item $(G,G') = (\rU(V),\rU(V'))$ form a type~I dual pair so that
  $\wtG' = \Mpn$ and
\item $\tpi'=\thetaVVp(\tpi)$ for an irreducible supercuspidal genuine
  $\wtG$-module. 
\end{inparaenum}
By \Cref{prop:pbd1} both $\rU(V)$ and $\rU(V')$ satisfy Kim's hypotheses
\cite[\Sec{3.4}]{Kim}.  Hence there is a supercuspidal data $\tD$ such that
$\tpi:= \tpi_{\tD}$ by \cite{Kim}. This is the starting point of the proof of
\Cref{it:Main.2} of the main theorem in \Cref{sec:pf.main.2}. Using
\Cref{prop:exhaust}, the proof gives supercuspidal data $\Sigma$ and $\Sigmap$
such that $\tD$ and $\tSigma$ are equivalent (i.e. $\tpi = \tpiSigma$) and
$\tpi'=\tpiSigmap$.  In particular, $\tpi'$ is realized as a supercuspidal
representation of $\wtG'$ attached to the supercuspidal datum $\tSigma'$. This
finishes the proof.
\end{proof}

\begin{remark}
The proof of \Cref{prop:exhaust} does not depend on Kim's work on
exhaustion except a variation of \cite[Proposition~17.2]{Kim}. 
Hence there is no circular reasoning.
\end{remark}

\section{Good factorizations and block decompositions}
\label{sec:GoodFact}
In this section, we
first construct $\GL$-good
factorizations which will be used in \Cref{sec:OB1}. 
Then we will define a notion of block decompositions for supercuspidal data. 
The theta lifting map of supercuspidal
data in \Cref{sec:LD} is defined based on this notion. We remark that parts of the
treatment resemble those of  \cite{S05}*{\Sec{3}}.

\subsection{$\GL$-good factorization} \label{sec:Efactor} We now
construct a good factorization of a tamely ramified semisimple element
$\Gamma \in \fgg$ following Howe~\cite{Howe}. Let $F' := \Cent{}{D}$
  be the center of $D$ which is also identified with the
center of $\EndDV$. 

Let $A := F'[\Gamma] \subseteq \End_D(V)$. Then $A$ is isomorphic to a product
$\prod_{j\in \fJ} F_j$ of (tamely ramified) finite extensions of $F'$ where
$\fJ$ is a finite index set. Furthermore, we have factorization of the
$A$-module $V = \bigoplus_j V_j$ where $V_j$ is an $(F_j,D)$-bimodule.  Since
$\Gamma^* = -\Gamma$, $*$ induces an involution on $A$ and on the set $\fJ$
respectively. An orbit of the $*$-action on $\fJ$ has at most $2$ elements.
Therefore we have a decomposition of $\fJ$ and $A$ such that
\crefformat{itA}{{\rm (#2A#1#3)}}
\begin{enumerate}[({A}1)]
\item \label[itA]{it:A1} $\fJ = \fJ_0 \sqcup \fJ_1 \sqcup * (\fJ_1)$ and
  $A = F'[\Gamma] = \prod_{j\in \fJ_0} F_j \times \prod_{j\in \fJ_1}
  (F^+_{j}\times F^-_{j})$
  where $F^+_j := F_j$ and $F^-_j := F_{*(j)}$ for $j\in \fJ_1$;

\item \label[itA]{it:A2} the $*$-action is an involution or the
  identity\footnote{This is because $F_j$ could be equal to $F$ in certain
    cases. For example, if  $D=F$ and $\Gamma$ is not a full rank matrix, then 
    $F[\Gamma] \cong F \oplus (F[x]/P(x))$ where $P$ is the minimal polynomial
    of $\Gamma$.} on the field~$F_j$  when $j \in \fJ_0$. 

\item \label[itA]{it:A3} the $*$-action permutes $F_j^+$ and $F_j^-$ when $j\in \fJ_1$.
\end{enumerate}

  \begin{lemma} \label{lem:aniso} Let $\circG^0$ be the connected component of
    $G^0:=\Cent{G}{\Gamma}$. Let $\fgg^0$ be its Lie algebra and let
    $\fgl^0 := \Cent{\fgl_D(V)}{\Gamma}$.  Then
    \begin{enumerate}[(i)]
    \item \label{it:aniso.1} $\Cent{}{\fgl^0} = F'[\Gamma]$;
    \item \label{it:aniso.2} 
the
    center $\Cent{}{\circG^0}$ is anisotropic if and only if (a)
    $\fJ_1 = \emptyset$ in the decomposition \cref{it:A1} and (b) there is no
    $\SO(1,1)(F)$ factor in $\circG^0$; 
    \trivial[h]{In the case of $\fso(1,1)$,
      $\Cent{}{\fgg^0} \neq F'[\Gamma]\cap \fgg$. }
  \item \label{it:aniso.3} $\Cent{}{\fgg^0} = F'[\Gamma] \cap \fgg$ when the
    equivalent conditions in \Cref{it:aniso.2} holds.
  \end{enumerate}
\end{lemma}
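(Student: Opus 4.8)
The plan is to argue block by block along the eigenspace decomposition $V=\bigoplus_{j\in\fJ}V_j$ attached to $F'[\Gamma]=\prod_{j\in\fJ}F_j$. For \cref{it:aniso.1}: an element of $\End_D(V)$ centralises $\Gamma$ exactly when it preserves each $V_j$ and commutes with $\Gamma|_{V_j}$, and since $\Gamma|_{V_j}$ generates $F_j$ over $F'$ this is the same as being $F_j$-linear, so $\fgl^0=\prod_{j\in\fJ}\End_{D\otimes_{F'}F_j}(V_j)$. Because $\Gamma$ is tamely ramified, each $F_j/F'$ is a finite separable field extension, hence $D\otimes_{F'}F_j$ is central simple over $F_j$; the endomorphism algebra of the $(D\otimes_{F'}F_j)$-module $V_j$ is then again central simple over $F_j$, with centre $F_j$. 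Taking the product yields $\Cent{}{\fgl^0}=\prod_{j}F_j=F'[\Gamma]$.

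For \cref{it:aniso.2}, I would first use the standard structure of $\circG^0$. As $*$ permutes the primitive idempotents of $F'[\Gamma]$ according to the $*$-action on $\fJ$, one has $\innv{V_j}{V_k}=0$ unless $k=*(j)$; so $\innv{}{}$ is non-degenerate on $V_j$ for $j\in\fJ_0$, and for $j\in\fJ_1$ it makes $V_j\oplus V_{*(j)}$ a hyperbolic space with $V_j,V_{*(j)}$ totally isotropic. Since elements of $\Cent{G}{\Gamma}$ preserve the $V_j$ and act $F_j$-linearly, this gives
\[
\circG^0\ \cong\ \prod_{j\in\fJ_0}\circG_j\ \times\ \prod_{j\in\fJ_1}\GL_{D\otimes_{F'}F_j}(V_j),
\]
with $\circG_j$ the identity component of $\rU(V_j)$ --- a unitary group over the $*$-fixed subfield of $F_j$ when $*|_{F_j}\neq\id$, and an orthogonal or symplectic group over $F_j$ when $*|_{F_j}=\id$. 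The centre of $\circG^0$ is the product of the centres of these factors; a $\GL_{D\otimes_{F'}F_j}(V_j)$-factor contributes $\Res_{F_j/F}\bG_m$ and an $\SO(1,1)$-factor contributes $\bG_m$, both isotropic over $F$, while every remaining factor contributes $\set{\pm 1}$, a norm-one torus, or the special orthogonal group of an anisotropic binary form, all anisotropic --- and restriction of scalars preserves anisotropy. Hence $\Cent{}{\circG^0}$ is anisotropic if and only if $\fJ_1=\emptyset$ and $\circG^0$ has no $\SO(1,1)$-factor.

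For \cref{it:aniso.3}, assume the conditions of \cref{it:aniso.2}. By \cref{it:aniso.1}, $F'[\Gamma]=\Cent{}{\fgl^0}$ commutes with $\fgl^0\supseteq\fgg^0$ and $F'[\Gamma]\cap\fgg\subseteq\fgl^0\cap\fgg=\fgg^0$, so $F'[\Gamma]\cap\fgg\subseteq\Cent{}{\fgg^0}$. For the opposite inclusion, since $\fJ_1=\emptyset$ we have $\fgg^0=\prod_{j\in\fJ_0}\fuu(V_j)$. When $*|_{F_j}\neq\id$, $\fuu(V_j)$ is a unitary Lie algebra, whose centre is the line of skew scalars, namely $(F_j)^{*,-1}=F_j\cap\fgg$. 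When $*|_{F_j}=\id$, then $\Gamma|_{V_j}\in F_j$ with $*|_{F_j}=\id$ gives $(\Gamma|_{V_j})^{*}=\Gamma|_{V_j}$, which with $\Gamma^{*}=-\Gamma$ forces $\Gamma|_{V_j}=0$; thus $F_j=F'$ and $\fuu(V_j)$ is symplectic or orthogonal of rank $\neq2$ (an $\fso_2$-summand being excluded by condition~(b) of \cref{it:aniso.2}), hence its centre is $0=(F')^{*,-1}$. Summing over $j$ gives
\[
\Cent{}{\fgg^0}\;=\;\bigoplus_{j\in\fJ_0}(F_j)^{*,-1}\;=\;\bigl(F'[\Gamma]\bigr)^{*,-1}\;=\;F'[\Gamma]\cap\fgg.
\]

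The step demanding the most care is the block-wise centre computation in \cref{it:aniso.3}: one must verify that the centre of each local factor $\fuu(V_j)$ of the ``real form'' $\fgg^0$ is no larger than the corresponding factor of $\Cent{}{\fgl^0}\cap\fgg$, which amounts to the assertion that $\fuu(V_j)$ generates the associative algebra $\End_{D\otimes_{F'}F_j}(V_j)$ --- valid except for a two-dimensional orthogonal $V_j$, which is exactly why hypothesis~(b) is imposed. Keeping straight the three possible behaviours of $*$ on each $F_j$ (the identity, a nontrivial field involution, or an interchange of a pair of fields) and the resulting trichotomy of group types is the principal bookkeeping burden.
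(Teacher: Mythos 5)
Your proof of~\cref{it:aniso.1} and~\cref{it:aniso.2} follows the same route as the paper's: decompose $\fgl^0=\prod_j\End_{D_j}(V_j)$ into central simple blocks to read off its centre, and decompose $G^0$ into the corresponding product of general linear, unitary, orthogonal and symplectic factors. For~\cref{it:aniso.3} the paper simply asserts that the conclusion follows from this decomposition, and you have spelled out the block-wise centre computation---but in doing so you have silently strengthened hypothesis~(b) to ``no $\fso_2$-summand at all.'' As stated, (ii)(b) excludes only the \emph{split} $\SO(1,1)(F)$. Take $D=F$, $\epsilon=1$, and a two-dimensional anisotropic quadratic block $V_j$ on which $\Gamma$ acts by zero; the corresponding factor of $\circG^0$ is an anisotropic $\SO_2(F)$, so the conditions of (ii) are satisfied, yet $\fso(V_j)$ is a one-dimensional abelian ideal of $\fgg^0$ that is not contained in $F'[\Gamma]\cap\fgg$ (a nonzero skew-symmetric endomorphism of a two-dimensional space is not a scalar), so the equality in~\cref{it:aniso.3} fails there. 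The same difficulty lies outside your ``symplectic or orthogonal'' dichotomy: for $D$ the quaternion division algebra and $V_j$ a one-dimensional skew-Hermitian $D$-block killed by $\Gamma$, $\rU(V_j)$ is an anisotropic rank-one torus and $\fuu(V_j)$ is a one-dimensional abelian Lie algebra (the skew part of an orthogonal involution of $D$) again not contained in $F'[\Gamma]\cap\fgg$. To close~\cref{it:aniso.3} one must either show that such rank-one torus blocks cannot occur for the $\Gamma$ arising from supercuspidal data, or strengthen~(b) to rule out every one-dimensional torus factor of $\circG^0$; your write-up, like the paper's terse one, does neither.
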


\begin{proof} 
  Let $D_j := F_j\otimes_{F'}D$.  \trivial[h]{Note that
    $F_j \cap D = F' \subset \fgl_{F'}(V)$ and so that $D_j$ is exactly the
    algebra generated by right multiplication by $D$ and the left action
    $\Gamma$ in $\fgl_{F'}(V_j)$.}  Then $D_j$ is a central simple algebra over
  $F_j$ and $D[\Gamma] = \prod_{j} D_j$. Hence
  $\fgl^0 = \Cent{\fgl_D(V)}{\Gamma} = \prod_i \End_{D_i} V_i$ where each factor
  $\fgl_i := \End_{D_i} V_i$ is a central simple algebra over $F_i$.  This
  proves \Cref{it:aniso.1}.

  The
  $*$-action permutes $\fgl_j$ and $\fgl_{*(j)}$.  If $j\in \fJ_0$, then the
  form $\innv{}{}$ restricted on $V_j$ is non-degenerate. \trivial[h]{Since the
    idempotent $\bfone_j$ is $*$-invariant.}  If $j\in \fJ_0$ and $F_j\neq F'$,
  then the $*$-action on $F_j$ is nontrivial since
  $(\Gamma|_{V_j})^* = - \Gamma|_{V_j}$. In this case,
  $\rU_{j} := \set{g\in \End_{D_j}(V_j)| g^* g = \id_{V_j}}$ is a unitary group
  defined over the $*$-fixed point sub-field $F_j^*$ of $F_j$.  In summary, we
  have
  \[
    G^0 = \prod_{j\in \fJ_1}\GL_{D_j}(V_j) \times \prod_{\substack{j\in \fJ_0\\
        F_j\neq F'}} \rU_j \times \prod_{\substack{j\in \fJ_0\\ F_j =
        F'}}\rU(V_j).
  \]
  Now \Cref{it:aniso.2} and \Cref{it:aniso.3} follow.
\end{proof}

By \Cref{lem:aniso} and \Cref{def:SC.D}~(b), we may and will assume that
$\fJ_1 = \emptyset$ and \mbox{(A3)} will not happen since we only study those
$\Gamma$ which are contained in supercuspidal data.  \Cref{lem:GLgood} and
\Cref{prop:GD} below also apply to $\fJ_1 \neq \emptyset$. We will leave
the details to the reader.


\begin{lemma} \label{lem:GLgood} Let $\gamma \in \fgg$ be a tamely ramified
  semisimple element. If it is $\GL_D(V)$-good, then it is $G$-good.
\end{lemma}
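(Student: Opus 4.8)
The plan is to realize the roots of $\fgg$ as restrictions of roots of $\fgl_D(V)$ along a common Cartan subalgebra, so that the values at $\gamma$ agree, and then to match the two depth functions. Since $\gamma$ is tamely ramified semisimple, fix a Cartan subalgebra $\ftt$ of $\fgg$ with $\gamma\in\ftt$ that splits over a tamely ramified extension $E/F$; enlarging $E$ if necessary (it stays tamely ramified, as $D$ is tame over $F$ and $p$ is odd) we may also assume $E$ splits $D$, and hence $\fgl_D(V)$. Because $\rU(V)\subseteq\GL_D(V)$ is a symmetric subgroup, $\mathfrak h:=\Cent{\fgl_D(V)}{\ftt}$ is a Cartan subalgebra of $\fgl_D(V)$ containing $\ftt$, it splits over $E$, and $\gamma\in\ftt\subseteq\mathfrak h$. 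Recall from \cite{AD,KM1} that goodness of a semisimple element does not depend on the choice of a maximal torus through it defined over a tame extension (any two such are $\Cent{\GL_D(V)}{\gamma}(\overline{F})$-conjugate, and such a conjugation fixes $\gamma$ and preserves the multiset of root values at $\gamma$). Thus the hypothesis that $\gamma$ is $\GL_D(V)$-good says precisely that for every root $\beta$ of $(\fgl_D(V)\otimes_F E,\mathfrak h\otimes_F E)$ the value $\dd\beta(\gamma)$ is zero or has valuation $\depth_{\fgl_D(V)}(\gamma)$.

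Next I would transfer the condition. Restricting the $\mathfrak h\otimes_F E$-root space decomposition of $\fgl_D(V)\otimes_F E$ to the $\ftt\otimes_F E$-stable subspace $\fgg\otimes_F E$, every nonzero $\ftt\otimes_F E$-weight occurring in $\fgg\otimes_F E$, i.e.\ every root $\alpha$ of $(\fgg\otimes_F E,\ftt\otimes_F E)$, is the restriction to $\ftt\otimes_F E$ of at least one root $\beta$ of $(\fgl_D(V)\otimes_F E,\mathfrak h\otimes_F E)$; indeed $\fgg(E)_\alpha$ is nonzero and lies in $\bigoplus_{\dd\beta|_{\ftt(E)}=\dd\alpha}\fgl_D(V)(E)_\beta$. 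Since $\gamma\in\ftt\otimes_F E$ we get $\dalpha(\gamma)=\dd\beta(\gamma)$, so $\dalpha(\gamma)$ is zero or has valuation $\depth_{\fgl_D(V)}(\gamma)$ for every root $\alpha$ of $\fgg$.

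It remains to check $\depth_{\fgg}(\gamma)=\depth_{\fgl_D(V)}(\gamma)$. The inequality $\depth_{\fgg}(\gamma)\le\depth_{\fgl_D(V)}(\gamma)$ follows from the $G$-equivariant embedding $\BTB{G}\hookrightarrow\BTB{\GL_D(V)}$ together with the compatibility $\fgg_{x,r}=\fgl_D(V)_{x,r}\cap\fgg$ for $x\in\BTB{G}$ (cf.\ \cite{BS} and \Cref{sec:Lattice}). For the reverse inequality, the set of $y\in\BTB{\GL_D(V)}$ at which $\gamma$ has depth $\ge\depth_{\fgl_D(V)}(\gamma)$ is a nonempty closed convex subset, and it is stable under the involution $\theta$ of $\BTB{\GL_D(V)}$ induced by $g\mapsto(g^*)^{-1}$, because $\gamma^*=-\gamma$ and $\theta$ preserves the Moy--Prasad filtrations; a finite-order isometry of a $\mathrm{CAT}(0)$ space stabilizing a nonempty closed convex set fixes a point of it, and since $p\neq2$ the $\theta$-fixed locus of $\BTB{\GL_D(V)}$ is $\BTB{G}$, so $\gamma$ attains its $\GL_D(V)$-depth at a point of $\BTB{G}$. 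Hence the two depths coincide, and combining with the previous paragraph, $\dalpha(\gamma)$ is zero or has valuation $\depth_{\fgg}(\gamma)$ for every root $\alpha$ of $\fgg$; that is, $\gamma$ is $G$-good.

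The only genuinely non-formal point is this equality of depths, and really only the inequality $\depth_{\fgg}(\gamma)\ge\depth_{\fgl_D(V)}(\gamma)$; the root comparison is pure bookkeeping with weight space decompositions, and the mild care required to see that $\mathfrak h=\Cent{\fgl_D(V)}{\ftt}$ is a maximal torus splitting over a tame extension rests on the explicit form of the classical embedding $\rU(V)\hookrightarrow\GL_D(V)$.
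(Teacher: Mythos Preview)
Your proof is correct and follows the same route as the paper: after tame base change, the roots of $\fgg$ arise as restrictions of roots of $\fgl_D(V)$, so the root-value condition transfers. The paper's two-line argument leaves the equality $\depth_{\fgg}(\gamma)=\depth_{\fgl_D(V)}(\gamma)$ implicit, whereas you supply it via a CAT(0) fixed-point argument on $\BTB{\GL_D(V)}$; this is a legitimate and more conceptual justification, though in the present classical setting one can also read it off directly after splitting, since both depths equal the minimum valuation of an eigenvalue of $\gamma$ on $V$.
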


\begin{proof}
  It is enough to prove this lemma after a base change to a tamely ramified
  extension of $F$ such that $\Gamma$ is contained in a split Cartan subalgebra
  of $\fgg$.  Since the set of roots of~$\fgg$ is the restrictions of a subset
  of roots of $\fgl_D(V)$, the lemma follows.
\end{proof}

\begin{prop} \label{prop:GD}
Let $\Gamma$ be a tamely ramified semisimple element in $\fgg$. Then there is a $G$-good factorization $\Gamma =
\sum_{i=-1}^d \Gamma_i$ such that 
$\Gamma_i$ is $\GL_D(V)$-good for $0 \leq i \leq d$.
\end{prop}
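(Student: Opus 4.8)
The plan is to reduce everything to \Cref{lem:GLgood} together with Howe's factorization algorithm. Since a tamely ramified semisimple element of $\fgg$ that is $\GL_D(V)$-good is automatically $G$-good by \Cref{lem:GLgood}, it suffices to exhibit a decomposition $\Gamma = \sum_{i=-1}^{d}\Gamma_i$ obeying conditions (a)--(f) of \Cref{def:GD} in which, moreover, $\Gamma_i$ is $\GL_D(V)$-good for every $0\le i\le d-1$. Note that once pairwise commuting semisimple elements $\Gamma_i \in \fgg$ with the prescribed depths and with $\Gamma_d$ central are found, the conditions of \Cref{def:GD} involving the tower $\{G^i\}$ carry no extra content, the $G^i$ being \emph{defined} by $G^d = G$ and $G^i := \Cent{G^{i+1}}{\Gamma_i}$; moreover, since each $\Gamma_i$ is a polynomial in $\Gamma$, one automatically gets $\Cent{G}{\Gamma}\subseteq\Cent{G}{\Gamma_i}$ for all $i$, hence $G^0 = \Cent{G}{\Gamma}$, so taking $\Gamma_{-1} := 0$ (with $r_{-1} = 0$) is consistent with \Cref{rmk:No.Pos}.

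To build the $\Gamma_i$, set $A := F'[\Gamma] \subseteq \End_D(V)$. By the discussion before \Cref{lem:aniso}, and under our running hypothesis $\fJ_1 = \emptyset$, we have $A \cong \prod_{j\in\fJ_0} F_j$ with each $F_j/F'$ finite, separable and tamely ramified, and the involution $X\mapsto X^*$ restricts (via $\Gamma^* = -\Gamma$) to an involution $*$ of $A$ preserving each factor; in particular $A\cap\fgg$ is precisely the $(-1)$-eigenspace of $*$ in $A$, because $\fgg = \End_D(V)^{*=-1}$. Now I would run Howe's construction \cite{Howe} in the form used in \cite{Kim}: from the valuations of the differences of the roots of the minimal polynomial of $\Gamma$ over $F'$ one reads off the levels $0 < r_0 < r_1 < \cdots$ (with $r_{d-1} = r_d = r := -\depth(\Gamma)$ in Case~I and $r_{d-1} < r_d = r$ in Case~II) and produces elements $\Gamma_i \in A$, each a polynomial in $\Gamma$, such that $\Gamma_i+\Gamma_{i+1}+\cdots+\Gamma_d$ is a ``depth $\ge -r_i$'' truncation of $\Gamma$, such that $\Gamma_i$ is $\GL_D(V)$-good of depth $-r_i$ for $i<d$, and such that $\Gamma_d$ is either $0$ (Case~I) or an $F'$-scalar (Case~II), the depth pattern being exactly (e)/(f) of \Cref{def:GD}.

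It remains to check that the $\Gamma_i$ lie in $\fgg$ and that the $\GL_D(V)$-depths agree with the $\fgg$-depths. For the first point one arranges the extraction of the $\Gamma_i$ to be equivariant for the involution $*$ of $A$ --- for instance by performing the truncations via the canonical, $*$-equivariant orthogonal projections of $A$ onto the $*$-stable subalgebras singled out by the level data, with respect to the nondegenerate trace form of the separable algebra $A$ --- so that $\Gamma_i^* = -\Gamma_i$ and hence $\Gamma_i \in A^{*=-1} = A\cap\fgg$. Being polynomials in the semisimple element $\Gamma\in A$, the $\Gamma_i$ commute and are semisimple, which gives (a); conditions (b)--(f) then hold by construction, using that $\fgg_{x,t} = \End_D(V)_{x,t}\cap\fgg$ under the embedding $\BTB{G}\hookrightarrow\BTB{\GL_D(V)}$, so that the depth of a semisimple element of $\fgg$ may be computed inside $\fgl_D(V)$; in particular the $F'$-scalar $\Gamma_d$ lies in $\Cent{}{\fgg}$. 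Finally \Cref{lem:GLgood} upgrades the $\GL_D(V)$-goodness of $\Gamma_0,\dots,\Gamma_{d-1}$ to $G$-goodness, which completes the proof. The only points that genuinely require care, as opposed to routine bookkeeping, are the $*$-equivariance of the construction (needed to place the $\Gamma_i$ in $\fgg$) and the identification of $\GL_D(V)$-depths with $\fgg$-depths; everything else follows formally once \Cref{lem:GLgood} is available.
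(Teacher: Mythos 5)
The overall strategy---run Howe's factorization inside $A = F'[\Gamma]$, then upgrade $\GL_D(V)$-goodness to $G$-goodness via \Cref{lem:GLgood}---matches the paper's, and you correctly identify that the crux is to show the pieces $\Gamma_i$ land in $\fgg$ and not merely in $\fgl_D(V)$. But your proposed mechanism for this step is not correct. You suggest performing the truncations ``via the canonical, $*$-equivariant orthogonal projections of $A$ onto the $*$-stable subalgebras singled out by the level data, with respect to the nondegenerate trace form.'' Howe's truncation $\Gamma^j\mapsto\hat\gamma_j$ is, however, not an $F'$-linear operation: by construction $\hat\gamma_j$ is a root of unity times a power of a fixed uniformizer, and the set of such elements is not an $F'$-linear subspace of $F_j$, so the assignment cannot be the restriction of any orthogonal projection. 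What actually supplies $*$-equivariance is a Hensel-type uniqueness. Fix a uniformizer $\varpi_F$, write $-r = k/e$ in lowest terms, set $b := \varpi_F^{-k}\Gamma^e$, and let $\hat{b}$ be the unique root of unity in $b+\fpp_A$. Since $p\nmid e$, the equation $\varpi_F^{-k}\gamma^e = \hat{b}$ has a \emph{unique} solution $\hat\gamma\in A$ subject to the congruence $\hat\gamma\equiv\Gamma$ modulo the appropriate power of $\fpp_A$. Then $\Gamma^* = -\Gamma$ gives $b^* = (-1)^e b$ and hence $\hat{b}^* = (-1)^e\hat{b}$, so $-\hat\gamma^*$ satisfies the same equation and congruence as $\hat\gamma$; uniqueness forces $\hat\gamma^* = -\hat\gamma$. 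That uniqueness argument, not a projection, is the ingredient your proposal is missing.

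A secondary error: you cannot in general set $\Gamma_{-1} := 0$. Howe's algorithm stops once the residual $\Gamma - \sum\beta_i$ reaches nonnegative depth, and that residual need not vanish (it absorbs the eigenvalue-blocks of $\Gamma$ with nonnegative valuation). One must take $\Gamma_{-1} := \Gamma - \sum_{i=0}^d\Gamma_i$, which lies in $A$ and has depth $\geq 0$, and that is all \Cref{def:GD}~(b) asks. Your digression about $G^0 = \Cent{G}{\Gamma}$ is also not needed for this proposition (it is part of the separate condition in \Cref{def:SC.D}), and in any case the inclusion you deduce from the $\Gamma_i$ being polynomials in $\Gamma$ runs the wrong way: it gives $\Cent{G}{\Gamma}\subseteq G^0$, not the equality.
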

The construction of the factorization is essentially the \emph{Howe factorization}.
\begin{proof}
  We fix a uniformizer $\varpiF$ of $F$.
  We recall that $A = F'[\Gamma] = \prod_{j \in \fJ} F_j$ is a product of fields. 

  First we assume that the product has only one factor, i.e. $\Gamma \neq 0$ and
  $A = F'[\Gamma]$ is a field.  Let~$e_A$ be the ramification index of $A/F$.
  Suppose $\Gamma$ has valuation $\frac{k}{e}$ such that $k$ and $e$ are
  coprime.  Consider $b := \varpiF^{-k}\Gamma^e$. Then
  $b \in \foo_A \setminus \fpp_A$.  Let $C$ be the set of roots of unity in
  $F'[\Gamma]$. Then $(b+\fpp_A) \cap C$ has a unique element, say $\hatbb$.
  \def\pmodAke{\pmod{\fpp_A^{\frac{k}{e}e_A+1}}}
\begin{claim} The equation $\varpiF^{-k}\gamma^e = \hatbb$ has a unique
  solution $\gamma = \hatgamma$ in $A$ such that
  $\Gamma \equiv \hatgamma \pmodAke$. Moreover $\hatgamma^* = -\hatgamma$, i.e. $\hatgamma \in \fgg$.
\end{claim}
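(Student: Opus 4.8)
The plan is to produce $\hatgamma$ by correcting $\Gamma$ by a principal unit. Set $c := \hatbb/b$; since $\hatbb \equiv b \pmod{\fpp_A}$ and both are units, $c \in 1 + \fpp_A$. Because $\Gamma$ is tamely ramified we have $p \nmid e$, hence $\val(e) = 0$, and Hensel's lemma applies to $X^{e} - c$ at $X = 1$ (indeed $\val(1-c) \geq 1/e_A > 0 = 2\val(e)$), yielding a unique $u \in 1+\fpp_A$ with $u^{e} = c$. I would then put $\hatgamma := \Gamma u$, so that $\hatgamma^{e} = \Gamma^{e} u^{e} = (\varpi_F^{k} b)\,c = \varpi_F^{k}\hatbb$. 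For the congruence, note that $\val(\Gamma) = k/e$ lies in the value group $\frac{1}{e_A}\bZ$ of $A$, so $\frac{k}{e}e_A$ is a non-negative integer and $\fpp_A^{\frac{k}{e}e_A + 1}$ consists of the elements of valuation $\geq \frac{k}{e} + \frac{1}{e_A}$; since $\hatgamma - \Gamma = \Gamma(u-1)$ has valuation $\geq \frac{k}{e} + \frac{1}{e_A}$, we obtain $\hatgamma \equiv \Gamma \pmod{\fpp_A^{\frac{k}{e}e_A + 1}}$. This establishes existence.

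For uniqueness, suppose $\gamma \in A$ also satisfies $\gamma^{e} = \varpi_F^{k}\hatbb$ and $\gamma \equiv \Gamma \pmod{\fpp_A^{\frac{k}{e}e_A+1}}$. Then $\gamma$ and $\hatgamma$ are nonzero of valuation $k/e$ and $\gamma - \hatgamma \in \fpp_A^{\frac{k}{e}e_A + 1}$, so $\gamma/\hatgamma \in 1 + \fpp_A$ while $(\gamma/\hatgamma)^{e} = 1$; as $1+\fpp_A$ is a pro-$p$ group and $p \nmid e$, it contains no nontrivial $e$-th root of unity, whence $\gamma = \hatgamma$.

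It remains to verify $\hatgamma^{*} = -\hatgamma$, and for this I would again use the uniqueness. Recall that $*$ restricts to an automorphism of the local field $A = F'[\Gamma]$ (so it preserves the valuation and $\fpp_A$), that $\Gamma^{*} = -\Gamma$, and that $\varpi_F^{*} = \varpi_F$ (as $\varpi_F \in F$ and $\tau$ is $F$-linear), so that $*$ commutes with multiplication by powers of $\varpi_F$. Then $b^{*} = \varpi_F^{-k}(\Gamma^{*})^{e} = (-1)^{e} b$, so $\hatbb^{*}$ is a root of unity in $A$ congruent to $(-1)^{e} b$ modulo $\fpp_A$; since $(-1)^{e}\hatbb$ is also such an element, the uniqueness of this representative — the same one asserted in the statement, valid because $p$ is large — forces $\hatbb^{*} = (-1)^{e}\hatbb$. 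Hence $-\hatgamma^{*}$ satisfies $(-\hatgamma^{*})^{e} = (-1)^{e}(\hatgamma^{e})^{*} = (-1)^{e}\varpi_F^{k}\hatbb^{*} = \varpi_F^{k}\hatbb$ and $-\hatgamma^{*} \equiv -\Gamma^{*} = \Gamma \pmod{\fpp_A^{\frac{k}{e}e_A+1}}$, so the uniqueness just proved gives $-\hatgamma^{*} = \hatgamma$, that is $\hatgamma^{*} = -\hatgamma$ and $\hatgamma \in \fgg$.

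The one slightly delicate point — a matter of care rather than a genuine obstacle — is the Hensel step: applied directly to the polynomial $X^{e} - \varpi_F^{k}\hatbb$ at $X = \Gamma$, Hensel's lemma would demand that the valuation of this polynomial at $\Gamma$ exceed twice that of its derivative, which fails once $e$ is large; factoring out $\Gamma$ and instead solving for the principal unit $c$ at $X = 1$ avoids this, and the remark that pro-$p$ groups have no $e$-torsion then does double duty, underlying both the uniqueness of $\hatgamma$ and the $*$-equivariance.
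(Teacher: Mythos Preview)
Your proof is correct and follows the same route as the paper's: correct $\Gamma$ by a principal unit via the bijectivity of the $e$-th power map on $1+\fpp_A$ (as $p\nmid e$), deduce uniqueness from the absence of $e$-torsion in that pro-$p$ group, and obtain $\hatgamma^{*}=-\hatgamma$ by applying uniqueness to $-\hatgamma^{*}$ after checking $\hatbb^{*}=(-1)^{e}\hatbb$. One cosmetic slip: $\tfrac{k}{e}e_A$ is an integer but need not be non-negative (typically $\val(\Gamma)<0$ here), though this is harmless to the argument.
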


\begin{proof}
  Note that $p \nmid e$. Hence the map
  $1+ \fpp_A \rightarrow b +\fpp_A = b(1+\fpp_A)$ given by
  $1+x \mapsto b(1+x)^e$ is a surjection (in fact it is a bijection) for any
  $b\in \foo_A\setminus \fpp_A$.  Therefore $\varpiF^{-k}\gamma^e = \hatbb$ has
  a solution $\hatgamma = \Gamma(1+x)$ for certain $x\in \fpp_A$.  On the other
  hand, all solutions of $\varpiF^{-k}\gamma^e = \hatbb$ are of the form
  $\gamma:=c\hatgamma$ such that $c^e=1$.  However among them
  only $\hatgamma$ satisfies $\Gamma \equiv \gamma \pmodAke$.  This proves the
  first assertion of the claim.

  Both $\hatgamma^*$ and $-\hatgamma$ are solutions of
  $\varpiF^{-k}\gamma^e = \hatbb^* = (-1)^e\hatbb$ and
  $\hatgamma^* \equiv \Gamma^* \equiv -\Gamma \equiv -\hatgamma \pmodAke$. Hence
  $\hatgamma^* = -\hatgamma$ by the uniqueness of the first part. This finishes
  the proof of the claim
\end{proof}

Next we consider the general case when $F'[\Gamma ] = A = \prod_{j \in \fJ} F_j$
 is a product of fields.  
Let~$\Gamma^j$ denote the $F_j$ component of $\Gamma$ to $F_j$. Let $-r_j$ 
denote the valuation of $\Gamma^j$ and let
$-r = \frac{k}{e}$ denote the depth of $\Gamma$ where $k$ and $e$ are coprime
integers. Let $b_j = \varpiF^{-k} \Gamma_j^e \in \foo_{F_j} \setminus \fpp_{F_j}$ when
$r_j = r$. Define $\hatbb_j$ and $\hatgamma_j$ as before. Let $\beta_1 = \sum_{r_j = r} \hatgamma_j$.

\begin{claim}
The element $\beta_1$ is $\GL_D(V)$-good. 
\end{claim}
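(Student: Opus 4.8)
The plan is to compute the root values $d\alpha(\beta_1)$ directly from the eigenvalues of $\beta_1$, using the defining relation $\varpi_F^{-k}\hatgamma_j^{\,e}=\hatbb_j$ together with the fact that $\hatbb_j$ — being the Teichm\"uller representative of $\overline{b_j}\in\fff_{F_j}^{\times}$ — is a root of unity of order prime to $p$, so that the same holds for every $\bar F$-conjugate of $\hatbb_j$.

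First I would record the basic features of $\beta_1$. It lies in $A=F'[\Gamma]=\prod_j F_j$, hence is semisimple and tamely ramified, with $\beta_1^{*}=-\beta_1$ (each $\hatgamma_j$ already satisfies $\hatgamma_j^{*}=-\hatgamma_j$ by the preceding claim). Its eigenvalues on $V\otimes_F\bar F$ are $0$ (with some multiplicity, coming from the $V_j$ with $r_j<r$ if there are any) together with the $F$-conjugates $\sigma(\hatgamma_j)$ of $\hatgamma_j$ for the indices $j$ with $r_j=r$. Since $\hatgamma_j^{\,e}=\varpi_F^{\,k}\hatbb_j$ with $\hatbb_j$ a unit, we get $\val(\hatgamma_j)=k/e=-r$, and as $r=-\depth(\Gamma)>0$ the smallest valuation occurring among the eigenvalues of $\beta_1$ is $-r$; thus $\depth(\beta_1)=-r$.

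Next I would pass to a tamely ramified extension $E$ of $F$ splitting $D$ and all the $F_j$, over which $\beta_1$, lying in a maximal torus of $\GL_D(V)$, becomes diagonalisable; then for any root $\alpha$ of $\fgl_D(V)\otimes_F E$ the value $d\alpha(\beta_1)$ is a difference $\mu-\mu'$ of two of the eigenvalues listed above, and it remains to see that each such difference is $0$ or has valuation $-r$. If $\mu=\mu'$ it is $0$; if exactly one of $\mu,\mu'$ is $0$ the difference has valuation $-r$. The remaining, and only substantive, case is $\mu=\sigma(\hatgamma_j)$, $\mu'=\sigma'(\hatgamma_{j'})$ both nonzero (so $r_j=r_{j'}=r$, possibly with $j=j'$) and $\mu\neq\mu'$. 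Here I would set $\zeta:=\mu/\mu'$ and note that $\zeta^{\,e}=\sigma(\hatbb_j)/\sigma'(\hatbb_{j'})$ is a root of unity of order prime to $p$; since $p\nmid e$ (recall $\Gamma$ is tamely ramified), $\zeta$ is itself a root of unity of order prime to $p$. Being $\neq 1$, its image in the residue field is a nontrivial root of unity of order prime to $p$, hence $\zeta\not\equiv 1\pmod{\fpp}$ and $\val(\zeta-1)=0$; therefore $\val(\mu-\mu')=\val(\mu')+\val(\zeta-1)=-r=\depth(\beta_1)$.

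Putting the three cases together shows that every root value $d\alpha(\beta_1)$ is zero or has valuation $\depth(\beta_1)$, which is exactly the assertion that $\beta_1$ is $\GL_D(V)$-good (and then by \Cref{lem:GLgood} it is also $G$-good). The hard part is the last case: two distinct ``deepest'' eigenvalues of $\beta_1$ must differ by an element of valuation exactly $-r$, and this is forced precisely because $\varpi_F^{-k}\hatgamma_j^{\,e}$ was arranged to be a prime-to-$p$ root of unity. Everything else — the eigenvalue description, the identity $\depth(\beta_1)=-r$, the base change to a splitting field, and the observation that the $\hatbb_j$ and their conjugates have order prime to $p$ — is routine.
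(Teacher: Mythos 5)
Your proof is correct and follows the same route as the paper: base change to a tamely ramified splitting field, observe that nonzero eigenvalues of $\beta_1$ are of the form (prime-to-$p$ root of unity)$\times\varpi_E^k$, and conclude that differences of eigenvalues are $0$ or of valuation $-r$. Your write-up is a bit more careful than the paper's — you make the three cases (both eigenvalues zero, exactly one zero, both nonzero and distinct) explicit and supply the reduction-mod-$\fpp$ argument that distinct prime-to-$p$ roots of unity have distinct residues, which the paper leaves implicit.
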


\begin{proof}
  We could base change to a tamely ramified extension $E$ of $F$ which at least
  contains a $e$-th root $\varpi_E$ of $\varpiF$ and sufficiently many roots of
  unity so that $\beta_1$ splits over $E$.

  Each factor $\hatgamma_j$ satisfies $\varpiF^{-k} \hatgamma_j^e = \hatbb_j$
  and $\hatbb_j$ is a certain root of unity. The eigenvalue of $\beta_1$ is of
  the form $\zeta_{l} \varpi_E^{k}$ where $\zeta_l$ is a root of unity.  The
  evaluation of $\beta_1$ on any root (with respect to any split maximal torus
  in $\fgl(V)\otimes_F E$ containing $\beta_1$) is a difference of two
  eigenvalues.  It is either $0$ or has valuation $\frac{k}{e} = -r$.  This
  proves the claim.
\end{proof}

Let $\Gamma' = \Gamma - \beta_1$.  Apply the same construction to $\Gamma'$ and
stop if $\depth(\Gamma') \geq 0$. Note that $\depth(\Gamma')> \depth(\Gamma)$.
Moreover the denominator of $\depth(\Gamma')$ is bounded by the maximum of the
ramification index of $F_j/F$, since $\Gamma' \subseteq F'[\Gamma]$. Therefore
the procedure stops after finitely many steps.  We get
$\beta_1, \beta_2, \cdots, \beta_{d'}$ for certain $d' \geq 1$.  If
$\beta_1 \not \in F'$, then we define $d = d'$, $\Gamma_d = 0$ and
$\Gamma_{d-i} = \beta_i$ for $1\leq i\leq d$.  If $\beta_1 \in F'$, then we
define $d = d'-1$, $\Gamma_d = \beta_1$ and $\Gamma_{d-i} = \beta_{1+i}$ for
$1\leq i \leq d$.  Finally
we set $\Gamma_{-1} = \Gamma - \sum_{i=0}^d \Gamma_i$.

Now $\Gamma = \sum_{i=-1}^d \Gamma_i$ is the required factorization.  
\end{proof}


\def\lid{{}^l \bfone}

\subsection{Block decomposition}  \label{sec:BD}
Let $\Sigma = (x,\Gamma, \phi, \rho)$ be a supercuspidal data of $G =\rU(V)$. 

\def\jval{\val}
Let $\Gamma^j$ be the $F_j$ component of $\Gamma$ in $\prod_{j \in \fJ} F_j$ in
\Cref{sec:Efactor}~\mbox{(A1)}.  Let 
\begin{equation}\label{eq:Gamma.j}
  \set{\bbr >  \cdots > \oor} = \set{-\val(\Gamma^j)>0 | j\in \fJ}
\end{equation}
be a set of positive numbers arranged in decreasing order where $b$ is the size
of the set. 
We set 
\[
\llA := \prod_{\jval(\Gamma^j) = -\llr} F_j\qquad \forall 1\leq l\leq b.
\]
For $l = 0$, we define $\zzA := \prod_j F_j$ where product is taken over those
$j \in \fJ=\fJ_0$ such that  $\Gamma^j = 0$ or $\jval(\Gamma^j) \geq 0$.

Let $\bfone$ and $\lid$ be the multiplicative identity elements 
of $A$ and $\llA$ respectively. 
Then we set ${}^l \Gamma = \lid \cdot \Gamma \cdot \lid$ and $\llV = \lid 
\cdot V$.
These give
\begin{equation} \label{eq:BD.Ga} A = F'[\Gamma] = \prod_{l=0}^b \llA, \quad
  \bfone = \bigoplus_{l=0}^b \lid, \quad \Gamma = \bigoplus_{l=0}^b
  \llGamma\quad 
  \text{and}\quad V = \bigoplus_{l=0}^b \llV.
\end{equation}


It is easy to see that $\innv{}{}$ restricted to $\llV$ is non-degenerate and $V = \bigoplus_{l=0}^b \llV$ is an orthogonal decomposition\footnote{If we set $\Gamma_{-1}= 0$, then $\zzV$ is the kernel of $\Gamma$.}.
We call \eqref{eq:BD.Ga} the \emph{block decompositions} of $A$, $\bfone$, $\Gamma$ and $V$ respectively.

\begin{definition} \label{def:BD0}
\begin{asparaenum}[(a)]
\item We say that a supercuspidal datum $\Sigma = (x,\Gamma, \phi, \rho)$ is a
  \emph{single block of positive depth $r$} if $\Gamma^j$  has the same valuation $-r<0$ for all
  $j\in \fJ$ in \Cref{sec:Efactor}~\mbox{(A1)}. Equivalently this means
  $\depth(\Gamma)= -r$, $\Gamma = \ooGamma$ and $V = \ooV$ in
  \cref{eq:BD.Ga}.
 
\item A depth zero supercuspidal datum $\Sigma = (x, \Gamma, \bfone, \rho)$ is
  called a \emph{single block of zero depth}.
\end{asparaenum}
\end{definition}

\begin{prop} \label{prop:decSigma} 
Let $\Sigma = (x,\Gamma, \phi, \rho)$ be a supercuspidal datum of $G$.
Then there exists an orthogonal decomposition of $V = \bigoplus_{l=0}^b \llV$ such that 
\begin{enumerate}[(i)]
\item \label{it:BDD.1} $\Gamma = \bigoplus_{l=0}^b  \llGamma$ where $\llGamma \in \End_D(\llV) \cap \fgg$;
\item \label{it:BDD.2} $\bbr > \cdots > \oor>0=\zzr$ where $\llr = \max\set{
      -\depth(\llGamma),0}$;

\item \label{it:BDD.3} $G^0 = \prod_{l = 0}^b \llG^0$; 

\item \label{it:BDD.4} $x  = (\zzxx, \cdots, \bbxx) \in \BTB{\zzG^0}\times \cdots \times \BTB{\bbG^0} \hookrightarrow \BTB{G}$;

\item \label{it:BDD.5} $\phi = \zzphi \boxtimes \dots \boxtimes \bbphi$ and $\rho =
  \zzrho\boxtimes \cdots \boxtimes \bbrho$ as $G^0_x = \prod_{l=0}^b
  \llG^0_{\llxx}$-modules;
	
\item  $\llSigma = (\llxx,\llGamma, \llphi, \llrho)$ is a single block supercuspidal datum of positive
  depth $\llr$ for $1 \leq l \leq b$; and 
\item $\zzSigma = (\zzxx, \zzGamma, \zzphi, \zzrho)$ is a single block
  supercuspidal datum of zero depth.
\end{enumerate}
Here $G^0 := \Cent{G}{\Gamma}$, $\llG := G\cap \End_D(\llV) = \rU(\llV)$ and $\llG^0 =
\Cent{\llG}{\llGamma}$ for $0 \leq l \leq b$ (cf.  \Cref{sec:Efactor}). 
\end{prop}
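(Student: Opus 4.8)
The plan is to take the block decomposition $V=\bigoplus_{l=0}^{b}\llV$ already produced in \eqref{eq:BD.Ga} of \Cref{sec:BD} and check, item by item, that it enjoys the asserted properties; everything rests on \Cref{sec:Efactor}, \Cref{lem:aniso} and the fact that all the groups in sight become direct products. First, since $\Gamma$ is contained in a supercuspidal datum, \Cref{lem:aniso} together with the anisotropy condition in \Cref{def:SC.D} forces $\fJ_1=\emptyset$, so the $*$-involution acts on each field factor $F_j$ of $A=F'[\Gamma]$ and, because $\Gamma^*=-\Gamma$, preserves the valuation $\val(\Gamma^j)$; hence each idempotent $\lid$ is $*$-invariant. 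This at once gives (i): $\llV=\lid\cdot V$ is non-degenerate, $V=\bigoplus_l\llV$ is an orthogonal decomposition, and $\llGamma=\lid\,\Gamma\,\lid$ satisfies $(\llGamma)^*=-\llGamma$, i.e. $\llGamma\in\End_D(\llV)\cap\fgg$. For (ii) one observes that by construction every $F_j$-component of $\llGamma$ has valuation exactly $-\llr$ for $1\le l\le b$, while every $F_j$-component of $\zzGamma$ has valuation $\ge 0$; a short computation with self-dual lattice functions (equivalently, after base change to a tamely ramified splitting field of $\llGamma$) then shows $\depth(\llGamma)=-\llr<0$ and $\depth(\zzGamma)\ge 0$, so that $\llr=\max\set{-\depth(\llGamma),0}$ and $\zzr=0$, and the chain $\bbr>\cdots>\oor>0=\zzr$ is exactly \eqref{eq:Gamma.j}.

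Next I would establish the product structure. Any $g\in G^0=\Cent{G}{\Gamma}$ commutes with $A=F'[\Gamma]$, hence with every $\lid$, hence stabilises each $\llV$; being an isometry of the orthogonal sum it restricts to an isometry of $\llV$ commuting with $\llGamma$, and conversely. Therefore $G^0=\prod_{l=0}^{b}\Cent{\llG}{\llGamma}=\prod_{l=0}^{b}\llG^0$, and likewise for identity components; in particular anisotropy of the centre of the identity component of $G^0$ forces the same for each block, and $\BTB{G^0}=\prod_{l}\BTB{\llG^0}$. This proves (iii), and writing $x=(\zzxx,\dots,\bbxx)$ under this product decomposition gives (iv): since $G^0_x=\prod_l\llG^0_{\llxx}$ with identity component equal to the product of the identity components, $x$ is a vertex precisely when each $\llxx$ is a vertex of $\BTB{\llG^0}$. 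For (v): $G^0_x=\prod_l\llG^0_{\llxx}$ is a direct product of (pro-)finite groups, so the character $\phi$ is an outer product $\boxtimes_l\llphi$, and the irreducible module $\rho$ of the finite group $\sfG^0_x=\prod_l\llG^0_{\llxx}/\llG^0_{\llxx,0^+}$ is an outer tensor product $\boxtimes_l\llrho$ of irreducibles; restricting $\phi|_{G^0_{x,0^+}}=\psi_\Gamma|_{G^0_{x,0^+}}$ to the $l$-th factor and using $\bB(\log g,\Gamma)=\bB(\log g,\llGamma)$ for $g\in\llG^0_{\llxx,0^+}$ (the form $\bB$ kills cross terms between distinct blocks) yields $\llphi|_{\llG^0_{\llxx,0^+}}=\psi_{\llGamma}|_{\llG^0_{\llxx,0^+}}$, and $\rho$ is cuspidal if and only if each $\llrho$ is, since parabolic subgroups of a direct product are products of parabolics.

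Finally, for (vi) and (vii) I would assemble the above to verify that each $\llSigma=(\llxx,\llGamma,\llphi,\llrho)$ satisfies conditions (a)--(e) of \Cref{def:SC.D} for $\llG=\rU(\llV)$: a good factorization of $\llGamma$ is obtained by restricting one of $\Gamma$ (which exists by \Cref{prop:GD}) to $\llV$ and discarding the blocks that become zero, each surviving block being $\GL_D(\llV)$-good, hence $\llG$-good by \Cref{lem:GLgood}; the anisotropy and vertex conditions were just checked, and so were the conditions on $\llphi$ and $\llrho$. For $1\le l\le b$ every $F_j$-component of $\llGamma$ has the common negative valuation $-\llr$, so $\llSigma$ is a single block of positive depth $\llr$ in the sense of \Cref{def:BD0}~(a). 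For $l=0$, $\zzGamma$ has depth $\ge 0$, so by the good factorization it lies in $\Cent{}{\zzfgg}$; passing to the equivalent datum with trivial third entry (absorbing $\zzphi$ into $\zzrho$) exhibits $\zzSigma$ as a single block of zero depth in the sense of \Cref{def:BD0}~(b). The one genuinely non-formal point is the depth computation in (ii) together with the verification that restrictions of a good factorization of $\Gamma$ really are good factorizations of the $\llGamma$; both rest on the description of the Moy--Prasad filtrations of $\fuu(V)$ by self-dual lattice functions combined with \Cref{sec:Efactor}, while the remaining steps are routine bookkeeping with direct products.
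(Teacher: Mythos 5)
Your proof is correct and takes exactly the route the paper takes: start from the block decomposition of \eqref{eq:BD.Ga}, observe that the idempotents $\lid$ lie in $F'[\Gamma]$ (hence are $*$-invariant once $\fJ_1=\emptyset$ by \Cref{lem:aniso} and \Cref{def:SC.D}~(b)), and verify the seven properties one by one using the resulting direct-product structure of $G^0$, its building, and $G^0_x/G^0_{x,0^+}$. The paper's own proof records only the single non-immediate point, namely that (iii) follows from $\lid\in F'[\Gamma]$, and labels the rest straightforward; you supply the straightforward verifications explicitly, including the detail that $\zzSigma$ should be read up to the equivalence that absorbs $\zzphi$ into $\zzrho$ to match \Cref{def:BD0}~(b), which is the same implicit convention the paper uses.
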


\begin{proof}
The proposition is straightforward probably except \cref{it:BDD.3}.
Part \cref{it:BDD.3} follows easily from the fact that $\lid \in F'[\Gamma]$ for all $1\leq
l\leq b$ (cf. \eqref{eq:BD.Ga}).  
\end{proof}

Motivated by the above proposition, we make the following definition.

\begin{definition} \label{def:BD}
Retaining the notation in \Cref{prop:decSigma}, we write $\Sigma = \bigoplus_{l=0}^b \llSigma$ and we call it
  \emph{the block decomposition} of $\Sigma$ (cf. next section). In this
    situation, we say that
  $\Sigma$ has $b$ blocks (by counting from $0$).
\end{definition}

In fact, the block decomposition is unique. See \cite[Remark~3.3~(iii)]{S05} for
an elementary argument.

\subsection{Direct sum of supercuspidal data}\label{sec:DS.D}
We now define the direct sum of single block
supercuspidal data with different depths. 

\begin{lemma}\label{lem:DS.D}
Suppose $b$ is a positive integer and $\set{\llSigma:0\leq l\leq b}$ is a set of supercuspidal data
  for $\rU(\llV)$ such that
  \begin{enumerate}[(a)]
	\item $\llV$ is an $\epsilon$-Hermitian space;
  \item $\zzSigma$ has zero depth;
  \item $\llSigma$ is single block of positive depth $\llr$ for $1\leq l\leq b$;
  \item $\bbr > \cdots >  \oor >0$.
  \end{enumerate}
  Let $V :=\bigoplus_{l=0}^b \llV$ be the orthogonal direct sum of
  $\epsilon$-Hermitian spaces.  Define $x,\Gamma, \phi$ and $\rho$ by the
  formula in \Cref{prop:decSigma}~\cref{it:BDD.1,it:BDD.4,it:BDD.5}
  respectively. 

  Then $\Sigma = (x,\Gamma, \phi,\rho)$ is a supercuspidal
  datum with depth $\bbr$ for $\rU(V)$ called the direct sum of $\set{\llSigma}$
  and we write $\Sigma = \bigoplus_l\llSigma$.
\end{lemma}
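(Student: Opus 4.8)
The plan is to verify that $\Sigma=(x,\Gamma,\phi,\rho)$ meets each of the requirements (a)--(e) of \Cref{def:SC.D}, in every case reducing the statement for $\Sigma$ on $\rU(V)$ to the one already known for the single block datum $\llSigma$ on $\rU(\llV)$ through the orthogonal decomposition $V=\bigoplus_{l=0}^{b}\llV$. First one records the elementary facts that make the decomposition block-diagonal: since the decomposition is orthogonal, the adjoint $X\mapsto X^{*}$ on $\End_{D}(V)$ preserves block-diagonal endomorphisms and restricts there to the direct sum of the adjoints on the $\End_{D}(\llV)$, so $\Gamma=\bigoplus_{l}\llGamma$ is a semisimple element of $\fuu(V)=\fgg$; and, choosing for each $l$ a maximal torus of $\Cent{\rU(\llV)}{\llGamma}$ that splits over a tamely ramified extension of $F$, the product of these is a maximal torus of $\Cent{G}{\Gamma}$, hence of $G$, containing $\Gamma$ and split over a tamely ramified compositum, so $\Gamma$ is tamely ramified. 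I would then build a good factorization of $\Gamma$ from chosen good factorizations $\llGamma=\sum_{i=-1}^{{}^{l}d}{}^{l}\Gamma_{i}$ of the blocks (which exist by \Cref{prop:GD}, normalized so that ${}^{l}\Gamma_{-1}\in F'[{}^{l}\Gamma_{{}^{l}d},\dots,{}^{l}\Gamma_{0}]$ because each $\llSigma$ is a datum) by reindexing all the pieces by strictly decreasing depth and grouping into a single term the block-diagonal sum of all pieces of a common depth. Each such term stays $\GL_{D}(V)$-good: as in the proof of \Cref{prop:GD} the pieces at a fixed depth have eigenvalues of the shape (a root of unity of order prime to $p$) times (a fixed power of a uniformizer), and distinct such roots of unity remain distinct modulo $\fpp$, so eigenvalue differences across blocks still have the correct valuation; hence each term is $G$-good by \Cref{lem:GLgood}, $\Gamma_{-1}=\bigoplus_{l}{}^{l}\Gamma_{-1}$ has nonnegative depth, and we obtain a good factorization in the sense of \Cref{def:GD}.

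The structural core is the identity
\[
G^{0}:=\Cent{G}{\Gamma}\;=\;\prod_{l=0}^{b}\Cent{\rU(\llV)}{\llGamma}\;=\;\prod_{l=0}^{b}\llG^{0},
\]
which holds precisely because the depths $\bbr>\cdots>\oor>0=\zzr$ are pairwise distinct: the $\llV$ are then exactly the sums of the $\Gamma$-eigenspaces grouped by the valuation of the eigenvalue (with $\zzV$ collecting the zero eigenvalue and the eigenvalues of valuation $\geq 0$), so each $\llV$ is canonically $\Gamma$-stable, hence stable under every element of $\Cent{G}{\Gamma}$, the reverse inclusion being clear. The same distinctness makes the $\llV$ stable under the group $\Cent{G}{\Gamma_{0},\dots,\Gamma_{d}}$ built from the factorization above, so that $\Gamma_{-1}=\bigoplus_{l}{}^{l}\Gamma_{-1}\in F'[\Gamma_{0},\dots,\Gamma_{d}]$ descends from the block normalizations; equivalently $\Cent{G}{\Gamma}$ equals this group, which is \Cref{def:SC.D}(a). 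Everything else then factorizes: $\circG^{0}=\prod_{l}{}^{\circ}(\llG^{0})$, so $\Cent{}{\circG^{0}}=\prod_{l}\Cent{}{{}^{\circ}(\llG^{0})}$ is a product of anisotropic tori, hence anisotropic, which is (b); $\BTB{G^{0}}=\prod_{l}\BTB{\llG^{0}}$, and $x=(\zzxx,\dots,\bbxx)$ is a vertex since each $\llxx$ is, which is (c); and $G^{0}_{x}=\prod_{l}\llG^{0}_{\llxx}$, $G^{0}_{x,0^{+}}=\prod_{l}\llG^{0}_{\llxx,0^{+}}$, $\sfG^{0}_{x}=\prod_{l}\sfG^{0}_{\llxx}$.

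For (d), $\phi=\zzphi\boxtimes\cdots\boxtimes\bbphi$ is a character of $G^{0}_{x}$; for $g=\prod_{l}{}^{l}g$ with ${}^{l}g\in\llG^{0}_{\llxx,0^{+}}$ we have $\log(g)=\bigoplus_{l}\log({}^{l}g)$, so $\bB(\log(g),\Gamma)=\sum_{l}\bB(\log({}^{l}g),\llGamma)$ by additivity of the reduced trace on block-diagonal matrices, and since $\llphi|_{\llG^{0}_{\llxx,0^{+}}}=\psi_{\llGamma}|_{\llG^{0}_{\llxx,0^{+}}}$ for every $l$, this gives $\phi|_{G^{0}_{x,0^{+}}}=\psi_{\Gamma}|_{G^{0}_{x,0^{+}}}$. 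For (e), $\rho=\zzrho\boxtimes\cdots\boxtimes\bbrho$ is an irreducible $\sfG^{0}_{x}$-module, and it is cuspidal because every proper parabolic of a direct product $\prod_{l}\sfG^{0}_{\llxx}$ is a product of parabolics proper in at least one factor and Harish-Chandra induction commutes with direct products, so $\boxtimes_{l}\llrho$ occurs in no representation induced from a proper parabolic once each $\llrho$ is cuspidal. Finally $\depth(\Gamma)=\min_{l}\depth(\llGamma)=\min_{l}(-\llr)=-\bbr$, whence $\depth(\Sigma)=\max\set{\bbr,0}=\bbr$. I expect the one genuinely delicate step to be the good-factorization bookkeeping of the first paragraph --- that grouping equal-depth pieces from different blocks remains $\GL_{D}(V)$-good, and that the normalization $\Gamma_{-1}\in F'[\Gamma_{0},\dots,\Gamma_{d}]$ really descends --- both of which rely on the strict ordering $\bbr>\cdots>\oor>0$ preventing the blocks from interacting; the remaining verifications are routine block-diagonal bookkeeping.
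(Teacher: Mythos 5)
Your proof is correct and takes essentially the same approach as the paper's, whose entire argument consists of stating the key identity $G^{0}=\prod_{l}\llG^{0}$, justifying it by the same observation that after a suitable base change each $\llV$ is the sum of the $\Gamma$-eigenspaces with eigenvalues of a fixed valuation, and then leaving the remaining block-diagonal verifications to the reader. The one spot deserving a touch more care in your assembly of the good factorization is that, once you group the block-level pieces by depth, the resulting top-depth piece is supported only on the blocks achieving that maximal depth and is therefore not central in $\fgg$ when there is more than one block, so one lands in Case~I of \Cref{def:GD} with $\Gamma_{d}=0$ and must reindex accordingly; this is purely bookkeeping, consistent with the delicacy you already flagged.
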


\begin{remark}
This construction also induces a notion of direct sum on the set of equivalence classes of data.
\end{remark}

\begin{proof}
  We recall $G^0 = \Cent{\rU(V)}{\Gamma}$ and $\llG = \rU(\llV)$ as in
  \Cref{prop:decSigma}. We claim that $G^0 = \prod_{l}\llG^0$. Indeed this
  follows from the observation that, after a certain base change, $\llV$ is
  exactly the direct sum of eigenspaces of $\Gamma$ whose eigenvalues have
  valuation $-\llr$ if $1\leq l\leq b$. The lemma now follows from this claim
  and we will leave the details to the reader.
\end{proof}

\section{Theta Lifts of supercuspidal data}\label{sec:LD}
The purpose of this section is to define the notion of theta lifts of
supercuspidal data. 
We first define the lift for a single block of zero depth or positive depth. 
The general case is defined using direct sum (cf. \Cref{sec:DS.D}).

\def\epsilonD{\epsilon_D}
Recall that $\fffF$ is the residual field of
$F$ and $\fffD$ is the residual field of $D$ which is at most a quadratic
extension of $\fffF$. We fix an uniformizer $\varpi_D\in \fppD$ such that $\tau(\varpi_D)
= \epsilonD\varpi_D$ and $\epsilonD \in \set{\pm 1}$. We retain the notation of \Cref{sec:DP}.

\subsection{Theta lifts of depth zero representations}\label{sec:TL.zero}
Local theta lifts between depth zero supercuspidal representations were studied by
Pan in \cite{Pan02J}.  It is reduced to theta correspondences over finite fields.
We summarize these results below.

\subsubsection{}\label{sec:GoodL}
We recall the definition of the dual lattice in \Cref{def:latticefn}.  A lattice
$L$ in $V$ is called a \emph{good lattice} if
$L^* \fpp_D \subseteq L \subseteq L^*$. 

The set of vertices in $\BTB{G}$ naturally corresponds to a subset of
  good lattices. \footnote{Suppose $x$ is a vertex in $\BTB{G}$ and $\sL$ is the
    corresponding lattice function, then $\sL\mapsto \sL_{0^+}$ gives the
    correspondence. The subset of good lattices could be proper, see
    \cite[Example~2.2.3.1]{YuB}.}


Let $L$ be a good lattice in $V$ corresponding to a vertex $x\in \BTB{G}$.
Then 
\begin{enumerate}[(a)]
\item $G_x = \set{ g \in G | g L = L }$. 

\item $G_{x,0^+} = \set{g \in G  | (g-1) L^* \subseteq L, (g-1) L\subseteq
    L^*\fpp_D }$.
\item The $\fffD$-modules $\ell := L/ L^* \fppD$ and $\ell^* := L^*/L$ are equipped with
  $\fffD$-sesquilinear forms induced by $\varpi_D^{-1}\innv{}{}$ and $\innv{}{}$
  respectively. 
  Clearly $\ell$ is $\epsilon_D\epsilon$-Hermitian and $\ell^*$ is $\epsilon$-Hermitian.
\item We have $G_{x}/G_{x,0^+} \cong \rU(\ell)\times \rU(\ell^*)$.
\end{enumerate}
 
The Witt classes of $\ell$ and $\ell^*$ are completely determined by the Witt
class of $V$ but independent of the choices of $L$ in $V$.  Indeed, the
anisotropic kernel of the Witt class of $\ell$ (resp. $\ell^*$) is equal to
$L_{\min}/ L_{\min}^* \fppD$ (resp. $L_{\max}^*/L_{\max}$) where $L_{\min}$
(resp. $L_{\max}$) is a minimal (resp. maximal) good lattice.

Let $\cT$ be a Witt class of $\epsilon$-Hermitian $D$-modules. 
Let $\sT$ and $\sT^*$ be the corresponding Witt classes of $\ell$ and $\ell^*$ determined by $\cT$. Then it is clear that there is a map 
\begin{equation}\label{eq:TTT}
\vcenter{
\xymatrix@R=0em@C=0em{
\Upsilon \colon& \sT\times
\sT^* \ar@{->>}[rr]&\hspace{2em} &  \cT \\
&(\ell,\ell^*) \ar@{|->}[rr] && V
}}
\end{equation}
such that $\dim_\fffD \ell + \dim_\fffD \ell^* = \dim_D V$, and $\ell$ and
$\ell^*$ are constructed from a vertex $x\in \BTB{\rU(V)}$.

\def\tsfG{\widetilde{\sfG}}

\begin{definition}\label{def:DZ.D}
\begin{asparaenum}[(a)]
\item \label{it:Z.type}
We say that a pair $(x,\rho)$ is a \emph{depth zero $K$-type} for $G := \rU(V)$ when $x$ is a
vertex in $\BTB{G}$ and $\rho$ is an irreducible $\sfG_x := G_x/G_{x,0^+}$-module.
 
\item Suppose  $\wtG$ is a certain central $\bC^\times$-covering of $G$ in (a).
  We say that a pair $(x,\trho)$ is a \emph{depth zero $K$-type} for $\wtG$ when $x$
  is a vertex in $\BTB{G}$ and $\trho$ is an irreducible 
$\tsfG_x := \wtG_x/G_{x,0^+}$-module.

\item We equip an equivalence relation on the set of depth zero $K$-types  by
  $G$-conjugacy.

\item \label{it:tZ.type} A $\wtG$-module $\tpi$ is said to have a depth zero
  $K$-type $(x,\trho)$ if $\trho$ occurs in $\tpi|_{\wtG_x}$.
\end{asparaenum}
\end{definition}

We warn that the depth zero $K$-type in \cref{it:Z.type} and \cref{it:tZ.type}
above is more general than the depth zero minimal $K$-type in \cite{MP1,MP2}
where~$\rho$ must be cuspidal.

Clearly, a depth zero supercuspidal datum $\Sigma = (x,0, \bfone, \rho)$
corresponds to the depth zero $K$-type $(x,\rho)$.  This gives an
embedding of the set of equivalence classes of depth zero supercuspidal datum to
the set of equivalence classes of depth zero $K$-types.  The image
precisely consists of the equivalence classes $[(x,\rho)]$ where $\rho$ is
cuspidal.

\subsubsection{}
We review some basic results of theta correspondences over a finite field. Let
$(\rU(\ell),\rU(\ell'))$ be a type~I reductive dual pair in a symplectic
$\fffF$-space $\bfW$ and let $\bomega_\bfW$ be the oscillator representation of
$\bfSp(\bfW)$ with respect to the additive character $\bpsi$
(cf. \Cref{sec:notation}).
  
\begin{definition}\label{def:theta.f}
  Let $\rho$ and $\rho'$ be irreducible representations of $\rU(\ell)$ and
  $\rU(\ell')$ respectively. Then $\rho$ and $\rho'$ are said to correspond with
  each other if $\rho\otimes \rho'$ is a submodule of
  $\bomega_\bfW|_{\rU(\ell)\times \rU(\ell')}$.  Such correspondence is not
  one-to-one in general, so we can only say that $\rho'$ is \emph{``a'' theta
    lift } of $\rho$.
\end{definition}

  If $\rho$ is cuspidal, then there is at most one $\rho'$ which
  corresponds to $\rho$. In particular, when restricted to cuspidal
  representations, theta lift still provide an one-to-one correspondence
  (cf. \cite[Section~3.IV.4]{MVW}).  In this case, we write
  $\rho' = \theta_{\ell,\ell'}(\rho)$.  

\subsubsection{} 
In the definition of lift of data, a zero dual pair (i.e. $\ell = 0$ or $\ell' = 0$)
may occur as the degenerate case\footnote{Similar notion also applies to dual pairs over other
  fields.}. 
\begin{definition}\label{def:DP.Z}
  A type~I reductive dual pair $(\rU(\ell),\rU(\ell'))$ defined over the field
  $\fffF$ is called a \emph{zero dual pair} if $\ell$ or $\ell'$ is the zero
  vector space.
\end{definition}
In this case, $\bfSp(\ell\otimes_\fffD \ell')$ degenerates to the trivial group
and the corresponding oscillator representation is the trivial representation.
Since the roles of $\ell$ and $\ell'$ are symmetric, we will assume that $\ell =
0$. Then $\rU(\ell)$ is the trivial group and it has only one
representation, namely, the trivial representation $\pi:=\bfone_{\rU(\ell)}$.
Now $\theta_{\ell,\ell'}(\pi)$ is the trivial representation of $\rU(\ell')$.
Note that, the trivial representation of $\rU(\ell')$ is cuspidal if and only if
$\ell'$ is anisotropic.

\subsubsection{} 
Fix a Witt tower $\cT'$ of $\epsilon'$-Hermitian $D$-modules. The discussion in
\Cref{sec:GoodL} also applies to $\cT'$ and we add primes $'$ to extend the
corresponding notations.

We fix $V'\in \cT'$ and a vertex $x'\in \BTB{G'}$.  Then
$(\rU(\ell),\rU(\ell'^*))$ and $(\rU(\ell^*), \rU(\ell'))$ form two reductive
dual pairs over the finite field $\fffD$. 
Here the zero dual pair may appear.

\def\llll{\diamond} 
\begin{definition} \label{def:liftzerodatum} Let $(x,\rho)$ and $(x',\rho')$ be
  two depth zero $K$-types of $G$ and $G'$ respectively.  We write
  $\rho = \rhol \boxtimes \rhols$ and $\rho' = \rholp \boxtimes \rholsp$ where
  $\rho_\llll$ is an irreducible $\rU(\llll)$-module with
  $\llll \in \set{ \ell,\ell^*,\ell',\ell'^*}$.  We say $(x',\rho')$ is a
  \emph{(theta) lift} of $(x,\rho)$ if $\rholsp$ is a theta lift of $\rhol$
    and $\rholp$ is a theta lift of $\rhols$.
\end{definition}

Now we are ready to state a theorem of Pan.

\begin{thm}[{\cite[Theorem~5.6]{Pan02J}}]\label{thm:Pan0}
Let $(G,G') = (\rU(V),\rU(V'))$ be a type~I dual pair over $F$ and $\tpi' =
\theta_{V,V'}(\tpi)$. Suppose $\tpi$ has a depth zero $K$-type $(x,\trho)$.
Then there exists a depth zero $K$-type $(x,\rho)$ and a theta lift $(x',\rho')$ of it
such that  (see \eqref{eq:SPT} for the definition of $\txi_{x,x'}$)
\begin{enumerate}[(a)]
\item \label{it:pan.1} $\trho = (\rho\boxtimes \idCx) \circ \txi_{x,x'}^{-1}$,
  and

\item \label{it:pan.2} $\tpi'$ has depth zero $K$-type $(x',\trho')$ where
  $\trho' := (\rho'\boxtimes \idCx)\circ \txi_{x,x'}^{-1}$. \qed
\end{enumerate}
\end{thm}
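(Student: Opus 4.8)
The plan is to reduce the statement to the theta correspondence over the residue field $\fffD$, using the generalized lattice model of $\omega$ attached to the pair of vertices $(x,x')$ together with the functor of invariants under the pro-$p$ radical. First I would fix good lattices $L\subseteq V$ and $L'\subseteq V'$ corresponding to $x$ and $x'$ as in \Cref{sec:GoodL}, so that $\sfG_x\cong\rU(\ell)\times\rU(\ell^*)$ and $\sfG'_{x'}\cong\rU(\ell')\times\rU(\ell'^*)$, and pass to the generalized lattice model $\sS(\sB_0)$ with $\sB := \sB_{x,x'}$. Chasing the tensor filtration on $W=V\otimes_D V'$ one checks that the symplectic $\fffF$-space $\bfbb = \sB_0/\sB_{0^+}$ is the orthogonal sum of the two symplectic $\fffF$-spaces underlying the residual dual pairs $(\rU(\ell),\rU(\ell'^*))$ and $(\rU(\ell^*),\rU(\ell'))$; consequently the finite oscillator representation $\bomegab$ of $\bfSp(\bfbb)$, restricted to $\sfG_x\times\sfG'_{x'}$, is the external tensor product of the two corresponding finite oscillator representations, the first carrying the $\rU(\ell)\times\rU(\ell'^*)$-action and the second the $\rU(\ell^*)\times\rU(\ell')$-action.

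Next I would compute $\omega^{G_{x,0^+}}$ as a $\wtG_x\times\wtG'$-module. Since $G_{x,0^+}\subseteq\KSpp$ acts on $\sS(\sB_0)$ through the canonical splitting $\CSPT$ (which coincides with $\txi_{x,x'}$ on $K_{0^+}\times K'_{0^+}$ by \eqref{eq:SPT} and \eqref{eq:CSPT}) and $\bomegab$ is inflated from $\bfSp(\bfbb)\ltimes\bfH(\bfbb)$, a short computation from \eqref{eq:latticemodel} and the defining transformation law of functions in $\sS(\sB_0)$ identifies, for an appropriate vertex $x'$, the space $\omega^{G_{x,0^+}\times G'_{x',0^+}}$ — as a module over $\wtG_x\times\wtG'_{x'}$ factoring through $\sfG_x\times\sfG'_{x'}$ via $\txi_{x,x'}$ — with $\bomegab|_{\sfG_x\times\sfG'_{x'}}$; more precisely it exhibits the larger invariant space $\omega^{G_{x,0^+}}$, with its residual $\sfG_x$-action, as a $\wtG'$-module assembled from the finite picture in such a way that a nonzero $\rho$-isotypic part on the $\sfG_x$ side forces a nonzero contribution visible at some vertex $x'$ of $\rU(V')$. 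Because $(-)^{G_{x,0^+}}$ is exact and $\omega[\tpi]\cong\tpi\boxtimes\Theta_{V,V'}(\tpi)$, applying this functor gives $\tpi^{G_{x,0^+}}\boxtimes\Theta_{V,V'}(\tpi)^{G'_{x',0^+}}$ as a nonzero subquotient of $\bomegab|_{\sfG_x\times\sfG'_{x'}}$; in particular $\tpi'$ has depth zero and $\Theta_{V,V'}(\tpi)^{G'_{x',0^+}}\neq 0$.

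Finally I would extract the $K$-types. Write $\trho = (\rho\boxtimes\idCx)\circ\txi_{x,x'}^{-1}$ for the unique $\rho\in\Irr(\sfG_x)$ determined by the normalization that $\txi_{x,x'}$ identifies $\wtG_x$ with $G_x\times\bCx$ acting by scalars; this is part~\cref{it:pan.1}. Decompose $\rho = \rhol\boxtimes\rhols$ under $\sfG_x\cong\rU(\ell)\times\rU(\ell^*)$. Since $\rho$ occurs in $\bomegab|_{\sfG_x\times\sfG'_{x'}}$ paired with an irreducible constituent of $\Theta_{V,V'}(\tpi)^{G'_{x',0^+}}$, the external tensor product structure found above shows this constituent has the form $\trho' = (\rholp\boxtimes\rholsp\boxtimes\idCx)\circ\txi_{x,x'}^{-1}$ where $\rholsp$ is a theta lift of $\rhol$ and $\rholp$ is a theta lift of $\rhols$ over $\fffD$; that is, $(x',\rho')$ with $\rho' := \rholp\boxtimes\rholsp$ is a theta lift of $(x,\rho)$ in the sense of \Cref{def:liftzerodatum}, and $\trho'$ occurs in $\Theta_{V,V'}(\tpi)^{G'_{x',0^+}}\subseteq\tpi'|_{\wtG'_{x'}}$, which is part~\cref{it:pan.2}.

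The main obstacle is the computation underlying the second paragraph: one must show that the $p$-adic oscillator representation, in the generalized lattice model aligned with $(x,x')$, has its pro-$p$ radical invariants equivariantly isomorphic to the residual oscillator representation, and — the more delicate point — that this forces $\Theta_{V,V'}(\tpi)^{G'_{x',0^+}}\neq 0$ at a suitable vertex $x'$ rather than merely that some subquotient of $\bomegab$ is nonzero; this is the depth-zero instance of depth preservation. A secondary subtlety is that the finite theta correspondence is not a bijection for non-cuspidal representations, and $\rho$ need not be cuspidal here, so the choice of the constituents $\rho$ and $\rho'$ and the verification that they match in the sense of \Cref{def:liftzerodatum} must be made with care.
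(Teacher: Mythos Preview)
The paper does not prove this theorem: it is quoted verbatim from \cite[Theorem~5.6]{Pan02J} and closed with a \qed, so there is no in-paper argument to compare against. Your sketch is a faithful outline of Pan's method---passing to the generalized lattice model at $(x,x')$, identifying the pro-$p$ invariants with the finite oscillator representation $\bomegab$, and reading off the residual dual pair decomposition---and the obstacles you flag (existence of a vertex $x'$ with nonzero invariants, and the non-bijectivity of the finite correspondence for non-cuspidal $\rho$) are exactly the substantive points Pan has to address.
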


On the other hand, suppose $(x',\rho')$ is a theta lift of $(x, \rho)$. Let
$\trho$ and $\trho'$ be defined as in \Cref{thm:Pan0}~\cref{it:pan.1,it:pan.2}. 
Then the $\wtG_x\times \wtG'_{x'}$-module $\trho\boxtimes \trho'$ occurs in
$\sSB_{\sB_0}$ where $\sB = \sB_{x,x'}$.

\subsubsection{} 
Now let $\Sigma = (x,0,\bfone, \rho)$ be a depth zero supercuspidal datum of
$\rU(V)$ and let $\cT'$ be a fixed Witt tower of $\epsilon'$-Hermitian spaces.
Then we have $\ell$, $\ell^*$, $\sT'$, $\sT'^*$ defined in \Cref{sec:GoodL}.
Moreover, $\rho = \rhol\boxtimes \rhols$ with $\rhol$ and $\rhols$ cuspidal. 

Let $\rholsp:= \theta_{\ell,\sT'^*}(\rhol)$ be the theta lift of $\rhol$ such that
it is at the first occurrence, say $\ell'^*\in \sT'^*$, with respect to the Witt
tower $\sT'^*$. Likewise define $\ell'$ and the $\rU(\ell')$-module
$\rholp := \theta_{\ell^*,\sT'}(\rhols)$. Note that $\rholp \otimes \rholsp$ is
a cuspidal representation. By \eqref{eq:TTT}, let
$V' := \Upsilon'(\ell',\ell'^*) \in \cT'$ and let $x'$ be the corresponding
vertex in $\BTB{\rU(V')}$.

\begin{definition}\label{def:DTL.0}
Define $\Sigma':= (x', 0, \bfone, \rholp\boxtimes \rholsp)
\in \sD_{V'}$
and call it the \emph{theta lift} of $\Sigma$ with respect to the Witt tower
$\cT'$. 
Furthermore, the theta lift of data map $\dthetaVT$ is defined by 
\[
\dthetaVT([\Sigma]) = [\Sigma']
\] when restricted on the set of depth zero supercuspidal data.  
By an abuse of notation, we also write $\Sigma'' = \dtheta_{V,V'}(\Sigma) =  \dthetaVT(\Sigma)$ where
  $\Sigma''$ is any element in the equivalence class $[\Sigma']$.
\end{definition}

The next corollary follows immediately from the above discussion (cf. \cite[\Sec{9}]{Pan02J}).
\begin{cor}
The \Cref{thm:main} holds when restricted on the set of depth zero supercuspidal
representations. \qed
\end{cor}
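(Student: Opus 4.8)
The plan is to reduce both halves of \Cref{thm:main} in the depth zero case to \Cref{thm:Pan0}, the paragraph immediately following it, and the recipe for $\dthetaVT$ in \Cref{def:DTL.0}; no genuinely new ingredient is needed beyond Pan's analysis, which is why the statement is recorded as a corollary.

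\textbf{Part \cref{it:Main.1}.} Let $\Sigma=(x,0,\bfone,\rho)$ be a depth zero supercuspidal datum and write $\rho=\rhol\boxtimes\rhols$ with both factors cuspidal. Let $\Sigma'=(x',0,\bfone,\rholp\boxtimes\rholsp)$ be the representative of $\dthetaVT([\Sigma])$ furnished by \Cref{def:DTL.0}, so that $\ell'^*\in\sT'^*$ and $\ell'\in\sT'$ are the first occurrences of $\rhol$ and $\rhols$ respectively and $V'=\Upsilon'(\ell',\ell'^*)$. Put $\sB:=\sB_{x,x'}$. Since $(x',\rho')$ with $\rho'=\rholp\boxtimes\rholsp$ is a theta lift of $(x,\rho)$ in the sense of \Cref{def:liftzerodatum}, the paragraph after \Cref{thm:Pan0} shows that $\tetaSigma\boxtimes\tetaSigmap$ occurs as a $\wtG_x\times\wtG'_{x'}$-module in $\sSB_{\sB_0}\subseteq\sSB\simeq\omega$; here one uses that in depth zero $\tetaSigma=(\rho\boxtimes\idCx)\circ\txi_{x,x'}^{-1}$ and $\tetaSigmap=(\rho'\boxtimes\idCx)\circ\txi_{x,x'}^{-1}$ are precisely the $\trho,\trho'$ of that paragraph. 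Now $\tpiSigma=\cInd_{\wtG_x}^{\wtG}\tetaSigma$ and $\tpiSigmap=\cInd_{\wtG'_{x'}}^{\wtG'}\tetaSigmap$ are supercuspidal; using the standard fact that a supercuspidal representation occurs in $\omega$ as a direct summand, together with the observation that the cuspidal depth zero type $\tetaSigma$ singles out $\tpiSigma$ among irreducibles, one deduces $\omega[\tpiSigma]\neq 0$ and that $\tpiSigmap$ is a subquotient of $\Theta_{V,V'}(\tpiSigma)$. Finally, cuspidality of $\rholp\otimes\rholsp$ forces, through the finite-field–to–$p$-adic dictionary of \cite{Pan02J}, the first occurrence of $\tpiSigma$ in $\cT'$ to be exactly at $V'$; hence $\Theta_{V,V'}(\tpiSigma)$ is irreducible and supercuspidal (by the properties of theta recalled in \Cref{sec:mainthm}) and therefore equals $\tpiSigmap$, i.e. $\thetaVVp(\tpiSigma)=\tpiSigmap$.

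\textbf{Part \cref{it:Main.2}.} Conversely, suppose $\tpi$ and $\tpi'=\thetaVVp(\tpi)$ are both supercuspidal of depth zero, and let $V'$, with Witt class $\cTp$, be the space carrying $\tpi'$; by the properties recalled in \Cref{sec:mainthm}, $V'$ is the first occurrence of $\tpi$ in $\cTp$. Choose a cuspidal depth zero $K$-type $(x,\trho)$ of the supercuspidal $\tpi$ and apply \Cref{thm:Pan0}: there is a depth zero $K$-type $(x,\rho)$ with $\trho=(\rho\boxtimes\idCx)\circ\txi_{x,x'}^{-1}$ and a theta lift $(x',\rho')$ of $(x,\rho)$ such that $\tpi'$ has depth zero $K$-type $(x',\trho')$ with $\trho'=(\rho'\boxtimes\idCx)\circ\txi_{x,x'}^{-1}$. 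Cuspidality of $\trho$ forces $\rho=\rhol\boxtimes\rhols$ cuspidal, so $\Sigma:=(x,0,\bfone,\rho)$ is a depth zero supercuspidal datum with $\tpi=\tpiSigma$. Since $\tpi'$ is supercuspidal, $\trho'$, hence $\rho'=\rholp\boxtimes\rholsp$, is cuspidal; but a finite-field theta lift of a cuspidal representation is cuspidal precisely when one is at the first occurrence of the relevant finite Witt tower, so $\ell'$ and $\ell'^*$ are the first occurrences of $\rhols$ and $\rhol$ — exactly the recipe defining $\dthetaVT([\Sigma])$ in \Cref{def:DTL.0}. Hence $(x',\rho')$ represents $[\Sigma']=\dthetaVT([\Sigma])$ and $\tpi'=\tpiSigmap$, as required.

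\textbf{Main obstacle.} The only non-formal point, in both directions, is the compatibility of ``first occurrence of the $p$-adic representation in $\cT'$'' with ``first occurrences of $\rhol,\rhols$ in the finite Witt towers $\sT'^*,\sT'$'', together with the bookkeeping provided by the surjection $\Upsilon'$ of \eqref{eq:TTT} and the verification that the splittings $\txi_{x,x'}$ make the identifications $\tetaSigma=\trho$, $\tetaSigmap=\trho'$ literally correct. This rests on Pan's work in \cite{Pan02J} rather than on anything new here; everything else is formal manipulation of compact induction, Frobenius reciprocity, and the semisimplicity of the supercuspidal part of $\omega$.
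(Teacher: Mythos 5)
Your proof is correct and takes essentially the same route as the paper, which records this as an immediate consequence of \Cref{thm:Pan0}, the paragraph following it, and \Cref{def:DTL.0}, deferring the non-formal first-occurrence bookkeeping to \cite[\S 9]{Pan02J}. One small structural remark: in Part~\cref{it:Main.1} you could avoid invoking the finite-to-$p$-adic first-occurrence dictionary as an input by arguing more directly that $\tetaSigma\boxtimes\tetaSigmap$ occurring in $\omega|_{\wtK\times\wtK'}$ together with Frobenius reciprocity for $\cInd$ gives a nonzero map $\tpiSigma\boxtimes\tpiSigmap\to\omega$, hence (by supercuspidality of both factors, so that sub $=$ quotient) $\tpiSigmap$ is a quotient of $\Theta_{V,V'}(\tpiSigma)$ and therefore $\tpiSigmap=\thetaVVp(\tpiSigma)$; that $V'$ is then the first occurrence in $\cT'$ follows from \Cref{sec:mainthm}~(iv) as an output rather than an input. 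The finite-field first-occurrence information is still needed, but only to guarantee that $\rholp\boxtimes\rholsp$ is cuspidal so that $\Sigma'$ is a supercuspidal datum at all, which is exactly the role it plays in \Cref{def:DTL.0}. Your Part~\cref{it:Main.2} argument, including the appeal to the finite-field fact that the lift of a cuspidal representation is cuspidal precisely at first occurrence, is the expected one and matches what is implicitly relied upon by the paper's citation of Pan.
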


\subsection{Theta lift of a single block of positive depth} \label{sec:DTL.pos}
Throughout this subsection, we assume that $\Sigma = (x, \Gamma, \phi, \rho)$ is
a single block datum of positive depth $r$ for $G = \rU(V)$. Let $s := r/2$ and
$\sL = \sL_x$.
Since $x$ is a point in $\BTB{G^0}$, we have
\crefformat{itGa}{{\rm (#2$\Gamma$#1#3)}}
\begin{enumerate}[({$\Gamma$}1)]
\item \label[itGa]{it:Ga.1} $\sL_{t-r} = \Gamma \sL_t$ for all
  $t\in \bR\sqcup\bR^+$, and
\item \label[itGa]{it:Ga.2} each element in $\Gamma+\fgg_{x,-r^+}$ is invertible
  with depth $-r$.
\end{enumerate}

\begin{definition}\label{def:VGa}
  For $\Gamma \in \fgg$ which is invertible in $\fgl(V)$, we define $V_\Gamma$ to be the
  $(-\epsilon)$-Hermitian $D$-module with the same underlying $D$-module as $V$
  and equipped with the form $\innvga{v_1}{v_2} := \innv{v_1}{\Gamma v_2}$ for
  $v_1,v_2\in V_\Gamma$.
\end{definition}

\begin{remark} In fact, there is an element
$w\in W:= \Hom_D(V,\VGa)$ such that $M(w) = \Gamma$. Let
$\iota \in \Hom_D(V,\VGa)$ be the identity map with respect to the underlying $D$-modules of $\VGa$
and $V$. Let $w := \iota$. Then $w^\mstar = \Gamma\circ \iota^{-1}$ and 
$M(w) = w^\mstar w = \Gamma$. 
\end{remark}

\subsubsection{} \label{sec:OBlattice} 
In this subsection, 
we let $V'$ be an $\epsilon'$-Hermitian $D$-module such
that 
\[
\dim_D V' = \dim_D V \quad \text{and} \quad \epsilon'=-\epsilon. 
\]
\begin{lemma} \label{lem:L'} Suppose that there is a $w \in \Hom_D(V,V')$ such
  that $M(w) \in \Gamma + \fgg_{x,-r^+}$.  By \cref{it:Ga.2}, $w$ is an isomorphism of
  $D$-modules. We define a lattice function in $V'$
  by
\begin{equation}\label{eq:sLp}
\sL'_t := w \sL_{t+s}.
\end{equation}
Then 
\begin{enumerate}[(i)]
\item \label{it:L'.1}$\Jump(\sL') = \Jump(\sL)-s$,
\item \label{it:L'.2} the lattice function $\sL'$ is self-dual and
\item \label{it:L'.3} $\sL'$ is the unique
self-dual lattice function on $V'$ such that $w \in (\sL\otimes_D \sL')_{-s}$.
\end{enumerate}
\end{lemma}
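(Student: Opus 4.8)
The plan is to analyze the lattice function $\sL'_t := w\sL_{t+s}$ directly, using the three structural facts available: (a) $\cref{it:Ga.1}$, which says $\sL_{t-r} = \Gamma\sL_t$; (b) $\cref{it:Ga.2}$, which guarantees that $M(w)\in\Gamma+\fgg_{x,-r^+}$ is invertible of depth $-r$, hence $w$ is a $D$-module isomorphism; and (c) the defining relation $\innvga{v_1}{v_2} = \innv{v_1}{\Gamma v_2}$ together with $M(w) = w^\mstar w$. First I would verify that $\sL'$ is genuinely a lattice function in the sense of \Cref{def:latticefn}: properties (i)--(iv) there all transport along the $D$-module isomorphism $w$ (e.g. $\sL'_{s'+\val(a)} = w\sL_{s'+s+\val(a)} = w\sL_{s'+s}a = \sL'_{s'}a$ since $w$ is $D$-linear), and the shift by $s$ does not affect the intersection/union axioms. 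From this, part \cref{it:L'.1} is immediate: $\sL'_{r'}\supsetneq\sL'_{r'^+}$ iff $w\sL_{r'+s}\supsetneq w\sL_{(r'+s)^+}$ iff $r'+s\in\Jump(\sL)$, so $\Jump(\sL') = \Jump(\sL)-s$.

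For part \cref{it:L'.2}, I need $(\sL'_t)^* = \sL'_{-t^+}$, where the dual on the left is taken with respect to $\innvp{}{}$ on $V'$. The key computation: for $v_1 = w u_1$, $v_2 = w u_2$ with $u_i\in V$, one has $\innvp{wu_1}{wu_2} = \inn{u_1}{w^\mstar w u_2}_V = \inn{u_1}{M(w)u_2}_V$. Since $M(w)\in\Gamma+\fgg_{x,-r^+}$, by $\cref{it:Ga.2}$ (more precisely, since $M(w)$ and $\Gamma$ differ by something in $\fgg_{x,-r^+}$ and $\sL_x$ is $M(w)$-stable in the same way it is $\Gamma$-stable — one checks $M(w)\sL_t = \sL_{t-r}$ using that $\fgg_{x,-r^+}\sL_t\subseteq\sL_{(t-r)^+}\subseteq\sL_{t-r}$ and that $M(w)$ is invertible of depth exactly $-r$), we get that the form $\inn{u_1}{M(w)u_2}_V$ pairs $\sL_a$ with $\sL_b$ into $\fppD$ exactly when $\innv{\sL_a}{\sL_{b-r}}\subseteq\fppD$, i.e. when $\sL_{b-r}\subseteq(\sL_a)^* = \sL_{-a^+}$, i.e. $b\geq r-a$, which unwinding through $\sL'_t = w\sL_{t+s}$ and $r = 2s$ gives precisely the self-duality relation $(\sL'_t)^* = \sL'_{-t^+}$. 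I should be careful here to track the ``$+$'' decorations and the strict-vs-nonstrict inequalities; this bookkeeping is where an error is most likely to creep in.

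For part \cref{it:L'.3}, recall from \Cref{sec:Lattice} that $(\sL\otimes_D\sL')_t = \bigcap_r\Hom_{\fooD}(\sL_r,\sL'_{t+r})$ as a lattice in $\Hom_D(V,V') = W$. So $w\in(\sL\otimes_D\sL')_{-s}$ means $w\sL_r\subseteq\sL'_{r-s}$ for all $r$, which by definition of $\sL'$ is the equality $w\sL_r = \sL'_{r-s}$, holding by construction. For uniqueness: if $\sL''$ is another self-dual lattice function with $w\in(\sL\otimes_D\sL'')_{-s}$, then $w\sL_r\subseteq\sL''_{r-s}$ for all $r$, i.e. $\sL'_{r-s}\subseteq\sL''_{r-s}$, so $\sL'_t\subseteq\sL''_t$ for all $t$; applying the duality-reversing operation (both are self-dual) gives $\sL''_{-t^+} = (\sL''_t)^*\subseteq(\sL'_t)^* = \sL'_{-t^+}$, hence $\sL''_t\subseteq\sL'_t$ for all $t$ as well, forcing $\sL'' = \sL'$.

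The main obstacle I anticipate is the precise verification in part \cref{it:L'.2} that $M(w)$ behaves on the lattice filtration exactly like $\Gamma$ does — that is, establishing $M(w)\sL_t = \sL_{t-r}$ from $\cref{it:Ga.1}$, $\cref{it:Ga.2}$, and $M(w)\in\Gamma+\fgg_{x,-r^+}$. Once that identification is in hand, the self-duality of $\sL'$ follows from the self-duality of $\sL$ by a direct if slightly fiddly manipulation of the defining inequalities, and parts \cref{it:L'.1} and \cref{it:L'.3} are essentially formal.
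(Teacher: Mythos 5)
Your proposal is correct and follows essentially the same route as the paper: establish self-duality of $\sL'$ by unwinding $\innvp{wu_1}{wu_2}=\innv{u_1}{M(w)u_2}$, and deduce uniqueness by the standard duality-reversal trick. In fact you are somewhat more careful than the published proof: the paper simply writes $\innv{v}{w^\mstar w\,\sL_{t+s}}=\innv{v}{\Gamma\,\sL_{t+s}}$, silently invoking the identity $M(w)\,\sL_{t+s}=\Gamma\,\sL_{t+s}$, whereas you correctly flag this as the step that needs justification. The quick way to close it, which you gesture at: write $M(w)=\Gamma(1+\Gamma^{-1}X)$ with $X\in\fgg_{x,-r^+}$; then $\Gamma^{-1}X$ has positive depth, so $1+\Gamma^{-1}X$ is a unit of $\End_{\fooD}(\sL_{x,t})$ for every $t$ (the Neumann series converges), and $M(w)\,\sL_t=\Gamma\,\sL_t=\sL_{t-r}$ follows from $\cref{it:Ga.1}$. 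One small bookkeeping slip: the condition you state as ``$b\geq r-a$'' should really be tracked with the ``$+$'' decorations (it is $\sL_{b-r}\subseteq\sL_{(-a)^+}$, i.e.\ $b-r>-a$ away from jump points), but since you explicitly warn that this is where care is needed and the final equality $(\sL'_t)^*=\sL'_{-t^+}$ is stated at the lattice level, this is a cosmetic rather than substantive issue.
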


\begin{proof} 
\begin{asparaenum}[(i)]
\item This is clear from the definition of $\sL'$.
\item  
  For any $v\in V$,
  $\innvp{w v}{\sL'_t} = \innv{w v}{w \sL_{t+s}} = \innv{v}{w^\mstar
    w \sL_{t+s}} = \innv{v}{\Gamma \sL_{t+s}} = \innv{v}{\sL_{t-s}}$.
  Therefore $(\sL'_t)^* = (w \sL_{-(t-s)^+}) = \sL'_{-t^+}$,
  i.e. $\sL'$ is self-dual.
\item Clearly, $w \in (\sL\otimes_D \sL')_{-s}$ by the definition of $\sL'$.
  Suppose $\cksL'$ is another self-dual lattice function such that
  $w \in (\sL\otimes \cksL')_{-s}$. Then
  $\sL'_t = w \sL_{t+s} \subseteq \cksL'_t$ for all $t \in \bR$. Taking the dual
  lattices gives
  $\cksL'_{-t^+} = (\cksL'_t)^* \subseteq (\sL'_{t})^* = \sL'_{-t^+}$ for all
  $t\in \bR$. Hence $\sL'_t = \cksL'_t$.
  \qedhere
\end{asparaenum}
\end{proof}

\subsubsection{} \label{sec:surj}
The following proposition shows the surjectivity of moment maps on certain cosets. 
This is a key proposition in the single block of positive depth case. 

\def\wwzz{w_0}

\begin{prop}\label{lem:surj}
Let $\sL'$ be a self-dual lattice in $V'$ and $\sB := \sL\otimes_D\sL'$.  
Let $w\in \sB_{-s}$. Suppose that $M(w) \in \Gamma+\fgg_{x,-r^+}$.  Then
\[
M(w+\sB_{t}) = M(w)+\fgg_{x,-s+t} \qquad \forall t> -s.\]
\end{prop}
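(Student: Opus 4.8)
The plan is to prove the two inclusions $M(w + \sB_t) \subseteq M(w) + \fgg_{x,-s+t}$ and $M(w)+\fgg_{x,-s+t} \subseteq M(w+\sB_t)$ separately, with the second being the substantive one. For the first inclusion, I would simply expand: for $u \in \sB_t$,
\[
M(w+u) = (w+u)^\mstar(w+u) = M(w) + w^\mstar u + u^\mstar w + u^\mstar u.
\]
Each of the three correction terms should land in $\fgg_{x,-s+t}$: since $w \in \sB_{-s} = (\sL\otimes_D\sL')_{-s}$, which I view as $\bigcap_\sigma \Hom_{\fooD}(\sL_\sigma, \sL'_{-s+\sigma})$, and $u \in \sB_t$ similarly, the composites $w^\mstar u$ and $u^\mstar w$ map $\sL_\sigma$ into $\sL_{\sigma - s + t - s + s} = \sL_{\sigma+(-s+t)}$-type lattices (using self-duality of $\sL$ and $\sL'$ to identify $\mstar$ with genuine adjoints), hence lie in $\fgg_{x,-s+t}$; and $u^\mstar u \in \fgg_{x,2t} \subseteq \fgg_{x,-s+t}$ since $t > -s$. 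This is routine lattice bookkeeping using \Cref{lem:inn}(i) and the self-duality conventions of \Cref{sec:Lattice}, so I would not belabor it.

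The reverse inclusion is the heart of the matter, and the natural approach is a successive-approximation / Hensel-type argument. Given a target $\Gamma' \in M(w) + \fgg_{x,-s+t}$, I want to solve $M(w + u) = \Gamma'$ for some $u \in \sB_t$. Write $\Gamma' = M(w) + Y$ with $Y \in \fgg_{x,-s+t}$, $Y = Y^* \cdot(\text{sign})$ — actually $Y \in \fgg$ means $Y^* = -Y$ — wait, $M(w) = w^\mstar w$ lies in $\fgg = \fuu(V)$ as noted before \Cref{lem:inn}, so the equation is inside $\fgg$. The linearization of $u \mapsto M(w+u) - M(w)$ at $u=0$ is $u \mapsto w^\mstar u + u^\mstar w$; I would show this linear map from $\sB_t$ to $\fgg_{x,-s+t}$ is surjective modulo $\fgg_{x,-s+t'}$ for each $t' > t$ close to $t$, then iterate. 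Surjectivity of the linearized map is where invertibility of $w$ is used crucially: since $M(w) \in \Gamma + \fgg_{x,-r^+}$ and by \cref{it:Ga.2} such elements are invertible of depth $-r$, the map $w$ is a $D$-module isomorphism $V \to V'$ (this is \Cref{lem:L'}), and $w^\mstar = M(w) w^{-1}$. So given $Y \in \fgg_{x,-s+t}$, I can try $u = \tfrac12 (w^\mstar)^{-1} Y = \tfrac12 w\, M(w)^{-1} Y$; then $w^\mstar u = \tfrac12 Y$ and I must check $u^\mstar w = \tfrac12 Y$ too — this should follow from $Y \in \fgg$ (i.e. $Y^* = -Y$) together with $(w^\mstar u)^\mstar = u^\mstar w$ and $M(w) \in \fgg$. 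The depth estimate $u \in \sB_t$ follows from $Y \in \fgg_{x,-s+t}$, $M(w)^{-1} \in \fgg_{x,r} = \fgg_{x,2s}$ (depth $+r$ by \cref{it:Ga.2}), and $w \in \sB_{-s}$, via \cref{it:Ga.1} which says $\Gamma$ and hence $M(w)$ shifts $\sL$ by $-r$.

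Then the iteration: having chosen $u_1$ with $M(w+u_1) \equiv \Gamma' \pmod{\fgg_{x,-s+t_1}}$ for some $t_1 > t$, I repeat with $w$ replaced by $w + u_1$ — noting $w + u_1 \in \sB_{-s}$ still and $M(w+u_1) \in \Gamma + \fgg_{x,-r^+}$ still, so the hypotheses persist — and the error term, now in $\fgg_{x,-s+t_1}$, gets corrected by a $u_2 \in \sB_{t_1}$, and so on. Since $\sB_t$ is compact (an $\fooD$-lattice) and the corrections $u_k \in \sB_{t_{k-1}}$ with $t_k \to \infty$, the series $\sum u_k$ converges in $\sB_t$ to the desired solution, and $M$ is continuous. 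The main obstacle I anticipate is getting the depth bookkeeping in the linearized solve exactly right — in particular verifying that the naive choice $u = \tfrac12 w M(w)^{-1} Y$ genuinely satisfies \emph{both} $w^\mstar u$ and $u^\mstar w$ equal $\tfrac12 Y$ (rather than only their sum), which relies on the precise interplay of $\mstar$, the involution $*$ on $\End_D(V)$, and the fact that $M(w), Y \in \fgg$; if the symmetric/skew decomposition does not cooperate one must instead solve for $u$ in a suitable self-adjoint-type subspace and track that $\sB_t \cap (\text{that subspace})$ still surjects. I would also double-check the edge behavior as $t \to -s^+$, where $-s + t \to -r^+$, against \cref{it:Ga.2}.
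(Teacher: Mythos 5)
Your overall strategy matches the paper's: get the easy inclusion $M(w+\sB_t)\subseteq M(w)+\fgg_{x,-s+t}$ by expanding, and get the hard one by successive approximation, which reduces to surjectivity of the linearization $u\mapsto w^\mstar u+u^\mstar w$ from $\sB_t$ onto $\fgg_{x,-s+t}$. Where you diverge is in how that surjectivity is proved. The paper sets $b=wX$ with $X\in\fgl(V)_{x,s+t}$, replaces $M(w)$ by a good representative $\ckGamma$, and must show $X\mapsto\ckGamma X+X^*\ckGamma$ is surjective modulo $\fgg_{x,-s+t^+}$; it does this by decomposing $\fgl(V)$ under the $*$-involution and under $\ad_\ckGamma$, invoking the Kim--Murnaghan lemma \cite[Lemma~2.3.4]{KM1} for the $\ckfgg^\perp$ part and a direct computation on the $*$-symmetric part of $\Cent{\fgl}{\ckGamma}$. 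Your route bypasses all of that: the explicit preimage $u=\tfrac12(w^\mstar)^{-1}Y=\tfrac12 w\,M(w)^{-1}Y$ works exactly. Indeed $w^\mstar u=\tfrac12 Y$, and the other half $u^\mstar w=\tfrac12 Y$ does hold, though your verification sketch has a sign slip --- because $(w^\mstar)^\mstar=-w$ one has $(w^\mstar u)^*=-u^\mstar w$, not $+u^\mstar w$, but this cancels against $Y^*=-Y$ to give $u^\mstar w=\tfrac12 Y$ as needed. The depth bookkeeping is as you outlined: by \Cref{lem:L'} one has $\sL'_\sigma=w\sL_{\sigma+s}$, and by \cref{it:Ga.1}--\cref{it:Ga.2} $M(w)^{-1}\sL_\sigma=\sL_{\sigma+r}$, whence $u\in\sB_t$; the residual error $u^\mstar u$ lies in $\fgg_{x,2t}\subseteq\fgg_{x,-s+t^+}$ since $t>-s$. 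Your approach is shorter, proves the linearization surjective exactly rather than only modulo the next jump (so the iteration converges geometrically), and avoids the Kim--Murnaghan lemma; what the paper's decomposition buys is that it isolates the goodness of $\ckGamma$ as the operative input, which is the form in which this invertibility is reused elsewhere.
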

\begin{proof}
  We first prove the following claim.

  \setcounter{claim}{0}
  \begin{claim}\label{claim:surj1}
    The map
    \[ 
    w+\sB_t \rightarrow \left(M(w) + \fgg_{x,-s+t}\right)/\fgg_{x,-s+t^+}\subsetneq
    \fgg/\fgg_{x,-s+t^+}
    \]
    given by $w' \mapsto M(w') + \fgg_{x,-s+t^+}$ is a surjection.
  \end{claim}
	
  \begin{proof}
    Let $b\in \sB_{t}$. Since $t>-s$, we have 
    \begin{equation} \label{eq:Mw}
      \begin{split}
        M(w+b) &=  (w+b)^\mstar(w+b) 
        = w^\mstar w + w^\mstar b + b^\mstar w + b^\mstar b \\
        & \equiv M(w) + w^\mstar b+ b^\mstar w
        \pmod{\fgg_{x,-s+t^+}}.
      \end{split}
    \end{equation}
    On the other hand, by \Cref{lem:L'}, $X\mapsto w X$ gives an
    isomorphism $\fglDV_{x,s+t} \simrightarrow \sB_{t}$. Hence we assume that
    $b = w X$ for some $X\in \fglDV_{x,s+t}$.
    
    Pick a good element
    $\ckGamma \in M(w) + \fgg_{x,-r^+} = \Gamma +\fgg_{x,-r^+}$.  We have
    \begin{equation} \label{eq:Mw1}
    \begin{split}
      w^\mstar b+ b^\mstar w &= w^\mstar w X + (wX)^\mstar
      w =
      M(w) X + X^* M(w) \\
      & \equiv \ckGamma X + X^* \ckGamma \qquad \pmod{\fgg_{x,-s+t^+}}.
    \end{split}
    \end{equation}
    We set $\fglDV_{x,t_1:t_2} := \fglDV_{x,t_1}/\fglDV_{x,t_2}$ for $t_1<t_2$.  Now
    \Cref{claim:surj1} reduces to the following claim.
	 	
    \begin{claim} 
      The map
      $\beta\colon \fglDV_{x,s+t:s+t^+} \rightarrow \fgg_{x,-s+t:-s+t^+}$ defined
      by $X \mapsto \ckGamma X + X^* \ckGamma$ is surjective.
    \end{claim}

    \def\sinv{*,+1} 
    \def\ssinv{*,-1}
    
    Under the $*$-action,
    $\fglDV_{x,-s+t} = \fgg_{x,-s+t} \oplus \fglDV_{x,-s+t}^{\sinv}$ where
    $\fglDV_{x,-s+t}^{\sinv}$ is the $*$-invariant subspace.
    Under the $\ckGamma$-action, we have decomposition
    $\fgg = \ckfgg \oplus \ckfgg^\perp$ where $\ckfgg = \Cent{\fgg}{\ckGamma}$
    and $\ckfgg^\perp$ is the orthogonal complement of $\ckfgg$ in $\fgg$ under
    the $G$-invariant bilinear form $\bB$.  We also have a similar decomposition
    of $\fglDV$.  Since $\ckGamma$ is $*$-skew invariant, these decompositions
    are compatible with each other.

    First assume that $X \in \fgg_{x,-s+t}$, i.e. $X^* = -X$.  Then
    $\beta(X) = \ckGamma X - X \ckGamma = [\ckGamma,X]$.

    Now \cite[Lemma~2.3.4]{KM1} states that $X \mapsto [\ckGamma,X]$ induces an
    isomorphism
    \begin{equation}\label{eq:adGamma}
      \beta \colon \ckfgg^\perp_{x,s+t:s+t^+} \simrightarrow \ckfgg^\perp_{x,-s+t:-s+t^+}.
    \end{equation}
    Since
    $\fgg_{x,-s+t:-s+t^+} = \ckfgg_{x,-s+t:-s+t^+} \oplus
    \ckfgg^\perp_{x,-s+t:-s+t^+}$
    it is remains to show that $\ckfgg_{x,-s+t:-s+t^+}$ is in the image of
    $\beta$.  Let $\ckfgl = \Cent{\fglDV}{\ckGamma}$.  Suppose
    $X \in \ckfgl_{x,s+t}^{\sinv} \subseteq \fgl^{\sinv}_{x,s+t}$.  Then
    \begin{equation} \label{eq:2GammaX}
    \beta(X) = \ckGamma X + X^* \ckGamma = \ckGamma X + X\ckGamma = 2\ckGamma X.
    \end{equation}
    Therefore $\beta$ restrict to an an isomorphism
    $\ckfgl_{x,s+t}^{\sinv} \xrightarrow{\ \beta\ } \ckfgl_{x,-s+t}^{\ssinv} =
    \ckfgg_{x,-s+t}$. This proves Claim 2 and also Claim 1.
\end{proof}

  We now prove \Cref{lem:surj}. By \eqref{eq:Mw} we have
  $M(w+\sB_{t}) \subseteq M(w)+\fgg_{x,-s+t}$.  Fix an element
  $\gamma \in M(w)+\fgg_{x,-s+t}$. Clearly $M(w) \in \gamma + \fgg_{x,-s+t}$.
  Let $w_1 := w$ and $t_1 := t$. We construct sequences $\set{w_i}$ and
  $\set{t_i}$ inductively.  Suppose we have
  $M(w_i) \in \gamma + 
\fgg_{x,-s+t_i}$ for some $w_i \in w + \sB_{t}$. Apply
  the above claim with $w=w_i$ and $t=t_i$, we get a certain $b_i \in \sB_{x,t_i}$ such that
  $M(w_i+b_i) \in \gamma + \fgg_{x,-s+t_i^+}$.  Let $w_{i+1} := w_i+b_i$ and
  $t_{i+1} = \max\set{t| t >t_i, \fgg_{x,-s+t} = \fgg_{x,-s+t_i^+}} \in
  \Jump(\fgg_x)$
  where $\fgg_x$ denote the lattice function on $\fgg$ corresponding  to the
  Moy-Prasad filtration.  Clearly
  $M(w_{i+1}) \in \gamma + \fgg_{x,-s+t_i^+} = \gamma+ \fgg_{x,-s+t_{i+1}}$.
  Since $\Jump(\fgg_x)$ is a discrete set in $\bR$, $t_i \rightarrow \infty$
  and $w_{i}$ converges to some $w_\infty \in w + \sB_t$.  Since the moment
  map is continuous, we have
  $M(w_{\infty}) = \lim_{i\rightarrow \infty}M(w_i) = \gamma$. This proves the
  proposition.
\end{proof}

Now we present some corollaries of \Cref{lem:surj}.

\begin{cor} \label{cor:Gporbit} The set of self-dual lattice functions $\sL'$ in
  $V'$ such that
\[(\Gamma+\fgg_{x,r^+}) \cap M((\sL\otimes_D \sL')_{-s}) \neq \emptyset
\]
is a $G'$-orbit.
\end{cor}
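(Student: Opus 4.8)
The plan is to verify directly that $\mathcal S$ --- the set described in the statement --- is stable under $G'$ and that $G'$ acts transitively on it; together with non-emptiness (clear from \Cref{lem:L'}, which turns any $w\in\Hom_D(V,V')$ with $M(w)\in\Gamma+\fgg_{x,r^+}$ into an element of $\mathcal S$) this exhibits $\mathcal S$ as a single $G'$-orbit. Throughout I shall use that $\fgg_{x,r^+}\subseteq\fgg_{x,-r^+}$, so that \cref{it:Ga.2}, \Cref{lem:L'} and \Cref{lem:surj} all apply to any $w$ realizing the intersection defining $\mathcal S$.

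For stability the key point is that the moment map is constant on $G'$-orbits in $W=\Hom_D(V,V')$: for $g'\in G'=\rU(V')$, acting by $w\mapsto g'\circ w$, one has $(g')^\mstar g'=\id$, so $M(g'\circ w)=(g'\circ w)^\mstar(g'\circ w)=w^\mstar(g')^\mstar g'\,w=M(w)$. Since the lattice function in \eqref{eq:Lattice.T} is $G\times G'$-equivariant, replacing $\sL'$ by $g'\sL'$ replaces $(\sL\otimes_D\sL')_{-s}$ by $g'\cdot(\sL\otimes_D\sL')_{-s}$, which has the same $M$-image; hence the defining condition of $\mathcal S$ is insensitive to $\sL'\mapsto g'\sL'$. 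This step is routine.

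The substance is transitivity. Given $\sL',\sL''\in\mathcal S$, I would first choose $w'\in(\sL\otimes_D\sL')_{-s}$ and $w''\in(\sL\otimes_D\sL'')_{-s}$ with $M(w'),M(w'')\in\Gamma+\fgg_{x,r^+}$; by \cref{it:Ga.2} these are isomorphisms of $D$-modules. The natural element carrying $\sL'$ to $\sL''$ should be of the form $\widetilde w''\circ(w')^{-1}$, and a one-line adjointness computation shows such an element lies in $\rU(V')$ exactly when $M(\widetilde w'')=M(w')$. So I must first correct $w''$: since $M(w')-M(w'')\in\fgg_{x,r^+}\subseteq\fgg_{x,r}=\fgg_{x,-s+3s}$ (recall $s=r/2$), \Cref{lem:surj} applied to $w''$ with $t=3s$ yields $\widetilde w''\in w''+(\sL\otimes_D\sL'')_{3s}\subseteq(\sL\otimes_D\sL'')_{-s}$ with $M(\widetilde w'')=M(w')$, hence also $M(\widetilde w'')\in\Gamma+\fgg_{x,r^+}$. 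Now \Cref{lem:L'}, in particular part~\cref{it:L'.3}, identifies $\sL'$ and $\sL''$ as the \emph{unique} self-dual lattice functions containing, in degree $-s$, the vectors $w'$ and $\widetilde w''$ respectively, and provides the formulas $\sL'_t=w'\sL_{t+s}$ and $\sL''_t=\widetilde w''\sL_{t+s}$. Therefore $g':=\widetilde w''\circ(w')^{-1}$ lies in $G'$ and satisfies $g'\sL'_t=g'(w'\sL_{t+s})=\widetilde w''\sL_{t+s}=\sL''_t$, i.e.\ $g'\sL'=\sL''$.

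I expect the only genuine obstacle to be this correction step --- producing $\widetilde w''$ whose moment image is \emph{equal} to $M(w')$ rather than merely congruent to it --- which is exactly what the surjectivity \Cref{lem:surj} supplies; the accompanying care is just the Moy--Prasad bookkeeping pinning down the right filtration level ($-s+3s=r$). Granting \Cref{lem:L'} and \Cref{lem:surj}, the isometry check for $g'$ and the passage from $w',\widetilde w''$ to the lattice functions are immediate, so no further difficulty is anticipated.
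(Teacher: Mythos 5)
Your proof is correct and follows essentially the same route as the paper: both use \Cref{lem:surj} to normalize the moment maps (the paper sets $M(w)=M(\ooww)=\Gamma$, you only align $M(\widetilde w'')$ with $M(w')$) and then read off the lattice functions via \Cref{lem:L'}~\cref{it:L'.3}. The only cosmetic difference is that the paper invokes Witt's theorem to produce $g'$, whereas you simply write $g'=\widetilde w''\circ(w')^{-1}$ and verify the isometry by the adjoint identity $(\widetilde w'')^\mstar\widetilde w''=(w')^\mstar w'$, which is slightly cleaner since both maps are already full-rank.
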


\begin{proof}
  Let $\sL'$ and $\sL''$ be two self-dual lattice functions in the set.  Let
  $w \in (\sL\otimes \sL')_{-s}$ such that $M(w) \in \Gamma+\fgg_{x,r^+}$.  By
  \Cref{lem:surj}, we may assume that $M(w) = \Gamma$. By \Cref{lem:L'}
  $\sL'_{t} = w\sL_{t+s}$ for all $t$.  Similarly we pick
  a~$\ooww \in (\sL\otimes \sL'')_{-s}$ such that $M(\ooww) = \Gamma$ and
  $\sL''_t = \ooww \sL_{t+s}$ for all $t$. Note that $\Gamma$ is invertible. By
  Witt's theorem (see \cite[Section 1.11]{Dieu} and \cite[Thm 3.7.1]{Howe95}) there is $g'\in G'$ such that $\ooww = g' w$.  Hence
  $\sL''_t = \ooww \sL_{t+s} = g' w\sL_{t+s} = g' \sL'_{t}$ for all $t$.
\end{proof}

\def\clVp#1{{\fV'_{#1}}}
\def\clVpGamma{\clVp{\Gamma}}

We recall the definition of $V_\Gamma$ in \Cref{def:VGa}.

\begin{cor}\label{cor:cVpGamma}
The set of $\epsilon'$-Hermitian $D$-modules
\[
\clVpGamma = \set{V' | \dim_D V'= \dim_D V \text{ and } M^{-1}(\Gamma+\fgg_{x,-r^+})
  \neq \emptyset}
\]
is the isometry class of $V_\Gamma$. 
\end{cor}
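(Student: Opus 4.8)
The plan is to prove that $\clVpGamma$ is precisely the isometry class of $\VGa$. Note first that this makes sense: since $\epsilon'=-\epsilon$, the space $\VGa$ of \Cref{def:VGa} is an $\epsilon'$-Hermitian $D$-module with $\dim_D\VGa=\dim_DV$. One inclusion is immediate. The remark following \Cref{def:VGa} exhibits $w=\iota\in\Hom_D(V,\VGa)$ with $M(w)=\Gamma$, and $\Gamma\in\Gamma+\fgg_{x,-r^+}$, so $\VGa\in\clVpGamma$; and membership in $\clVpGamma$ is visibly an isometry invariant of $V'$, because if $\sigma\colon V''\iso V'$ is an isometry and $w\in\Hom_D(V,V')$ has $M(w)\in\Gamma+\fgg_{x,-r^+}$ then $\sigma^{-1}\circ w\in\Hom_D(V,V'')$ and a one-line computation with adjoints gives $M(\sigma^{-1}\circ w)=M(w)$. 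Hence the entire isometry class of $\VGa$ is contained in $\clVpGamma$, and it remains to prove the reverse inclusion.

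So let $V'\in\clVpGamma$ and fix $w\in\Hom_D(V,V')$ with $M(w)\in\Gamma+\fgg_{x,-r^+}$. The key idea is to first use the surjectivity of the moment map to replace $w$ by an element whose moment is \emph{exactly} $\Gamma$, and only then extract a Hermitian isometry. The element $w$ is an isomorphism of $D$-modules by \cref{it:Ga.2}, so \Cref{lem:L'} yields a self-dual lattice function $\sL'$ in $V'$ with $w\in(\sL\otimes_D\sL')_{-s}=\sB_{-s}$, where $\sB:=\sL\otimes_D\sL'$ and $s=r/2$. Writing $\fgg_{x,-r^+}=\bigcup_{a>-r}\fgg_{x,a}$, choose $a_0>-r$ with $M(w)-\Gamma\in\fgg_{x,a_0}$ and set $t_0:=a_0+s$; since $-s>-r$ we have $t_0>-s$, so \Cref{lem:surj} applies and gives
\[
M(w+\sB_{t_0})=M(w)+\fgg_{x,-s+t_0}=M(w)+\fgg_{x,a_0}\ni\Gamma .
\]
Thus there is $w'\in w+\sB_{t_0}\subseteq\Hom_D(V,V')$ with $M(w')=\Gamma$. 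Finally, $\Gamma$ is invertible in $\fgl(V)$ (again by \cref{it:Ga.2}), so $w'^\mstar w'=\Gamma$ is invertible, $w'$ is injective, and, the $D$-dimensions agreeing, $w'\colon V\to V'$ is a $D$-module isomorphism; and for all $v_1,v_2\in V$ one computes
\[
\innvp{w'v_1}{w'v_2}=\innv{v_1}{w'^\mstar w'\,v_2}=\innv{v_1}{\Gamma v_2}=\innvga{v_1}{v_2},
\]
so $w'$ carries the form of $\VGa$ onto that of $V'$; that is, $w'\colon\VGa\iso V'$ is an isometry. Therefore $V'$ lies in the isometry class of $\VGa$, which together with the first paragraph proves the corollary.

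I expect no genuinely hard step here once \Cref{lem:L'} and \Cref{lem:surj} are available; the points that need care are the Moy--Prasad bookkeeping in the middle step --- one must use that the threshold $-s$ in \Cref{lem:surj} lies strictly above the depth $-r$ of $\Gamma$, so that picking $a_0$ strictly above $-r$ forces $t_0=a_0+s>-s$ --- and the structural observation that one should normalise $M(w)$ to $\Gamma$ before translating into a statement about Hermitian spaces, rather than attempting to compare $\VGa$ with $V_{M(w)}$ directly. Alternatively, the statement can be derived from \Cref{cor:Gporbit} together with \Cref{lem:L'}, but the argument sketched above is the most self-contained.
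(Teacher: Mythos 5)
Your proof is correct and follows essentially the same approach as the paper: use \Cref{lem:L'} to obtain a self-dual $\sL'$ with $w\in\sB_{-s}$, invoke \Cref{lem:surj} to adjust $w$ so that $M(w)=\Gamma$ exactly, and read off the isometry $\VGa\iso V'$. You merely make explicit two points the paper leaves implicit --- the Moy--Prasad bookkeeping showing $t_0>-s$ so that \Cref{lem:surj} is actually applicable, and the easy observation that $\clVpGamma$ is closed under isometry --- which is fine.
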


\begin{proof}
By the remark after \Cref{def:VGa}, we see that $\clVpGamma$ contains $V_\Gamma$. 
Let $V' \in \clVpGamma$.  By \Cref{lem:surj} there exists a  $w\in \Hom(V,V')$ such that $M(w) = \Gamma$.
Now $\innvp{w v_1}{w v_2}= \innv{v_1}{\Gamma v_2} = \innvga{v_1}{v_2}$ for all $v_1, v_2 \in V$. 
In other words $w$ gives an isometry from $V_\Gamma$ to $V'$. This proves the corollary.
\end{proof}

\Cref{cor:cVpGamma} shows that $V_\Gamma$ is the unique $\epsilon'$-Hermitian
$D$-module $V'$ up to isometry such that $\dim_DV' = \dim_DV$ and there exists a
$w\in W$ such that $M(w) \in \Gamma +\fgg_{x,r^+}$.

\subsubsection{} \label{sec:singleblockdatum} Recall the notation in the
  beginning of \Cref{sec:DTL.pos}.
where $\Sigma = (x,\Gamma, \phi, \rho)$ is a single block supercuspidal datum of
positive depth~$r$.  We assume that $V'$ is isomorphic to $V_\Gamma$. Let
$W = V\otimes_D V'$ and fix a $w\in W$ such that $M(w) = \Gamma$. Let
$G' = \rU(V')$ and let $x' \in \BTB{G'}$ be the point corresponding to the
lattice function $\sL'$ defined by \eqref{eq:sLp}. We define
\[
 \Gamma' := M'(w) = w w^\mstar = w\Gamma w^{-1} \in \fgg'.
\]

\begin{prop}\label{lem:LiftGD}
Let $\Gamma = \sum_{i=-1}^d \Gamma_i$ be a $\GL_D(V)$-good factorization of
$\Gamma$ in $\fgg$
(which always exists by \Cref{prop:GD}). 
Let 
\[
\Gamma'_j := w\Gamma_j
  w^{-1}.
\] 
Then $\Gamma'_j \in \fgg'$ and $\Gamma' = \sum_{j=-1}^d \Gamma'_j$ is a
$\GL_D(V')$-good factorization of $\Gamma'$ in $\fgg'$.  
\end{prop}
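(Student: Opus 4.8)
The plan is to transport the good factorization of $\Gamma$ through the conjugation map $X \mapsto w X w^{-1}$, checking that each defining property of a $\GL_D(V')$-good factorization (Definition \ref{def:GD} together with the $\GL$-goodness condition) is preserved. The key observation is that $w\colon V \to V'$ is a $D$-module isomorphism (by \cref{it:Ga.2}), so conjugation by $w$ is an $F$-algebra isomorphism $\End_D(V) \to \End_D(V')$; moreover, since $M(w) = w^\mstar w = \Gamma$ we have $w^\mstar = \Gamma w^{-1}$, hence $w X w^{-1} \in \fgg'$ whenever $X \in \fgg$: indeed $(wXw^{-1})^\mstar = (w^{-1})^\mstar X^\mstar w^\mstar = w \Gamma^{-1} \cdot X^* \cdot \Gamma w^{-1}$, and when $X^* = -X$ and $X$ commutes with $\Gamma$ this equals $-wXw^{-1}$. (This commutativity holds for each $\Gamma_i$ by Definition \ref{def:GD}(a).) So first I would record that $\Gamma'_j := w\Gamma_j w^{-1} \in \fgg'$ and $\Gamma' = \sum_{j=-1}^d \Gamma'_j$ with all the $\Gamma'_j$ commuting and semisimple, since conjugation preserves these properties; this handles Definition \ref{def:GD}(a) and the central condition \ref{def:GD}(d) (as $w$ maps $\Cent{}{\fgg} \cap F'[\Gamma]$-type elements appropriately, using $w\varpiF^k w^{-1} = \varpiF^k$ on scalars).

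Next I would verify the depth conditions \ref{def:GD}(b),(c),(e),(f). The crucial point is that conjugation by $w$ sends the Moy-Prasad filtration at $x$ in $\fgl_D(V)$ to the Moy-Prasad filtration at $x'$ in $\fgl_D(V')$, \emph{shifted by $s$}: by \eqref{eq:sLp} we have $\sL'_t = w\sL_{t+s}$, so $\sL'$ is the image of $\sL$ under $w$ composed with a shift by $s$, and correspondingly $\fgl_D(V')_{x',t} = w\,\fgl_D(V)_{x,t}\,w^{-1}$ (the shift by $s$ cancels on the Lie algebra level since the Lie algebra filtration is indexed by differences of lattice indices). Hence $\depth_{x'}(w X w^{-1})$ with respect to the building point $x'$ equals $\depth_x(X)$; and since $x \in \BTB{G^0}$ implies (via \cref{it:Ga.1}) that these depths are in fact the global depths, we get $\depth(\Gamma'_j) = \depth(\Gamma_j)$ for each $j$. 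Therefore the sequence of numbers $-r_{d} < \cdots < -r_0 < 0$ (or its Case I variant) is exactly reproduced, and $\depth(\Gamma'_{-1}) = \depth(\Gamma_{-1}) \geq 0$; this gives \ref{def:GD}(b),(c),(e),(f) at once, with the same $r_i$'s, and in particular $\depth(\Gamma') = -r = \depth(\Gamma)$ (consistent with $\Gamma'$ being $\GL_D(V')$-good of depth $-r$, since $\Gamma' = w\Gamma w^{-1}$).

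The remaining and, I expect, the most delicate step is to check that each $\Gamma'_j$ ($0 \le j \le d$) is $\GL_D(V')$-good. For this I would pass to a tamely ramified splitting field $E$ of $F$ as in the proofs of \Cref{lem:GLgood} and \Cref{prop:GD}: after base change, $\Gamma_j$ lies in a split maximal torus $\ftt$ of $\fgl_D(V) \otimes_F E$, with eigenvalues $\lambda_1, \dots, \lambda_N$, and $\GL$-goodness means that every difference $\lambda_a - \lambda_b$ is either $0$ or has valuation $\depth(\Gamma_j) = -r_j$. Conjugating by $w \otimes 1$ carries $\ftt$ to a split maximal torus $w\ftt w^{-1}$ of $\fgl_D(V') \otimes_F E$, and the eigenvalues of $\Gamma'_j = w\Gamma_j w^{-1}$ are \emph{the same multiset} $\lambda_1, \dots, \lambda_N$ (conjugation preserves eigenvalues). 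Hence the eigenvalue differences, and therefore their valuations, are unchanged, so $\Gamma'_j$ is $\GL_D(V')$-good with the same depth $-r_j$. The main obstacle is really just being careful about the bookkeeping: that $w\ftt w^{-1}$ is genuinely a maximal torus of the \emph{right} group (it is, since $w$ is an honest $D$-module isomorphism, not merely an $F$-linear one, so it intertwines $\GL_D(V)$ with $\GL_D(V')$), and that the $s$-shift in \eqref{eq:sLp} does not disturb the valuation comparisons (it does not, since goodness is a statement purely about eigenvalues of $\Gamma'_j$, which are independent of the lattice function used). Finally, I would note that the condition $\Gamma'_{-1} \in F'[\Gamma'] = w\,F'[\Gamma]\,w^{-1}$ from Definition \ref{def:SC.D}\cref{it:SC.D.Ga} is immediate, since $\Gamma_{-1} \in F'[\Gamma]$ by hypothesis and conjugation by $w$ carries $F'[\Gamma]$ isomorphically onto $F'[\Gamma']$ (note $w$ fixes $F' = \Cent{}{D}$ pointwise as it acts only on $V$). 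This completes the verification of all the conditions.
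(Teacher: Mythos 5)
Your proposal is correct and follows essentially the same route as the paper: compute $(\Gamma'_j)^* = -\Gamma'_j$ using $w^\mstar = \Gamma w^{-1}$ together with $[\Gamma_j,\Gamma]=0$, then observe that conjugation by the $D$-module isomorphism $w$ transports the entire good-factorization structure from $\fgl_D(V)$ to $\fgl_D(V')$. The paper compresses the second half into one sentence ("$w$ is an isomorphism of $D$-modules; hence $\Gamma' = \sum \Gamma'_j$ is a $\GL(V')$-good factorization"), whereas you unpack the depth and eigenvalue bookkeeping explicitly; this is harmless extra detail rather than a different argument, though note that the citation of \cref{it:Ga.1} for the claim $\depth_x(\Gamma_j)=\depth(\Gamma_j)$ is not quite the right reference (what you want is simply that conjugation by $w$ induces a bijection of buildings, so global depth transports), and the parenthetical $w\varpiF^k w^{-1}=\varpiF^k$ should really be the observation that $\Gamma_d\in F'=\Cent{}{D}$ acts as a $D$-scalar and hence commutes with $w$.
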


\begin{proof}
  Since $\Gamma$ commutes  with $\Gamma_j$, we have 
\[
\begin{split}
(\Gamma'_j)^* &= (w^{-1})^\mstar \Gamma^*_j w^\mstar = 
  -(w^\mstar)^{-1}\Gamma_j w^\mstar 
  = -(w^\mstar)^{-1}\Gamma_j \Gamma w^{-1} \\
  &=
  -(w^\mstar)^{-1}\Gamma \Gamma_j w^{-1} 
= -w\Gamma_j w^{-1} = -\Gamma'_j.
\end{split}
\]
 This shows that $\Gamma'_j\in \fgg'$. 
 Note that $w\colon V \rightarrow V'$ is an isomorphism of $D$-modules. 
 Hence $\Gamma' = \sum_{j=-1}^d
  \Gamma_j'$ is a $\GL(V')$-good factorization.
\end{proof}

\begin{remarks}
We collect some easy consequences of \Cref{lem:LiftGD}.
\begin{enumerate}[1.]
\item By \Cref{lem:GLgood}, $\Gamma' = \sum_{j=-1}^d \Gamma'_j$ is also a
  $G'$-good factorization.

\item $\Gamma'$ satisfies  \Cref{def:SC.D}~(a) with respect to
  $G'$ and therefore
\[
G'^0 := \Cent{G'}{\Gamma'_{d}, \ldots, \Gamma'_{0}} = \Cent{G'}{\Gamma'}.
\]

\item We have an isomorphism 
\begin{equation}\label{eq:alpha.def}
\alpha \colon G^0\simrightarrow G'^0 \quad\text{defined by} \quad g\mapsto w g w^{-1}.
\end{equation}
Thanks to \Cref{lem:L'}, $\alpha$ restricted to an isomorphism $\alpha|_{G^0_x} \colon
  G^0_x\iso G'^0_{x'}$.

\item The point $x' \in \BTB{G'^0}$ is also a vertex. 
\end{enumerate}
\end{remarks}

Let $\phi' := \phi^*\circ \alpha^{-1}$ and $\rho' := \rho^*\circ \alpha^{-1}$
viewed as a character and a cuspidal representation of $G'^0_{x'}/G'^0_{x',0^+}$
respectively.  Clearly
\begin{equation}\label{eq:SC.L}
\Sigma'_{\Sigma,w}:= (x', -\Gamma', \phi', \rho')
\end{equation}
is a single block supercuspidal datum of positive depth $r$ for $G' = \rU(V')$. 

The following lemma shows that \eqref{eq:SC.L} is well-defined up to equivalence classes.
\begin{lemma}\label{lem:LD.wdef}
  Let $\Sigma := (x,\Gamma,\phi, \rho)$ be a single block datum of positive
  depth $r$. Let $w\in M^{-1}(\Gamma)$ and define $\Sigma'_{\Sigma,w}$ as in
  \eqref{eq:SC.L}. Then the equivalence class $[\Sigma'_{\Sigma,w}]$ is
  independent of the choice of the element in the equivalence class $[\Sigma]$ and $w$.
\end{lemma}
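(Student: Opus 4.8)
The plan is to check that the construction $\Sigma \mapsto \Sigma'_{\Sigma,w}$ in \eqref{eq:SC.L} is unchanged, up to $G'$-conjugacy, when we (i) vary $w$ within the fibre $M^{-1}(\Gamma)$, and (ii) replace $\Sigma$ by an equivalent datum. I would organize the argument into two independent lemmas of this kind, combined at the end.

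\emph{Step 1: independence of $w$.} Fix $\Sigma = (x,\Gamma,\phi,\rho)$ and suppose $w, w_1 \in M^{-1}(\Gamma)$, so $w^\mstar w = w_1^\mstar w_1 = \Gamma$. By \Cref{cor:cVpGamma}, both $w$ and $w_1$ realize isometries of $V_\Gamma$ onto their targets $V', V_1'$; hence $V' \cong V_1'$ and, composing, $g' := w_1 w^{-1} \colon V' \to V_1'$ is an isometry, i.e.\ an element of $G'$ once we identify $V' = V_1' = V_\Gamma$ (this uses Witt's theorem exactly as in \Cref{cor:Gporbit}). I would then check that conjugation by $g'$ carries the data: since $\sL_{1,t}' = w_1\sL_{x,t+s} = g' w \sL_{x,t+s} = g'\sL_t'$ we get $x_1' = g'\cdot x'$; since $\Gamma_1' = w_1\Gamma w_1^{-1} = g'(w\Gamma w^{-1})g'^{-1} = \Ad_{g'}(\Gamma')$, condition \cref{it:SC.eq.b} holds (with equality, a fortiori); and the isomorphism $\alpha_1 = \Ad_{g'}\circ\alpha$ of \eqref{eq:alpha.def} shows $\phi_1' = \phi'\circ\Ad_{g'}^{-1}$ and $\rho_1' = \rho'\circ\Ad_{g'}^{-1}$, giving \cref{it:SC.eq.c}. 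So $[\Sigma'_{\Sigma,w_1}] = [\Sigma'_{\Sigma,w}]$.

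\emph{Step 2: independence of the choice in $[\Sigma]$.} Suppose $\dotSigma = (\dotxx,\dotGamma,\dotphi,\dotrho)$ is equivalent to $\Sigma$ via $g \in G$ as in \Cref{def:SC.eq}; by the remark following \Cref{def:SC.eq} we may assume $\Ad_g(\dotGamma) \in \Gamma + (\Cent{}{\fgg^0}\cap\fgg^0_{x,0})$. Replacing $\dotSigma$ by its $g$-conjugate, I reduce to the case $\dotxx = x$, $\dotGamma \in \Gamma + (\Cent{}{\fgg^0}\cap\fgg^0_{x,0})$, and $\dotrho\otimes\dotphi \cong \rho\otimes\phi$ as $G^0_x$-modules. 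Pick $w \in M^{-1}(\Gamma)$; I claim the same $w$ (or a nearby one) works for $\dotSigma$. Indeed $\dotGamma - \Gamma \in \fgg^0_{x,0} \subseteq \fgg_{x,-r^+}$ since $r > 0$, so $M(w) = \Gamma \in \dotGamma + \fgg_{x,-r^+}$, and by \Cref{lem:surj} (applied with the coset $\dotGamma + \fgg_{x,-r^+}$) there is $\dotww \in w + \sB_{-r^+}$ with $M(\dotww) = \dotGamma$. Moreover $\dotww = w + b$ with $b \in \sB_{-r^+}$, so $\dotww\sL_{x,t+s}$ differs from $w\sL_{x,t+s}$ only by lattices at higher level; one checks this forces the associated vertex $\dotxx'$ to equal $x'$ and $\dot\Gamma' = M'(\dotww) \in \Gamma' + \fgg'_{x',-r^+}$, which is condition \cref{it:SC.eq.b} for the lifts. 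Finally, since $\dotGamma - \Gamma$ is central in $\fgg^0$, the two transport isomorphisms $\alpha, \dot\alpha$ of \eqref{eq:alpha.def} agree on $G^0_x = \dot G^0_x$ up to the inner automorphism $\Ad_{\dotww w^{-1}}$ which lies in $G'^0_{x'}$ and acts trivially modulo $G'^0_{x',0^+}$; transporting $\dotrho\otimes\dotphi \cong \rho\otimes\phi$ through it gives \cref{it:SC.eq.c}. Hence $[\Sigma'_{\dotSigma,\dotww}] = [\Sigma'_{\Sigma,w}]$, and by Step 1 this is independent of the remaining choice of $\dotww$.

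\emph{Main obstacle.} The delicate point is Step 2: verifying that passing from $\Gamma$ to $\dotGamma = \Gamma + (\text{central, depth }0)$ really does not move the target vertex $x'$ or disturb the character/cuspidal pieces beyond what \Cref{def:SC.eq} permits. The key leverage is that the perturbation lies in $\Cent{}{\fgg^0}\cap\fgg^0_{x,0}$, so it commutes with everything in $G^0$ and the transport map $\alpha$ is essentially unchanged; the rest is bookkeeping with \Cref{lem:L'}, \Cref{lem:surj}, and the definition of $\sL'$. I expect the lattice-function comparison showing $\dotxx' = x'$ to require the most care, but it is routine given $\dotww - w \in \sB_{-r^+}$ and self-duality.
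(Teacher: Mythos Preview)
Your Step~1 is correct and matches the paper's argument: $M^{-1}(\Gamma)$ is a single $G'$-orbit by Witt's theorem, and conjugating by $g' = w_1 w^{-1}$ transports the datum.

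Step~2 has a genuine gap. You assert that $\dot\alpha$ and $\alpha$ differ by $\Ad_{\dotww w^{-1}}$, and that $\dotww w^{-1}$ ``lies in $G'^0_{x'}$ and acts trivially modulo $G'^0_{x',0^+}$''. But $\dotww w^{-1}$ is not in $G'$ at all in general: for $v_1',v_2'\in V'$ one computes
\[
\inn{\dotww w^{-1} v_1'}{\dotww w^{-1} v_2'}_{V'} = \inn{w^{-1}v_1'}{\dotGamma\, w^{-1}v_2'}_V,
\qquad
\inn{v_1'}{v_2'}_{V'} = \inn{w^{-1}v_1'}{\Gamma\, w^{-1}v_2'}_V,
\]
so $\dotww w^{-1}$ is an isometry only when $\dotGamma=\Gamma$. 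Consequently the comparison of $(\rho',\phi')$ with $(\dot\rho',\dot\phi')$ does not go through as written; in fact with a generic $\dotww\in w+\sB_s$ the centralizers $G'^0$ and $\dot G'^0$ need not even coincide as subgroups of $G'$, so condition~\cref{it:SC.eq.c} cannot be checked against $g'=1$ directly.

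The paper avoids this by making a much more specific choice of $\dotww$. Since $\gamma := \dotGamma - \Gamma$ lies in $\Cent{}{\fgg^0}\cap\fgg^0_{x,0}\subseteq F'[\Gamma]$, one solves $2c+c^2 = \Gamma^{-1}\gamma$ inside the commutative ring $F'[\Gamma]\cap\fgl(V)_{x,r}^{*,+1}$ by Hensel's lemma, and sets $\dotww := w(1+c)$. Because $1+c$ is central in $\GL^0$, one gets $\dot\alpha(g)=w(1+c)g(1+c)^{-1}w^{-1}=wgw^{-1}=\alpha(g)$ \emph{exactly} for all $g\in G^0$. Hence $G'^0$, $x'$, $\alpha$, $\phi'$, $\rho'$ are literally unchanged, and only $\Gamma'$ moves to $\Gamma'+\gamma'$ with $\gamma'\in\Cent{\fgg'}{\Gamma'}\cap\fgg'^0_{x',0}$, making the equivalence immediate. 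Your appeal to \Cref{lem:surj} finds \emph{some} preimage of $\dotGamma$, but not one with this centrality property; that is the missing idea.
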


\begin{proof}
  First we fix $\Sigma = (x,\Gamma,\phi, \rho)$ in $[\Sigma]$.  For any
  $w\in M^{-1}(\Gamma)$, let $\Sigma'_{\Sigma,w}$ denote the datumn defined via
  \eqref{eq:SC.L}.  Since $M^{-1}(\Gamma)$ is a single $G'$-orbit, we see that
  elements in $\set{\Sigma'_{\Sigma,w} | w\in M^{-1}(\Gamma)}$ are
  $G'$-conjugates of one another.

  Suppose $\dotSigma = (\dotxx,\dotGamma,\dotphi,\dotrho) \in [\Sigma]$.  We
  will show that $\Sigma'_{\Sigma,w}$ and $\Sigma'_{\dotSigma,\dotww}$ are
  equivalent.  By $G$-conjugacy, we could assume $\dotxx = x$ and
  $\dotGamma = \Gamma + \gamma$ such that
  $\gamma \in \Cent{}{\fgg^0}\cap \fgg_{x,0} \subseteq F'[\Gamma]$ (see Remark
  after \Cref{def:SC.eq} and \Cref{lem:aniso}).
  
  \def\FGrr{F'[\Gamma]_{r}^*}
  \begin{claim*}
    Let $\FGrr:= F'[\Gamma] \cap \fgl(V)_{x,r}^{*,+1}$ be the set of elements in
    $F'[\Gamma]$ which are $*$-invariant and whose depth is not smaller than
    $r$.  Then there is an element $c \in \FGrr$ such that
    $M(w(1+c)) = \dotGamma$.
  \end{claim*}

  \begin{proof} By a similar calculation as in \eqref{eq:Mw} and \eqref{eq:Mw1},
    we have $M(w(1+c)) = \Gamma +\Gamma(2c+ c^2)$. Then $M(w(1+c)) = \dotGamma$
    if and only if $2c+ c^2 = \Gamma^{-1}\gamma$. Observe that
    $F'[\Gamma]\cap \fgl(V)^{*,+1}$ is a product of non-Archimedean local
    field(s) and $\FGrr$ is an ideal in its integral ring.  Since $p\neq 2$, the
    map $\FGrr \mapsto \FGrr$ defined by $c \mapsto 2c+c^2$ is a bijection by
    Hensel's lemma. Now the claim follows because $\Gamma^{-1}\gamma \in \FGrr$.
  \end{proof}

Let $c$ be the element given by the above claim and $\dotww := w(1+c)$. 
It is straightforward to check that $\dotGamma' = \Gamma' + \gamma'$ with
$\gamma' := w (2c+c^2) w^\mstar \in \Cent{\fgg'}{\Gamma'}\cap
\fgg'_{x',0}$. Moreover $G'^0$, $x'$, $\alpha$, $\phi'$, $\rho'$ are exactly the
same objects for $w$ and $\dotww$.  \trivial[h]{ $\gamma' \in \fgg'_{x',0}$ is
  clear. It is in $\Cent{\fgg'}{\Gamma'}$ because
  $[\Gamma',\dotGamma'] = \Gamma' w (1+c)(1+c^*) w^\mstar - w (1+c)(1+c^*)
  w^\mstar \Gamma' = w[\Gamma,(1+c)(1+c^*)] w^\mstar = 0$.  $x'$ is independent
  of $w$ or $\dotww$ is follows from (the proof of) \Cref{lem:L'}~(iii).
  $\alpha_{\dotww}(g) = w(1+c) g (1+c)^{-1} w^{-1} = w g w^{-1} = \alpha_w(g)$
  for any $g\in G^0$. Hence $G'^0$ is independent since it is the image of
  $\alpha$ (it also follows from $[\Gamma', \dotGamma']$ with some argument).
  Now $\phi'$ and $\rho'$ are independent since $\alpha$ and $G'^0$ are
  physically the same.} In other words, $\Sigma'_{\Sigma,w}$ and
$\Sigma'_{\dotSigma,\dotww}$ are equivalent.  This completes the proof of the
lemma.
\end{proof}

\begin{definition} \label{def:LD.pos}
We retain the notation in \Cref{lem:LD.wdef}. The isomorphism class of
$V'$ is independent of the choice of the element in $[\Sigma]$ by \Cref{cor:cVpGamma}.
We define $\dthetap([\Sigma])$ to be the equivalence class $[\Sigma'_{\Sigma,w}]
\in \bDVp$.
By an abuse of notation, we will also write $\Sigma' = \dthetap(\Sigma)$ where
$\Sigma' := \Sigma'_{\Sigma,w}$ and $w$ is
implicitly fixed.
\end{definition}

\subsection{The general case} \label{sec:changesplitting2} Let
$\Sigma= (x,\Gamma, \phi, \rho)$ be a supercuspidal datum of $G := \rU(V)$.  By
\Cref{def:BD}, let $\Sigma = \bigoplus_{l=0}^b \llSigma$ be the block
decomposition of $\Sigma$ into $b$ positive depth blocks
$\set{\llSigma | 1\leq l \leq b}$ and a depth zero block $\zzSigma$.
In addition, we have $\Gamma = \bigoplus \llGamma$ and $V
= \bigoplus \llV$.

For any $\epsilon'$-Hermitian $D$-module $V'$, let $[V']$ represent its Witt
class in the Witt group. 

\begin{definition}\label{def:LD}
Let $\cT'$ be a fixed Witt class of $\epsilon'$-Hermitian $D$-modules. We recall $\bDTp$ in~\eqref{eqbDTp}.
We set
\begin{enumerate}[(a)]
\item $\llSigmap := 
\dthetap(\llSigma)\in \sD_{\llV_{\llGamma}}$ for $1 \leq l \leq b$;
\item $\zzcTp := \cT' - \sum_{l=1}^{b} [\llV_{\llGamma}]$ and
\item $\zzSigmap := \dtheta_{\zzV,\zzcTp}(\zzSigma)$ (cf. \Cref{def:DTL.0}).
\end{enumerate}
Then we define 
\[
\dthetaVT([\Sigma]) :=  \left[ \textstyle\bigoplus_{i=0}^{b} \llSigmap \right]\in \bDTp. 
\]
By an abuse of notation again, we also write $\Sigma' = \dthetaVT(\Sigma)$ where
$\Sigma' := \bigoplus_{i=0}^{b} \llSigmap$.
\end{definition}

\begin{remarks}
\begin{asparaenum}[1.]
\item Note that the $\llSigmap$s have different depths. It follows from
  \Cref{lem:DS.D} that
  $\Sigma' := \dthetaVT(\Sigma) = (x',-\Gamma', \phi', \rho')$ is a
  supercuspidal datum of $\rU(V')$ for a well-defined $[V'] \in \cT'$.

\item 
  In the construction, we get an element $\llww\in \llV\otimes_D\llVp$ such
  that $\llGamma = M(\llww)$ and $\llGamma' = M'(\llww)$ for
  each $0< l\leq b$. Therefore we get an element 
\begin{equation}\label{eq:w.gen}
w := \bigoplus_{0<l\leq b} \llww \in
  \bigoplus_{0<l \leq b} \llV\otimes_D\llVp \subseteq V\otimes_D V'
\end{equation}
so that $\Gamma \equiv M(w) \pmod{\fgg_{x,0}}$ and
$\Gamma' \equiv M'(w) \pmod{\fgg'_{x',0}}$.

\item In the above definition of $\dtheta_{V,\cTp}$, the key is the
  correspondence of semisimple elements via the
  moment maps.  We expect an explicit description of the correspondences between
  cuspidal representations of dual pairs over finite fields using similar
  construction.  Indeed there are some partial results in this aspect by
    Pan \cite{PanU,Pan16}.

\item \label[rmk]{rmk:dt.genD} The discussions in
  \Cref{sec:SC,sec:GoodFact,sec:LD} extend to $K$-type data defined in the
    remark to \Cref{def:SC.D}. More precisely, the notion of $K$-type data
  extends to the covering group and the notions of equivalence
  relation, block decomposition, direct sum etc. extend under exactly the same
  definition as well.

\item   Suppose $\Sigma$ (resp. $\tSigma$) is a $K$-type data for $G$
  (resp. $\wtG$). Then $\etaSigma$ and $\tetaSigma$ are also defined in the same
  way. 
\end{asparaenum}
\end{remarks}

\begin{definition}\label{def:LDC} 
  A $K$-type datum $\Sigmap$ is a \emph{theta lift of} a supercuspidal
    datum~$\Sigma$ for the dual pair $(\rU(V),\rU(V'))$, if
  \begin{enumerate}[(a)]
  \item $\Sigma = \bigoplus_{l=0}^b \llSigma$ is a block
    decomposition of a $K$-type datum;
    
  \item $\llSigmap = \dthetap(\llSigma)$ for $1\leq l\leq b$;
    
  \item $\zzSigmap$ is a (not necessary supercuspidal) depth zero data  which is
    a theta lift of $\zzSigma$ (cf. \Cref{def:liftzerodatum});
    
  \item $V' = \bigoplus_{l=0}^b \llVp$ and $\Sigmap := \bigoplus_{l=0}^b \llSigmap$.
  \end{enumerate}
\end{definition}

\subsection{An example}\label{sec:eg.Gen}
To illustrate the content of the definitions made above, we provide the
following example which could be considered as a generic case:
 \begin{eg*} Let $\tSigma$ be a supercuspidal datum of $G$ such that
      $\zzV$ is the zero space under the block decomposition
      (cf. \Cref{sec:BD}).  Equivalently, this means every eigenvalue of
      $\Gamma$ over $\barF$ has negative valuation when we view $V$ as an
      $F$-vector space and $\Gamma$ as an $F$-linear map on $V$.  Since $\Gamma$
      is invertible, we let $V_\Gamma$ denote the $\epsilon'$-Hermitian space in
      \Cref{def:VGa}.  \def\VTG{V^\circ_{\cT'-[V_\Gamma]}} Let $\VTG$ be the
      anisotropic $\epsilon'$-Hermitian space in the Witt tower
      $\cT'- [V_\Gamma]$.  Then the first occurrence of $\tpiSigma$ is at
      $V':=V_\Gamma \oplus \VTG$ in the Witt tower $\cT'$.  Using this explicit
      formula, one may check the conservation relation~\cite{SZ} of the first
      occurrence indices directly in this case.  If $\cT' = [V_\Gamma]$, then
      $\vartheta_{V,\cT'}(\Sigma)$ is essentially the ``contragredient'' of
      $\Sigma$ (cf. \eqref{eq:SC.L}).  Otherwise, $\vartheta_{V,\cT'}(\Sigma)$
      is the direct sum of $\vartheta_{V,[V_\Gamma]}(\Sigma)$ and the datum
      attached to the trivial representation of $\rU(\VTG)$. 
  \end{eg*}

\section{One positive depth block case I: orbit structure}\label{sec:OB1}

\subsection{Assumptions}
\label{sec:assumptions}
Throughout this section, we retain the notation in \Cref{sec:DTL.pos} and make
the following assumptions. 
\begin{enumerate}[(I)]
\item Let $\Sigma = (x, \Gamma, \phi, \rho)$ be a single block datum with
  positive depth $r = 2s$. In particular $\Gamma$ is an invertible element in $\End(V)$.
  We fix a $\GL(V)$-good factorization $\Gamma = \sum_{i=-1}^d \Gamma_i$.

\item The space $(V', \innvp{}{})$ is isomorphic to
  $(V_\Gamma, \inn{}{}_{V_\Gamma})$ and $w \in \Hom_D(V,V')$ is a fixed element
  such that $M(w) = \Gamma$. In particular $\dim_D V = \dim_D V'$.
\end{enumerate}

\def\IGamma{{\fI_\Gamma}}

We make following definitions:

\begin{definition} \label{def:dalpha}
\begin{enumerate}[(a)]
\item \label{it:Gamma.cases}
We refer to Cases I and II in \Cref{def:GD}. 
If we are in Case I, i.e. $\Gamma_d = 0$, then we set $\IGamma := \set{0, \cdots, d-1}$ and $\ckGamma = \Gamma_{d-1}$.
Otherwise if we are in Case II, i.e. $\Gamma_d \neq 0$, then we set
$\IGamma:=\set{0, \cdots, d}$ and $\ckGamma = \Gamma_d$. 
Under this definition $\ckGamma$ is a nonzero good element in $\Gamma +
\fgg_{x,-r^+}$. Let $\ckG := \Cent{G}{\ckGamma}$.

\item \label{it:PB.2} Define $G^i$ with its Lie algebra $\fgg^i$ as in \Cref{def:GD}.
  In particular $G^0 = \Cent{G}{\Gamma}$.

\item \label{it:def.fggiperp}Let $\fgg^{i-1 \perp}$ be the orthogonal complement
  of $\fgg^{i-1}$ in $\fgg^i$ with respect to the invariant bilinear form $\bB$
  in \Cref{sec:CG}, i.e.  $\fgg^i = \fgg^{i-1} \oplus \fgg^{i-1 \perp}$. Let
  $\fgg^{i \perp}_{x,r} = \fgg^{i \perp} \cap \fgg_{x,r}$.

\item \label{it:PB.3} Let $\fgl :=\fgl^d := \fgl(V)$ and
  $\fgl^i = \Cent{\fgl^{i+1}}{\Gamma_i}$ for $0\leq i<d$.

\item \label{it:PB.4} Let $\sL$ be the self-dual lattice function on $V$
  corresponding to $x$.  Let $\sL'$ be the self dual lattice function in $V'$
  defined by $\sL'_t = w \sL_{t+s}$ as in \Cref{lem:L'} and let $x'$ be
  the corresponding point in $\BTB{G'^0}$ .

\item \label{it:PB.5} Let $\Gamma' = M'(w)$ and $\Gamma' = \sum_{i=-1}^{d}\Gamma'_i$ be the good
  factorization of $\Gamma'$ given by \Cref{lem:LiftGD}. 
  
\item \label{it:PB.6} We define similar notations for $G'$ as in \crefrange{it:PB.2}{it:PB.3}.

\item Let $\sB := \sL\otimes_D \sL'$. Then $w \in \sB_{-s}$ by
  \Cref{lem:L'}~\cref{it:L'.3}.

\item \label{it:defalpha.iso} Let $\alpha\colon G^0 \simrightarrow G'^0$ be the group isomorphism given by
  $\alpha(g) = w g w^{-1}$ (cf. \eqref{eq:alpha.def}).

\item Let $\fgg'^\perp := \fgl(V')^{*,+1}$. Then $\fgl':= \fgl(V') =
  \fgg' \oplus \fgg'^\perp$ under the $*$-action. 

\item For each $X \in \fgg$, we define $\dalpha\colon \fgg\to \fgg'$ and $\dalphap \colon \fgg \to \fgg'^\perp
$ by
\[
\dalpha(X) = \half(wXw^{-1} - (wXw^{-1})^*) \quad \text{and} \quad
\dalphap(X) = \half(wXw^{-1} + (wXw^{-1})^*).
\] 
Clearly $wXw^{-1} = \dalpha(X) + \dalphap(X)$. 
\end{enumerate}
\end{definition}

\subsection{Structure of orbits}
\def\odalphat{\odalpha|_{\fgg^i_{x,t:t^+}}}
We apply \Cref{def:SC.N}~(a)-(e) to data $(x, \Gamma)$ and $(x',\Gamma')$.  The
purpose of this section is to study the $K\times K'$-orbit of the coset
$w+\sB_0$ in $W/\sB_0$.

\subsubsection{}
We start by investigating some properties of $\dalpha$ and $\dalphap$ by
elementary linear algebra.
\begin{lemma}\label{lem:alpha}
  For any $t\in \bR$, we set $t'_i := r -r_{i-1}+t$. Then the following statements hold:
\begin{enumerate}[(i)]
\item \label{it:dalpha.fil} 
  $\dalpha(\fgg_{x,t}^i) \subseteq \fgg'^i_{x',t}$ for $0\leq i\leq d$.

\item \label{it:dalpha.p} $\dalphap(\fgg^i_{x,t}) \subseteq \fgl'_{x',t'_i}$ for
  $0 < i \leq d$.

\item $\dalpha \colon \fgg^0 \rightarrow \fgg'^0$ is an isomorphism which is the
  differential of $\alpha$ and $\dalphap(\fgg^0) = 0$.


\item \label{it:dalpha.JG1}
  $\dalpha(\fgg^i_{x,t} \setminus \fgg^i_{x,t^+}) \subseteq
  \fgg'^i_{x',t} \setminus \fgg'^i_{x',t^+}$
  for $i \in \IGamma$. 
  
\item \label{it:dalpha.JG2} The map
  $\odalphat \colon \fgg^i_{x,t:t^+} \longrightarrow \fgg'^i_{x',t:t^+}$ induced
  by $\dalpha |_{\fgg^i_{x,t}}$ is an isomorphism for $i\in \IGamma$. Hence,
  $\dalpha |_{\fgg^i_{x,t}} \colon \fgg^i_{x,t} \longrightarrow \fgg'^i_{x',t}$
  and $\dalpha|_{\fgg^i}\colon \fgg^i \longrightarrow \fgg'^i$ are also
  isomorphisms for $i\in \IGamma$.

\item \label{it:dalpha.JG3} 
  $\dalpha(\fgg^{i \perp}) \subseteq \fgg'^{i\perp}$ for
  $0\leq i<d$ and so $\dalpha(\fgg^{i\perp}_{x,t}) \subseteq
  \fgg'^{i\perp}_{x',t}$ by \cref{it:dalpha.fil}. 
\end{enumerate}
\end{lemma}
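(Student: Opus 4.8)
The plan is to reduce the whole lemma to one computation: that $\dalpha$ and $\dalphap$ are conjugation by $w$ followed by the two $*$-eigenspace projections of $\fgl(V')$, and that conjugation by $w$ differs from an \emph{intrinsic} involution of $\fgl(V)$ by a controlled amount. Since $M(w)=w^\mstar w=\Gamma$ and $w\colon V\to V'$ is an isomorphism of $D$-modules ($\Gamma$ is invertible, \cref{it:Ga.2}), $w$ is an isometry of $\VGa$ onto $V'$, whence $(wXw^{-1})^{*}=w\,\sigma(X)\,w^{-1}$ for all $X\in\fgl(V)$, where $\sigma(X):=\Gamma^{-1}X^{*}\Gamma$ is the adjoint involution attached to $\VGa$. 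Thus $\Ad_w\colon X\mapsto wXw^{-1}$ is an isomorphism of algebras with involution $(\fgl(V),\sigma)\iso(\fgl(V'),*)$ taking $\fgl_{x,t}$ onto $\fgl'_{x',t}$ (since $\sL'_t=w\sL_{t+s}$, see \eqref{eq:sLp}) and $\Gamma_j$ to $\Gamma'_j$ (\Cref{lem:LiftGD}), and by the definitions $\dalpha=\Ad_w\circ\tfrac12(\id-\sigma)$ and $\dalphap=\Ad_w\circ\tfrac12(\id+\sigma)$. For $X\in\fgg$ one has $\sigma(X)=-\Gamma^{-1}X\Gamma=-X+\Gamma^{-1}[\Gamma,X]$, so $\dalpha(X)=w(X-\tfrac12\Gamma^{-1}[\Gamma,X])w^{-1}$ and $\dalphap(X)=\tfrac12 w\Gamma^{-1}[\Gamma,X]w^{-1}$; the whole lemma is thereby reduced to controlling the correction term $\Gamma^{-1}[\Gamma,X]$.

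For that I would use a single depth estimate. If $X\in\fgg^i$ then $[\Gamma,X]=\sum_{j=-1}^{i-1}[\Gamma_j,X]$, because $X$ commutes with $\Gamma_i,\dots,\Gamma_{d-1}$ and with the central element $\Gamma_d$; since $x\in\BTB{G^0}$ forces $\Gamma_j\in\fgl_{x,-r_j}$ (with $r_{-1}:=0$) and $\Gamma^{-1}\in\fgl_{x,r}$, it follows that $[\Gamma,\fgg^i_{x,t}]\subseteq\fgl_{x,-r_{i-1}+t}$ and hence $\tfrac12\Gamma^{-1}[\Gamma,\fgg^i_{x,t}]\subseteq\fgl_{x,\,r-r_{i-1}+t}=\fgl_{x,t'_i}$. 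Applying $\Ad_w$ gives \cref{it:dalpha.p} at once. For \cref{it:dalpha.fil} one checks that $\tfrac12(\id-\sigma)(X)$ is $\sigma$-skew, commutes with $\Gamma_i,\dots,\Gamma_{d-1}$ (as do $X$, $\Gamma$ and $\Gamma^{-1}$), and lies in $\fgl_{x,t}$ (the correction term is already in $\fgl_{x,r-r_{i-1}+t}\subseteq\fgl_{x,t}$, as $r\ge r_{i-1}$), so it lies in $\Ad_{w^{-1}}(\fgg'^i_{x',t})$ and $\dalpha(X)\in\fgg'^i_{x',t}$. Part~(iii) is the degenerate case: since $G^0=\Cent{G}{\Gamma}$ we have $[\Gamma,\fgg^0]=0$, so $\dalphap|_{\fgg^0}=0$ and $\dalpha|_{\fgg^0}=\Ad_w|_{\fgg^0}$ is the differential of the isomorphism $\alpha$ of \eqref{eq:alpha.def}, hence an isomorphism onto $\fgg'^0$.

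The restriction $i\in\IGamma$ is exactly what makes the graded statements \cref{it:dalpha.JG1,it:dalpha.JG2} work: then $r_{i-1}<r$ strictly, so the correction term lies in $\fgl_{x,t^+}$ and $\dalpha(X)\equiv wXw^{-1}\pmod{\fgl'_{x',t^+}}$, which gives injectivity of the induced map $\odalpha|_{\fgg^i_{x,t:t^+}}$ immediately; \cref{it:dalpha.JG1} is its ``no drop in depth'' restatement. For surjectivity, given $Y\in\fgg'^i_{x',t}$ I would set $Z:=w^{-1}Yw\in\Ad_{w^{-1}}(\fgg'^i)\cap\fgl_{x,t}$ and $X:=\tfrac12(Z-Z^{*})\in\fgg^i$; from $\sigma(Z)=-Z$ one has $Z^{*}=-Z-[\Gamma,Z]\Gamma^{-1}$, so the same depth bound gives $X\equiv Z\pmod{\fgl_{x,t^+}}$, whence $X\in\fgg^i_{x,t}$ and $\dalpha(X)\equiv wZw^{-1}=Y\pmod{\fgg'^i_{x',t^+}}$. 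Thus $\odalpha|_{\fgg^i_{x,t:t^+}}$ is an isomorphism for every $t$, and completeness of the Moy--Prasad filtrations upgrades this to isomorphisms $\fgg^i_{x,t}\iso\fgg'^i_{x',t}$ and $\fgg^i\iso\fgg'^i$. Finally, for \cref{it:dalpha.JG3} I would write $\fgg^{i\perp}=[\Gamma_i,\fgg^{i+1}]$ (the image of the semisimple operator $[\Gamma_i,\,\cdot\,]$ on $\fgg^{i+1}$, which is $\bB$-orthogonal to its kernel $\fgg^i$), and likewise $\fgg'^{i\perp}=[\Gamma'_i,\fgg'^{i+1}]$; since $\sigma$ is an anti-automorphism with $\sigma(\Gamma_i)=-\Gamma_i$, one has $\tfrac12(\id-\sigma)([\Gamma_i,Z])=[\Gamma_i,\tfrac12(\id-\sigma)(Z)]$, and $\tfrac12(\id-\sigma)(\fgg^{i+1})\subseteq\Ad_{w^{-1}}(\fgg'^{i+1})$ by \cref{it:dalpha.fil}, so applying $\Ad_w$ gives $\dalpha(\fgg^{i\perp})\subseteq\fgg'^{i\perp}$; the filtered version then follows by intersecting with $\fgg_{x,t}$ and using \cref{it:dalpha.fil} once more.

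The step I expect to require the most care is the surjectivity half of \cref{it:dalpha.JG2}: one must exhibit a preimage lying in $\fgg^i$ itself, not merely in $\fgl(V)$, whose defect from the prescribed target is provably of \emph{strictly} higher depth, and this is precisely where the hypothesis $i\in\IGamma$ (so that $r_{i-1}<r$) and the exact normalization $\sL'_t=w\sL_{t+s}$ (so that $\Gamma^{-1}\in\fgl_{x,r}$) are both indispensable. The remaining verifications are routine linear algebra with the $\sigma$-versus-$*$ dictionary and the good-factorization depth bounds.
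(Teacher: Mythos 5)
Your proof is correct, and the engine is the same as the paper's: everything hinges on $w^\mstar=\Gamma w^{-1}$, the identity $\dalphap(X)=\tfrac12 w\Gamma^{-1}[\Gamma,X]w^{-1}$, the depth estimate $\Gamma^{-1}[\Gamma_j,\,\cdot\,]\in\fgl_{x,r-r_j}$ together with the monotonicity $r_{-1}=0\le r_0<\dots<r_{i-1}<r$ for $i\in\IGamma$, and the intertwining $\dalpha\circ\ad_{\Gamma_i}=\ad_{\Gamma'_i}\circ\dalpha$. The genuine value you add is the framing: introducing the adjoint involution $\sigma(X)=\Gamma^{-1}X^*\Gamma$ of $(\fgl(V),\innvga{}{})$ and recognizing $\dalpha=\Ad_w\circ\tfrac12(\id-\sigma)$, $\dalphap=\Ad_w\circ\tfrac12(\id+\sigma)$ turns the paper's ad hoc computations with $w$, $w^\mstar$ into one clean dictionary, and it makes \cref{it:dalpha.fil}, \cref{it:dalpha.p} and the $\ad$-equivariance used in \cref{it:dalpha.JG3} essentially formal. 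The one place you actually diverge from the paper is surjectivity in \cref{it:dalpha.JG2}: the paper proves it by a symmetry/dimension argument (defining $\dalpha'$ in the reverse direction and noting $\odalphaprime\circ\odalpha$ is injective, hence the finite-dimensional graded pieces have equal dimension), whereas you exhibit an explicit preimage $X=\tfrac12(Z-Z^*)$ with $Z=w^{-1}Yw$ and use the same depth estimate (now on $[\Gamma,Z]\Gamma^{-1}$, with $\sigma(Z)=-Z$ replacing $X^*=-X$) to show the defect is of strictly higher depth. Your version is more constructive and localizes the role of $i\in\IGamma$; the paper's is shorter and sidesteps the verification that the candidate preimage lies in $\fgg^i$. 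Both are correct, and all the remaining parts (\cref{it:dalpha.fil}--(iii), \cref{it:dalpha.JG1}, \cref{it:dalpha.JG3}) coincide with the paper's argument up to the $\sigma$-translation.
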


\begin{proof}
We set $\tX := wXw^{-1} = \dalpha(X) + \dalphap(X)$. By the definition of
  $\sL'$, it is clear that $\tX \in \fgl'_{x',t}$ if and only if $X\in \fgl_{x,t}$.
Note that $(w^{-1})^\mstar = (w^\mstar)^{-1}$ and  $\tX^* = (wXw^{-1})^* =
(w^\mstar)^{-1}X^*w^\mstar$.   

\begin{asparaenum}[(i)]
\item Suppose $X\in \fgg^i$.  Then
  $[\tX,\Gamma'_l] = [wXw^{-1}, w\Gamma_l w^{-1}] = w [X,\Gamma_l] w^{-1} = 0$
  for all $i \leq l \leq d$, i.e. $\tX \in \fgl'^i$. Similar argument gives
  $\tX^* \in \fgl'^i$.  
  \trivial[h]{ Since $\Gamma'_i$ is $*$-skew invariant,
    $*$-stabilize $\fgl'^i$.  In fact,
    $[X^*, \Gamma'_i] = - [X^*, \Gamma'^*_i] = [X,\Gamma'_i]^*$. Hence,
    $X^* \in \Cent{\Gamma'_i}{\fgl'^{i+1}} = \fgl'^i$ for
    $X\in \Cent{\Gamma'_i}{\fgl'^{i+1}} = \fgl'^{i}$. }  
  Now
  $\dalpha(X) = (\tX - \tX^*)/2 \in \fgg'^i$. This shows that
  $\dalpha(\fgg^i) \subseteq \fgg'^i$.  Since $\tX^* \in \fgl_{x',t}$ for
  $X \in \fgg_{x,t}$, $\dalpha(\fgg_{x,t}^i) \subseteq \fgg'^i_{x',t}$.

\item Let $X \in \fgg_{x,t}^i$. Then
\[
\begin{split}
\dalphap(X) = & \half( wXw^{-1} + (wXw^{-1})^*) =  \half (wXw^{-1} -
(w^\mstar )^{-1}Xw^\mstar )\\
= & \half (w^\mstar )^{-1}(w^\mstar w X - X w^\mstar  w) w^{-1} = \half
(w^\mstar )^{-1}[\Gamma,X] w^{-1} \\ 
= & \half (w^\mstar )^{-1}(\sum_{l=-1}^{d} [\Gamma_l,X] )w^{-1} 
= \half (w^\mstar )^{-1}(\sum_{l=-1}^{i-1} [\Gamma_l,X] )w^{-1}.
\end{split}
\]
\def\tttt{\tilde{t}}
Note that, for any $\tttt \in \bR$, we have  $w^{-1} \sL'_{\tttt} = \sL_{\tttt+s}$,
$(w^{\mstar})^{-1} \sL_{\tttt} = \sL'_{\tttt+s}$,
$\Gamma_l \sL_{\tttt} \subseteq \sL_{\tttt-r_l}$ and
$X \sL_{\tttt} \subseteq \sL_{\tttt+t}$ (cf. \Cref{sec:OBlattice}). Hence
$\dalphap(X) \in \fgl'_{r - r_{i-1} + t}$. 

\item This is clear from the definition of $\alpha$, $G^0$
  and $G'^0$ (cf. \Cref{def:dalpha} \cref{it:PB.2} and \cref{it:defalpha.iso}).

\item If $X \in \fgg_{x,t}^i \setminus \fgg_{x,t^+}^i$, then
  $\tX = wX w^{-1} \in \fgl'_{x',t} \setminus \fgl'_{x',t^+}$. Since $r > r_{i-1}$, 
    $\dalphap(X) \in \fgl'_{x',t^+}$ by \cref{it:dalpha.p}. Therefore,
    $\dalpha(X) = \tX - \dalphap(X) \in \fgl'_{x',t} \setminus \fgl'_{x',t^+}$. 
    
  \item The injectivity of $\odalphat$ is a restatement of \cref{it:dalpha.JG1}.
    The surjectivity follows by dimension counting.  Note that the roles of $G$
    and $G'$ are symmetric. We could define
    $\dalpha'\colon \fgg' \rightarrow \fgg$ in the same way.  \trivial[h]{By
      viewing $w$ as an element in $\Hom_D(V',V)$.}  Now
    \[
    \xymatrix@R=0em{\fgg^i_{x,t:t^+}\ar[r]^{\odalpha}& \fgg'^i_{x',t:t^+}
      \ar[r]^{\odalphaprime} &\fgg^i_{x,t:t^+}}
    \]
    where $\odalpha$ and $\odalphaprime$ are injections. Hence
    $\odalphaprime \circ \odalpha$ is an injection,
    $\dim_{\fff}\fgg^i_{x,t:t^+} = \dim_{\fff} \fgg'^i_{x',t:t^+}$ and
    $\odalphat$ is an isomorphism.  The claim for $\dalpha|_{\fgg^i_{x,t}}$ and
    $\dalpha|_{\fgg^i}$ immediately follows.
    
      
\item 
Note that $(w^\mstar)^{-1}\Gamma_i w^\mstar = (w^\mstar)^{-1}w^{-1} \Gamma'_i w
w^\mstar = \Gamma'^{-1} \Gamma'_i \Gamma' = \Gamma'_i$.
Therefore, for all $X \in \fgg$,
\begin{equation}\label{eq:dalpha.ad}
\begin{split}
\dalpha\circ \ad_{\Gamma_{i}}(X) &= \half(w[\Gamma_{i},X]w^{-1} - 
(w^\mstar)^{-1}[\Gamma_i,X]^* w^\mstar)\\
&= \half([\Gamma'_{i},\tX] - 
(w^\mstar)^{-1}[\Gamma_i,-X] w^\mstar)\\
& = \half([\Gamma'_{i},\tX] - [\Gamma'_i, -(w^\mstar)^{-1}X w^\mstar])\\
& =  \ad_{\Gamma'_{i}}\circ \dalpha(X). 
\end{split}
\end{equation}
Now \cref{it:dalpha.JG3} follows since 
 $\fgg^{i\perp}$ and $\fgg'^{i\perp}$
  are the sums of non-trivial isotypic components in $\fgg^{i+1}$ and
  $\fgg'^{i+1}$ under the actions of $\ad_{\Gamma_i}$ and $\ad_{\Gamma'_i}$ respectively. 
\qedhere  
\end{asparaenum}
\end{proof}

\subsubsection{}

We define symplectic forms on $\fgg$ and $\fgg'$ respectively by\footnote{Do not confuse with $\inn{}{}_{V_\Gamma}$ on $V_\Gamma$ in \Cref{def:VGa}.}
\begin{equation}\label{eq:innGa.def}
  \begin{split}
    \innGa{X_1}{X_2} & = \bB([X_1,X_2],\Gamma)
    \qquad \forall X_1, X_2, \in \fgg \quad \text{and}  \\
    \innGap{X'_1}{X'_2} & = \bB([X'_1,X'_2],-\Gamma') \quad \forall X'_1, X'_2
    \in \fgg'.
\end{split}
\end{equation}
We equip $\fgg\oplus\fgg'$ with the form $\innGa{}{}\oplus \innGap{}{}$.

\begin{lemma} \label{lem:isometry} The map  
$\iota \colon \fgg\oplus \fgg'\longrightarrow W$ given by
\[
(X,X') \mapsto X\cdot w + X'\cdot w = -wX + X'w
\]  
is an isometry. 
\end{lemma}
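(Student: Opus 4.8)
The plan is to verify the isometry property by a direct computation using Lemma~\ref{lem:inn}, exploiting the fact that $M(w)=\Gamma$ and $M'(w)=\Gamma'$. First I would observe that $\iota$ is well-defined and linear, and that $\dim_F(\fgg\oplus\fgg') = \dim_F W$ (this follows from the moment-map surjectivity picture, or more directly from counting: $\dim_F W = \dim_F\Hom_D(V,V')$ while $\dim_F\fgg + \dim_F\fgg' = \dim_F\fgl(V)$ after accounting for the $*$-decompositions $\fgl(V) = \fgg\oplus\fgg^\perp$ with $\dim\fgg^\perp = \dim\fgg'^\perp$). Hence it suffices to show $\iota$ preserves the symplectic forms and is injective; but an isometry of symplectic spaces is automatically injective once the target form is nondegenerate, so preservation of the form is the whole content.

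The key computation is: for $(X_1,X_1'), (X_2,X_2') \in \fgg\oplus\fgg'$, compute $\innw{\iota(X_1,X_1')}{\iota(X_2,X_2')}$ and match it with $\innGa{X_1}{X_2} + \innGap{X_1'}{X_2'} = \bB([X_1,X_2],\Gamma) + \bB([X_1',X_2'],-\Gamma')$. Using bilinearity, $\innw{\iota(X_1,X_1')}{\iota(X_2,X_2')}$ expands into four terms:
\[
\innw{X_1\cdot w}{X_2\cdot w} + \innw{X_1\cdot w}{X_2'\cdot w} + \innw{X_1'\cdot w}{X_2\cdot w} + \innw{X_1'\cdot w}{X_2'\cdot w}.
\]
For the pure $\fgg$ term, $\innw{X_1\cdot w}{X_2\cdot w} = \innw{(X_1 X_2 - \text{something})\cdot w}{w}$ after moving one factor across; more precisely, since the $\fgg$-action and $\fgg'$-action on $W$ commute and one uses $\innw{Y\cdot w}{w} = -\innw{w}{Y\cdot w}$ together with $\innw{X_1\cdot w}{X_2\cdot w} = -\innw{X_2 X_1 \cdot w}{w}$ (using $(X\cdot w)^{\mstar}$-type identities and $w^\mstar w = \Gamma$), one gets $\innw{X_1\cdot w}{X_2\cdot w} = \bB([X_1,X_2], M(w)) = \bB([X_1,X_2],\Gamma)$ by Lemma~\ref{lem:inn}(ii). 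Symmetrically the pure $\fgg'$ term gives $\bB([X_1',X_2'], -M'(w)) = \bB([X_1',X_2'],-\Gamma')$ by Lemma~\ref{lem:inn}(iii). So the main point will be to show the two \emph{cross terms} $\innw{X_1\cdot w}{X_2'\cdot w} + \innw{X_1'\cdot w}{X_2\cdot w}$ cancel. Here I would write $\innw{X\cdot w}{X'\cdot w} = \innw{(-wX)}{(X'w)} = \tr_F((-wX)^\mstar X' w) = -\tr_F(X^* w^\mstar X' w)$, and similarly $\innw{X'\cdot w}{X\cdot w} = \tr_F((X'w)^\mstar(-wX)) = -\tr_F(w^\mstar X'^* (-w X))$... tracking signs carefully and using $X^* = -X$, $X'^* = -X'$, and cyclicity of the trace, the two cross terms should be negatives of each other.

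The main obstacle I anticipate is bookkeeping the signs and the adjoint conventions correctly — in particular the sign in the $\fgg$-action $(X\cdot w)(v) = w(-Xv)$, i.e. $X\cdot w = -wX$ as an element of $\Hom_D(V,V')$, versus $X'\cdot w = X'w$, and keeping straight that $w^\mstar \in \Hom_D(V',V)$ with $w^\mstar w = M(w) = \Gamma \in \fgg$ and $w w^\mstar = M'(w) = \Gamma' \in \fgg'$. Once the cross terms cancel and the diagonal terms are identified via Lemma~\ref{lem:inn}, the result follows, with nondegeneracy of $\innGa{}{}\oplus\innGap{}{}$ (hence injectivity of $\iota$) being either a separate easy check — $\innGa{}{}$ is nondegenerate on $\fgg$ because $\ad_\Gamma$ acting on $\fgg$ has kernel exactly $\fgg^0$ and the radical of $\innGa{}{}$ is related to $\Cent{\fgg}{\Gamma}$, but actually I would instead just note that $\iota$ is surjective by Lemma~\ref{lem:surj}-type reasoning or dimension count and that the symplectic form on $W$ restricted through $\iota$ being what we computed forces nondegeneracy on $\fgg\oplus\fgg'$ — or simply deduce injectivity from equality of dimensions plus the fact that $\Ker\iota$ would lie in the radical, which we can rule out directly. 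I would present the form-preservation computation as the core of the proof and dispatch dimension/injectivity in a sentence.
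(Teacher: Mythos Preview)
Your core computation---expanding $\innw{\iota(X_1,X_1')}{\iota(X_2,X_2')}$ into four terms, matching the two diagonal terms with $\innGa{X_1}{X_2}$ and $\innGap{X_1'}{X_2'}$ via $w^\mstar w=\Gamma$ and $ww^\mstar=\Gamma'$, and showing the cross terms vanish by the trick $\innw{\iota(X)}{\iota(X')}=\innw{\iota(X')}{\iota(X)}=-\innw{\iota(X)}{\iota(X')}$---is exactly what the paper does. The paper carries this out as a bare trace computation rather than routing through \Cref{lem:inn}, but the content is the same.

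Where you go astray is the injectivity discussion. In this paper ``isometry'' means only \emph{form-preserving}; the lemma does not assert that $\iota$ is injective, and in fact it is not. For any $X\in\fgg^0=\Cent{\fgg}{\Gamma}$ one checks (using $w^\mstar=\Gamma w^{-1}$ and $[X,\Gamma]=0$) that $wXw^{-1}\in\fgg'$, and then $\iota(X,\,wXw^{-1})=-wX+wXw^{-1}w=0$; thus $\Ker\iota\cong\fgg^0\neq 0$. This is consistent with how the lemma is used: the induced map $\biota$ on $\fgg_{x,s:s^+}\oplus\fgg'_{x',s:s^+}$ is analyzed in \Cref{lem:S1}, where its kernel is explicitly identified as $\bdtriangle(\frr)$. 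Relatedly, your claim that ``an isometry of symplectic spaces is automatically injective once the target form is nondegenerate'' is false---it is nondegeneracy of the \emph{domain} form that would force injectivity, and here the domain form $\innGa{}{}\oplus\innGap{}{}$ is degenerate with radical containing $\fgg^0\oplus\fgg'^0$. So drop the dimension and injectivity paragraph entirely; the form-preservation computation is the whole proof.
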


\begin{proof} Let $X,X_1,X_2\in \fgg$ and $X',X'_1,X'_2 \in \fgg'$. Then
 \[
\begin{split}
\innGa{X_1}{X_2} &= \bB([X_1,X_2],\Gamma) =  \half \trF(X_1X_2\Gamma-X_2X_1\Gamma) \\
& = \half\trF(X_1X_2\Gamma-(X_2X_1\Gamma)^*) = \half\trF(X_1 X_2\Gamma + \Gamma X_1X_2) \\
& = \trF(X_1X_2 w^\mstar w) = -\trF((-wX_2)^\mstar (-wX_1))\\
& = -\innw{X_2 \cdot w}{X_1 \cdot w} = \innw{\iota(X_1)}{\iota(X_2)}.
\end{split}
\]

Similarly, we have
\[
\begin{split}
\innGap{X'_1}{X'_2} &=  -\bB([X'_1,X'_2],\Gamma') = - \trF(X'_1X'_2\Gamma') 
= - \trF(X'_1X'_2 w w^\mstar) \\ 
&= \trF((X'_1w)^\mstar X'_2w)
=  \innw{X'_1\cdot w}{X'_2\cdot w} = \innw{\iota(X'_1)}{\iota(X'_2)}.
\end{split}
\]

On the other hand
\[
\begin{split}
  \innw{\iota(X)}{\iota(X')} &= \innw{-wX}{X'w} = \trF((-wX)^\mstar X'w)
= \trF(Xw^\mstar X'w) \\
&= \trF(w^\mstar X' w X) = \trF((X'w)^\mstar(-wX)) =
\innw{\iota(X')}{\iota(X)} \\
&= -\innw{\iota(X)}{\iota(X')}.
\end{split}
\]
Hence $\innw{\iota(X)}{\iota(X')} = 0$. Therefore, $\iota(\fgg)$ and $\iota(\fgg')$ are orthogonal to each other. 
\end{proof}

\subsubsection{}\label{sec:radical}
Let $\binnGa{}{}$ denote the $\fff$-symplectic form on $\fgg_{x,s:s+}$ induced by $\innGa{}{}$.
\trivial[h]{
Obviously, $\binnGa{}{}$ and $\overline{\inn{}{}}_{\ckGamma}$
  are the same. 
}
Let $\frr := \Rad(\binnGa{}{})$ be the radical of $\binnGa{}{}$ in $\fgg_{x,s:s^+}$. 
Likewise we define $\binnGap{}{}$ and $\frr' := \Rad(\binnGap{}{})$.

\def\dtriangle{{\mathrm{d}\triangle}}
\def\dtriangle{{\blacktriangle}}
\def\bdtriangle{{\overline{\dtriangle}}}
\def\bdalpha{{\overline{\dalpha}}}

\begin{lemma} \label{lem:radical}
\begin{enumerate}[(i)]
\item \label{it:radical.1} In Case I (i.e. $\Gamma_d = 0$), we have $\frr = \fgg^{d-1}_{x,s:s^+}$. In
  Case II (i.e. $\Gamma_d \neq 0$), we have $\binnGa{}{} \equiv 0$ and
  $\frr = \fgg_{x,s:s^+}$.

\item \label{it:radical.2} Likewise $\frr' = \fgg'^{d-1}_{x',s:s^+}$ in Case I and $\frr' = \fgg'_{x',s:s^+}$ in Case II.
\end{enumerate} 
\end{lemma}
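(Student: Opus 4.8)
The plan is to compute the radical $\frr = \Rad(\binnGa{}{})$ directly from the definition $\innGa{X_1}{X_2} = \bB([X_1,X_2],\Gamma)$, reducing modulo $\fgg_{x,s^+}$. First I would recall from \Cref{sec:radical} (the parenthetical ``trivial'' remark) that $\binnGa{}{}$ coincides with the form $\overline{\inn{}{}}_{\ckGamma}$ attached to the good element $\ckGamma$, because $\Gamma \equiv \ckGamma \pmod{\fgg_{x,-r^+}}$ and $\fgg_{x,s}\cdot\fgg_{x,s}\subseteq \fgg_{x,r}$, so that $\bB([X_1,X_2],\Gamma-\ckGamma)\in \bB(\fgg_{x,r},\fgg_{x,-r^+})\subseteq \fppF$. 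Thus without loss of generality I may work with $\ckGamma$, which has depth exactly $-r$. The key input is the standard fact (e.g. \cite[Lemma~2.3.4]{KM1}, already invoked in the proof of \Cref{lem:surj}) that $\ad_{\ckGamma}$ induces an isomorphism $\ckfgg^\perp_{x,s:s^+} \iso \ckfgg^\perp_{x,-s:-s^+}$, where $\ckfgg = \Cent{\fgg}{\ckGamma}$ and $\ckfgg^\perp$ is its $\bB$-orthogonal complement.

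For the main computation: given $X\in\fgg_{x,s}$, its class lies in $\frr$ iff $\bB([X,Y],\ckGamma)\in\fppF$ for all $Y\in\fgg_{x,s}$, i.e. iff $\bB(\ad_{\ckGamma}(X),Y)\in\fppF$ for all $Y\in\fgg_{x,s}$ (using invariance of $\bB$ to move $\ad_{\ckGamma}$), which by nondegeneracy of the pairing $\fgg_{x,s}\times\fgg_{x,-s^+}\to\fffF$ induced by $\bB$ says exactly $\ad_{\ckGamma}(X)\in\fgg_{x,-s^+}$. Decompose $X = X_0 + X_1$ with $X_0\in\ckfgg_{x,s}$, $X_1\in\ckfgg^\perp_{x,s}$. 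Then $\ad_{\ckGamma}(X) = \ad_{\ckGamma}(X_1)$, and by the isomorphism above this lies in $\fgg_{x,-s^+}$ iff $X_1\in\fgg_{x,s^+}$. Hence $\frr = \ckfgg_{x,s:s^+} = \Cent{\fgg}{\ckGamma}_{x,s:s^+}$. Now I split into the two cases of \Cref{def:GD}. In Case~II, $\ckGamma = \Gamma_d\in\Cent{}{\fgg}$, so $\Cent{\fgg}{\ckGamma} = \fgg$ and indeed $\binnGa{}{}\equiv 0$, $\frr = \fgg_{x,s:s^+}$. In Case~I, $\ckGamma = \Gamma_{d-1}$, and $\Cent{G}{\Gamma_{d-1}} = \Cent{G^d}{\Gamma_{d-1}} = G^{d-1}$ by \Cref{def:GD}, so $\Cent{\fgg}{\ckGamma} = \fgg^{d-1}$ and $\frr = \fgg^{d-1}_{x,s:s^+}$. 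This proves \cref{it:radical.1}.

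For \cref{it:radical.2}, the roles of $(\fgg,\Gamma)$ and $(\fgg',-\Gamma')$ are symmetric: by \Cref{lem:LiftGD} the factorization $\Gamma' = \sum_i \Gamma'_i$ with $\Gamma'_i = w\Gamma_i w^{-1}$ is a good factorization, the relevant good element is $\ckGamma' := w\ckGamma w^{-1}$, and $\Cent{G'}{\Gamma'_{d-1}} = G'^{d-1}$ by the construction in \Cref{def:dalpha}\cref{it:PB.6}. Applying the identical argument with $\binnGap{}{}$ in place of $\binnGa{}{}$ gives $\frr' = \fgg'^{d-1}_{x',s:s^+}$ in Case~I and $\frr' = \fgg'_{x',s:s^+}$ in Case~II.

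The main obstacle I anticipate is being careful about the precise filtration shifts and the identification of $\binnGa{}{}$ with $\overline{\inn{}{}}_{\ckGamma}$ — in particular that replacing $\Gamma$ by $\ckGamma$ changes nothing modulo $\fppF$, which uses $r = 2s$ so that $\fgg_{x,s}\cdot\fgg_{x,s}\subseteq\fgg_{x,r}$ lands exactly in the depth where the error term $\Gamma - \ckGamma$ has sufficiently negative depth to pair trivially. The rest is a routine invocation of the $\ad_{\ckGamma}$-isomorphism and the nondegeneracy of the Moy--Prasad pairing.
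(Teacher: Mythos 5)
Your proof is correct and follows essentially the same route as the paper's: pass from $\Gamma$ to the good element $\ckGamma$ on $\fgg_{x,s:s^+}$, translate membership in $\frr$ via the Moy--Prasad duality into the condition $\ad_{\ckGamma}(X)\in\fgg_{x,-s^+}$, and then invoke the isomorphism $\ad_{\ckGamma}\colon\ckfgg^\perp_{x,s:s^+}\iso\ckfgg^\perp_{x,-s:-s^+}$ of \eqref{eq:adGamma} to conclude $\frr = \ckfgg_{x,s:s^+}$, identified with $\fgg^{d-1}_{x,s:s^+}$ in Case~I and all of $\fgg_{x,s:s^+}$ in Case~II. You merely spell out the decomposition $X = X_0 + X_1$ along $\ckfgg\oplus\ckfgg^\perp$ and the symmetry for $\frr'$ a bit more explicitly than the paper's terse version, but the mathematical content is identical.
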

 
\begin{proof}
  For $X_1,X_2\in \fgg_{x,s}$, 
  \[
  \innGa{X_1}{X_2} = \bB([X_1,X_2],\Gamma) = - \bB(X_2,[X_1,\Gamma]).
  \]
  Hence $\frr = \set{X_1 +\fgg_{x,s^+} | \ad_\Gamma(X_1) \in \fgg_{x,-s^+}}$. However $\ad_\Gamma |_{\fgg_{x,s:s^+}} =
  \ad_{\ckGamma} |_{\fgg_{x,s:s^+}}$. 
  We get the conclusion in Case~I by \eqref{eq:adGamma}. 
  In Case~II, we have $\ckGamma = \Gamma_d \in \Cent{}{\fgg}$. So $[X_1,\Gamma]\equiv
  [X_1, \ckGamma] \equiv 0 \pmod{\fgg_{x,-s^+}}$ and $\innGa{X_1}{X_2} 
  \in \fppF$. 
\end{proof}

\subsubsection{}\label{sec:lem.S1}

We recall that $\bfbb :=\sB_{0:0^+}$ is an $\fff$-symplectic space. By
\Cref{lem:isometry}, $\iota$ induces an isometry
\[
\biota\colon \fgg_{x,s:s^+}\oplus \fgg'_{x',s:s^+}\longrightarrow \bfbb.
\]
Define $ \dtriangle \colon \fgg \longrightarrow \fgg\oplus \fgg'$ by
$X \mapsto (X,\dalpha(X)).$ It induces an injection
\begin{equation} \label{eq:bdtriangle}
\xymatrix{
\bdtriangle \colon \ggss \ar@{^(->}[r]&  \ggss\oplus \ggssp.
}
\end{equation}

Let $\bfbbpp := \biota(\frr\oplus \frr')$. Then $\bfbbpp$  is the radical of
$\Im(\biota)$. 
Let $\bfbbz := \bfbbpp^\perp/\bfbbpp$.

\begin{lemma}\label{lem:S1}
The following statements hold.
\begin{enumerate}[(i)]
\item \label{it:lem.S1.1} $\bdalpha\colon \frr \rightarrow \frr'$ is an isomorphism.

\item \label{it:lem.S1.2} $\xymatrix{0 \ar[r] & \frr \ar[r]^<>(.5){\bdtriangle} & \ggss \oplus
  \ggssp \ar[r]^<>(.5){\biota} & \bfbb}$ is exact.

\item \label{it:lem.S1.3} 
  $\bfbbpp = \biota(\frr) = \biota(\frr')$.

\item \label{it:lem.S1.4} $\dim_\fffF \ggss + \dim_\fffF \ggssp= \dim_\fffF \bfbb$.

\item \label{it:lem.S1.5} $\bfbbpp^\perp = {\mathrm{Im}}(\biota)$. 

\item \label{it:lem.S1.6} In Case I, the composition of maps
\[
\xymatrix{\biotabz\colon \fgg^{d-1\perp}_{x,s:s^+}\oplus \fgg'^{d-1\perp}_{x',s:s^+} \ar@{^(->}[r]& 
  \fgg_{x,s:s^+}\oplus \fgg'_{x',s:s^+}
  \ar[r]^<>(.5){\biota} &\bfbbpp^\perp \ar@{->>}[r]&\bfbbz}
\]
is an isomorphism of non-degenerate symplectic spaces over $\fffF$.

\item In Case II,
  ${\mathrm{Im}}(\biota) = \bfbbpp = \bfbbpp^\perp$ is a maximal isotropic subspace of $\bfbb$ and
  $\bfbbz = 0$.
\end{enumerate}
\end{lemma}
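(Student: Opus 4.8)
The plan is to deduce everything from one structural identity. Since $\sL'_t = w\sL_{t+s}$ (\Cref{lem:L'}), combining with $\sB_{t}=\bigcap_r \Hom_{\fooD}(\sL_r,\sL'_{t+r})$ gives $\sB_0 = w\,\fgl(V)_{x,s}$ and $\sB_{0^+}=w\,\fgl(V)_{x,s^+}$, so $u\mapsto w^{-1}u$ identifies $\bfbb=\sB_{0:0^+}$ with $\fgl(V)_{x,s:s^+}$ as an $\fffF$-symplectic space; under this identification $\iota(X,X')=-wX+X'w=w\big(w^{-1}X'w-X\big)$, the subspace $\biota(\ggss\oplus 0)$ becomes $\fgg_{x,s:s^+}$, and $\biota(0\oplus\ggssp)$ becomes the image of $\mathfrak{h}_s:=w^{-1}\fgg'_{x',s}w$ (which, using $w^\mstar=\Gamma w^{-1}$, is the $\Gamma$-skew part $\{Y\in\fgl(V)_{x,s}\mid Y^*\Gamma+\Gamma Y=0\}=\Gamma^{-1}\fgg^\perp_{x,-s}$). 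I would record these first. Part (i) is then immediate from \Cref{lem:alpha}~\cref{it:dalpha.JG2}: by \Cref{lem:radical}, in Case~I one has $\frr=\fgg^{d-1}_{x,s:s^+}$, $\frr'=\fgg'^{d-1}_{x',s:s^+}$ with $d-1\in\IGamma$, and in Case~II $\frr=\fgg_{x,s:s^+}=\fgg^{d}_{x,s:s^+}$, $\frr'=\fgg'_{x',s:s^+}=\fgg'^{d}_{x',s:s^+}$ with $d\in\IGamma$, so the isomorphism $\odalpha$ of \Cref{lem:alpha} restricts to the asserted $\bdalpha\colon\frr\to\frr'$.

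For (ii): since $\biota$ preserves the (degenerate) form on $\ggss\oplus\ggssp$ by \Cref{lem:isometry}, $\ker\biota$ lies in the radical $\frr\oplus\frr'$. For the reverse inclusion, take $X\in\fgg_{x,s}$ representing a class in $\frr$, so $\ad_\Gamma X\in\fgg_{x,-s^+}$; then $\iota(X,\dalpha X)=-wX+\dalpha(X)w=-\dalphap(X)w$ with $\dalphap(X)=\tfrac12(w^\mstar)^{-1}[\Gamma,X]w^{-1}$, which a Moy--Prasad level count places in $\fgl(V')_{x',s^+}$, hence $\iota(X,\dalpha X)\in\sB_{0^+}$ and $\bdtriangle(\frr)\subseteq\ker\biota$. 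Conversely, if a class in $\ker\biota\subseteq\frr\oplus\frr'$ is represented by $(X,X')$, subtracting $\iota(X,\dalpha X)\in\sB_{0^+}$ gives $(X'-\dalpha X)w\in\sB_{0^+}=w\,\fgl(V)_{x,s^+}$, so $X'-\dalpha X\in\fgl(V')_{x',s^+}\cap\fgg'=\fgg'_{x',s^+}$ and the class equals $\bdtriangle(X+\fgg_{x,s^+})$; thus $\ker\biota=\bdtriangle(\frr)$, which with injectivity of $\bdtriangle$ is the exactness in (ii). Part (iii) is then formal: $\bdtriangle(X)\in\ker\biota$ gives $\biota(X,0)=-\biota(0,\dalpha X)$ for $X\in\frr$, and since $\dalpha(\frr)=\frr'$ by (i), $\biota(\frr\oplus 0)=\biota(0\oplus\frr')$, so both equal $\biota(\frr\oplus\frr')=\bfbbpp$.

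The heart of the matter is (iv); given (ii)--(iii), (v) follows, because $\bfbbpp$ is isotropic (it is the radical of $\mathrm{Im}(\biota)$), $\dim\mathrm{Im}(\biota)=\dim\ggss+\dim\ggssp-\dim\frr$ by (ii), and $\dim\bfbbpp^\perp=\dim\bfbb-\dim\frr$, so $\mathrm{Im}(\biota)=\bfbbpp^\perp$ as soon as $\dim\bfbb=\dim\ggss+\dim\ggssp$. Under the identification above $\mathrm{Im}(\biota)$ becomes $\fgg_{x,s:s^+}+\overline{\mathfrak{h}_s}$ inside $\fgl(V)_{x,s:s^+}$; pairing with the trace form $\bB$ and chasing the $*$- and $\ad_{\ckGamma}$-decompositions of $\fgl(V)_{x,s:s^+}$ and of $\fgl(V)_{x,-s:-s^+}$ shows the annihilator of this subspace is the $*$-symmetric part of $\Cent{\fgl(V)}{\ckGamma}_{x,-s:-s^+}$, which reduces (iv) to the equality of the dimensions of the $*$-skew part $\frr$ and the $*$-symmetric part of $\Cent{\fgl(V)}{\ckGamma}_{x,s:s^+}$. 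Here the single-block hypothesis enters: $\ckGamma$ is $*$-skew and all of its components have valuation exactly $-r$, so the involution is nontrivial on the centre of every simple factor of $\Cent{\fgl(V)}{\ckGamma}$, and at the balancing level $s=r/2$ the two graded eigen-parts have equal dimension. I expect this last balancing to be the main obstacle, being the single step that forces a hands-on analysis of the Moy--Prasad grading of a unitary Lie algebra at half-integral depth; alternatively one can compute $\dim\bfbb$, $\dim\ggss$ and $\dim\ggssp$ directly from $\Jump(\sL)$ and the Hermitian form and check the equality numerically.

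Finally (vi) and (vii) are consequences. In Case~II, $\frr\oplus\frr'=\ggss\oplus\ggssp$, so $\bfbbpp=\mathrm{Im}(\biota)=\bfbbpp^\perp$ by (v), i.e. $\bfbbpp$ is Lagrangian and $\bfbbz=0$. In Case~I, $\fgg_{x,s:s^+}=\frr\oplus\fgg^{d-1\perp}_{x,s:s^+}$ and likewise for $\fgg'$; using $\ker\biota=\bdtriangle(\frr)$ with $\frr=\fgg^{d-1}_{x,s:s^+}$ one checks $\biota$ sends $\fgg^{d-1\perp}_{x,s:s^+}\oplus\fgg'^{d-1\perp}_{x',s:s^+}$ injectively into $\mathrm{Im}(\biota)=\bfbbpp^\perp$ with image meeting $\bfbbpp$ trivially, so it descends to an injection $\biotabz$ into $\bfbbz$; a dimension count using (iv) makes $\biotabz$ onto, and since $\biota$ is a form isometry and $\fgg^{d-1\perp}_{x,s:s^+}\oplus\fgg'^{d-1\perp}_{x',s:s^+}$ is a complement of the radical $\frr\oplus\frr'$, the transported form is non-degenerate, so $\biotabz$ is an isomorphism of symplectic spaces.
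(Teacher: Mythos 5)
Your treatment of parts (i)--(iii) and (v)--(vii) matches the paper's proof essentially step for step: the radical identification from Lemma~\ref{lem:radical}, the computation $\biota\circ\bdtriangle(\barX)\equiv -\dalphap(X)w\equiv 0$ using the level estimate on $\dalphap$, the isometry argument forcing $\Ker\biota\subseteq\frr\oplus\frr'$, the injectivity of $\biota$ on the $\fgg'$-summand, and the derivation of (v)--(vii) from (ii)--(iv) are all as in the paper. These portions are fine.

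The gap is in (iv), which you correctly flag as the load-bearing step but then do not actually prove. After identifying $\bfbb$ with $\fgl(V)_{x,s:s^+}$ you outline a programme involving both the $*$- and $\ad_{\ckGamma}$-decompositions, the centralizer $\Cent{\fgl(V)}{\ckGamma}$, and a ``balancing'' of graded eigenparts at level $s=r/2$, and you end with ``I expect this last balancing to be the main obstacle'' plus a numerical fallback --- that is a plan, not a proof, and the extra structure you invoke (the $\ad_{\ckGamma}$-decomposition, the claim that the single-block hypothesis is what makes the count come out) is not actually needed. The paper closes (iv) with a two-line argument that you are circling but never state: the map $\eta\colon X\mapsto w^{\mstar}Xw$ (equivalently, in your set-up, left multiplication of $Y=w^{-1}Y'w$ by $\Gamma$) intertwines $*$ with $-*$ and shifts the Moy--Prasad filtration by $-r$, hence induces $\fgl(V)^{*,+1}_{x,s:s^+}\cong\fgl(V')^{*,-1}_{x',-s:-s^+}=\fgg'_{x',-s:-s^+}\cong\Hom_{\fffF}(\ggssp,\fffF)$, the last step being $\bB$-duality on $\fgg'$. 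Combined with $\dim_{\fffF}\bfbb=\dim_{\fffF}\fgl(V)_{x,s:s^+}=\dim_{\fffF}\ggss+\dim_{\fffF}\fgl(V)^{*,+1}_{x,s:s^+}$ this gives (iv) at once, with no case analysis and no use of the single-block hypothesis. You should extract this observation from the preliminary identifications you already wrote down, prove (iv) with it, and only then pass to (v)--(vii).
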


\begin{proof}
  Throughout this proof, we let $(X,X')\in \ggs\oplus \ggsp$ and we let
  $(\barX,\barX')$ denote its image in $\fgg_{x,s:s^+}\oplus \fgg'_{x',s:s^+}$.
\begin{enumerate}[(i)]
\item This follows from \Cref{lem:alpha}~\cref{it:dalpha.JG2} and \Cref{lem:radical}.

\item Let  $\barX \in \frr \subseteq \fgg_{x,s:s^+}$. Then
\[
\begin{split}
  \biota \circ \bdtriangle(\barX) &\equiv -wX + \dalpha(X)w = (-wXw^{-1} +
  \dalpha(X))w \\
  &= - \dalphap(X) w \pmod{\sB_{0^+}}.
\end{split}
\] 
Since $r > r_{d-2}$ in Case~I (resp. $r > r_{d-1}$ in Case~II),
\Cref{lem:alpha}~\cref{it:dalpha.p} implies $\dalphap(X) \in
\fgl'_{x',s^+}$.
Hence $\dalphap(X) w\in \sB_{0^+}$.  This proves
$\bdtriangle(\frr) \subseteq \Ker \biota$.

Now we show the opposite inclusion. 
Suppose $\biota(\barX,\barX')=0$.
Since $\biota$ is an isometry, $(\barX,\barX') \in \frr\oplus \frr' =
\Rad(\binnGa{}{}\oplus \binnGap{}{})$. Now
$\bdtriangle(\barX) = (\barX,\bdalpha(\barX)) \in \Ker \biota$ so 
$\biota(0,\barX'-\bdalpha(\barX)) = 0$. Note that 
$\biota|_{\fgg'_{x',s:s^+}}$ is an injection.
\trivial[h]{This follows from $\sL'_t = w\sL_{t+s}$. In fact, $X' \mapsto X'w$
  induces an isomorphism $\fgl_{x',t} = \bigcap_{l}\Hom(\sL'_l,\sL'_{l+t})
  \rightarrow  \bigcap_{l}\Hom(\sL_{l+s},\sL'_{l+s +(t-s)}) = \sB_{t-s}$}
This implies $\barX' =
\bdalpha(\barX)$ and proves the exactness of (ii). 

\item
  By (ii), $\biota(\barX) = -\biota(\bdalpha(\barX))$ for $\barX \in \frr$.
  Now $\biota(\frr) = \biota(\frr')$ by part (i). 

\item  Since $\sL'_{t} = w\sL_{t+s}$, we have $\fgl_{x,s:s^+} \cong \bfbb$ by $X \mapsto
  wX$.
Therefore, 
\begin{equation}\label{eq:Dim1}
\dim_\fff\bfbb = \dim_\fff \fgl_{x,s:s^+}.
\end{equation}

\def\bareta{{\overline{\eta}}} 
\def\sinvariant{{*,+1}}
\def\ssinvariant{{*,-1}} 

Consider the isomorphism $\eta\colon \fgl \simrightarrow \fgl'$ defined by
$X \mapsto w^\mstar X w$.  Since $(w^\mstar)^\mstar = -w$, we have
$\eta(X)^* = (w^\mstar X w)^* = -w^\mstar X^* w = \eta(-X^*)$.  By reducing to the
residue field, $\bareta \colon \fgl_{x,s:s^+} \simrightarrow \fgl'_{x',-s:-s^+}$
induces an isomorphism 
\[
\bareta \colon \fgl_{x,s:s^+}^{\sinvariant} \simrightarrow \fgl'^{\ssinvariant}_{x',-s:-s^+} = \fgg'_{x',-s:-s^+} \cong \Hom_{\fffF}(\ggssp,\fffF).
\] 
Therefore
\begin{equation}\label{eq:Dim2}
\dim_\fffF \fgl_{x,s:s^+} = \dim_\fffF \ggss + \dim_\fffF \fgl_{x,s:s^+}^{\sinvariant}
 = \dim_\fffF \ggss + \dim_\fffF  \ggssp.
\end{equation}
Combining \eqref{eq:Dim1} and \eqref{eq:Dim2} yields \cref{it:lem.S1.4}. 

\item 
 Note that $\biota$ is an isometry.  So
    $\Im(\biota) \subseteq \biota(\frr)^\perp =\bfbb_+^\perp$ by
    \cref{it:lem.S1.3}. Since $\biota|_{\fgg_{x,s:s^+}}$ is an injection and
    $\bfbb$ is a non-degenerate symplectic space,
    $\dim \bfbb_+^\perp = \dim \bfbb - \dim \bfbb_+ =
    \dim \fgg_{x,s:s^+} \oplus \fgg'_{x',s:s^+} - \dim \frr = \dim\Im(\biota)$
    by \cref{it:lem.S1.4} and \cref{it:lem.S1.2}.  
\item By \Cref{lem:radical}~\cref{it:radical.1},
    $\fgg^{d-1\perp}_{x,s:s^+} \oplus \fgg'^{d-1\perp}_{x',s:s^+}$ is a maximal
    non-degenerate symplectic subspace of
    $\fgg_{x,s:s^+}\oplus \fgg_{x',s:s^+}$. Now the claim follows from
    \cref{it:lem.S1.5}, since $\biota$ is an isometry.
\item This follows by a similar argument as that of \Cref{it:lem.S1.6}
    using \Cref{lem:radical}~\cref{it:radical.2}.
\end{enumerate}
\end{proof}

\subsubsection{}
We begin with definitions which will be used in the rest of the section. 
\begin{definition}
In order to simplify the notation, let 
\begin{enumerate}[(a)]
\item $\dbK := K\times K'$, $\dbK_{0^+} = K_{0^+}\times K'_{0^+}$ and
  $\dbK_+ := K_+ \times K'_+$;
\item $\dbG^i_{s_{i-1}} := G^i_{x,s_{i-1}} \times G'^i_{x',s_{i-1}}$ and
  $\dbG^i_{s_{i-1}^+} := G^i_{x,s_{i-1}^+}\times G'^i_{x',s_{i-1}^+}$ for
  $0<i\leq d$; and
\item  $\dbG^0 :=
  G^0_{x} \times G'^0_{x'}$ and $\dbG^0_{0^+} :=
  G^0_{x,0^+}\times G'^0_{x',0^+}$. 
\savemyenumi
\end{enumerate}
We define
\begin{enumerate}[(a)]
\resumemyenumi
\item
  $S := \Stab_{\dbK}(w+\sB_0) = \Set{(h,h')\in \dbK | (h, h')\cdot w \in w
    +\sB_0}$,

\item $S^i := S\cap \dbG^i_{s_{i-1}}$ and $S^i_+ :=
  S \cap \dbG^i_{s_{i-1}^+}$ for $i>0$, 

\item $S^0 := S\cap \dbG^0$, $S_+ :=  S \cap \dbK_+$ and $S_{0^+} :=
  S \cap \dbK_{0^+}$.
\savemyenumi
\end{enumerate}
We extend the group isomorphism $\alpha \colon G^0_x \iso G'^0_{x'}$ in
\Cref{def:dalpha}~\cref{it:defalpha.iso} to a map
\begin{equation} \label{eq:alpha}
\alpha  \colon G_{x}^0 G_{x,0^+} \longrightarrow G'^0_{x'} G'_{x',0^+}  \quad
\text{by} \quad g \exp(X) \mapsto (wgw^{-1}) \exp(\dalpha(X))
\end{equation}
for all $g\in G^0_{x}$ and $\exp(X) \in G_{x,0^+}$. This map $\alpha$ is
well-defined because $w\exp(X)w^{-1} = \exp(\dalpha(X))$ for
  $\exp{X}\in G^0_x \cap G_{x,0+} = G^0_{x,0^+}$. We warn that the map
$\alpha$ is not necessarily a group homomorphism.  We define
\begin{enumerate}[(a)]
\resumemyenumi
\item 
$\triangle^0 := \set{(g,\alpha(g)) | g\in G^0_{x}} \cong G^0_{x} \cong
G'^0_{x'}$,

\item $\triangle^0_+ := \set{(g,\alpha(g)) | g\in G^0_{x,0^+}} \cong G^0_{x,0^+}
  \cong G'^0_{x',0^+}$,

\item $\triangle^i := \set{(g,\alpha(g)) | g\in G^i_{x,s_{i-1}}}$ for $i > 0$ and

\item  $ \triangle^{i}_+ := \set{(g,\alpha(g)) | g\in G^i_{x,s_{i-1}^+}}$ for $i>0$.
\end{enumerate}
\end{definition}

By \Cref{lem:alpha}, the map $\alpha$ restricts to a map\footnote{We reiterate
  that $\alpha$ is not necessarily a group homomorphism}
$\alpha|_{G^i_{x,s_{i-1}}} \colon G^i_{x,s_{i-1}} \rightarrow G'^i_{x',s_{i-1}}$
for $1 \leq i \leq d$ and this restricted map is a bijection
if $i \in \IGamma$.  Therefore $\triangle^i \subseteq \dbG_{s_{i-1}}^i$ and
$\triangle_+^i \subseteq \dbG_{s_{i-1}^+}^i$ for all $0 \leq i\leq d$.

Obviously $G_{x,s} G'_{x',s}\subseteq S$. 
The following is a key lemma which claims that $S$ is generated by these sets. 

\begin{lemma}\label{lem:SS}
\begin{enumerate}[(i)]
\item \label{it:lem.SS.1}If $(g,g') \in \triangle^0$, then $(g,g') \cdot w = w$. Moreover,
  $\iota$ and $\biota$ are $\triangle^0$-equivariant.
\item \label{it:lem.SS.2}
Suppose $0< i \in \IGamma$. Let $g = \exp(X)$ with $X\in \fgg^i_{x,s_{i-1}}$.
Then $\dalphap(X)w \in \sB_{0^+}$,  $[\dalpha(X), \dalphap(X)]w\in \sB_{s}$ and 
\[
(g,\alpha(g)) \cdot w \subseteq w - \dalphap(X) w -\half  [\dalpha(X), \dalphap(X)]w+\sB_{s^+} \subseteq w+ \sB_{0^+}.
\]
In particular, $\triangle^i \subseteq S$.

\item 
\label{it:lem.SS.3}
$S\cap G = G_{x,s}$ and $S\cap G' = G'_{x',s}$.

\item 
\label{it:lem.SS.4}
For each $i \in \IGamma$, $S^i =\triangle^i G'^i_{x',s} =  \triangle^i G^i_{x,s}G'^i_{x',s}$.

\item  
\label{it:lem.SS.5}
In Case~I, i.e. $\Gamma_d = 0$ and $r=r_d=r_{d-1}$,
  \begin{align*}
    S &= \triangle^0 \triangle^1 \cdots \triangle^{d-1} G_{x,s} G'_{x,s} \quad
         \text{and} \\
    S_+&= \triangle^0_+ \triangle^1_+ \cdots \triangle^{d-1}_+
          G_{x,s}^{d-1} G'^{d-1}_{x',s} G_{x,s^+}G'_{x',s^+}.
  \end{align*}
  In Case~II, i.e. $\Gamma_d \neq 0$ and $r=r_d>r_{d-1}$,  
\begin{align*} 
S &= \triangle^0 \triangle^1 \cdots \triangle^{d-1} \triangle^d G_{x,s} G'_{x,s}
     \quad \text{and} \\
  S_+&= \triangle^0_+ \triangle^1_+ \cdots \triangle^{d-1}_+ \triangle^d_+ G_{x,s} G'_{x',s}.
\end{align*}
\end{enumerate}
\end{lemma}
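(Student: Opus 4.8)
The plan is to derive (i)--(iv) by direct computation from \Cref{lem:alpha}, and then to obtain (v) from them by an induction over the Moy--Prasad layers of $\dbK$. Throughout I use that $G\times G'$ acts on $W=\Hom_D(V,V')$ by $(h,h')\cdot w=h'\circ w\circ h^{-1}$. For (i): since $\alpha(g)=wgw^{-1}$ on $G^0$ by \eqref{eq:alpha.def}, $(g,\alpha(g))\cdot w=wgw^{-1}wg^{-1}=w$; and the identity $(g,\alpha(g))\cdot\iota(X,X')=-w\Ad_g(X)+\Ad_{\alpha(g)}(X')w=\iota(\Ad_gX,\Ad_{\alpha(g)}X')$, which uses only $\alpha(g)w=wg$, gives the equivariance of $\iota$, and of $\biota$ after reducing modulo $\sB_{0^+}$ and restricting (legitimately, since $g\in G_x$ and $\alpha(g)\in G'_{x'}$ preserve the relevant lattices). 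For (ii): write $\tX:=wXw^{-1}=\dalpha(X)+\dalphap(X)$; using $\alpha(\exp X)=\exp(\dalpha X)$ from \eqref{eq:alpha} and $w\exp(-X)=\exp(-\tX)w$ one gets $(g,\alpha(g))\cdot w=\exp(\dalpha X)\exp(-\dalpha X-\dalphap X)\,w$, and Campbell--Hausdorff rewrites the product of the two exponentials as $\exp\!\big(-\dalphap X-\half[\dalpha X,\dalphap X]+Z\big)$ with $Z$ a sum of iterated brackets each of $\fgl'$-depth $>r$. Now insert the estimates of \Cref{lem:alpha}: $\dalpha X\in\fgg'^i_{x',s_{i-1}}$ and $\dalphap X\in\fgl'_{x',\,r-s_{i-1}}$, where $r-s_{i-1}>s$ because $r>r_{i-1}$; combined with $\fgl'_{x',t}\,w\subseteq\sB_{t-s}$ (immediate from $\sL'_t=w\sL_{t+s}$) this yields $\dalphap X\cdot w\in\sB_{s-s_{i-1}}\subseteq\sB_{0^+}$, $[\dalpha X,\dalphap X]w\in\sB_s$, and all remaining terms of $\exp(\,\cdot\,)w$ in $\sB_{s^+}$, giving the displayed congruence and hence $\triangle^i\subseteq S$.

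For (iii): $(h,1)\in S$ says $w(h^{-1}-1)\in\sB_0$, i.e.\ $(h^{-1}-1)\sL_t\subseteq w^{-1}\sL'_t=\sL_{t+s}$ for all $t$, which (as $h\in K$ preserves $\sL$) is exactly $h\in G_{x,s}$; symmetrically $S\cap G'=G'_{x',s}$. For (iv), the inclusion $\supseteq$ is clear from $\triangle^i\subseteq S\cap\dbG^i_{s_{i-1}}$ and $G^i_{x,s}\times G'^i_{x',s}\subseteq(G_{x,s}\times G'_{x',s})\cap\dbG^i_{s_{i-1}}\subseteq S^i$; for $\subseteq$, write $(g,g')\in S^i$ with $g=\exp X$, note $(g,\alpha g)\in\triangle^i\subseteq S^i$, so $(1,\alpha(g)^{-1}g')=(g,\alpha g)^{-1}(g,g')\in S\cap\dbG^i_{s_{i-1}}$, whence $\alpha(g)^{-1}g'\in G'_{x',s}\cap G'^i_{x',s_{i-1}}=G'^i_{x',s}$ by (iii) and $(g,g')\in\triangle^iG'^i_{x',s}$; finally $\alpha(G^i_{x,s})\subseteq G'^i_{x',s}$ lets one absorb $G^i_{x,s}\times 1$, giving $\triangle^iG'^i_{x',s}=\triangle^iG^i_{x,s}G'^i_{x',s}$.

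For (v), the inclusion $\supseteq$ is just the product of factors already shown to lie in $S$. For $\subseteq$: projecting $S$ into $\dbK/\dbK_{0^+}\cong\sfGz\times\sfGzp$, the computation behind (i)--(ii) (once $h\in K_{0^+}$, write $h=\exp X$ and check $h'-1\in\fgl'_{x',0^+}$, so $h'\in K'_{0^+}$) shows the image is precisely the graph of $\alpha$; hence, after left-multiplying by a suitable element of $\triangle^0$, we may assume $(h,h')\in S\cap\dbK_{0^+}$. One then runs an induction over $i=1,\dots,d-1$ in Case~I (resp.\ $i=1,\dots,d$ in Case~II): at stage $i$, inspect the level-$i$ component of $(h,h')$ in the Moy--Prasad factorisations of $K$ and $K'$, apply part~(iv) at level $i$ --- together with the exactness and dimension statements of \Cref{lem:radical} and \Cref{lem:S1} to handle the top layer --- to see that this component lies in the image of $\triangle^iG^i_{x,s}G'^i_{x',s}$, and strip it off; what remains lies in $S$ with all lower levels trivial, hence in $G_{x,s}G'_{x',s}$ by (iii). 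The formula for $S_+$ is the same induction carried out inside $\dbK_+$; the one extra point is that at the top relevant level the $G$- and $G'$-components of an element of $S_+$ vary independently within depth $s$ --- which is exactly the content of (iii)--(iv) --- producing the additional factor $G^{d-1}_{x,s}G'^{d-1}_{x',s}$ in Case~I.

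The main obstacle is the combinatorial bookkeeping in (v): one must track how the Moy--Prasad filtrations of $K$ and $K'$ interact through $\alpha$ --- which is a bijection on each layer but not a group homomorphism --- while respecting the Case~I/Case~II dichotomy and pinning down the precise form of $S_+$; this is where \Cref{lem:radical} and \Cref{lem:S1} do the real work, whereas (i)--(iv) are essentially formal once \Cref{lem:alpha} is available.
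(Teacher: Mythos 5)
Your arguments for parts (i)--(iv) are correct and match the paper's own proof (using Baker--Campbell--Hausdorff in (ii) where the paper uses Zassenhaus is cosmetic; the estimates $\dalpha X\in\fgg'^i_{x',s_{i-1}}$, $\dalphap X\in\fgl'_{x',r-s_{i-1}}$ with $r-s_{i-1}>s$ are exactly what the paper invokes, and the projection arguments in (iii)--(iv) are the same). The divergence is in (v), and there your proposed route is both more laborious and underspecified. You propose to induct over the Moy--Prasad layers of $\dbK$, stripping off a level-$i$ factor at each stage, and you say that \Cref{lem:radical} and \Cref{lem:S1} ``do the real work'' at the top layer. In fact the paper's proof of this lemma uses neither of those lemmas (they only enter later, in the analysis of $\bomegaww$), and your induction as sketched has a genuine gap: part (iv) concerns $S^i=S\cap\dbG^i_{s_{i-1}}$, but a general $(h,h')\in S\cap\dbK_{0^+}$ is not in $\dbG^i_{s_{i-1}}$, so you cannot ``apply (iv) at level $i$'' to the level-$i$ component without first explaining how to isolate it inside $S$ --- and the Moy--Prasad layers are cosets, not subgroups, so this is exactly where the bookkeeping you flag would have to be done carefully rather than waved at.

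The paper's argument for (v) is much shorter and avoids all of this. Set $S':=\triangle^0\triangle^1\cdots\triangle^d\,G_{x,s}G'_{x',s}$ (and note $\triangle^d\subseteq G_{x,s}G'_{x',s}$ in Case~I). Parts (i)--(iv) give $S\supseteq S'$. For the reverse inclusion one observes that the first projection $\pr_1\colon S'\to K$ is already \emph{surjective}, because $\pr_1(\triangle^0)\cdots\pr_1(\triangle^d)\,G_{x,s}=G^0_xG^1_{x,s_0}\cdots G^d_{x,s_{d-1}}G_{x,s}=K$. Hence for any $(g,g')\in S$ there is $g''\in K'$ with $(g,g'')\in S'\subseteq S$, and then $(1,g''^{-1}g')\in S\cap G'=G'_{x',s}$ by (iii), so $(g,g')\in S'G'_{x',s}=S'$. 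Intersecting with $\dbK_+$ then yields the stated description of $S_+$. This surjectivity trick replaces your whole layer-by-layer induction and eliminates the need for any auxiliary lemma; it is worth adopting, both because it closes the gap in your (v) and because it is the argument the rest of the section actually relies on.

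Your derivation of the $S_+$ formula is also too vague as stated (``the $G$- and $G'$-components vary independently at the top level''); once $S=S'$ is in hand, the description of $S_+$ follows by a direct intersection with $\dbK_+=K_+\times K'_+$, using that each factor $\triangle^i$, $G^i_{x,s}$, $G'^i_{x',s}$ is already adapted to the Moy--Prasad filtration, which is a routine check rather than a new argument.
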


\begin{proof}
\begin{asparaenum}[(i)]
\item Clearly, $(g,g') \cdot w = g' w g^{-1} = \alpha(g) w g^{-1} = w g w^{-1} w g^{-1} = w$. 
The claim that $\iota$ and $\biota$ are $\triangle^0$-equivariant also follows
by a straightforward computation which we will leave to the
reader.
\item 
Let $X' := \dalpha(X)$, $Y' := \dalphap(X)$ and $g' := \exp(X')$. Then $(g, g')=(g,\alpha(g)) \in \triangle^i$.
Note that $r>r_{i-1}>0$ and so $r - s_{i-1}>s$,.
By \Cref{lem:alpha}~(i) and (ii), 
$X' \in \fgl'_{x',s_{i-1}}\subseteq \fgl'_{x',0^+}$ and $Y' \in
\fgl'_{x',r-s_{i-1}} \subseteq  \fgl'_{x',s^+} \subseteq \fgl'_{x',0^+}$. Therefore $X'Y', Y'X' \in \fgl'_{x',r}$, $Y' w \in \sB_{0^+}$ and $[X', Y'] w \in \sB_s$.   
By Zassenhaus formula, 
\[
\begin{split}
  ((g,g')\cdot w)w^{-1} = & g'wg^{-1}w^{-1} = \exp(X')\exp(-wXw^{-1})
  = \exp(X')\exp(-X'-Y') \\
  = & \exp(X') \exp(-X')\exp(-Y') \exp(-\half [X',Y']) g'_{r^+}  \\
  = & \exp(-Y') \exp(-\half [X',Y'])g'_{r^+}.
\end{split}
\]
where $g'_{r^+} \in \GL(V')_{x',r^+}$.  Hence
$(g,g')\cdot w -w \equiv \exp(-Y') \exp(-\half[X',Y']) w -w \equiv -Y'w -
\half[X',Y']w \pmod{\sB_{s^+}}$.  This finishes the proof of (ii).

\item We only prove the identity for $G'$ and the proof for $G$ is similar.
  Note that $T\mapsto T w^{-1}$ induces an isomorphism
  $\sB_{t} \rightarrow \fgl'_{x',t+s}$. Therefore, for $g'\in G'$,
  $g' w + \sB_0 = w+\sB_0$ if and only if $g'\in 1+\fgl'_{x',s}$. Hence
  $S \cap G' = \GL(V')_{x',s}\cap G' = G'_{x',s}$.
  
\item By \cref{it:lem.SS.1} to \cref{it:lem.SS.3} ,
  $S^i \supseteq \triangle^i G^i_{x,s} G'^i_{x',s} \supseteq \triangle^i
  G'^i_{x',s}$.
  It remains to show $S^i \subseteq \triangle^i G'^i_{x',s}$.  Indeed suppose
  $(g,g') \in S^i\subseteq \dbG^i_{s_{i-1}}$.  Then
  $(1,\alpha(g)^{-1}g')= (g, \alpha(g))^{-1} (g,g') \in S^i\cap G'^i \subseteq
  G'^i_{x',s}$
  by \cref{it:lem.SS.3}.  Hence $(g,g') \in \triangle^i G'^i_{x',s}$.
  
\item Let $S' = \triangle^0 \cdots \triangle^d G_{x,s} G'_{x',s}$. Note that
  $\triangle^d \subseteq G_{x,s} G'_{x',s}$ in Case I. 
	By (i)-(iv), $S \supseteq S'$. Observe that the projection to the first factor
  $\pr_1 \colon S' \rightarrow K$ is surjective.  Suppose $(g,g') \in S$. Then
  there is a $g''\in K'$ such that $(g,g'')\in S'$.  Therefore
  $(1,g''^{-1}g') \in S\cap G' = G'_{x,s}$ by (iii).  Hence
  $S \subseteq S' G'_{x',s} = S'$.  This proves that $S = S'$. Intersecting $S$
  with $\dbK_+$ gives~$S_+$.  \qedhere
\end{asparaenum}
\end{proof}

\subsection[A key identity of psi]{A key identity of $\psi$}
We recall the function $\psi_\Gamma$ in \eqref{eq:psi.Ga} which is clearly well-defined on
$\dbK_{0^+}$.

\begin{lemma}\label{lem:psi}
Suppose $1 \leq i\in \IGamma$ and $(g,g') \in \triangle^i$ or $(g,g')\in
G_{x,s}\times G'_{x',s}$. Then
\begin{equation}\label{eq:psi}
\psi(\half\innw{w}{(g,g')^{-1}\cdot w-w})= \psi_{\Gamma}(g)\psi_{-\Gamma'}(g').
\end{equation}
\end{lemma}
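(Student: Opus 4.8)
The plan is to reduce the identity to the infinitesimal computation already carried out in \Cref{rmk:Sact.1}, specifically \eqref{eq:Sym.s}, and then handle the two cases ($(g,g')\in\triangle^i$ with $i\in\IGamma$, $i\geq 1$, and $(g,g')\in G_{x,s}\times G'_{x',s}$) uniformly. First I would observe that in both cases we may write $g=\exp(X)$ and $g'=\exp(X')$ for suitable $X\in\fgg$, $X'\in\fgg'$: when $(g,g')\in G_{x,s}\times G'_{x',s}$ this is immediate with $X\in\fgg_{x,s}$, $X'\in\fgg'_{x',s}$; when $(g,g')\in\triangle^i$ we have $X\in\fgg^i_{x,s_{i-1}}$ and $X'=\dalpha(X)\in\fgg'^i_{x',s_{i-1}}$ by \Cref{lem:alpha}~\cref{it:dalpha.fil}. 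In either situation $w\in\sB_{-s}$ with $s>0$, so the hypothesis $w\in\sB_{-s}$ of \Cref{rmk:Sact.1} is met; the computation \eqref{eq:Sym.s} then gives, modulo $\fpp$,
\[
\tfrac12\innw{w}{(g,g')^{-1}\cdot w-w}\equiv \bB(X,M(w))+\bB(X',-M'(w)) = \bB(X,\Gamma)+\bB(X',-\Gamma'),
\]
using $M(w)=\Gamma$ and $M'(w)=\Gamma'$ from \Cref{sec:singleblockdatum}. Applying $\psi$ and the definition \eqref{eq:psi.Ga} of $\psi_\Gamma$, together with the fact that $\log(g)=X+\tfrac12 X^2+\cdots$ has the same image as $X$ under $\bB(-,\Gamma)$ modulo the conductor (the higher terms contribute to $\fgg_{x,t}$ with $t$ large enough that $\bB(\cdot,\Gamma)$ lands in $\fppF$), yields $\psi_\Gamma(g)\psi_{-\Gamma'}(g')$ on the nose.

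The one point that needs care — and which I expect to be the main (mild) obstacle — is verifying that the error terms are genuinely in the kernel of $\psi$, i.e. that all the "$\pmod\fpp$" and "$\pmod{\sB_{s^+}}$" slack in \eqref{eq:Sym.s} is absorbed. In the case $(g,g')\in G_{x,s}\times G'_{x',s}$ this is exactly the setting of \Cref{rmk:Sact.1} verbatim, so nothing new is required. In the case $(g,g')\in\triangle^i$ the relevant depths differ: here $X\in\fgg^i_{x,s_{i-1}}$ has depth $\geq s_{i-1}$ rather than $\geq s$, and $X'=\dalpha(X)$ likewise lies only in $\fgg'^i_{x',s_{i-1}}$. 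I would therefore re-run the expansion of $\tfrac12\innw{w}{(g,g')^{-1}\cdot w - w}$ directly, as in \Cref{lem:SS}~\cref{it:lem.SS.2}: writing $X'=\dalpha(X)$, $Y'=\dalphap(X)$, the Zassenhaus expansion there shows $(g,\alpha(g))\cdot w - w\equiv -Y'w-\tfrac12[X',Y']w\pmod{\sB_{s^+}}$, with $Y'\in\fgl'_{x',r-s_{i-1}}$ and $[X',Y']w\in\sB_s$. Pairing with $w\in\sB_{-s}$ and using \Cref{lem:inn}, the term $\innw{w}{-Y'w}$ becomes (up to the symplectic form) $2\bB$ of $Y'=\dalphap(X)$ against $M'(w)=\Gamma'$; but $\dalphap(X)$ is $*$-invariant (it lies in $\fgl'^{*,+1}$) while $\Gamma'$ is $*$-skew, so $\bB(\dalphap(X),\Gamma')=0$ after the trace-symmetry in \Cref{sec:CG}, and what survives is precisely $\bB(\dalpha(X),-\Gamma')=\bB(X',-\Gamma')$ together with $\bB(X,\Gamma)$ from the $-wX$ part of $\iota$. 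The bracket term $[X',Y']w\in\sB_s$ pairs with $w\in\sB_{-s}$ into $\fooF$, and a further depth count ($r-s_{i-1}+s_{i-1}=r$ against $-s$, i.e. depth $r/2=s>0$ in $\fppF$ after one more bracket) shows it contributes trivially to $\psi$.

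Finally I would assemble: in both cases $\psi\bigl(\tfrac12\innw{w}{(g,g')^{-1}\cdot w-w}\bigr)=\psi(\bB(\log g,\Gamma))\,\psi(\bB(\log g',-\Gamma'))=\psi_\Gamma(g)\,\psi_{-\Gamma'}(g')$, which is \eqref{eq:psi}. The only subtlety worth flagging in the write-up is that $\psi_\Gamma$ is a priori only defined on $\dbK_{0^+}$ via $\log$, so one should note that $\triangle^i\subseteq S\subseteq \dbK$ with $\triangle^i$ landing inside $G^i_{x,s_{i-1}}\times G'^i_{x',s_{i-1}}\subseteq\dbK_{0^+}$ for $i\geq 1$ (since $s_{i-1}>0$), and similarly $G_{x,s}\times G'_{x',s}\subseteq\dbK_{0^+}$ since $s>0$; hence both sides are well-defined. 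No deeper input is needed — the lemma is essentially \eqref{eq:Sym.s} packaged with the orthogonality of the $*$-eigenspaces under $\bB$.
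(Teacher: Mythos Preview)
Your handling of $(g,g')\in G_{x,s}\times G'_{x',s}$ is exactly the paper's, via \eqref{eq:Sym.s}. For $(g,g')\in\triangle^i$ there are two problems.

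First, the depth count for the bracket term does not kill it: $[X',Y']\in\fgl'_{x',r}$ gives $[X',Y']w\in\sB_s$, and pairing with $w\in\sB_{-s}$ lands only in $\fooF$, not $\fppF$; the phrase ``after one more bracket'' does not correspond to an actual estimate. The paper handles this by rewriting $\bB(\Gamma',[Y',X'])=\bB([\Gamma',X'],Y')$, replacing $\Gamma'$ by $\ckGamma'$ modulo $\fppF$, and using $X'=\dalpha(X)\in\fgg'^i\subseteq\Cent{\fgg'}{\ckGamma'}$ (this is where $i\in\IGamma$ enters). Your own orthogonality idea would also work: $X'\in\fgg'$, $Y'\in\fgg'^\perp$ force $[X',Y']\in\fgg'^\perp$, hence $\bB(\Gamma',[X',Y'])=0$ exactly.

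Second, the bookkeeping after \Cref{lem:SS}~\cref{it:lem.SS.2} is wrong. That expansion already gives $(g,g')^{-1}\cdot w-w\equiv Y'w-\tfrac12[X',Y']w\pmod{\sB_{s^+}}$ with \emph{no} residual first-order piece $wX-X'w$: that difference is precisely $Y'w$, since $wXw^{-1}=X'+Y'$. So there is no ``$-wX$ part of $\iota$'' contributing a separate $\bB(X,\Gamma)$; pairing with $w$ gives only $\tfrac12\innw{w}{Y'w}=\bB(Y',\Gamma')=0$ by your orthogonality observation. The left side of \eqref{eq:psi} is therefore $1$. This is consistent with the lemma because the right side is \emph{also} $1$: the same orthogonality (applied to $\bB(X,\Gamma)=\bB(wXw^{-1},\Gamma')=\bB(X'+Y',\Gamma')=\bB(X',\Gamma')$) gives $\psi_\Gamma(g)\psi_{-\Gamma'}(g')=1$. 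The paper instead multiplies the two sides together and shows the product is $1$ by a direct trace telescoping using $wXw^{-1}=X'+Y'$; your orthogonality route is a legitimate and arguably cleaner alternative, but as written you have mis-tracked which terms survive and what they equal.
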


\begin{remark} 
  For $(g,g')\in \triangle^0$,
  $\psi(\half\innw{w}{(g,g')^{-1}\cdot w-w}) = 1$ by
  \Cref{lem:SS}~\cref{it:lem.SS.1}.
\end{remark}

\begin{proof}
  Suppose $(g,g') = (\exp(X),\exp(X'))\in \triangle^i$ where
  $X\in \fgg^i_{x,s_{i-1}}$ and $X' = \dalpha(X)$. Let $Y' := \dalpha^\perp(X)$.
  By \Cref{lem:SS}~\cref{it:lem.SS.2},
  $(g,g')^{-1}\cdot w - w \equiv Y'w - \half[X',Y']w \pmod{\sB_{s^+}}$.

We claim that $\half \innw{w}{[X',Y']w} \in \fppF$. Indeed by \Cref{lem:inn},
\[
\begin{split}
 \half \innw{w}{[X',Y']w} & = \bB(\Gamma',[Y',X'])= \bB([\Gamma',X'],Y')\\
& \equiv \bB([\ckGamma',X'],Y') \quad  \pmod{\fppF}\\
& =  0  \quad \text{(because $X'\in \Cent{\fgg'}{\ckGamma'}$ by
  \Cref{lem:alpha}~\cref{it:dalpha.fil} and $i\in \IGamma$)}.
\end{split}
\]

Note that $wXw^{-1} = X'+Y'$ and so we have
\[
\begin{split}
&\psi_{\Gamma}(g^{-1})\psi_{-\Gamma'}(g'^{-1})\psi(\half\innw{w}{(g,g')^{-1}\cdot
   w-w}) \\
 & =  \psi \left(\bB(\Gamma,-X) + \bB(-\Gamma',-X') + \half\innw{w}{Y'w} 
 -\frac{1}{4}\innw{w}{[X',Y']w}\right) \\
 & =  \psi\left(\half \trF(-w^\mstar w X) + \half \trF(w w^\mstar X') + \half \trF(w^\mstar Y' w) \right) \\
 & =  \psi\left(\half\trF(-ww^\mstar (wXw^{-1}) + ww^\mstar (X' + Y')) \right)\\
 & =  \psi(0) = 1.
\end{split}
\]
This proves the lemma for $(g,g') \in \triangle^i$.

Next suppose $(g,g') := (\exp(X),\exp(X')) \in G_{x,s}G'_{x',s}$.  Then
\[
  \begin{split}
    \psi(\half \innw{w}{(g,g')^{-1}\cdot w - w}) & = \psi(\bB(X,M(w))+\psi(\bB(X',-M'(w))
    \quad
    \text{(by \eqref{eq:Sym.s})} \\
    & = \psi_{M(w)}(g) \psi_{-M'(w)}(g') = \psi_{\Gamma}(g)\psi_{-\Gamma'}(g').
\end{split}
\]
This proves the lemma.
\end{proof}

\subsection{A maximal totally isotropic subspace}\label{sec:iso}
We refer to \Cref{sec:Special} for the notation. Also see \cite[p.~591]{Yu} and
\cite[Section~12]{Kim}.
Suppose $1\leq i\in \IGamma$. Let $J^i := (G^{i-1},G^i)_{x,(r_{i-1},s_{i-1})}$ and
$J^i_+ := (G^{i-1},G^i)_{x,(r_{i-1},s_{i-1}^+)}$.  Likewise we have subgroups $J'^i$
and $J'^i_+$ in $G'$.  Let $\dbJ^i := J^i \times J'^i$ and
$\dbJ^i_+ := J^i_+ \times J'^i_+$.  Note that $\exp$ induces a group isomorphism
$\fgg^{i-1\perp}_{x,s_{i-1}:s_{i-1}^+} \simrightarrow J^i/J^i_+$ and we identify
both sides from now on.  We let
$\bfW^i:= J^i/J^i_+ = \fgg^{i-1\perp}_{x.s_{i-1}:s_{i-1}^+}$,
$\bfW'^i := J'^i/J'^i_+ = \fgg'^{i-1\perp}_{x',s_{i-1}:s_{i-1}^+}$ and
$\dbbfW^i := \dbJ^i/\dbJ^i_+ = \bfW^i\times \bfW'^i$.  Note that $\dbbfW^i$ has
a natural non-degenerate symplectic space structure over $\fffF$ induced by
$\innGa{}{}\oplus \innGap{}{}$ (cf. \cref{eq:innGa.def}).  Let $\bfD^i$ be the image of
$\triangle^i\cap \dbJ^i$ under the natural quotient map
$\dbJ^i \twoheadrightarrow \dbbfW^i$.  By \Cref{lem:alpha},
$\triangle^i\cap \dbJ^i = \set{(\exp(X),\exp(\dalpha(X))|X\in
  \fgg^{i-1}_{x,r_{i-1}} \oplus \fgg^{i-1\perp}_{x,s_{i-1}}}$.
Although $\triangle^i\cap \dbJ^i$ is not a subgroup of $G\times G'$, $\bfD^i$ is
an $\fffF$-subspace in $\dbbfW$ isomorphic to
$\fgg^{i-1\perp}_{x,s_{i-1}:s_{i-1^+}}$.

\begin{lemma}\label{lem:Di}
  For $1\leq i \in \IGamma$, $\bfD^i$ is a maximal totally isotropic subspace of
  $\dbbfW^i$.
\end{lemma}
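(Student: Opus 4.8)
The goal is to show $\bfD^i$ is a maximal totally isotropic subspace of the non-degenerate symplectic space $\dbbfW^i = \bfW^i\times \bfW'^i$ over $\fffF$. I would proceed in two parts: first establish that $\bfD^i$ is totally isotropic, and then establish that it has exactly half the dimension of $\dbbfW^i$, which forces maximality.

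\textbf{Step 1: $\bfD^i$ is totally isotropic.} By the description recorded just before the lemma, $\bfD^i$ is the image in $\dbbfW^i = \bfW^i \times \bfW'^i$ of the set $\set{(X,\dalpha(X)) \mid X \in \fgg^{i-1\perp}_{x,s_{i-1}}}$, i.e.\ it is the graph of $\bdalpha$ restricted to $\bfW^i = \fgg^{i-1\perp}_{x,s_{i-1}:s_{i-1}^+}$, landing in $\bfW'^i = \fgg'^{i-1\perp}_{x',s_{i-1}:s_{i-1}^+}$ (note $\dalpha(\fgg^{i-1\perp}) \subseteq \fgg'^{i-1\perp}$ by \Cref{lem:alpha}~\cref{it:dalpha.JG3}). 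So I need: for all $X_1, X_2 \in \fgg^{i-1\perp}_{x,s_{i-1}}$,
\[
\innGa{X_1}{X_2} + \innGap{\dalpha(X_1)}{\dalpha(X_2)} \equiv 0 \pmod{\fppF}.
\]
This is the key computation. The natural route is \Cref{lem:isometry}: the map $\iota(X,X') = X\cdot w + X'\cdot w$ is an isometry from $\fgg\oplus\fgg'$ (with $\innGa{}{}\oplus\innGap{}{}$) into $W$, so the left-hand side equals $\innw{\iota(X_1,\dalpha(X_1))}{\iota(X_2,\dalpha(X_2))}$. By the Zassenhaus-type computation in \Cref{lem:SS}~\cref{it:lem.SS.2}, for $X \in \fgg^i_{x,s_{i-1}}$ we have $\iota(X,\dalpha(X)) = (X,\alpha_*(X))\cdot w \equiv $ (up to sign conventions) $-\dalphap(X)w$ modulo higher filtration, and here $\dalphap(X) \in \fgl'_{x',r-s_{i-1}}$ by \Cref{lem:alpha}~\cref{it:dalpha.p}. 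Pairing two such elements via $\innw{}{}$ gives something in $\fgl'_{x',2(r-s_{i-1})}$, and since $r > r_{i-1}$ one checks $2(r-s_{i-1}) > 0$, so the symplectic pairing lands in $\fppF$. Thus $\bfD^i$ is isotropic. (I should double-check the cross terms in the Zassenhaus expansion contribute only to higher filtration, using that $X' = \dalpha(X) \in \Cent{\fgg'}{\ckGamma'}$ when $i \in \IGamma$, exactly as in the proof of \Cref{lem:psi}.)

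\textbf{Step 2: dimension count.} Since $\bfD^i$ is the graph of the linear map $\bdalpha\colon \bfW^i \to \bfW'^i$, we have $\dim_{\fffF}\bfD^i = \dim_{\fffF}\bfW^i = \dim_{\fffF}\fgg^{i-1\perp}_{x,s_{i-1}:s_{i-1}^+}$. By \Cref{lem:alpha}~\cref{it:dalpha.JG2} and \cref{it:dalpha.JG3}, $\dalpha$ restricts to an isomorphism $\fgg^{i-1\perp}_{x,s_{i-1}:s_{i-1}^+} \iso \fgg'^{i-1\perp}_{x',s_{i-1}:s_{i-1}^+}$, so $\dim_{\fffF}\bfW^i = \dim_{\fffF}\bfW'^i$, and hence $\dim_{\fffF}\bfD^i = \tfrac12\dim_{\fffF}\dbbfW^i$. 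A totally isotropic subspace of a non-degenerate symplectic space whose dimension is half the ambient dimension is maximal (Lagrangian). This completes the proof.

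\textbf{Main obstacle.} The only real content is Step 1, the isotropy computation; Step 2 is bookkeeping with the filtration isomorphisms already proved in \Cref{lem:alpha}. Within Step 1, the delicate point is tracking which terms in the Zassenhaus/Campbell–Hausdorff expansion of $(\exp(\dalpha X))\,w\,(\exp X)^{-1}$ survive modulo $\sB_{s^+}$ and confirming that the surviving term $-\dalphap(X)w$ pairs trivially — this is where one uses $r > r_{i-1}$ (hence $r - s_{i-1} > s$) together with $\dalphap(X) \in \fgl'_{x',r-s_{i-1}}$ from \Cref{lem:alpha}~\cref{it:dalpha.p}. I expect this to go through cleanly by reusing the computation already done in \Cref{lem:SS}~\cref{it:lem.SS.2} and \Cref{lem:psi}, essentially verbatim.
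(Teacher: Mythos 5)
Your proof is correct, and your Step~1 takes a genuinely different route from the paper's. The paper works entirely inside $\fgg\oplus\fgg'$ with the $\bB$-pairing: it writes the form on $\dbbfW^i$ as $\bB([X_1,X_2],\Gamma_{i-1})+\bB([Y_1,Y_2],-\Gamma'_{i-1})$, rewrites each side as $\bB(X_2,\ad_{\Gamma_{i-1}}X_1)$ etc., and uses the intertwining identity $\dalpha\circ\ad_{\Gamma_{i-1}}=\ad_{\Gamma'_{i-1}}\circ\dalpha$ from \cref{eq:dalpha.ad} plus the filtration estimates of \Cref{lem:alpha} to cancel the two sides. You instead push the whole computation into $W$ via the isometry $\iota$ of \Cref{lem:isometry}, where it collapses dramatically. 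Two points worth flagging in your write-up: first, you do not need the Zassenhaus/BCH computation of \Cref{lem:SS}~\cref{it:lem.SS.2} at all here — that lemma works at the group level, while $\iota$ is linear, and the identity $\iota(X,\dalpha(X)) = -wX + \dalpha(X)w = -\dalphap(X)w$ holds \emph{exactly} (no filtration error), directly from $wXw^{-1}=\dalpha(X)+\dalphap(X)$. Dropping the Zassenhaus reference makes your argument cleaner than you presented it. Second, the phrase ``pairing \ldots gives something in $\fgl'_{x',2(r-s_{i-1})}$'' is imprecise since the pairing is a scalar; what you mean is that $\dalphap(X_1)^\ast\dalphap(X_2)\in\fgl'_{x',2(r-s_{i-1})}$, hence $\dalphap(X_1)w,\dalphap(X_2)w\in\sB_{r-s_{i-1}-s}$, and since $r-s_{i-1}-s = (r-r_{i-1})/2 > 0$ and $\sB$ is self-dual, $\innw{\sB_{a}}{\sB_a}\subseteq\fppF$ for $a>0$. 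With those adjustments your route is arguably shorter and more conceptual than the paper's: it makes visible that $\bfD^i$ is the graph of $\bdalpha$ and that its $\iota$-image lands in the small sublattice $\sB_{(r-r_{i-1})/2}$, which forces isotropy for free, at the price of invoking \Cref{lem:isometry} rather than the purely Lie-algebraic identity \cref{eq:dalpha.ad}. Your Step~2 is the same as the paper's.
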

\begin{proof}
  By \Cref{lem:alpha}~\cref{it:dalpha.JG2},
  \[\dim_\fffF \fgg^{i-1\perp}_{s_{i-1}:s_{i-1}^+} =
  \dim_{\fffF}\fgg^{i}_{s_{i-1}:s_{i-1}^+}/\fgg^{i-1}_{s_{i-1}:s_{i-1}^+}
  =\dim_{\fffF}\fgg'^{i}_{s_{i-1}:s_{i-1}^+}/\fgg'^{i-1}_{s_{i-1}:s_{i-1}^+} =
  \dim_\fffF \fgg'^{i-1\perp}_{s_{i-1}:s_{i-1}^+}.
  \]
  Therefore, by \Cref{lem:alpha}~\cref{it:dalpha.JG3}, $\bdalpha$ induces an
  isomorphism
  $\fgg^{i-1\perp}_{s_{i-1}:s_{i-1}^+} \simrightarrow
  \fgg'^{i-1\perp}_{s_{i-1}:s_{i-1}^+}$
  and $\dim_{\fffF}\bfD^i = \half \dim_{\fffF}\dbbfW^i$.  It remains to show that
  $\bfD^i$ is isotropic.  Let $X_1,X_2\in \fgg^{i-1\perp}_{s_{i-1}}$ and let
  $Y_1 = \dalpha(X_1), Y_2 = \dalpha(X_2)$. Then the symplectic form is given by
  \[
  \begin{split}
    \inn{(X_1,Y_1)}{(X_2,Y_2)} & \equiv \bB([X_1,X_2],\Gamma_{i-1}) +  \bB([Y_1,Y_2],-\Gamma'_{i-1}) \\
    & \equiv \bB(X_2,\ad_{\Gamma_{i-1}}(X_1)) - \bB(Y_2,\ad_{\Gamma'_{i-1}}(Y_1))\\
    & \equiv \bB(X_2,\ad_{\Gamma_{i-1}}(X_1)) -
    \bB(Y_2,\dalpha(\ad_{\Gamma_{i-1}}(X_1)))
    \quad \text{(by \eqref{eq:dalpha.ad})} \\
    & \equiv \bB(X_2,\ad_{\Gamma_{i-1}}(X_1)) -
    \bB(wX_2w^{-1},w(\ad_{\Gamma_{i-1}}(X_1))w^{-1}) \\
    & \qquad \text{(by $r-r_{i-1}>0$, \Cref{lem:alpha}~\cref{it:dalpha.fil} and
      \cref{it:dalpha.p}) }\\
    & \equiv 0 \quad \pmod{\fppF}.
  \end{split}
  \]
  This finishes the proof. 
  \trivial[h]{ For the second last and last equation: Let $A = \dalphap(X_2)$,
    $B=\dalpha([\Gamma_{i-1},X_1])$ and $C = \dalphap([\Gamma_{i-1},X_1])$.
    Then $A \in \fgg_{X,r-r_{i-1}+s_{i-1}}$,
    $B\in \fgg_{x,s_{i-1}-r_{i-1}} = \fgg_{x,-s_{i-1}}$ and
    $C \in \fgg_{x,r-r_{i-1}-s_{i-1}}$. Now $AB,Y_2C \in \fgl_{x,r-r_{i-1}}$ and
    $AC \in \fgl_{x,2(r-r_{i-1})}$. Hence
  \[
  \begin{split}
   & \bB(X_2,[\Gamma_{i-1},X_1]) =\bB(wX_2w^{-1}, w[\Gamma_{i-1},X_1]w^{-1}) \\
    & = \half \trF((Y_2+A)(B+C)) \equiv \half \trF(Y_2B) = \bB(Y_2,
    \dalpha([\Gamma_{i-1},X_1])).
  \end{split}
  \] 
  }
\end{proof}

\subsection{Triviality of $\chi^{\fbb_+}$}
\def\chibp{\chi^{\bfbb_+}}
We recall the $\fffF$-vector space $\bfbbpp = \biota(\frr) = \biota(\frr')$ in \Cref{sec:lem.S1}. 
The space $\bfbbpp$ is an isotropic subspace in $\bfbb$ and $\triangle^0$ acts on it. 
Let $\chi^{\bfbb_+}$ be the character of $\triangle^0$ as defined in \Cref{sec:HW}. 
More precisely, $\chibp(g,\alpha(g)) = \det((g,\alpha(g))|_{\bfbbpp})^{(q-1)/2}$ where $(g,\alpha(g)) \in \triangle^0$
and $q = \abs{\fffF}$. 

\begin{lemma}\label{lem:chib}
We have $\chi^{\bfbb_+}(g,\alpha(g)) = 1$ for all $(g,\alpha(g)) \in \triangle^0$. 
\end{lemma}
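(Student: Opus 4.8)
The plan is to show that the character $\chibp$ of $\triangle^0$, which records the action of $g$ on the isotropic space $\bfbbpp$ via $\det(\cdot|_{\bfbbpp})^{(q-1)/2}$, is trivial. First I would use \Cref{lem:S1}~\cref{it:lem.S1.3}, which identifies $\bfbbpp$ with $\biota(\frr)$ where $\frr = \Rad(\binnGa{}{})$ is a subquotient of $\fgg_{x,s:s^+}$, together with \Cref{lem:SS}~\cref{it:lem.SS.1}, which says that $\iota$ (hence $\biota$) is $\triangle^0$-equivariant. Since $\triangle^0 \cong G^0_x$ acts on $\fgg_{x,s:s^+}$ through the adjoint action (composed with $\alpha$ on the $\fgg'$-side, which is again adjoint), the action of $(g,\alpha(g))$ on $\bfbbpp$ is conjugate to the adjoint action of the image of $g$ in $\sfG^0_x$ on $\frr \subseteq \fgg_{x,s:s^+}$.

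The key computation is then the determinant of the adjoint action of $\sfG^0_x$ on $\frr$. By \Cref{lem:radical}~\cref{it:radical.1}, in Case~I we have $\frr = \fgg^{d-1}_{x,s:s^+}$, and in Case~II we have $\frr = \fgg_{x,s:s^+}$ itself. In both cases $\frr$ is an $\sfG^0_x$-stable subquotient of the Moy--Prasad graded piece of a Lie algebra of a reductive group over $\fffF$ on which $\sfG^0_x$ acts by the adjoint representation. The adjoint action of a connected reductive group always has trivial determinant (the top exterior power of the adjoint representation is the trivial character, since $\mathfrak{g} \cong \mathfrak{g}^*$ via the invariant form and the determinant of a self-dual representation squares to $1$, while connectedness forces it to be $1$); the same holds for the adjoint action restricted to any stable subspace that is again self-dual under the restricted form — and $\fgg^{d-1}_{x,s:s^+}$ is self-dual because $\Gamma_{d-1}$ (a good element representing $\ckGamma$) is $*$-skew, so $\fgg^{d-1}$ is nondegenerately paired with itself by $\bB$. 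Hence $\det((g,\alpha(g))|_{\bfbbpp}) = 1$, and a fortiori its $(q-1)/2$ power is $1$.

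The main obstacle I anticipate is the disconnectedness of $\sfG^0_x = G^0_x/G^0_{x,0^+}$: a priori this finite group of Lie type need not be connected, so the ``top exterior power of adjoint is trivial'' argument via connectedness does not apply verbatim. I would handle this by noting that the determinant character factors through the component group, and then arguing directly that even on disconnected classical groups the adjoint action has determinant $1$ on a self-dual submodule — for instance, because the nonidentity components are represented by elements conjugating the torus in a way that still preserves the invariant form, so on each graded piece the adjoint action is by orthogonal/symplectic transformations of determinant $\pm 1$, and the $(q-1)/2$-th power then lands in $\{\pm 1\}$; one then needs that it is actually $+1$. An alternative, cleaner route — which I expect the authors take — is to observe that by \Cref{lem:alpha}~\cref{it:dalpha.JG2} the map $\bdalpha$ is a $\triangle^0$-equivariant isomorphism $\frr \iso \frr'$, so the action of $(g,\alpha(g))$ on $\bfbbpp = \biota(\frr) = \biota(\frr')$ can be computed two ways, via $g$ acting on $\frr$ and via $\alpha(g)$ acting on $\frr'$; combined with the skew-symmetry of the pairing and \Cref{lem:isometry}, this forces the relevant determinant to be its own inverse in a way that pins it to $1$. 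I would write the proof using whichever of these is shortest given the finite-field representation theory recalled in \Cref{sec:HW}.
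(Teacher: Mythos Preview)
Your reduction to computing $\chi^{\frr}_{\fffF}(g)$ for $g\in G^0_x$ acting by the adjoint action on $\frr$ is correct and matches the paper's first step. The difficulty you underestimate is that $\frr$ is a Moy--Prasad graded piece, not the full Lie algebra, and your self-duality argument breaks there. Concretely: the invariant form $\bB$ pairs $\fgg^{d-1}_{x,s:s^+}$ perfectly with $\fgg^{d-1}_{x,-s:-s^+}$, \emph{not} with itself. So ``$\fgg^{d-1}$ is self-dual under $\bB$'' does not descend to ``$\fgg^{d-1}_{x,s:s^+}$ is a self-dual $\sfG^0_x$-module'', and the ``top exterior power is trivial'' conclusion does not follow. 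When $s$ lies in $\val(D)$ one can rescue this by multiplying by a uniformizer power to identify $\fgg_{x,s:s^+}$ with $\fgg_{x,0:0^+}$ (the Lie algebra of the reductive quotient) and then invoke the adjoint-determinant argument; this is exactly the paper's ``Case~1''. But when $s\notin\val(D)$ there is no such identification, and the paper carries out a genuinely different analysis: after reducing to a single unitary group over a quadratic extension $D/F$, it decomposes $\fgl_{x,s:s^+}$ into blocks $\Hom_{\fff_D}(\sfL_t,\sfL_{t+s})$ and tracks the $*$-action and the contribution of each factor $\GL(\sfL_t)$, $\rU(\sfL_0)$, $\rU(\sfL_{1/2})$ of $\sfG$ separately, using for instance that $l_{1/2}$ is even in the ramified orthogonal/symplectic situation. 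None of this is captured by a general self-duality principle.

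Your ``cleaner route'' via $\bdalpha\colon\frr\iso\frr'$ does not close the gap either. Equivariance of $\bdalpha$ (which does hold) together with $\bfbbpp=\biota(\frr)=\biota(\frr')$ only gives $\det(g|_{\frr})=\det(\alpha(g)|_{\frr'})$, which is a tautology, not a relation of the form $\det=\det^{-1}$. The symplectic structure on $\bfbb$ tells you $\det|_{\bfbbpp}\cdot\det|_{\bfbb/\bfbbpp^\perp}=1$, but you have no independent control over the quotient factor, so nothing is pinned down. In short, the paper's case-by-case computation for $s\notin\val(D)$ is doing real work that your proposal does not supply.
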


The rest of this section is devoted to the proof of the above lemma.  The proof
does not affect the rest part of the paper, so the reader may skip it without
loss of continuity.

First we introduce some notation.
Suppose $\fff'$ is an extension of $\fffF$ and $\fV$ is an $\fff'$-module. 
Let $\sfG$ be a group acting $\fff'$-linearly on $\fV$. 
Let $\det_\fffF(g|_{\fV})$ denote the determinant of $g\in \sfG$ when we view $\fV$ as an $\fffF$-vector space. 
Let $\chi^{\fV}_{\fffF}$ be the character $g \mapsto \det_\fffF(g|_\fV)^{(\abs{\fffF}-1)/2}$. 
More conceptually, $\chi^{\fV}_{\fffF}(g)$ is $1$ if $\det_\fffF(g|_\fV)$ is a
square in $\fffF^\times$ and is $-1$ if otherwise, i.e. it is the Legendre symbol of
$\det_\fffF(g|_{\fV})$ in $\fffF$. 

\medskip

Note that $G_x^0 \cong \triangle^0$ via $g\mapsto (g,\alpha(g))$ and $\frr \cong
\bfbbpp$ via $X \mapsto \biota(X)$.
Clearly for $g \in G_x^0$ and $X \in \frr$,
\[
\biota(g \cdot X) = -w g X g^{-1} = -\alpha(g) w X g^{-1} =  (g,\alpha(g)) \cdot \biota(X)
\]
so
$\chi^{\frr}_{\fffF}(g) = (\det_\fffF(g|_{\frr}))^{(q-1)/2} =
\det_\fffF((g,\alpha(g))|_{\bfbbpp})^{(q-1)/2}= \chibp((g,\alpha(g))$.  Then
\Cref{lem:chib} is equivalent to the following statement:
\begin{equation}\label{eq:chib.1}
\chi^{\frr}_{\fffF}(g) = 1 \qquad \forall\; g \in G_x^0. 
\end{equation}


Recall $\ckGamma$ and $\ckG$ in \Cref{def:dalpha}~\cref{it:Gamma.cases}. Note that $G_x^0$ is a subgroup
of $\ckG_x$ and $\ckG_x$ acts on $\frr = \ckfgg_{x,s:s^+}$.  Hence
\eqref{eq:chib.1} is a consequence of the following:
\begin{equation}\label{eq:chib.2}
\chi^{\frr}_{\fffF}(g) = 1 \qquad  \forall\; g \in \ckG_x. 
\end{equation}
If we replace $\Gamma$ by $\ckGamma$, then every object in \eqref{eq:chib.2} remains unchanged.
Therefore, we could assume that $\Gamma = \ckGamma$. In this case $\ckG_x
= G_x^0$.

\medskip
We recall 
that $F' = \Cent{}{D}$. We consider Case I and Case II separately.
\def\Fci{F^\circ_i}
\begin{enumerate}[1.]
\item
In Case I, $F'[\Gamma] = \prod_i F_i$ is a product of fields $F_i$ with
involution $*$ and $V = \bigoplus_i V^i$ such that each $V^i$ is a certain
Hermitian space over $F_i$. 
Let $\Fci$ be the $*$ fixed subfield of $F_i$. 
Then $\rU(V^i)$ is an algebraic group defined over $\Fci$. 
Under this decomposition, $G^0= \prod_i \rU(V^i)$, $x = (x_i)\in \prod_i\BTB{\rU(V^i)}$ and $\chi^{\frr}_{\fffF} = \prod_i \chi^{\fuu(V^i)_{x_i,s:s^+}}_{\fffF}$.

The residue field $\fff_i^\circ$ of $\Fci$ could be a finite extension of
$\fffF$.  Note 
$\chi^{\fuu(V^i)_{x_i,s:s^+}}_{\fff_i^\circ}(g) = 1$ means that
$\det_{\fff_i^\circ}(g) = a^2$ is a square in $(\fff_i^\circ)^\times$. Hence
$\det_\fffF(g) = \Norm_{\fff_i^\circ/\fffF} \circ \det_{\fff_i^\circ}(g) =
(\Norm_{\fff_i^\circ/\fffF}(a))^2$ is a square in $\fffF^\times$. So
$\chi^{\fuu(V^i)_{x_i,s:s^+}}_{\fffF}(g) = 1$. 
In order to prove \eqref{eq:chib.2}, it suffices to check
that $\left.\chi^{\fuu(V^i)_{x_i,s:s^+}}_{\fff_i^\circ}\right|_{\rU(V^i)_{x_i}}$ is trivial for each~$i$.

\item
In Case II, $G^0 = G$ is a unitary group over $D = F' = F[\Gamma]$.
\end{enumerate}

\trivial[h]{
Note that  $\Gamma \in D$ in Case II and $\Gamma|_{V^i} \in F_i$ in Case I. 
We see that $r\in \val(D)$ and $r\in \val(F_i)$ respectively.
}

To summarize, we have reduced \Cref{lem:chib} to the following claim. 

\def\chigse{\chi^{\fgg_{x,s:s^+}}_{\fffF}}
\begin{claim*}
 Suppose 
\begin{enumerate}[(a)]
\item 
$D$ is a quadratic field extension of a certain $p$-adic field $F$;

\item $\tau$ is the nontrivial element in $\Gal(D/F)$;

\item $V$ is a $D$-vector space with a Hermitian form $\inn{}{}$; 

\item $G = \rU(V)$, $\fgg = \fuu(V)$ and $x\in \BTB[F]{G}$;

\item $\Gamma$ is an element in $\Cent{}{\fgg} = D^{\tau,-1}$ with valuation $-r = -2s$.
\end{enumerate}
Then $\chigse(g) = 1$ for all $g \in G_x$. 
\end{claim*}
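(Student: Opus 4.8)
The plan is to reduce the claim to a statement purely about the finite unitary group over the residue field and then use the structure of the Moy--Prasad quotient $\fgg_{x,s:s^+}$ as a module for $\sfG_x^0$. First I would observe that $\Gamma$ is a central skew element of valuation $-r=-2s$ in $\fuu(V)$, where $V$ is Hermitian over a quadratic extension $D/F$, and that $x$ is a point in the building of the $F$-group $G=\rU(V)$. The key reduction is that for any $g\in G_x$ one has
\[
\det\nolimits_{\fffF}\!\big(g|_{\fgg_{x,s:s^+}}\big) = \det\nolimits_{\fffF}\!\big(g|_{\fgg_{x,0:0^+}}\big),
\]
because multiplication by the central invertible element $\Gamma$ (which satisfies $\Gamma\sL_t=\sL_{t-r}$ by \cref{it:Ga.1}) gives a $\fffF$-linear, $\sfG_x$-equivariant isomorphism $\fgg_{x,s:s^+}\iso\fgg_{x,-s:-s^+}$, while the invariant form $\bB$ identifies $\fgg_{x,-s:-s^+}$ with the $\fffF$-dual of $\fgg_{x,s:s^+}$, and the determinant of $g$ on a space equals that on its dual. (More precisely one pairs $\fgg_{x,s:s^+}$ against $\fgg_{x,-s:-s^+}$ and against $\fgg_{x,0:0^+}$ against $\fgg_{x,0:0^+}$; in all cases the relevant determinant collapses to the determinant on the reductive quotient $\fgg_{x,0:0^+}=\Lie(\sfG_x)$.) Thus it suffices to prove $\chi^{\fgg_{x,0:0^+}}_{\fffF}(g)=1$ for all $g\in G_x$, i.e. that $\sfG_x=\rU(\ell)\times\rU(\ell^*)$ acts on its own Lie algebra by automorphisms of square determinant over $\fffF$.

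The next step is the finite-field computation. Write $\sfG_x^0=\rU(\ell)\times\rU(\ell^*)$ with $\ell,\ell^*$ Hermitian spaces over the residue field $\fff_D$, a quadratic extension of $\fffF$. For a finite unitary group $\rU_n(\fff_D/\fffF)$ acting on its Lie algebra $\fuu_n=\{X\in\fgl_n(\fff_D):X+X^\dagger=0\}$, the adjoint action is by conjugation; I would compute $\det_{\fffF}(\Ad_g)$ directly. Over $\fff_D$ one has $\fgl_n(\fff_D)=\fuu_n\otimes_{\fffF}\fff_D$ as an $\Ad$-module, and $\det_{\fff_D}(\Ad_g|_{\fgl_n(\fff_D)})=\det_{\fff_D}(g)^{n}\det_{\fff_D}(g^{-1})^{n}=1$; descending to $\fuu_n$ as an $\fffF$-form and tracking the $\fffF$-determinant shows $\det_{\fffF}(\Ad_g|_{\fuu_n})$ is a norm from $\fff_D^\times$, hence a square in $\fffF^\times$ — here one uses that $\Norm_{\fff_D/\fffF}\colon\fff_D^\times\to\fffF^\times$ has image the whole of $\fffF^\times$ but that the relevant quantity is $\Norm(\det_{\fff_D}g)\cdot(\text{square})$, and $\det_{\fff_D}g\in\fff_D^{\dagger=1}$ means $\Norm_{\fff_D/\fffF}(\det_{\fff_D}g)=1$ by Hilbert~90 for the finite cyclic extension. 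So $\chi^{\fuu_n}_{\fffF}$ is trivial on $\rU_n$. Applying this to each factor $\rU(\ell)$ and $\rU(\ell^*)$ and using the multiplicativity $\chi^{\fgg_{x,0:0^+}}_{\fffF}=\chi^{\fuu(\ell)}_{\fffF}\cdot\chi^{\fuu(\ell^*)}_{\fffF}$ gives the triviality on $\sfG_x^0$, and finally on all of $\sfG_x=G_x/G_{x,0^+}$ since $G_x/G_x^0$ is a finite group of order a power of $2$ or trivial and one checks the character is trivial on any extra component (e.g. any element normalizing the lattice acts with determinant $\pm1$ on a symplectic space, whose square is $1$).

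I expect the main obstacle to be the careful bookkeeping of the two residue-field structures: $\fff_D$ may genuinely be a quadratic extension of $\fffF$, so one must consistently distinguish $\det_{\fff_D}$ from $\det_{\fffF}$ and invoke $\Norm_{\fff_D/\fffF}$ at the right moment, exactly as the paper's own discussion preceding the claim already indicates (the reduction of Case~I to checking $\chi^{\fuu(V^i)_{x_i}}_{\fff_i^\circ}$ factor by factor). The cleanest route is probably to prove the abstract lemma: \emph{if $\sfG$ is a finite group acting $\fff_D$-linearly on an $\fff_D$-space $\fV$ with $\det_{\fff_D}(g|_\fV)=1$ for all $g$, or more generally $\det_{\fff_D}(g|_\fV)\in\ker(\Norm)$, then $\chi^{\fV}_{\fffF}$ is trivial}, and then note that for $\fV=\fuu(\ell)$ the adjoint action has $\det_{\fff_D}$-value $1$ because $\Ad$ preserves the Hermitian-Killing pairing and $\fuu(\ell)$ carries a self-duality making $\det_{\fff_D}(\Ad_g)^2=1$, combined with $\det_{\fff_D}(\Ad_g)$ lying in the image of the connected group $\rU(\ell)$ hence in the kernel of $\Norm$. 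Case~II is then the special case $\fff_D=\fffF$ and requires no extra argument. Once this lemma is in place the proof is a short assembly; the only real work is the determinant identity for the adjoint representation of a finite unitary group, which is a standard linear-algebra computation.
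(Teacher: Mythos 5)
Your proposed reduction
\[
\det\nolimits_{\fffF}\!\big(g|_{\fgg_{x,s:s^+}}\big) = \det\nolimits_{\fffF}\!\big(g|_{\fgg_{x,0:0^+}}\big)
\]
is the crux of the argument, and it is where the proof breaks down. Multiplication by the central element $\Gamma$ does \emph{not} give an isomorphism $\fgg_{x,s:s^+}\iso\fgg_{x,-s:-s^+}$. Since $\Gamma\in D^{\tau,-1}$ is $*$-skew and $X\in\fgg$ is $*$-skew, one computes $(\Gamma X)^* = X^*\Gamma^* = (-X)(-\Gamma) = X\Gamma = \Gamma X$, so $\Gamma X$ lands in the $*$-\emph{symmetric} part $\fgl(V)^{*,+1}$, not in $\fgg$. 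Thus $\Gamma$-multiplication gives $\fgg_{x,s:s^+}\iso\fgl^{*,+1}_{x,-s:-s^+}$, and the latter is paired by $\bB$ with $\fgl^{*,+1}_{x,s:s^+}$, not with $\fgg_{x,s:s^+}$. Separately, the statement that ``the determinant of $g$ on a space equals that on its dual'' is false: on the contragredient one gets the inverse, $\det(g|_{V^\vee})=\det(g|_V)^{-1}$. Chasing the correct dualities and the $\Gamma$-shift only yields the tautology $\det(g|_{\fgg_{x,s:s^+}})=\det(g|_{\fgg_{x,s:s^+}})$ (via $\det(g|_{\fgl_{x,-s:-s^+}})=1$), not a reduction to depth zero.

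There is a more structural reason the reduction cannot work: when $s\notin\val(D)=\bZ$ (the harder case, which your argument does not single out), $\fgg_{x,s:s^+}$ is genuinely a different $\sfG_x$-module from $\fgg_{x,0:0^+}$. The paper's proof has to split into the case $s\in\bZ$, where $\varpi_D^s$-multiplication does yield the shift you want (crucially because $D/F$ is then forced to be unramified and $\varpi_D^s$ is $\tau$-symmetric, so the shift \emph{does} preserve $\fgg$), and the case $s\notin\bZ$, where $\fgg_{x,s:s^+}$ decomposes into pieces like $\Hom(\sfL_0,\sfL_{\half})$, $\Hom(\sfL_{\quarter},\sfL_{\quarter+s})$, etc., and the determinant must be tracked factor by factor on the $\GL(\sfL_t)$, $\rU(\sfL_0)$, $\rU(\sfL_{\half})$ components of $\sfG_x$. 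None of these pieces match the adjoint module of $\sfG^0_x$, so the problem does not collapse to the depth-zero Lie algebra. Your subsequent finite unitary group computation is sound in spirit but is aimed at the wrong target module. To fix the proof you would need to carry out the case analysis on $s$ and on the jumps $t$ of $\sfL_x$, as in the paper.
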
 

\begin{proof}[Proof of the Claim]
Without loss of generality, we may assume $\val(D) = \bZ$. 
Let $\sL$ be the self-dual lattice function corresponding to $x$. 
Define the $\fff_D$-space $\sfL_t := \sL_t/\sL_{t^+}$.

\begin{enumerate}
\item 
If $D/F$ is unramified, we let $\varpi_D$ be the fixed uniformizer
of both $D$
and $F$. 
The residue field $\fff_D$ is a quadratic extension of $\fffF$. 
Moreover $\sfL_0$ and $\sfL_{\half}$ (possibly zero spaces) are Hermitian spaces
over~$\fff_D$.

\item
If $D/F$ is ramified then $\fff_D = \fffF$. We fix a uniformizer $\varpiD$ of $D$ such that $\varpiD^\tau = - \varpiD$.
Then $\sfL_0$ is an orthogonal space over $\fffF$ whose form is induced by $\inn{}{}$ and $\sfL_{\half}$ is a symplectic space over $\fffF$ whose form is induced by $\varpi_D^{-1}\inn{}{}$. 
\end{enumerate}
Note that $\chigse$ factors
through the group 
\begin{equation}\label{eq:sfG0}
\sfG := G_{x}/G_{x,0^+} = \left( \prod_{t\in \Jump(\sL)\cap (0,\half)} \GL_{\fff_D}(\sfL_t) \right) \times \rU(\sfL_0)\times \rU(\sfL_{\half}).
\end{equation}

\def\chiggs{\chi^{\fgg_{x,s:s^+}}_{\fffF}}

Now we consider two separate cases in the next two subsections. 

\subsubsection{Case 1: $s \in \val(D)=\bZ$.} 
First we claim that $D$ is an unramified extension of $F$. Indeed, if $D/F$ is
ramified, then $\val(F) = 2 {\mathbb{Z}}$ and $-r = \val(\Gamma)$ is odd because $\Gamma
\in D^{\tau,-1}$. This implies $s \not \in \bZ$, a contradiction.

Now $X \mapsto \varpiD^s X$ gives a $\sfG$-equivariant isomorphism $\fgg_{x,0:0^+} \simrightarrow \fgg_{x,s:s^+}$.
Therefore $\det_{\fffF}(g|_{\fgg_{x,s:s^+}}) = \det_{\fffF}(g|_{\fgg_{x,0:0^+}}) = 1$ since all the simple factors of $\sfG$ in \eqref{eq:sfG0} are of type A acting on its Lie algebra via adjoint action. 
Note that $\GL_{\fff_D}(\sfL_{t})$ should be viewed as a group defined
over $\fffF$ by restriction of scalars, but this does not affect the conclusion. 
Hence we have proved the claim in this case.

\def\HomfD{\Hom_{\fffD}}
\subsubsection{Case 2:} $s \not \in \val(D) = \bZ$. 
Then $s=\frac{r}{2} \in \frac{1}{2} \bZ \setminus \bZ$. We recall that 
\begin{equation}\label{eq:gls}
\fgl_{x,s:s^+} = \bigoplus_{t\in \bQ/\bZ} \HomfD(\sfL_t,\sfL_{t+s}).
\end{equation}
The adjoint action $*$ (cf. \Cref{sec:CG}) permutes the terms in \eqref{eq:gls}
and $\fgg_{x,s:s^+}$ is the $(-1)$-eigenspace of $*$ in $\fgl_{x,s:s^+}$. Let
$l_t := \dim_{\fff_D} \sfL_t$.

Now we consider the value of $\chigse$ on each factor of \eqref{eq:sfG0}.

\begin{enumerate}[(i)]
\item Suppose $t \nequiv -t \pmod{\bZ}$ and $t \nequiv -(t+s) \pmod{\bZ}$. We
  consider the action of $\GL(\sfL_t)$.  We have
\begin{align*}
  * & \colon \HomfD(\sfL_t,\sfL_{t+s})\simrightarrow \HomfD(\sfL_{-t-s},\sfL_{-t})
      \quad \text{and}\\
  * & \colon \HomfD(\sfL_{-t},\sfL_{-t+s})\simrightarrow
      \HomfD(\sfL_{t-s},\sfL_{t}).
\end{align*}
The two domains and codomains are distinct terms in \eqref{eq:gls}. 
Moreover $l_{t+s}  = l_{t-s}$ since $\sfL_{t-s} \cong 
\sfL_{t+s}$ via multiplication by $\varpi_D^{2s}$.
Therefore
\[
\det_{\fffF}(\Ad(g)|_{\fgg_{x,s:s^+}}) = \Norm_{\fff_D/\fffF} \left(\det_{\fff_D}(g|_{\sfL_t})^{-l_{t+s}}
\det_{\fff_D}(g|_{\sfL_t})^{l_{t-s}} \right) = 1 \quad  \forall g \in \GL(\sfL_t).
\] 
Hence $\chigse(g) = 1$ for $g \in \GL(\sfL_t)$. 

\item 
Suppose $t \equiv -t$, i.e. $t \equiv 0$ or $\half \pmod{\bZ}$. 
We consider the actions of $\rU(\sfL_0)$ and $\rU(\sfL_{\half})$.
Now $-s+\frac{1}{2}$ is an integer and multiplication by $\varpi_D^{-s+\frac{1}{2}}$ induces isomorphisms 
\[
\Hom(\sfL_0,\sfL_{s}) \simrightarrow \Hom(\sfL_0,\sfL_\half)
\text{ \ and \ } 
\Hom(\sfL_{-\half},\sfL_{-\half+s}) \simrightarrow
\Hom(\sfL_{-\half},\sfL_{0}). 
\]
Now $*\colon \Hom(\sfL_0,\sfL_{\half}) \cong
\Hom(\sfL_{-\half},\sfL_0)$. Combining with the above gives \linebreak
$\Hom(\sfL_0,\sfL_{s}) \simrightarrow \Hom(\sfL_{-\half},\sfL_{-\half+s})$.  As
a $\rU(\sfL_0)$-module, $\fgg_{x,s:s^+}$ is isomorphic to
$\Hom(\sfL_{0},\sfL_{\half})$ direct sum with certain copies of the trivial
representation.  Suppose $g_0\in \rU(\sfL_0)$. Then
$\det_{\fffF}(g_0|_{\fgg_{x,s:s^+}})= \det_{\fffF}(g_0|_{\sfL_0})^{-l_{\half}}$.
\begin{itemize}
\item If $D/F$ is unramified,  then $\rU(\sfL_0)$ is a unitary group. Now 
$\det_{\fffD}(g_0|_{\sfL_0})\in \fffD$ has norm $1$ so $\det_{\fffF}(g_0|_{\sfL_0}) = 1$. 
By a similar argument, $\det_{\fffF}(g_{\half}|_{\fgg_{x,s:s^+}}) = 1$ for
$g_{\half}\in \rU(\sfL_\half)$.

\item If $D/F$ is ramified, then $\sfL_0$ is an orthogonal space and
  $\sfL_{\half}$ is a symplectic space. Hence
  $\det_{\fffF}(g_0|_{\fgg_{x,s:s^+}})  = \det_{\fffF}(g_0|_{\sfL_0})^{-l_{\half}}
  = 1$ since $l_{\half}$ is even. Since $\sfL_{\half}$ is a symplectic space, $\det_{\fffF}(g_{\half}|_{\fgg_{x,s:s^+}}) = 1$ for
$g_{\half}\in \rU(\sfL_\half)$.
\end{itemize}
Hence, we have shown that $\chigse$ is trivial on ${\rU(\sfL_0)\times \rU(\sfL_{\half})}$. 

\item 
Suppose $t \equiv -t -s \pmod{\bZ}$. Then $t \equiv \pm \quarter \pmod{\bZ}$.
We consider the actions of $\GL(\sfL_{\frac{1}{4}})$ in \Cref{eq:sfG0}.
Composing with multiplication by $\varpi_D^{-s-\half}$ induces an isomorphism
\[
\Hom(\sfL_{\quarter},\sfL_{\quarter+s}) \simrightarrow
\Hom(\sfL_{\quarter}, \sfL_{-\quarter})
\]
and the $*$-action on the left hand side commutes with the
$(\varepsilon *)$-action on the right hand side.  Here $\varepsilon = 1$ if
$D/F$ is unramified and $\varepsilon = (-1)^{s+\half}$ if $D/F$ is
ramified.

Similarly composing with multiplication by $\varpi_D^{-s+\half}$ induces an isomorphism
\[
\Hom(\sfL_{-\quarter},\sfL_{-\quarter+s}) \simrightarrow \Hom(\sfL_{-\quarter},\sfL_{\quarter})
\]
and $*$ action on the left hand side commutes with the $(\varepsilon' *)$-action
on the right hand side with $\varepsilon' = 1$ if $D/F$ is unramified and
$\varepsilon' = (-1)^{s-\half}$ if $D/F$ is ramified.

Let $\fss:= \Hom(\sfL_{-\quarter},\sfL_{\quarter})$ and
$\fss' := \Hom(\sfL_{\quarter},\sfL_{-\quarter})$.  Clearly $\fss$ and $\fss'$
are dual to each other as $\GL(\sfL_{\quarter})$-modules over~$\fff_D$ via the
trace form $(X,Y) \mapsto \tr_{\fff_D}(YX)$.  Since the form is $*$-invariant,
$\fss^{*,e}$ and $\fss'^{*,e}$ are dual to each other for $e \in \set{\pm 1}$.
As a $\GL(\sfL_{\quarter})$-module, $\fgg_{x,s:s^+}$ is isomorphic to
$\fss^{*,-\varepsilon}\oplus \fss'^{*,-\varepsilon'}$ direct sum with
copies of the trivial representation.  Let
$g_{\quarter}\in \GL(\sfL_{\quarter})$.

\begin{itemize}
\item
If $D/F$ is unramified, then $\varepsilon = \varepsilon' = 1$. 
Since $\fss^{*,-1}$ and $\fss'^{*,-1}$ are dual to each other, we have
  $\det_{\fff_D}(g_{\quarter}|_{\fgg_{x,s:s^+}}) = 1$.

\item
If $D/F$ is ramified, then $\varepsilon = -\varepsilon'$. 
Since  $\det_{\fffF} (g_{\quarter}|_{\fss}) = \det_{\fffF}(g_{\quarter}|_{\sfL_{\quarter}})^{2
  l_{\quarter}}$ is a square 
and  $\fss = \fss^{*,\varepsilon}\oplus \fss^{*,-\varepsilon}$, we have $\chi_{\fffF}^{\fss^{*,\varepsilon}} = \chi_{\fffF}^{\fss^{*,-\varepsilon}}$.
Hence 
\[
\chi_{\fffF}^{\fgg_{x,s:s^+}}(g_{\quarter}) = \chi_{\fffF}^{\fss^{*,-\varepsilon}}(g_{\quarter})
\chi_{\fffF}^{\fss'^{*,-\varepsilon'}}(g_{\quarter}) = \chi_{\fffF}^{\fss^{*,\varepsilon}}(g_{\quarter})
\chi_{\fffF}^{\fss'^{*,\varepsilon}}(g_{\quarter}) = 1.
\]
\end{itemize}
We conclude that $\chigse = 1$ on $\GL(\sfL_{\quarter})$.
\end{enumerate}
Combining (i), (ii) and (iii), we have proved the Claim in view of \cref{eq:sfG0}.\qedhere
\end{proof}

This concludes the proof of \Cref{lem:chib}. \qed

\section{One positive depth block Case II: 
the constructions of refined minimal $K$-types}
\label{sec:OB2}
We retain the notation in \Cref{sec:OB1}.
Recall that $\Sigma = (x, \Gamma, \phi, \rho)$ is a single block datum with positive
depth $r=2s$ as in \Cref{sec:assumptions}. 
We have $\Gamma = M(w)$, $\Gamma' = M'(w)$ and a group isomorphism $\alpha\colon
G_x^0 \iso G'^0_{x'}$. In 
\Cref{def:LD.pos}, we had defined
$\Sigma' := \dthetap(\Sigma) = (x',-\Gamma', \phi',\rho')$ where $\phi':= \phi^*
\circ \alpha^{-1}$ and 
$\rho' = \rho^*\circ \alpha^{-1}$.

\subsection{A key proposition} \label{sec:oneblocksetting}

We always use `` $\breve{\ }$ '' to mark an object in $G\times G'$ having two
similar copies in components of $G$ and $G'$ as below.  We set
\begin{itemize}
\item  $\dbK :=K\times K'$, $\dbKzp = K_{0^+}\times K'_{0^+}$ and 
$\dbKp := K_+\times K'_+$;

\item $\dbsfGz := \sfG^0_x \times \sfG'^0_{x'} = G_x^0/G^0_{x,0^+} \times  G'^0_{x'}/G'^0_{x',0^+}$;

\item $\dbrho:= \rho \boxtimes \rho'$ be the 
$K\times K'$-module inflated from the $\dbsfGz$-module $\rho\boxtimes
\rho'$;

\item $\dbkappa := \kappa\boxtimes \kappa'$ be the Heisenberg-Weil representation
of $\dbK$ constructed by
$(\Gamma,-\Gamma')$ (cf. \Cref{sec:kappa});

\item  $\dbeta := \eta \boxtimes \eta' = \dbrho \otimes \dbkappa$ and

\item $\Omega := \dbK \cdot w + \sB_0 \subseteq W$. 
\end{itemize}

\begin{remark}
  The above notations also apply to multiple block $\Sigma$ and its lift
  $\Sigma'$ defined in \Cref{rmk:dt.genD} of \Cref{def:LD}.
\end{remark}

\subsubsection{}
If $J$ is a compact group, $U$ is a $J$-module and $\chi$ is an irreducible
$J$-module, then we let $U[\chi]$ denote the $\chi$ isotypic component of $U$.
Now we can state a key proposition.
\begin{prop} \label{prop:sSB}
Under the settings in \Cref{sec:oneblocksetting}, we have $\sSB_{\Omega}
[\eta\boxtimes \eta'] \cong \eta\boxtimes \eta'$.
\end{prop}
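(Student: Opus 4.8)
The plan is to analyze the $\dbK$-module $\sSB_\Omega$ via the induced-representation description provided by \Cref{lem:Saction}. Taking $H = \dbK$ and $w$ the fixed element with $M(w) = \Gamma$, $M'(w) = -(-\Gamma') = \Gamma'$ (sign conventions from \eqref{eq:SC.L}), the stabilizer $S := \Stab_{\dbK}(w+\sB_0)$ is exactly the group analyzed in \Cref{lem:SS}, and \Cref{lem:Saction} gives $\sSB_\Omega = \sSB_{\dbK\cdot w + \sB_0} \cong \cInd_S^{\dbK} \bomegaww$, where $\bomegaww$ is the $S$-action on $\bSb$ described by \eqref{eq:OS}. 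By Frobenius reciprocity,
\[
\sSB_\Omega[\dbeta] \cong \Hom_S(\dbeta|_S, \bomegaww) \otimes \dbeta,
\]
so the proposition reduces to showing $\dim_{\bC}\Hom_S(\dbeta|_S, \bomegaww) = 1$, i.e. that $\bomegaww$ contains $\dbeta|_S = (\dbrho\otimes\dbkappa)|_S$ with multiplicity exactly one.

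First I would use the decomposition $S = \triangle^0\triangle^1\cdots\triangle^{d(-1)}\,G_{x,s}G'_{x',s}$ from \Cref{lem:SS}~\cref{it:lem.SS.5} to reorganize $\bomegaww$ as an iterated small Heisenberg-Weil-type representation. The key computational inputs are: (a) the identity of $\psi$ in \Cref{lem:psi}, which identifies $\psi(\tfrac12\innw{w}{(g,g')^{-1}w-w})$ with $\psi_\Gamma(g)\psi_{-\Gamma'}(g')$ on each $\triangle^i$ and on $G_{x,s}\times G'_{x',s}$ — this matches the character $\psi_\Gamma$ by which $\dbkappa|_{\dbKp}$ (hence $\dbeta|_{\dbKp}$) is isotypic; (b) the orbit/isometry picture of \Cref{lem:isometry} and \Cref{lem:S1}, by which $\biota$ identifies $\fgg_{x,s:s^+}\oplus\fgg'_{x',s:s^+}$ with $\bfbb$, sends $\frr\oplus\frr'$ to the radical $\bfbbpp$, and gives $\bfbbz = \bfbbpp^\perp/\bfbbpp$ as an honest nondegenerate symplectic $\fffF$-space; (c) the totally isotropic subspaces $\bfD^i \subseteq \dbbfW^i$ of \Cref{lem:Di}, which describe how $\triangle^i$ sits inside the Heisenberg quotients $\dbJ^i/\dbJ^i_+$; and (d) the triviality of the determinant character $\chibp$ on $\triangle^0$ (\Cref{lem:chib}), which is precisely what is needed for the Weil-representation twist on $\bomegaww|_{\triangle^0}$ to match $\dbkappa|_{\triangle^0}$ with no extra sign character. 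Using (a)--(c) I would peel off the layers $\triangle^d, \triangle^{d-1},\dots,\triangle^1$ one at a time: at each step, restricting to $\triangle^i G_{x,s}G'_{x',s}$ modulo the next filtration layer, $\bomegaww$ restricted to the Heisenberg group $\dbJ^i/\dbJ^i_+$ with central character $\psi_\Gamma$ is the (small) oscillator representation of $\dbbfW^i$, and the Stone--von Neumann theorem together with $\bfD^i$ being a Lagrangian shows its $\triangle^i$-fixed-type part is one-dimensional, exactly reproducing the corresponding layer of $\dbkappa$. After all positive-depth layers are removed one is left with the depth-zero core: $\bomegaww$ restricted to $\triangle^0 \cong G^0_x \cong G'^0_{x'}$ acting on $\bfbbz$ via the Weil representation $\bomegabz$, and the claim becomes $\Hom_{\triangle^0}\big((\rho\boxtimes\rho')|_{\triangle^0}\otimes \bomegabz^{\chibp}, \ \text{one copy}\big)$ is one-dimensional, which is the depth-zero finite-field theta statement — here $\rho' = \rho^*\circ\alpha^{-1}$ by construction, so $(\rho\boxtimes\rho')|_{\triangle^0} \cong \rho\otimes(\rho\circ\mathrm{something})^*$, and the multiplicity-one conclusion follows from \Cref{lem:mult1} / the fact that $\rho\otimes\rho^\vee$ contains the trivial character of $\sfG^0_x$ with multiplicity one, combined with \Cref{lem:chib} ensuring the Weil twist contributes nothing.

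The main obstacle I expect is the bookkeeping in the inductive peeling-off of the $\triangle^i$: one must carefully track, at each layer, (i) that the relevant subquotient of $S$ really is (an extension of $\triangle^i$ by) the Heisenberg group $\dbJ^i/\dbJ^i_+$ with the correct central character $\bpsi$ coming from $\psi_\Gamma$ — this is where \Cref{lem:psi} and \Cref{lem:SS}~\cref{it:lem.SS.2} are used, including the precise cross-term $-\tfrac12[\dalpha(X),\dalphap(X)]w$; (ii) that the compatible splittings $\xi_\sB$ on the pro-unipotent part (via $\CSPT$ in \eqref{eq:CSPT}) make the generalized-lattice-model construction of $\bomegaww$ agree, layer by layer, with Yu's/Kim's construction of $\dbkappa$ as recalled in \Cref{sec:kappa}; and (iii) that the $\triangle^i\cap\dbJ^i$ ``diagonal'' — which is not a subgroup, only its image $\bfD^i$ in the symplectic quotient is a Lagrangian — still produces a genuine intertwining space of dimension one, which requires a small cocycle/normalization check analogous to the footnote argument after \eqref{eq:CSPT}. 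A secondary subtlety is Case~II versus Case~I of \Cref{def:GD}: in Case~II one extra layer $\triangle^d$ (with $\ckGamma = \Gamma_d$ central) appears and $\bfbbz = 0$ by \Cref{lem:S1}, so the depth-zero core is just a character computation; I would handle both cases uniformly by letting the index run over $\IGamma$ as in \Cref{def:dalpha}~\cref{it:Gamma.cases}. Once these are in place, $\sSB_\Omega[\dbeta]\cong\dbeta$ follows, as $\Hom_S(\dbeta|_S,\bomegaww)$ is one-dimensional and $\dbeta$ is irreducible.
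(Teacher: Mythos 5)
Your plan is essentially the paper's proof: reduce via $\sSB_\Omega \cong \cInd_S^{\dbK}\bomegaww$ and Frobenius reciprocity to a $\Hom_S$-computation, then peel off the $\triangle^i$-layers of $S$ using the decomposition from \Cref{lem:SS}, the $\psi$-identity from \Cref{lem:psi}, the radical/$\bfbbz$ structure from \Cref{lem:S1}, the Lagrangians $\bfD^i$ from \Cref{lem:Di}, and the triviality of $\chi^{\bfbbpp}$ from \Cref{lem:chib} --- precisely the content the paper packages as \Cref{lem:IT1} and \Cref{lem:dimPj}. One small slip in your endgame: once the positive-depth layers are peeled off, the factor $\bomegabz$ on $\bfbbz$ has already been matched against the top Heisenberg--Weil layer $\dbbomega^d_{\dbGamma_{d-1}}$ of $\dbkappa$ (via the identification $\dbbfW^d\cong\bfbbz$), so the residual depth-zero computation is simply $\Hom_{\btrianglez}(\dbrho,\bfone)=\bC$ by Schur's lemma --- there is no remaining Weil factor in the $\Hom$-space, no finite-field theta statement, and no need to invoke \Cref{lem:mult1}.
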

The proof will be given in \Cref{proof:sSB} based on \Cref{lem:IT1} below.

\subsubsection{} 
We now record an elementary fact which will be used freely in this paper. 
Suppose $H$ and $J$ are compact groups and $J$ is a subgroup of $H$.  For a
$J$-module $\tau$, we will identify the induced representation with a space of
functions:
\[
\Ind^H_J \tau = \set{f \colon H\rightarrow \tau | f(jh) = \tau(j)f(h) \; \forall j\in  J, h\in
  H}
\] 
where $H$ acts by right translation.

\begin{lemma}\label{lem:Ind1}
  Suppose $J$ is a compact normal subgroup of $H$. Let $J_1$ be a subgroup
  of $H$ such that $J<J_1<H$.  Let $\tau$ be a $J_1$-module and $\chi$ be an
  irreducible $J$-module.  Suppose that $H$ stabilizes $\chi$, i.e.
  $\chi\circ \Ad_h\cong \chi$ as $J$-modules for all $h \in H$. Then
  $\tau[\chi]$ is a $J_1$-module and
  $(\Ind_{J_1}^H \tau)[\chi] = \Ind_{J_1}^H(\tau[\chi])$. \qed
\end{lemma}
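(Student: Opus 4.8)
The plan is to realise the $\chi$-isotypic projection by the standard averaging idempotent and to show that on the induced module this idempotent acts ``pointwise''. Since $J$ is compact and $\chi$ is a (finite-dimensional) irreducible representation, for any smooth $J$-module $U$ the $\chi$-isotypic component is $U[\chi]=e_\chi U$, where $e_\chi=(\dim\chi)\int_J\overline{\tr\chi(j)}\,j\,dj$ is the idempotent in the Hecke algebra of $J$ computed with the Haar probability measure $dj$. All the groups occurring in our applications (open compact subgroups of $p$-adic groups, or finite groups) are of this type, so this is legitimate.

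First I would check that $\tau[\chi]$ is a $J_1$-submodule. Since $J\trianglelefteq H$, the subgroup $J$ is normal in $J_1$, and $J_1\subseteq H$ stabilises $\chi$. For $j_1\in J_1$, the change of variables $j\mapsto j_1jj_1^{-1}$ in the integral, combined with $\tr\chi(j_1^{-1}j'j_1)=\tr\!\big(\chi\circ\Ad_{j_1^{-1}}\big)(j')=\tr\chi(j')$, gives $\tau(j_1)\,e_\chi\,\tau(j_1)^{-1}=e_\chi$ on $\tau$; hence $\tau(j_1)$ commutes with $e_\chi$ and preserves $\tau[\chi]=e_\chi\tau$. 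Next, writing $\Ind_{J_1}^H\tau$ as the space of functions $f\colon H\to\tau$ with $f(j_1h)=\tau(j_1)f(h)$ and $H$ acting by right translation, I would compute for $h\in H$
\[
(e_\chi\cdot f)(h)=(\dim\chi)\int_J\overline{\tr\chi(j)}\,f(hj)\,dj
=(\dim\chi)\int_J\overline{\tr\chi(j)}\,\tau(hjh^{-1})f(h)\,dj,
\]
where $f(hj)=f\big((hjh^{-1})h\big)=\tau(hjh^{-1})f(h)$ uses $hjh^{-1}\in J\subseteq J_1$, valid because $J$ is normal in $H$. Substituting $j'=hjh^{-1}$ (Haar measure and $\tr\chi$ are preserved, the latter because $H$ stabilises $\chi$) turns the right-hand side into $(\dim\chi)\int_J\overline{\tr\chi(j')}\,\tau(j')f(h)\,dj'=e_\chi\big(f(h)\big)$, i.e. the isotypic projector on the induced module acts by applying the isotypic projector of $\tau$ value-by-value.

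From this identity, $(\Ind_{J_1}^H\tau)[\chi]$ consists precisely of the $f\in\Ind_{J_1}^H\tau$ taking values in $\tau[\chi]$; by the first step these are exactly the elements of $\Ind_{J_1}^H(\tau[\chi])$, and the right $H$-translation actions on the two descriptions are literally the same, proving the stated equality. The only point that needs care — which I would flag as the main (mild) obstacle — is keeping track of \emph{which} group acts \emph{how}: the isotypy is taken for $J$ acting through right translation on the induced space, and the whole argument hinges on showing this action commutes with $e_\chi$, which is exactly where the normality of $J$ in $H$ and the $H$-stability of $\chi$ are used. Everything else is formal once the pointwise formula for $e_\chi$ is established.
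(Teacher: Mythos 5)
The paper does not actually give a proof of this lemma --- it is stated as an ``elementary fact'' and closed immediately with \verb|\qed| --- so there is nothing to compare against line by line. Your proof is correct and is the standard argument: you realise the $\chi$-isotypic projector as the idempotent $e_\chi=(\dim\chi)\int_J\overline{\tr\chi(j)}\,j\,dj$ (legitimate here because $J$ is compact and $\chi$ is finite-dimensional), check it commutes with the $J_1$-action on $\tau$ via the change of variable $j\mapsto j_1jj_1^{-1}$ together with $\tr\chi\circ\Ad_{j_1}=\tr\chi$, and then show that on $\Ind_{J_1}^H\tau$ the operator $e_\chi$ acts pointwise by $(e_\chi f)(h)=e_\chi(f(h))$, using the identity $f(hj)=\tau(hjh^{-1})f(h)$ (valid because $hjh^{-1}\in J\subseteq J_1$ by normality of $J$ in $H$) and the conjugation-invariance of both the Haar measure and $\tr\chi$. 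All the hypotheses --- compactness of $J$, normality of $J$ in $H$, and $H$-stability of $\chi$ --- are used exactly where they should be, and the final identification of $(\Ind_{J_1}^H\tau)[\chi]$ with $\Ind_{J_1}^H(\tau[\chi])$ as $H$-modules follows immediately.
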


\def\btrianglez{\overline{\triangle^0}}

\def\psiS{\psi^{S}}

\subsubsection{} 
Let $\dbpsi := \psi_\Gamma\boxtimes \psi_{-\Gamma'}$ be the function on
$G_{x,0^+}\times G'_{x,0^+}$. Recall that $\dbpsi$ restricted on $\dbKp$ is a
character, $\dbK$ normalizes $\dbKp$ and stabilizes $\dbpsi|_{\dbKp}$. We
could extend $\dbpsi$ to a function on $\triangle^0 G_{x,0^+}G'_{x,0^+}$ by
letting $\dbpsi(xg) := \dbpsi(g)$ for all $x \in \triangle^0$ and
$g \in G_{x,0^+} G'_{x,0^+}$.

Combining  \Cref{lem:Saction},  \Cref{lem:SS}~\cref{it:lem.SS.5}
and \Cref{lem:psi} yields the following lemma.

\begin{lemma} \label{lem:SSaction}
As an $S$-module realized on $\bSb$, $\bomegaww$ is given by 
\[
\bomegaww(h) = \dbpsi(h)\, \bomegab(h)\, \bomegab(h^{-1}\cdot w-w) 
\quad \forall h \in S.
\qquad \qed
\]
\end{lemma}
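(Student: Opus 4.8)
The plan is to assemble the three ingredients that \Cref{lem:SSaction} is stated to combine: the general formula for the $S_w$-action on $\bSb$ from \Cref{lem:Saction}, the explicit generation of $S$ from \Cref{lem:SS}~\cref{it:lem.SS.5}, and the $\psi$-identity of \Cref{lem:psi}. Recall from \Cref{lem:Saction}, equation~\eqref{eq:OS}, that for $h\in S_w$ we already have
\[
\bomegaww(h) = \bomegab(h)\,\bomegab(h^{-1}\cdot w - w)\,\psi\!\left(\tfrac12 \innw{w}{h^{-1}w - w}\right).
\]
So the content of the lemma is precisely the claim that the scalar $\psi\!\left(\tfrac12\innw{w}{h^{-1}\cdot w - w}\right)$ equals $\dbpsi(h)$ for every $h\in S$, where $\dbpsi$ has been extended to $\triangle^0 G_{x,0^+}G'_{x,0^+}$ by declaring it trivial on $\triangle^0$.

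First I would verify the identity $\psi\!\left(\tfrac12\innw{w}{h^{-1}\cdot w-w}\right) = \dbpsi(h)$ on each of the generating pieces of $S$ listed in \Cref{lem:SS}~\cref{it:lem.SS.5}, namely $\triangle^0$, the $\triangle^i$ for $i\in\IGamma$, and $G_{x,s}\times G'_{x',s}$. On $\triangle^0$ this is the Remark after \Cref{lem:psi} (the scalar is $1$) together with the extension convention $\dbpsi|_{\triangle^0}=1$; on $\triangle^i$ and on $G_{x,s}\times G'_{x',s}$ it is exactly \eqref{eq:psi} of \Cref{lem:psi}, which says the scalar equals $\psi_\Gamma(g)\psi_{-\Gamma'}(g') = \dbpsi(g,g')$. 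Next, since $S$ is generated by these subsets, I would promote the identity to all of $S$ by a multiplicativity argument: if $h = h_1 h_2$ with the identity known on $h_1,h_2$, then using the cocycle relation $\innw{w}{(h_1 h_2)^{-1}w - w} = \innw{w}{h_2^{-1}w - w} + \innw{w}{h_1^{-1}w - w} + \text{(a correction term)}$, where the correction term lies in $\fppF$ because each factor moves $w$ only into $w+\sB_0$ (so the relevant pairings land in $\fooF$ and the cross term in $\fppF$); meanwhile $\dbpsi$ is multiplicative on $\triangle^0 G_{x,0^+}G'_{x,0^+}$ modulo the same kind of correction, which again vanishes under $\psi$. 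This is the one genuinely fiddly point, but it is routine: it amounts to tracking that the products $h_1 h_2 \cdot w - w$ differ from the sum of the individual displacements by an element of $\sB_{0^+}$ (using $w\in\sB_{-s}$, $s>0$, and $\Cref{lem:SS}~\cref{it:lem.SS.5}$), and that $\psi$ kills $\innw{w}{\sB_{0^+}}\subseteq\psi(\fppF)=1$.

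The main obstacle I anticipate is ensuring the extension of $\dbpsi$ to $\triangle^0 G_{x,0^+} G'_{x,0^+}$ is consistent and that the resulting function really is well-defined on all of $S$ — i.e.\ that the value $\psi\!\left(\tfrac12\innw{w}{h^{-1}\cdot w - w}\right)$ depends only on $h$ and not on the chosen factorization into generators. This is guaranteed a posteriori by the fact that $\bomegaww$ in \eqref{eq:OS} is intrinsically defined, so the scalar is intrinsic; but to present it cleanly I would instead argue directly that both sides of the claimed equality are the ``same'' function of $h\in S$, checking agreement on generators and then invoking the homomorphism-up-to-$\psi$-coboundary property just described. Once that is in place, substituting $\dbpsi(h)$ for the $\psi$-scalar in \eqref{eq:OS} gives exactly the displayed formula
\[
\bomegaww(h) = \dbpsi(h)\,\bomegab(h)\,\bomegab(h^{-1}\cdot w - w)\qquad\forall h\in S,
\]
completing the proof.
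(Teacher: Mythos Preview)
Your overall strategy matches the paper's: the paper literally just says ``Combining \Cref{lem:Saction}, \Cref{lem:SS}~\cref{it:lem.SS.5} and \Cref{lem:psi} yields the following lemma'' and appends a \qed, so the content is exactly the scalar identity you isolate, and your plan to verify it on the generating pieces $\triangle^0$, $\triangle^i$ ($i\in\IGamma$), and $G_{x,s}\times G'_{x',s}$ is the intended one.

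However, your multiplicativity step has a concrete error. You write that the cross term is killed because $\innw{w}{\sB_{0^+}}\subseteq\fpp_F$. This is false: $w\in\sB_{-s}$ with $s>0$, and self-duality gives only $\innw{\sB_{-s}}{\sB_{s^+}}\subseteq\fpp_F$; since $\sB_{0^+}\supsetneq\sB_{s^+}$, the pairing $\innw{w}{\sB_{0^+}}$ need not lie in $\fpp_F$. The correct bookkeeping is different. Expanding directly,
\[
\innw{w}{(h_1h_2)^{-1}w-w}
=\innw{w}{h_1^{-1}w-w}+\innw{w}{h_2^{-1}w-w}+\innw{h_2w-w}{\,h_1^{-1}w-w},
\]
so the correction pairs the two \emph{displacements}, not $w$ against $\sB_{0^+}$. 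Now invoke \Cref{lem:SS}~\cref{it:lem.SS.1,it:lem.SS.2}: for $h\in\triangle^0$ the displacement is $0$, and for $h\in\triangle^i$ with $1\le i\in\IGamma$ the displacement lies in $\sB_{0^+}$. Hence in the ordered product $h=h_0h_1\cdots h_{d'}g$ from \Cref{lem:SS}~\cref{it:lem.SS.5} (with $g\in G_{x,s}\times G'_{x',s}$, a group, so never needing to be split), every cross term has at least one factor in $\sB_{0^+}$, and then self-duality of $\sB$ gives $\innw{\sB_0}{\sB_{0^+}}\subseteq\fpp_F$. That is the inclusion you actually need. A parallel check is required on the $\dbpsi$ side (the function $\psi_\Gamma$ is not a character on all of $K_{0^+}$), and it goes through by the same depth estimates using $[\Gamma,\fgg^i_{x,s_{i-1}}]\subseteq\fgg_{x,-s_{i-1}}$; you should say this rather than ``modulo the same kind of correction''.
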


We recall the definitions of $\bfbbpp$ and $\bfbbz := \bfbbpp^\perp/\bfbbpp$ in \Cref{sec:lem.S1}.   
Let $\bSb^{\bfbbpp}$ be the $\bomega_{\bfbb}(\bfbbpp)$ invariant subspace in $\bSb$.  Then
$\bSb^{\bfbbpp}\cong \chi^{\bfbbpp}\otimes \bomega_{\bfbbz}$ as 
$\bfP(\bfbbpp) \ltimes \bfH(\bfbbpp^\perp)$-modules where $\bfP(\bfbbpp)$ is the parabolic subgroup in $\bfSp(\bfbb)$ stabilizing $\bfbbpp$ (see \Cref{sec:HW} for notation).

From \Cref{lem:SS}~\cref{it:lem.SS.5},
$S = \left(\prod_{j\in \IGamma} \triangle^j \right) \dbG_s$ and
\[
S\dbK_+ = \left(\prod_{j\in \IGamma} \triangle^j \right) \dbK_+  \dbG_s = 
\begin{cases}
\triangle^0\triangle^1 \cdots \triangle^{d-1} \dbK_+ \dbG_s & \text{in Case I,}\\
 \triangle^0\triangle^1 \cdots \triangle^d \dbK_+ \dbG_s & \text{in Case II.}
\end{cases}
\]

\def\IndGz{\Ind_{\btrianglez}^{\dbsfGz}
    \bfone}
\begin{lemma} \label{lem:IT1}
\begin{enumerate}[(i)]
\item \label{it:IT1.1}
The evaluation map $\eva$ defined by $f\mapsto f(1)$ gives an isomorphism between the vector spaces
 \begin{equation} \label{eq:indSSKp}
 \xymatrix{
 \eva \colon \left(\Ind_{S}^{S\dbKp} \bomegaww \right) [\dbpsi|_\dbKp]
 \ar[r]^<>(.5){\sim} & \bSb^{\bfbbpp}.}
 \end{equation}
 Note that $\bSb^{\bfbbpp}\cong \bSbz$. The
 $S\dbK_+$-module structure on the left hand side of \eqref{eq:indSSKp}
 translates to an action $\psiS$ of $S\dbK_+$ on $\bSbz$. The action $\psiS$ is given as
 following:
\begin{enumerate}[(a)]
\item 
\label{it:IT1.a}
If $h \in \prod_{1\leq j\in \IGamma}\triangle^j \dbK_+$, then $\psiS(h)$ acts on
$\bSbz$ by the scalar $\dbpsi(h)$.  \footnote{By definition
  $\dbpsi|_{\triangle^0} \equiv 1$. Therefore by \cref{it:IT1.a} the function
  $\dbpsi$ is a character when restricted on
  $\prod_{j\in \IGamma}\triangle^j \dbK_+$.}

\item \label{it:IT1.b} If $h = (\exp(X),\exp(X')) \in G_{x,s}\times G'_{x,s}$,
  then $\psiS(g) = \dbpsi(h) \bomegabz(b)$ where $b$ is the image in $\bfbbz$ of
  $h^{-1}\cdot w - w \equiv wX - X'w \pmod{\sB_{0^+}}$.
\item \label{it:IT1.c} $\psiS|_{\triangle^0}$ is the inflation of $\bomegabz$ via $\triangle^0 \longrightarrow \bfSp(\bfbbz)$. 
\end{enumerate}
\item \label{it:IT1.2} We have $ \left( \Ind_S^{\triangle^0\dbKzp} \bomegaww \right)[\dbpsi|_{\dbKp}] \cong \dbkappa|_{\triangle^0\dbKzp}$ as $\triangle^0\dbKzp$-modules. 

\item \label{it:IT1.3} Let $\btrianglez$ denote the image of $\triangle^0$ in $\dbsfGz$. Then we
  have following $\dbK$-module isomorphisms
  \[
  \left( \Ind_S^{\dbK} \bomegaww \right)[\dbpsi|_{\dbKp}] \cong
  \Ind_{\triangle^0 \dbKzp}^{\dbK}
  \left(\left(\Ind_{S}^{\triangle^0\dbKzp}\bomegaww
    \right)[\dbpsi|_{\dbKp}]\right) \cong \left( \IndGz \right) \otimes \dbkappa.
  \] 
\end{enumerate}
\end{lemma}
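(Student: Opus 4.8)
\textbf{Proof strategy for \Cref{lem:IT1}.}
The three parts build on one another, so the plan is to establish \cref{it:IT1.1} first, then \cref{it:IT1.2}, then \cref{it:IT1.3}. For \cref{it:IT1.1}, I would begin with the standard description of $\Ind_S^{S\dbKp}\bomegaww$ as functions on $S\dbKp$ transforming on the left under $S$ via $\bomegaww$; passing to the $\dbpsi|_{\dbKp}$-isotypic component under the right $\dbKp$-action forces such an $f$ to be determined by $f(1)\in\bSb$ subject to a compatibility condition coming from $S\cap\dbKp$. That condition says precisely that $f(1)$ lies in the subspace of $\bSb$ on which the two actions agree. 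Using \Cref{lem:SSaction} to write $\bomegaww(h)=\dbpsi(h)\,\bomegab(h)\,\bomegab(h^{-1}\cdot w-w)$ for $h\in S$, and the fact (from \Cref{lem:SS}~\cref{it:lem.SS.5} together with \Cref{lem:alpha}~\cref{it:dalpha.p}) that for $h\in S\cap\dbKp$ the element $h^{-1}\cdot w-w$ lies in $\sB_{0^+}$ while $\bomegab(h)$ acts through the translation by the image of $h$ in $\bfbbpp$, one checks that the compatibility condition is exactly $\bomega_{\bfbb}(\bfbbpp)$-invariance of $f(1)$. This identifies the isotypic component with $\bSb^{\bfbbpp}\cong\chi^{\bfbbpp}\otimes\bomega_{\bfbbz}$, and since $\chi^{\bfbbpp}$ is trivial on $\triangle^0$ by \Cref{lem:chib} it contributes nothing; so $\eva$ is the desired isomorphism of vector spaces. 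The formulas \cref{it:IT1.a}--\cref{it:IT1.c} for the transported action $\psiS$ are then read off: on $\prod_{1\le j\in\IGamma}\triangle^j\dbKp$ the relevant $h^{-1}\cdot w-w$ lies in $\sB_{s^+}\subseteq\sB_{0^+}$ by \Cref{lem:SS}~\cref{it:lem.SS.2}, so only the scalar $\dbpsi(h)$ survives after passing to $\bfbbz$; on $\dbG_s$ one uses \eqref{eq:Sym.s} and \Cref{lem:SS}~\cref{it:lem.SS.5} to get the Weil-representation factor $\bomegabz(b)$ with $b$ the class of $wX-X'w$; and \cref{it:IT1.c} follows since on $\triangle^0$ the element acts by $w$ alone (\Cref{lem:SS}~\cref{it:lem.SS.1}), i.e.\ through the symplectic action on $\bfbbz$, which by \Cref{lem:SS}~\cref{it:lem.SS.1} is the inflation of $\bomegabz$ via $\triangle^0\to\bfSp(\bfbbz)$.

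For \cref{it:IT1.2}, the plan is to recognize the right-hand side as exactly the Heisenberg--Weil datum that defines $\dbkappa$. By the construction recalled in \Cref{sec:kappa} (and \Cref{sec:Special}), $\dbkappa=\kappa\boxtimes\kappa'$ restricted to $\triangle^0\dbKzp$ is built from the symplectic spaces $\bfW^i$ ($i\in\IGamma$) and the residual symplectic space, with the $\triangle^0$-action given by the Weil representation on these pieces and the $\dbKzp$-action given by $\dbpsi$ on the Heisenberg part. On the other side, $\Ind_S^{\triangle^0\dbKzp}\bomegaww$ with its $\dbpsi|_{\dbKp}$-isotypic component decomposes, via \Cref{lem:IT1}~\cref{it:IT1.1} applied with $S\dbKp$ replaced by $\triangle^0\dbKzp$ and iterated induction through the intermediate groups $\triangle^0\triangle^1\cdots\triangle^i\dbKzp$, into a tensor product of oscillator representations attached to the spaces $\dbbfW^i$; the maximal totally isotropic subspace $\bfD^i\subset\dbbfW^i$ from \Cref{lem:Di} is precisely the polarization realizing $\Ind$ as a model of the Weil representation, so each induction step contributes one Weil-representation factor. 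Matching these factor by factor with the construction of $\dbkappa$ gives the isomorphism. The main technical content here is bookkeeping: carefully tracking how the repeated induction splits and matching each $\bfD^i$-induced piece with the corresponding special-isomorphism factor of Yu's $\kappa^i\boxtimes\kappa'^i$; this is where I expect to spend the most effort, and it is the natural obstacle.

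Finally, \cref{it:IT1.3} is a transitivity-of-induction argument. Since $S\dbK_+\subseteq\triangle^0\dbKzp\cdot S$ and $S\dbK=\dbK$ (because $\dbK=K\times K'$ is normalized appropriately and $S$ surjects onto the relevant quotients), induction in stages gives
\[
\Ind_S^{\dbK}\bomegaww\;\cong\;\Ind_{\triangle^0\dbKzp}^{\dbK}\Ind_S^{\triangle^0\dbKzp}\bomegaww .
\]
Passing to the $\dbpsi|_{\dbKp}$-isotypic component commutes with the outer induction by \Cref{lem:Ind1} — here one uses that $\dbKzp$ is normal in $\dbK$ and that $\dbK$ stabilizes the character $\dbpsi|_{\dbKp}$ — which yields the first isomorphism. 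Then \cref{it:IT1.2} identifies the inner piece as $\dbkappa|_{\triangle^0\dbKzp}$, and a final application of the push-pull (projection) formula, $\Ind_{\triangle^0\dbKzp}^{\dbK}(\dbkappa|_{\triangle^0\dbKzp})\cong(\Ind_{\btrianglez}^{\dbsfGz}\bfone)\otimes\dbkappa$, which holds because $\dbkappa$ extends to all of $\dbK$ and $\dbKzp$ acts trivially on the quotient $\dbsfGz$, gives the second isomorphism. I would double-check the normality and stabilization hypotheses needed for \Cref{lem:Ind1} and the projection formula, since those are the only places the argument could go wrong.
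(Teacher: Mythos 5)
Your proofs of \cref{it:IT1.1} and \cref{it:IT1.3} follow essentially the same route as the paper (Frobenius reciprocity reducing to $\bomegaww[\dbpsi|_{S_+}]$, then transitivity of induction together with \Cref{lem:Ind1} and the projection formula), but there is a technical confusion worth flagging in \cref{it:IT1.1}: you assert that for $h\in S\cap\dbKp$ one has $h^{-1}\cdot w-w\in\sB_{0^+}$ ``while $\bomegab(h)$ acts through the translation by the image of $h$ in $\bfbbpp$.'' In fact for $h\in G^{d-1}_{x,s}G'^{d-1}_{x',s}\subseteq S_+$ (Case I), $\bomegab(h)$ is trivial because $h\in G_{x,0^+}G'_{x',0^+}$; the translation comes from $\bomegab(h^{-1}\cdot w-w)$, and $h^{-1}\cdot w-w$ lies in $\sB_0$ but not in $\sB_{0^+}$ --- its nonzero class in $\bfbbpp=\biota(\frr)\subseteq\sB_{0:0^+}$ is exactly why the $\dbpsi|_{S_+}$-isotypic component is the $\bfbbpp$-invariant subspace. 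You reach the right conclusion, but the mechanism you describe is off.

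The more substantial divergence is in \cref{it:IT1.2}. You propose an explicit factor-by-factor matching: unwinding $\Ind_S^{\triangle^0\dbKzp}\bomegaww$ through the chain $\triangle^0\triangle^1\cdots\triangle^i\dbKzp$, and identifying each $\bfD^i$-induced piece with the corresponding Heisenberg--Weil factor of Yu's $\kappa^i\boxtimes\kappa'^i$. The paper instead uses a combination of (a) a dimension count $\dim(\Ind_{S\dbKp}^{\triangle^0\dbKzp}\psiS)=\dim\dbkappa$, proved by raw index computations via the surjection $S_{0^+}\twoheadrightarrow K_{0^+}$, and (b) the multiplicity-one statement $\dim\Hom_{S\dbKp}(\dbkappa,\psiS)=1$, established by peeling off $\dbJ^d$ via the special isomorphism $\SHomega$ and then inducting on $j$ through \Cref{lem:dimPj}, where \Cref{lem:Di} enters to show $\bfD^j$ is Lagrangian in $\dbbfW^j$. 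Since $\dbkappa|_{\dbKzp}$ is irreducible, a nonzero hom plus equal dimensions yields the isomorphism without ever constructing an explicit intertwiner. Your approach is in principle workable but harder to execute: the Stone--von Neumann theorem controls the Heisenberg-group action, but you must also verify in each induction step that the $\triangle^0$-action on the two sides agrees, not merely up to a character --- precisely the subtlety that \Cref{lem:chib} addresses for the radical $\bfbbpp$ and that \Cref{lem:dimPj} handles via $\chi^{\bfD^j}|_{\triangle^0}$ being trivial. You correctly flag this bookkeeping as ``the natural obstacle,'' and the paper's dimension-count argument is the device that sidesteps it. If you pursue your route, you will essentially rediscover the content of the paper's \Cref{lem:dimPj} in the guise of compatibility of special isomorphisms, so you may as well use the paper's more economical packaging.
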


Note that \cref{it:IT1.3} follows immediately from \cref{it:IT1.2}. 
Before we embark on the proofs of \cref{it:IT1.1} and \cref{it:IT1.2}, we will use the lemma to give a proof of \Cref{prop:sSB}.

\subsubsection{Proof of \mbox{\Cref{prop:sSB}}} \label{proof:sSB}
We have 
\[
\begin{split}
  \Hom_{\dbK}(\dbeta, \sSB_{\Omega}) &= \Hom_{\dbK}(\dbeta, \left(\Ind_S^{\dbK} \bomegaww \right)[\dbpsi|_{\dbKp}]) \\
  & \qquad \text{(by \Cref{lem:Saction} and the fact that $\dbeta|_{\dbK_+}$ is $\dbpsi|_{\dbK_+}$-isotypic)} \\
  & =	\Hom_{\dbK}(\dbrho\otimes \dbkappa, (\IndGz) \otimes \dbkappa) \qquad \text{(by \Cref{lem:IT1}~\cref{it:IT1.3})}\\
  & = \Hom_{\dbsfGz}(\dbrho, (\IndGz) \otimes \Hom_{\dbK_{0^+}}(\dbkappa, \dbkappa)) \\
  & \qquad \text{(since $\dbrho$ and $\IndGz$ are trivial when
    restricted on $\dbK_{0^+}$)}\\
  & = \Hom_{\dbsfGz}(\dbrho, \Ind_{\btrianglez}^{\dbsfGz}\bfone ) \qquad
  \text{(since $\dbkappa|_{\dbK_{0^+}}$ is irreducible)}\\
  & =\Hom_{\btrianglez}(\dbrho, \bfone) = \bC \qquad
  \text{(since $\rho' =\rho^*\circ \alpha^{-1}$ )}.
\end{split}
\]

This proves the proposition. \qed

\subsection{Proof of \Cref{lem:IT1}}\label{sec:IT1.proof}
The rest of this section is devoted to proving \Cref{lem:IT1}.

\subsubsection{Proof of \Cref{lem:IT1}~\cref{it:IT1.1}}
We recall that $S_+ := S \cap \dbKp$. Frobenius reciprocity gives the following
natural isomorphism of vector spaces:
\[  
\xymatrix{
\eva \colon \left(\Ind_{S}^{S\dbKp} \bomegaww \right) [\dbpsi|_\dbKp] \cong
\left(\Ind_{S_+}^{\dbKp} \bomegaww \right) [\dbpsi|_\dbKp] \ar[r]^<>(.5){\sim} & \bomegaww[\dbpsi|_{S_+}].
}
\]

Now the key is to prove the following claim.

\begin{claim*}
We have
\begin{equation}\label{eq:Sbbp}
\bomegaww[\dbpsi|_{S_+}] = \bSb^{\bfbbpp} \subseteq \bSb.
\end{equation}
\end{claim*}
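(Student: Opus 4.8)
The equality \eqref{eq:Sbbp} asserts that the $\dbpsi|_{S_+}$-isotypic subspace of $\bomegaww$ coincides with the $\bomegab(\bfbbpp)$-fixed subspace $\bSb^{\bfbbpp}$. Since $S_+ = \triangle^0_+\triangle^1_+\cdots\triangle^{d-1}_+\,G^{d-1}_{x,s}G'^{d-1}_{x',s}G_{x,s^+}G'_{x',s^+}$ in Case~I (and the analogous product in Case~II) by \Cref{lem:SS}~\cref{it:lem.SS.5}, the plan is to analyze the $S_+$-action described by \Cref{lem:SSaction}, namely $\bomegaww(h)=\dbpsi(h)\,\bomegab(h)\,\bomegab(h^{-1}\cdot w-w)$, element-type by element-type, and identify exactly the vectors on which this action agrees with the scalar $\dbpsi(h)$.

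First I would handle the elements of $\triangle^j_+$ for $1\le j\in\IGamma$ together with $\dbKp$. For $h=(g,\alpha(g))\in\triangle^j_+$ with $g=\exp(X)$, $X\in\fgg^j_{x,s_{j-1}^+}$, \Cref{lem:SS}~\cref{it:lem.SS.2} gives $h^{-1}\cdot w-w\equiv -\dalphap(X)w-\tfrac12[\dalpha(X),\dalphap(X)]w\pmod{\sB_{s^+}}$, so modulo $\sB_{0^+}$ the "translation part" of $h$ lies in $\bfbbpp$ once one checks $\biota$ maps the relevant subspace onto $\bfbbpp$ (this is essentially \Cref{lem:S1}~\cref{it:lem.S1.3}, combined with \Cref{lem:alpha}~\cref{it:dalpha.p}); meanwhile $\bomegab(h)$ acts via the image of $h$ in $\KSpp$, which is trivial on $\bSb$ because $h\in\KSpp$. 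Hence on this part $\bomegaww(h)$ is the Heisenberg operator $\bomegab(\text{element of }\bfbbpp)$ up to the scalar $\dbpsi(h)$, and the $\dbpsi|_{S_+}$-isotypic condition forces fixity under $\bomegab(\bfbbpp)$; conversely any $\bomegab(\bfbbpp)$-fixed vector is $\dbpsi$-isotypic here. The same computation, now using \Cref{rmk:Sact.2} (equation \eqref{eq:bomegaS.sp}) and \Cref{lem:psi}, disposes of $G_{x,s^+}\times G'_{x',s^+}$: there $\bomegaww(h)=\psi_{M(w)}(g)\psi_{-M'(w)}(g')=\dbpsi(h)$ acts by a pure scalar, imposing no condition.

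The crux is the remaining generators, the "Case~I tail" $G^{d-1}_{x,s}G'^{d-1}_{x',s}$ (or the full $G_{x,s}G'_{x',s}$ in Case~II). For $h=(\exp X,\exp X')$ with $(X,X')\in\fgg^{d-1}_{x,s}\oplus\fgg'^{d-1}_{x',s}$, \Cref{rmk:Sact.1} gives $\bomegaww(h)=\bomegab(-(X,X')\cdot w)\,\psi_{M(w)}(g)\psi_{-M'(w)}(g')$, and by \Cref{lem:psi} the last two factors equal $\dbpsi(h)$; so the $\dbpsi|_{S_+}$-isotypic condition becomes: $v$ is fixed by $\bomegab(\biota(X,X'))$ for all $(X,X')\in\fgg^{d-1}_{x,s:s^+}\oplus\fgg'^{d-1}_{x',s:s^+}$. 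Now in Case~I we have $\frr=\fgg^{d-1}_{x,s:s^+}$ and $\frr'=\fgg'^{d-1}_{x',s:s^+}$ by \Cref{lem:radical}~\cref{it:radical.1}, and $\bfbbpp=\biota(\frr\oplus\frr')$ by definition; so the set of Heisenberg directions being imposed is exactly $\bfbbpp$, giving $\bomegaww[\dbpsi|_{S_+}]=\bSb^{\bfbbpp}$. In Case~II one uses \Cref{lem:radical}~\cref{it:radical.2}: $\frr=\fgg_{x,s:s^+}$, $\frr'=\fgg'_{x',s:s^+}$, and $\bfbbpp=\Im(\biota)$ is a maximal isotropic subspace, so the same argument applies verbatim with the full $G_{x,s}G'_{x',s}$ in place of its $(d-1)$-part.

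The main obstacle I anticipate is keeping the bookkeeping of the various filtration subspaces straight — specifically, verifying that the translation parts $h^{-1}\cdot w-w$ of the $\triangle^j_+$-elements land in $\biota(\frr\oplus\frr')$ rather than merely in $\bfbbpp^\perp$, so that no \emph{extra} fixity conditions beyond $\bomegab(\bfbbpp)$ get imposed. This is controlled by the depth estimates in \Cref{lem:alpha}~\cref{it:dalpha.p} (which forces $\dalphap(X)w\in\sB_{s^+}$, hence negligible modulo the level where $\bSb$-translations act) together with the exactness in \Cref{lem:S1}~\cref{it:lem.S1.2}; once one confirms that the $\triangle^j_+$ contributions for $j\ge1$ add nothing new, the claim follows. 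After the claim, \Cref{lem:IT1}~\cref{it:IT1.1} is immediate: transport $\bomegaww[\dbpsi|_{S_+}]=\bSb^{\bfbbpp}\cong\chi^{\bfbbpp}\otimes\bomega_{\bfbbz}$ along the Frobenius-reciprocity isomorphism $\eva$, and read off the $S\dbKp$-action on $\bSbz$ using \Cref{lem:SSaction} once more, with parts \cref{it:IT1.a}, \cref{it:IT1.b}, \cref{it:IT1.c} each following from the element-type analysis above — \cref{it:IT1.a} from the scalar action on $\triangle^j_+\dbKp$, \cref{it:IT1.b} from the $\bomegab(\biota(X,X'))$ computation reduced mod $\bfbbpp$, and \cref{it:IT1.c} from \Cref{lem:SS}~\cref{it:lem.SS.1} together with the $\triangle^0$-equivariance of $\biota$.
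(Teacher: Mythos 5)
Your proposal takes essentially the same route as the paper: decompose $S_+$ by \Cref{lem:SS}~\cref{it:lem.SS.5}, compute $\bomegaww(h)$ on each generating factor via \Cref{lem:SSaction} and \Cref{lem:psi}, and observe that the $\bfbbpp$-fixity is carved out exactly by $G^{d-1}_{x,s}\times G'^{d-1}_{x',s}$ (resp.\ $G_{x,s}\times G'_{x',s}$ in Case~II), whose Heisenberg translations $\biota(X,X')$ sweep out $\bfbbpp = \biota(\frr\oplus\frr')$.

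One numerical slip in your last paragraph: the bound $\dalphap(X)w\in\sB_{s^+}$ is too strong. For $X\in\fgg^j_{x,s_{j-1}}$, \Cref{lem:alpha}~\cref{it:dalpha.p} gives $\dalphap(X)\in\fgl'_{x',r-s_{j-1}}$, hence $\dalphap(X)w\in\sB_{s-s_{j-1}}$, which lies in $\sB_{0^+}$ (as recorded in \Cref{lem:SS}~\cref{it:lem.SS.2}) but lies in $\sB_{s^+}$ only if $s_{j-1}\le 0$, which fails for $j\ge 1$. The weaker bound $\sB_{0^+}$ is all you need, since translations in $\sB_{0^+}$ act trivially on $\bSb$. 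Relatedly, your first-pass framing of $\triangle^j_+$ — that its translation parts ``land in $\bfbbpp$'' and thereby ``force fixity under $\bomegab(\bfbbpp)$'' — is slightly off-target: those translations are in fact zero modulo $\sB_{0^+}$, so the $\triangle^j_+$ factors act by the pure scalar $\dbpsi(h)$ and impose no condition whatsoever; the full $\bfbbpp$-fixity is supplied entirely by the $G^{d-1}_{x,s}\times G'^{d-1}_{x',s}$ layer, which is what you correctly conclude at the end of the ``main obstacle'' paragraph.
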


\begin{proof} 
  We will only prove it for Case~I. The proof for Case~II is similar and easier,
  so we leave it to the reader.

  We recall \Cref{lem:SS}~\cref{it:lem.SS.5} that
  \begin{equation} \label{eq:Splus} S_+= \triangle^0_+ \triangle^1_+ \cdots
    \triangle^{d-1}_+ G_{x,s}^{d-1} G'^{d-1}_{x',s} G_{x,s^+}G'_{x',s^+}.
  \end{equation}
  Now we consider the $\bomegaww$-action (cf. \Cref{lem:SSaction}) of each
  factor on the right hand side of~\eqref{eq:Splus}. Note that $\bomegab|_{S_+}$
  is trivial since $S_+ \subseteq G_{x,0^+}G'_{x',0^+}$.
\begin{enumerate}[(1)]
\item \label{it:pf.S1} 
Suppose $h = \exp(X) \in G_{x,s^+}$ where
  $X\in \fgg_{x,s^+}$.  Then
  $h^{-1} \cdot w -w \in wX +\sB_{s^+} \subseteq \sB_{0^+}$.  Hence
  $\bomegaww(h) = \dbpsi(h)\, \bomegab(h^{-1} \cdot w-w) = \dbpsi(h)$ . By the same
  argument, we also have $\bomegaww(h) = \dbpsi(h)$ for $h\in G'_{x,s^+}$.
\item 
Suppose $h = \exp(X)\in G^{d-1}_{x,s}$ with $X \in \fgg^{d-1}_{x,s}$.
    Then $h^{-1} \cdot w-w \in wX + \sB_{0^+} \subseteq \sB_0$. Hence 
    $\bomegaww(h) = \dbpsi(h)\bomegab(wX)$. 
    The same argument gives $\bomegaww(h') = \dbpsi(h') \bomegab(-X'w)$ for $h' =
    \exp(X')\in G'^{d-1}_{x,s}$ where $X'\in \fgg'^{d-1}_{x,s}$. 
    Hence $\bomegaww[\dbpsi|_{S^+}] \subseteq \bSb^{\bfbb_+}$ since $\bfbb_+ =
    \biota(\fgg^{d-1}_{x,s}\oplus \fgg'^{d-1}_{x',s}) =
    w\fgg^{d-1}_{x,s}+\fgg'^{d-1}_{x',s}w  +\sB_{0^+} 
    \subseteq \sB_0/\sB_{0^+}$. 

 \item \label{it:pf.S3}
 Suppose $h = (g,g')\in \triangle^i_+$ for $0\leq i \leq d-1$. By \Cref{lem:SS}~\cref{it:lem.SS.1} and \cref{it:lem.SS.2}, $h^{-1}w -w \in \sB_{0^+}$.  Therefore
    $\bomegaww(h) = \dbpsi(h)$.
    \savemyenumi
  \end{enumerate}
  Combining \cref{it:pf.S1}--\cref{it:pf.S3}, we see that the $\dbpsi|_{S_+}$ isotypic component is exactly
  the $\bomega_{\bfbb}(\bfbbpp)$-invariant subspace in $\bS(\bfbb)$. This proves
  the claim.
\end{proof}

Now we calculate the translated $S \dbK_+ =
\triangle^0 \prod_{0<i\in \IGamma}\triangle^{i}\dbK_+$ action $\psiS$ on  $\bSbz \cong \bSb^{\bfbbpp}$.
\begin{enumerate}[(1)]
\resumemyenumi
\item \label{it:pf.S4} Clearly $\psiS|_\dbKp = \dbpsi|_{\dbKp}$. 

\item Suppose $h \in \triangle^i$ for $0 < i \in \IGamma$.  Then
    $h\in  G_{x,0^+}G'_{x',0^+}$ and $h^{-1} \cdot w - w \in \sB_{0^+}$ by
    \Cref{lem:SS}~\cref{it:lem.SS.2}.  So
    $\psiS(h) = \bomegaww(h) = \dbpsi(h)$.  Combining this with \cref{it:pf.S4}
    proves \Cref{it:IT1.a}.

\item Suppose $h\in G_{x,s}\times G'_{x',s}$. By \Cref{sec:HW}~\cref{it:fWeil.3}, 
\[
\psiS(h) = \bomegaww(h) = \dbpsi(h)\bomega_{\bfbb}(h^{-1}\cdot w -w) =
\dbpsi(h)\bomega_{\bfbbz}(b).
\] 
This proves \Cref{it:IT1.b}.

\item Suppose $h \in \triangle^0$. By \Cref{sec:HW}~\cref{it:fWeil.3} and \Cref{lem:chib}, 
  \[
  \psiS(h) = \bomegaww(h) = \bomega_{\bfbb}(h) = \chi^{\bfbbpp}(h)
  \bomega_{\bfbb_0}(h) = \bomega_{\bfbb_0}(h).
  \] 
  This proves \Cref{it:IT1.c}. 
\end{enumerate}
These complete the proof of \Cref{lem:IT1}~\cref{it:IT1.1}.

\smallskip

\subsubsection{Proof of \Cref{lem:IT1}~\cref{it:IT1.2}}
By Part \cref{it:IT1.1} and \Cref{lem:Ind1}, we have 
\begin{equation} \label{eq:II2}
  \left(\Ind_S^{\triangle^0\dbKzp} \bomegaww \right)
  [\dbpsi|_{\dbKp}]  \cong
  \Ind_{S\dbKp}^{\triangle^0\dbKzp} \left( \left(\Ind_{S}^{S\dbKp} \bomegaww \right) 
[\dbpsi|_{\dbKp}] \right) 
 \cong \Ind_{S\dbKp}^{\triangle^0\dbKzp} \psiS
\end{equation}
as $\triangle^0\dbKzp$-modules. 

\def\II{\Ind_{S\dbKp}^{\triangle^0\dbKzp} \psiS}

\begin{claim*} 
We have
\begin{equation} \label{eq:IIdim}
\dim \II = \dim \dbkappa.
\end{equation} 
\end{claim*}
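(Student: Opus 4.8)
The plan is to prove the claim by a dimension count. Since $\II=\Ind_{S\dbKp}^{\triangle^0\dbKzp}\psiS$ with $\psiS$ realized on the space $\bSbz$ (by part~\cref{it:IT1.1}), one has
\[
\dim\II \;=\; [\triangle^0\dbKzp : S\dbKp]\cdot\dim\bSbz ,
\]
so it suffices to match the right-hand side with $\dim\dbkappa=\dim\kappa\cdot\dim\kappa'$. The factor $\dim\bSbz$ is immediate: by \Cref{lem:S1}~\cref{it:lem.S1.6}, in Case~I the space $\bfbbz$ is symplectically isomorphic to $\fgg^{d-1\perp}_{x,s:s^+}\oplus\fgg'^{d-1\perp}_{x',s:s^+}$ (recall $s_{d-1}=s$), so $\dim\bSbz$ is the order of a Lagrangian in it; in Case~II one has $\bfbbz=0$ and $\dim\bSbz=1$.

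The main step is the index $[\triangle^0\dbKzp : S\dbKp]$, which I would compute layer by layer along the Moy--Prasad filtration. By \Cref{lem:SS}~\cref{it:lem.SS.5}, $S\dbKp=\triangle^0\triangle^1\cdots\triangle^{d-1}(G_{x,s}\times G'_{x',s})\dbKp$ in Case~I, with an additional factor $\triangle^d$ in Case~II, whereas $\triangle^0\dbKzp=\triangle^0(K_{0^+}\times K'_{0^+})$ with $K_{0^+}=G^0_{x,0^+}\prod_{i\ge1}G^i_{x,s_{i-1}}$ and $K_+=G^0_{x,0^+}\prod_{i\ge1}G^i_{x,s_{i-1}^+}$. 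The two groups share $\triangle^0 G^0_{x,0^+}$ and their depth-$s$ layers, so the discrepancy is concentrated in the layers of depth $s_{i-1}$ for $1\le i\le d-1$ (Case~I), resp.\ $1\le i\le d$ (Case~II). In layer $i$ the associated graded of $\triangle^0\dbKzp$ is $\fgg^i_{x,s_{i-1}:s_{i-1}^+}\oplus\fgg'^i_{x',s_{i-1}:s_{i-1}^+}$, while that of $S\dbKp$ is the image of $\triangle^i$ together with $\fgg^{i-1}_{x,s_{i-1}:s_{i-1}^+}\oplus\fgg'^{i-1}_{x',s_{i-1}:s_{i-1}^+}$; the latter is already carried by $\dbKp$ because $G^{i-1}_{x,s_{i-1}}\subseteq G^{i-1}_{x,s_{i-2}^+}\subseteq K_+$ (and likewise on the $G'$ side). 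Hence, by \Cref{lem:alpha} and the definition of $\bfD^i$, the layer-$i$ quotient is $\dbbfW^i/\bfD^i$, and since $\bfD^i$ is a Lagrangian subspace of $\dbbfW^i$ by \Cref{lem:Di} this quotient has order $|\dbbfW^i|^{1/2}$. Multiplying over the layers gives $[\triangle^0\dbKzp : S\dbKp]=\prod_{1\le i\le d-1}|\dbbfW^i|^{1/2}$ in Case~I and $\prod_{1\le i\le d}|\dbbfW^i|^{1/2}$ in Case~II.

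Combining the two computations, $\dim\II=\prod_{i=1}^{d}|\dbbfW^i|^{1/2}=\prod_i|\bfW^i|^{1/2}\cdot\prod_i|\bfW'^i|^{1/2}$ in both cases, where in Case~I the $i=d$ factor $|\dbbfW^d|^{1/2}$ (with $\dbbfW^d:=\fgg^{d-1\perp}_{x,s:s^+}\oplus\fgg'^{d-1\perp}_{x',s:s^+}$) is the one supplied by $\bSbz$ rather than by the index. It then remains to observe that Yu's construction of the Heisenberg--Weil representation gives $\dim\kappa=\prod_{i=1}^d|\bfW^i|^{1/2}$ and $\dim\kappa'=\prod_{i=1}^d|\bfW'^i|^{1/2}$, each factor being the dimension of the Weil representation of the symplectic space $\bfW^i=J^i/J^i_+$ of \Cref{sec:iso}; therefore $\dim\dbkappa=\dim\II$. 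The step I expect to be the main obstacle is exactly this layer-by-layer bookkeeping: verifying in each of Cases~I and~II which graded pieces of $\dbKp$ and of $G_{x,s}\times G'_{x',s}$ are absorbed, so that the layer-$i$ contribution is precisely $[\dbbfW^i:\bfD^i]$. What makes the final count come out uniformly is the asymmetry that in Case~I the ``top'' symplectic space $\dbbfW^d$ enters through $\bSbz$, whereas in Case~II it enters through the index.
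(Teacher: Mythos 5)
Your proof is correct, but it takes a genuinely different route from the paper's. The paper computes the index $\#(\dbKzp/S_{0^+}\dbKp)$ purely by a counting cancellation: with $N:=\#(K_{0^+}/K_+)$, $N_s:=\#(G_{x,s}/G_{x,s^+})$, $Q:=\#(S_{0^+}/S_{+})$ (and primed analogues), it reads off $N=Q/N'_s$ and $N'=Q/N_s$ from the surjectivity with kernel $G'_{x',s}$ of the projection $S_{0^+}\twoheadrightarrow K_{0^+}$ (\Cref{lem:SS}~\cref{it:lem.SS.5}); together with $\dim\dbkappa=\sqrt{NN'}$ and $\dim\bSbz=\sqrt{N_sN'_s}$ (\Cref{lem:S1}~\cref{it:lem.S1.6}) this gives $\dim\II=\sqrt{N_sN'_s}\,NN'/Q=\sqrt{NN'}$, with no reference to the symplectic geometry of the $\dbbfW^i$. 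You instead filter $\dbKzp/\dbKp$ by Moy--Prasad depth, identify the depth-$s_{i-1}$ layer of the quotient with $\dbbfW^i/\bfD^i$, and use that $\bfD^i$ is Lagrangian (\Cref{lem:Di}) to see that each such layer contributes $|\dbbfW^i|^{1/2}$. The bookkeeping does check out: the depth-$s_{i-1}$ graded piece of $K_{0^+}/K_+$ is exactly $\bfW^i=\fgg^{i-1\perp}_{x,s_{i-1}:s_{i-1}^+}$ (the $\fgg^{i-1}$-part having already been absorbed by $K_+$), the image of $\triangle^i$ in $\dbbfW^i$ is $\bfD^i$, and the remaining layer ($i=d$ in Case~I, via $G_{x,s}\times G'_{x',s}$; nothing in Case~II, where $G_{x,s}\times G'_{x',s}\subseteq\dbKp$) is accounted for so that $\bSbz$ supplies the last factor $|\dbbfW^d|^{1/2}$ in Case~I and the trivial factor in Case~II. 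What your route buys is a structural account of where each of the $d$ factors of $\dim\dbkappa=\prod_{i=1}^d|\dbbfW^i|^{1/2}$ comes from and a transparent view of the Case~I/II asymmetry; what it costs is a dependence on \Cref{lem:Di}, which the paper's count does not need for this step.
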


\begin{proof}
Let $K_{+,s} := K_{+}\cap G_{x,s}$ and $K_s := K \cap G_{x,s} = G_{x,s}$. 
Let $Q := \# (S_{0^+}/S_{+})$, $N := \# (K_{0^+}/K_+)$, $N_s := \#(K_s/K_{+,s})$, 
 $N' := \# (K'_{0^+}/K'_+)$ and $N'_s := \#(K'_s/K'_{+,s})$. 

\smallskip

We note the following facts. 
\def\SSzp{S_{0^+}} 
\def\SSp{S_{+}}
\begin{enumerate}[1.]
\item 
By the definition of $\dbkappa$, we have $\dim\dbkappa = (\#
\dbKzp/\dbKp)^\half = \sqrt{NN'}$.

\item 
  By \Cref{lem:S1}~\cref{it:lem.S1.6}, we have $\dim \bSbz = \sqrt{\#\bfbbz} = 
\sqrt{\#(K_s/K_{+,s}) \#(K'_s/K'_{+,s})} = \sqrt{N_sN'_s}$. 

\item 
By \Cref{lem:SS}~\cref{it:lem.SS.5}, the projection to the first coordinate
$S_{0^+} \rightarrow K_{0^+}$ is surjective and its kernel is
$K'_{s}$. Hence $K_{0^+} \cong S_{0^+}/K'_{s}$. Similarly, we have $K_{+}\cong
S_{+}/K'_{+,s}$. Hence
\begin{equation}\label{eq:count1}
K_{0^+}/K_+  \cong   (S_{0^+}/K'_{s}) / (S_{+}/K'_{+,s}) \cong S_{0^+}/K'_{s} S_+ \cong (S_{0^+}/S_+)/(K'_{s} S_+/S_+).
\end{equation}
Note that $K'_s/K'_{+,s} = K'_sS_+/S_+$. Counting the elements of the both sides
of \eqref{eq:count1}, we get $N = Q/N'_s$. A similar argument yields $N'= Q/N_s$. 

\item 
Note that $\dbKzp\cap S\dbKp = \SSzp \dbKp$ and 
$\SSzp\dbKp/\dbKp = \SSzp/(\dbKp\cap \SSzp) = \SSzp/\SSp$. 
\end{enumerate}
Hence 
\[
\begin{split}
\dim \II = & \dim \bSbz \cdot \#(\dbKzp/\SSzp\dbKp) \\
 = & \dim \bSbz \cdot \#(\dbKzp/\dbKp)  /
\#(\SSzp\dbKp/\dbKp) \\
 &= \sqrt{N_sN'_s} N N'/ Q = \sqrt{NN'} = \dim \dbkappa.
\end{split}
\]
This proves the claim.
\end{proof}

In \Cref{sec:SKhom}, we will show that 
\begin{equation} \label{eq:SKhom}
\dim \Hom_{\triangle^0\dbKzp}(\dbkappa,\II)=\dim 
\Hom_{S\dbKp}(\dbkappa, \psiS) =1. 
\end{equation}
Combining \eqref{eq:II2}, \eqref{eq:IIdim} and \eqref{eq:SKhom}  gives \Cref{lem:IT1}~\cref{it:IT1.2}. 
This also completes the proof of the whole \Cref{lem:IT1}. \qed

\subsubsection{Proof of \mbox{\eqref{eq:SKhom}}}\label{sec:SKhom}
The first equality in \eqref{eq:SKhom} is just Frobenius reciprocity. It remains
to prove the second equality.  We only give the proof for Case~I. The
  proof for Case~II where $\bfbb_0 = 0$ is essentially contained in the proof of
  \Cref{lem:dimPj}~\cref{it:dimPj.2} below.

\def\Qz{\triangle^0}

We assume the notation and the construction of $\kappa$ in \Cref{sec:kappa}.  We
also retain the notion in \Cref{sec:iso}. Recall
$J^j := (G^{j-1},G^j)_{x,(r_{j-1},s_{j-1})}$,
$J^j_+ := (G^{j-1},G^j)_{x,(r_{j-1},s_{j-1}^+)}$, $\dbJ^j := J^j\times J'^j$ and
$\dbJ^j_+ = J^j_+\times J'^j_+$.

\begin{itemize}
\item  For $1\leq j\leq d$, let $Q^{j} := \triangle^1\cdots \triangle^j \dbK^j_+$ and  $Q_j := (\triangle^j\cap
\dbJ^j)\dbJ^j_+$. 
\item For $0 \leq j \leq d-1$, let $\dbkappa^j := \kappa^j\boxtimes \kappa'^j$
  and
  $P^{j} := S\dbK_+\cap G^{j}= \triangle^0 \triangle^1 \cdots \triangle^j
  \dbK^j_+$. Moreover, let $P :=P^{d-1}$.

\item For $1\leq j \leq d$, let
  $\dbbomega^j_{\dbGamma_{j-1}}:= \bomega^j_{\Gamma_{j-1}}\boxtimes
  \bomega^j_{-\Gamma'_{j-1}}$
  and
  $\tv\times \dbpsi_{\dbGamma^j}: = (\tv\times \psi_{\Gamma^j})\boxtimes
  (\tv\times \psi_{-\Gamma^j})$
  be
  $\dbK^{j-1}\ltimes \dbJ^j = (K^{j-1}\ltimes J^j)\times (K'^{j-1}\ltimes J'^j)
  $-modules. Here $\Gamma^j := \sum_{i=j}^d \Gamma_i$ and
    $\Gamma'^j := \sum_{i=j}^d \Gamma'_i$.
\end{itemize}

\def\dbbomegaG#1{\dbbomega^{#1}_{\dbGamma_{#1-1}}}
By definition we have
\begin{enumerate}[(a)]
\item a surjection $P \ltimes \dbJ^d \twoheadrightarrow S\dbK_+$;
\item 
$Q^j$ and $Q_j$ are groups such that $Q^{j} = Q^{j-1} Q_{j}$;
\item $P^0 = \Qz K^{0}_+$ and  $P^{j} =  \Qz Q^{j} = P^{j-1}Q_j$;

\item Pulling back via $P^{j-1}\ltimes \dbJ^{j} \rightarrow \dbK^j$, $\dbkappa^j|_{P^{j-1}\ltimes \dbJ^{j}}  = \dbkappa^{j-1} \otimes
(\dbbomegaG{j}\otimes (\tv\times \dbpsi_{\dbGamma^j}))$
where the $\dbK^{j-1}$-module $\dbkappa^{j-1}$ is inflated to $P^{j-1}\ltimes \dbJ^j$
via $P^{j-1}\ltimes \dbJ^j \twoheadrightarrow P^{j-1} \hookrightarrow \dbK^{j-1}$
(cf. \Cref{sec:kappa}).
In particular, as $P\ltimes J^d$-module, 
\[
\dbkappa = \dbkappa^{d-1}\otimes \dbbomegaG{d}.
\]
\end{enumerate}


Now
\begin{equation}\label{eq:SK}
\begin{split}
\Hom_{S\dbK_+}(\dbkappa, \psiS) &= \Hom_{P\ltimes \dbJ^d}(\dbkappa^{d-1},
\Hom_{\bC}(\dbbomegaG{d}, \psiS))  \\
& = \Hom_{P}(\dbkappa^{d-1},\Hom_{\dbJ^d}(\dbbomegaG{d}, \psiS)). 
\end{split}
\end{equation}

\def\dbbomegaGa{\bomega^d_{\Gamma_{d-1}}\otimes \bomega^d_{-\Gamma'_{d-1}}}
\def\dbbomegaGa{\bomega^d_{\Gamma_{d-1}}\otimes
\bomega^d_{-\Gamma'_{d-1}}}

The map $\biota$ defined in \Cref{sec:lem.S1} induces a
$P$-equivariant isomorphism of symplectic spaces: 
\begin{equation}\label{eq:bfbbz.iso}
\xymatrix@R=0em@C=3em{
\biotabz\colon 
\dbbfW^{d} := \fgg^{d-1\perp}_{x,s:s^+}\oplus
\fgg'^{d-1\perp}_{x',s:s^+}  \ar[r]^<>(.5){\biota}& \bfbbz.
}
\end{equation}
In fact, this is just a rephrase of \Cref{lem:S1}~\cref{it:lem.S1.6} and \Cref{lem:SS}~\cref{it:lem.SS.1}
since the $P$ actions on the both
sides of $\biotabz$ factors through $\triangle^0$.

\def\SH{\zeta}
\def\SHomega{\SH^{\bfbbz}}
Consider the map\footnote{Note that there is a negative sign before $(X,X')\cdot w$.}
\begin{equation}\label{eq:SH.b0}
  \vcenter{
    \xymatrix@C=0em@R=0em{
      \SHomega \colon& \dbJ^d \ar[rr]&\hspace{2em}&
      (\bfbbpp^\perp/\bfbbpp)\times \fff  = \bfH(\bfbb_0)\\
      & h=(\exp(X),\exp(X')) \ar@{|->}[rr]&& 
      (- (X,X')\cdot w,\overline{\half\innw{w}{h^{-1}\cdot w -w}}).
    }
  }
\end{equation}
By the explicit description of
special isomorphism in \Cref{sec:Special}, \cref{eq:Sym.s}, \cref{eq:bfbbz.iso} and 
\Cref{lem:IT1}~\cref{it:IT1.1}~\Cref{it:IT1.b},
the following diagram commutes:
\[
\xymatrix@R=-.3em{
  &\bfSH(\dbbfW^{d})\ar[dd]^{\cong}\\
\triangle^0 \ltimes \dbJ^d\ar[ru]^{
}\ar[rd]_{\SHomega}&\\
& \bfSH(\bfbbz)\\
}
\]

Hence, as $\triangle^0\ltimes \dbJ^d$-module,
\begin{equation} \label{eq:bomega.d}
\psiS\cong \dbbomegaG{d}.
\end{equation}

On the other hand, $Q^{d-1}\subseteq \dbK^{d-1}_{0^+}$ acts trivially on $\dbbomegaG{d}$
and acts as $\dbpsi$ on $\psiS$ (cf. \Cref{lem:IT1}~\cref{it:IT1.1}~\Cref{it:IT1.a}). 
Therefore, as $P = \triangle^0Q^{d-1}$-modules, 
$\Hom_{\dbJ^d}(\dbbomegaG{d}, \psiS)\cong \dbpsi|_P$.
Putting this into \eqref{eq:SK}, \eqref{eq:SKhom} becomes
\begin{equation} \label{eqkdminus}
\dim \Hom_{P}(\dbkappa^{d-1}, \dbpsi|_P) =1. 
\end{equation}
This follows from \Cref{it:dimPj.2} of the next lemma.


\begin{lemma} \label{lem:dimPj}
\begin{enumerate}[(i)]
\item\label{it:dimPj.1} For  $1 \leq j \leq d-1$, we have
\begin{equation} \label{eq:PJ}
 \Hom_{Q_j}(\dbbomegaG{j} \otimes (\tv \times \dbpsi_{\dbGamma^j}),\dbpsi) \cong \dbpsi
\end{equation}
as $P^{j-1}$-modules.

\item \label{it:dimPj.2} For $0 \leq j \leq d-1$, we have $\dim \Hom_{P^{j}}(\dbkappa^{j}, \dbpsi)=1$ .
\end{enumerate}
\end{lemma}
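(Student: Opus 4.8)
\textbf{Proof strategy for \Cref{lem:dimPj}.}
The plan is to prove both parts simultaneously by induction on $j$, where \cref{it:dimPj.1} for a given $j$ feeds into the inductive step for \cref{it:dimPj.2}. The base case $j=0$ of \cref{it:dimPj.2} asks for $\dim\Hom_{P^0}(\dbkappa^0,\dbpsi)=1$, where $P^0 = \triangle^0 \dbK^0_+$ and $\dbkappa^0 = \rho^{\circ}\boxtimes\rho'^{\circ}$ is just (the inflation of) a one-dimensional or the defining piece built from depth-zero data; since $\dbkappa^0$ restricted to $\dbK^0_+$ is $\dbpsi$-isotypic and $\triangle^0$ acts appropriately, this reduces to a statement about the depth-zero Heisenberg-Weil datum that is already built into the construction in \Cref{sec:kappa}. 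For the inductive step of \cref{it:dimPj.2}, I would use $P^j = P^{j-1} Q_j$ together with the factorization $\dbkappa^j|_{P^{j-1}\ltimes \dbJ^j} = \dbkappa^{j-1}\otimes(\dbbomegaG{j}\otimes(\tv\times\dbpsi_{\dbGamma^j}))$ from item (d) just above, exactly as in the computation \eqref{eq:SK}--\eqref{eqkdminus}: namely
\[
\Hom_{P^j}(\dbkappa^j,\dbpsi) \cong \Hom_{P^{j-1}}\bigl(\dbkappa^{j-1}, \Hom_{Q_j}(\dbbomegaG{j}\otimes(\tv\times\dbpsi_{\dbGamma^j}),\dbpsi)\bigr),
\]
and then apply \cref{it:dimPj.1} to identify the inner Hom-space with $\dbpsi|_{P^{j-1}}$, reducing to $\dim\Hom_{P^{j-1}}(\dbkappa^{j-1},\dbpsi)=1$, which is the inductive hypothesis.

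For \cref{it:dimPj.1} itself, the key point is that $Q_j = (\triangle^j\cap\dbJ^j)\dbJ^j_+$ and that $\bfD^j$, the image of $\triangle^j\cap\dbJ^j$ in the symplectic space $\dbbfW^j = \dbJ^j/\dbJ^j_+$, is a maximal totally isotropic subspace by \Cref{lem:Di}. The Heisenberg-Weil representation $\dbbomegaG{j}$ of $\dbK^{j-1}\ltimes\dbJ^j$ restricts on the Heisenberg group $\dbJ^j/\dbJ^j_+$ to the oscillator representation attached to $\dbbfW^j$, so the space of vectors on which $\dbJ^j_+$ acts through the central character and $\bfD^j$ acts trivially (matching $\tv\times\dbpsi_{\dbGamma^j}$, which is the relevant character after untwisting) is exactly one-dimensional — this is the standard fact that a maximal isotropic subspace has a one-dimensional space of (twisted-)invariants in the oscillator representation, precisely the content recorded in \Cref{sec:HW}. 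One then has to check that the residual action of $P^{j-1}$ (equivalently of $\triangle^0$, since the higher $\triangle^i$ factors act through scalars given by $\dbpsi$) on this line is exactly $\dbpsi$; this uses that $\alpha$ is built from conjugation by $w$ and that $\psi_{\Gamma_{j-1}}$ corresponds to $\psi_{-\Gamma'_{j-1}}$ under $\dalpha$, which is a direct consequence of \Cref{lem:psi} and the isometry properties in \Cref{lem:alpha} and \Cref{lem:isometry}.

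The main obstacle I anticipate is bookkeeping rather than a conceptual gap: one must carefully track how the various characters $\tv\times\dbpsi_{\dbGamma^j}$, $\dbpsi_{\dbGamma_{j-1}}$ and the genuine central character of the Heisenberg group interact across the nested groups $P^{j-1}\subset P^j$, and verify that the isomorphism in \eqref{eq:PJ} is $P^{j-1}$-equivariant (not merely an isomorphism of vector spaces) — this is where the explicit special isomorphisms of Yu reviewed in \Cref{sec:Special} and the commuting-diagram argument used for \eqref{eq:bomega.d} must be invoked at each level $j$. A secondary subtlety is that $\triangle^j\cap\dbJ^j$ is not a group (only $\bfD^j$ inside the quotient is a subspace), so the reduction must be phrased in terms of the isotypic/invariant subspace in the oscillator representation rather than as an honest induction from a subgroup; this is exactly the device already used in \Cref{sec:SKhom} for the top block $j=d$, and \cref{it:dimPj.1} is the uniform statement that makes that device work at every intermediate block.
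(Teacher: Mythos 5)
Your overall strategy is exactly the paper's: identify the image $\bfD^j$ of $\triangle^j\cap\dbJ^j$ in $\dbbfW^j$ as a maximal totally isotropic subspace (via \Cref{lem:Di}), use the one-dimensionality of the twisted-invariant line in the oscillator representation to compute the dimension of the inner $\Hom$-space, verify the $P^{j-1}$-equivariance, and then induct on $j$ through the factorization $\dbkappa^j|_{P^{j-1}\ltimes\dbJ^j}=\dbkappa^{j-1}\otimes(\dbbomegaG{j}\otimes(\tv\times\dbpsi_{\dbGamma^j}))$. Your remark that $\triangle^j\cap\dbJ^j$ is not a subgroup, so one must work with the invariant line rather than with honest induction, is also the right observation.

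There is one place where the mechanism you cite does not match what is actually needed, and it is the most delicate point of the lemma. You say the residual $\triangle^0$-action on the line ``is exactly $\dbpsi$'' and that this follows from $\psi_{\Gamma_{j-1}}$ corresponding to $\psi_{-\Gamma'_{j-1}}$ under $\dalpha$, citing \Cref{lem:psi} and \Cref{lem:isometry}. But those statements control the $\dbJ^j$-action (the Heisenberg part) and the scalar by which elements of $\triangle^j$ and $G_{x,s}\times G'_{x',s}$ act; they say nothing about the parabolic $\bfP(\bfD^j)$-action on the one-dimensional invariant subspace of $\bomega_{\dbbfW^j}$, which by the mixed-model description in \Cref{sec:HW}~\cref{it:fWeil.3} is the sign character $\chi^{\bfD^j}(g)=\det(g|_{\bfD^j})^{(q-1)/2}$. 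Since $\dbpsi$ is \emph{trivial} on $\triangle^0$, what you actually need is $\chi^{\bfD^j}|_{\triangle^0}\equiv 1$. The paper's argument at this point is a determinant computation that has nothing to do with \Cref{lem:psi}: under the projection to the first coordinate, $\bfD^j\cong\bfW^j=\fgg^{j-1\perp}_{x,s_{j-1}:s_{j-1}^+}$ as a $\triangle^0\cong G^0_x$-module, and $G^0_x$ preserves a nondegenerate symplectic form on $\bfW^j$, whence $\det_{\fffF}(g|_{\bfD^j})=1$ for every $g\in\triangle^0$. (This is the same kind of argument as the separate \Cref{lem:chib}, but easier because here the symplectic form is manifestly nondegenerate.) Your proposal as written would also need to record that $\dbpsi_{\dbGamma_{j-1}}\cong\dbpsi|_{J^j}\otimes(\dbpsi_{\dbGamma^j}|_{J^j})^{-1}$ factors to a $\bfD^j$-character before the one-dimensionality claim can be applied; you gesture at this (``after untwisting'') but it is worth making explicit. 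Finally, the base case: the paper's $\dbkappa^0=\phi\boxtimes\phi'$ is a character, not built from $\rho$ as you wrote, and the one-dimensionality of $\Hom_{P^0}(\dbkappa^0,\dbpsi)$ is definitional given $\phi'=\phi^*\circ\alpha^{-1}$ and $\phi|_{K^0_+}=\psi_\Gamma$.
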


\begin{proof}
\begin{asparaenum}[(i)]
\item We first check that the right hand side of \eqref{eq:PJ} has dimension
  one.  The image of $Q_j$ under
  $\dbJ^j \twoheadrightarrow \dbJ^j/\dbJ^j_+ =: \dbbfW^j$ is $\bfD^j$ which is a
  maximal isotropic subspace in $\dbbfW^j$ according to \Cref{lem:Di}.  Note
  that the $Q_j$-character\footnote{One can see that $\dbpsi_{\dbGamma_{j-1}}$
    is a character of $Q_j$ directly from the discussion in \Cref{sec:Special}.}
  $\dbpsi_{\dbGamma_{j-1}} := \psi_{\Gamma_{j-1}}\boxtimes \psi_{-\Gamma'_{j-1}}
  \cong \dbpsi|_{J^j} \otimes (\dbpsi_{\dbGamma^j}|_{J^j})^{-1}$
  factors to a $\bfD^j$-character.  Therefore
\[
\dim \Hom_{Q_j}(\dbbomega_{\dbGamma_{j-1}} \otimes (\tv \times
\dbpsi_{\dbGamma^j}),\dbpsi) = \dim \Hom_{Q_j}(\dbbomega_{\dbGamma_{j-1}},
\dbpsi_{\dbGamma_{j-1}}) =1.
\]

We now check that the actions of $P^{j-1} = \triangle^0 Q^{j-1}$ on both sides
of~\eqref{eq:PJ} agree.
\begin{itemize}
\item The character $\dbpsi$ is trivial on $\triangle^0$. By the \Cref{sec:HW},
  the left hand side of \eqref{eq:PJ} is isomorphism to $ \chi^{\bfD^j}$ as
  $\Qz$-module.  We claim that $\chi^{\bfD^j}|_{\triangle^0}$ is trivial. Indeed
  $\bfD^j \cong \bfW^j$ as $\triangle^0 \cong G^0_x$-module. Since the right
  hand side has a symplectic form preserved by $G^0_x$-action,
  $\det_\fffF(h|_{\bfD^j})= 1$ for all $h\in \triangle^0$. This proves the claim.

\item The group $Q^{j-1}\subseteq P^{j-1}\cap \dbK^{j-1}_{0^+}$ has trivial
  action on
  $\dbbomega_{\dbGamma_{j-1}} \otimes (\tv \times \dbpsi_{\dbGamma^j})$.
  Therefore the left hand side of \eqref{eq:PJ} is $\dbpsi$-isotypic as
  $Q^{j-1}$-module.
\end{itemize}
This proves (i).

\item  We prove by induction on $j$.
\begin{enumerate}[1.]
\item 
By definition, $\dim \Hom_{P^0}(\dbkappa^0, \dbpsi)=1$.
 
\item Now assume $\dim \Hom_{P^{j-1}}(\dbkappa^{j-1}, \dbpsi)=1$.
By (i)
\[
\Hom_{P^j}(\dbkappa^j, \dbpsi) = 
\Hom_{P^{j-1} }(\dbkappa^{j-1}, 
\Hom_{Q_j}(\dbbomega_{\dbGamma_{j-1}} \otimes (\tv \times
\dbpsi_{\dbGamma^j}),\dbpsi)) = \Hom_{P^{j-1}}(\dbkappa^{j-1}, \dbpsi).
\]
Hence $\dim \Hom_{P^j}(\dbkappa^j, \dbpsi) = 1$. This completes the induction
process and proves (ii).
\end{enumerate}
\end{asparaenum}
\end{proof}

Now \eqref{eqkdminus} holds and the proof of \eqref{eq:SKhom} is complete. \qed

\section{Proof of the main theorem I: 
  construction of $K$-types in the general case}
\label{sec:CKtype.gen}


In this section, we will prove the part~\cref{it:Main.1} of the \Cref{thm:main}
by reducing the statement into one block cases.  The idea is old, already
appeared in \cite[\Sec{2.4}]{HoweMoy} and \cite[Section~3.3]{S01i} for
example. Hence we will omit the proofs of some simple facts.

We retain the notation in the \Cref{thm:main}. The \Cref{it:Main.1} of the
\Cref{thm:main} is a consequence of the following proposition.
\begin{prop} \label{prop:CKtype} Suppose
  $\Sigmap := \dthetaVT(\Sigma) = (x',-\Gamma',\phi',\rho')$.  Let
  $w \in V\otimes_DV'$ be the element defined by \eqref{eq:w.gen} via the
  construction of $\Sigmap$. We retain the notation in
  \Cref{sec:oneblocksetting} with respect to $w$, $\Sigma$ and $\Sigma'$ so that
  $\Omega := \dbK \cdot w + \sB_0$.  Then
  \[
  \dim \Hom_{\dbK}(\dbeta, \sSB_{\Omega})=1. 
  \]
\end{prop}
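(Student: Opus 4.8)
The strategy is to reduce \Cref{prop:CKtype} to the single-block case \Cref{prop:sSB} by an induction on the number of blocks $b$, following the classical ``peeling off blocks'' technique from \cite{HoweMoy,S01i}. Recall that $\Sigma = \bigoplus_{l=0}^{b} \llSigma$ decomposes orthogonally, $V = \bigoplus_{l=0}^{b} \llV$, $\Gamma = \bigoplus_l \llGamma$, and correspondingly $V' = \bigoplus_l \llVp$, $\Gamma' = \bigoplus_l \llGamma'$, with $w = \bigoplus_{0<l\leq b} \llww$ as in \eqref{eq:w.gen}. The key geometric point is that the lattice function $\sB = \sL\otimes_D\sL'$ decomposes compatibly: since the $\llV$ (and $\llVp$) are mutually orthogonal and each $\llww$ lives in $\llV\otimes_D\llVp$, the cross terms $\Hom_{\fooD}(\sL_{x,t_1}\cap \llV,\ \sL'_{x',t_2}\cap {}^{m}V')$ for $l\neq m$ are ``large'' (they sit inside $\sB_0$) while the diagonal pieces reproduce the single-block lattice data. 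Thus $\Omega = \dbK\cdot w + \sB_0$ and the generalized lattice model $\sSB_\Omega$ should factor, up to the cross-term contributions, as a tensor product over the blocks.

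\textbf{Main steps.} First I would set up the block decomposition of the Weil representation: writing $W = \bigoplus_{l,m} \llV\otimes_D {}^m V'$, the symplectic form is block-orthogonal in a suitable sense, and $\sB$ restricted to the ``diagonal'' summand $\bigoplus_l \llV\otimes_D\llVp$ agrees with the single-block lattices $\sB^{(l)} = \sL^{(l)}\otimes_D\sL'^{(l)}$, while the off-diagonal summands contribute lattices contained in (a shift of) $\sB_0$. This gives an isomorphism $\bfbb = \sB_{0:0^+} \cong \bigoplus_l \bfbb^{(l)} \oplus (\text{off-diagonal part})$ of symplectic $\fffF$-spaces, where $\bomegab$ decomposes as a tensor product. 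Second, I would observe that $\dbK = K\times K'$ itself ``almost'' factors: the dominant positive-depth parts $K^i$ are built from $G^i$ and $\dalpha$ respects the block decomposition (each $\llGamma_j$ only moves ${}^l V$), so the stabilizer $S = \Stab_{\dbK}(w+\sB_0)$ decomposes as a product $\prod_l S^{(l)}$ up to factors already inside $\dbK_{0^+}$, and similarly for the $\triangle^i$. Third, with these two factorizations in hand, $\Hom_{\dbK}(\dbeta, \sSB_\Omega)$ becomes a product $\prod_l \Hom_{\dbK^{(l)}}(\dbeta^{(l)}, \sSB^{(l)}_{\Omega^{(l)}})$ over the blocks — each positive-depth factor equals $1$ by \Cref{prop:sSB}, and the depth-zero factor equals $1$ by the depth-zero theory (\Cref{thm:Pan0} together with the multiplicity-one statement that $\rholp\otimes\rholsp$ is cuspidal and matches $\rho^*$ under the finite-field theta correspondence, as in \Cref{def:DTL.0}). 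Multiplying gives the desired value $1$.

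\textbf{Expected obstacle.} The main technical difficulty is making precise the claim that $\dbK$ and $S$ factor \emph{compatibly} with the block decomposition of the Weil representation, because the building point $x$ is a single vertex in $\BTB{G^0} = \prod_l \BTB{\llG^0}$ and the Moy--Prasad filtrations $G_{x,s}$ at the various depths $s = s_{i-1}$ mix the blocks (the $\llSigmap$ have different depths, so the $K^i$ for $G'$ is assembled from different $\llVp$ in a nested fashion). One must carefully track, for each $i$, which blocks $l$ have already ``activated'' (i.e. $\llr \geq r_i$) and show that the Heisenberg--Weil representation $\dbkappa = \bigotimes_l \dbkappa^{(l)}$ respects this nesting — essentially re-deriving the tensor-product property of Yu's $\kappa$ under block decomposition. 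This is where the induction on $b$ does its real work: peeling off the top block $\Sigma^{(b)}$ (of largest depth $r_b$), one is left with $\bigoplus_{l<b}\llSigma$ on $\bigoplus_{l<b}\llV$ and can apply the inductive hypothesis, with the single-block result \Cref{prop:sSB} handling the peeled-off piece. I would also need to invoke \Cref{cor:Gporbit} and \Cref{cor:cVpGamma} to know that the $V'$ and $x'$ produced block-by-block are consistent with a single global choice, and \Cref{lem:LD.wdef} for well-definedness.
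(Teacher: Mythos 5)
Your high-level plan — induction on the number $b$ of blocks, peeling off the top positive-depth block $\bbSigma$ and reducing to the single-block result \Cref{prop:sSB} plus the depth-zero case \Cref{thm:Pan0} — is exactly the route the paper takes. But a key geometric claim in your set-up is wrong, and it is precisely the place where the paper needs its three technical lemmas.

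You assert that for $l\neq m$ the cross terms $\Hom_{\fooD}(\sL_{x,t_1}\cap \llV,\ \sL'_{x',t_2}\cap {}^{m}V')$ ``sit inside $\sB_0$'' and are therefore negligible, and you then expect $\sSB_\Omega$ to factor as a tensor product of single-block spaces. Neither is true. The cross-term lattice piece $\sssB := \basB\oplus\absB$ is \emph{not} contained in $\sB_{0^+}$; its residue $\ssbfbb := \sssB_{0:0^+}$ is a non-degenerate symplectic $\fffF$-space, and its oscillator representation $\ssbomega$ shows up as a tensor factor of $\bomega_w$ (\Cref{lem:de.S}). The actual reason everything closes up is not that this piece is trivial, but that $\ssbomega$ is \emph{identified} with the Weil correction factor $\ssdbkappa = \sskappa\boxtimes\sskappa'$ coming from Yu's $\kappa$-construction on both sides (\Cref{lem:iso.KO}), which in turn rests on the $\dgS$-equivariant isomorphism $\ssbiota\colon\ssfgg_{x,s:s^+}\oplus\ssfggp_{x',s:s^+}\iso\ssbfbb$ (\Cref{lem:TB1}). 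After that cancellation one gets $\Hom_{\dgS}(\dgdbeta,\bomegadgS)$, which splits as a product over the top block and the remainder, handled respectively by \Cref{prop:sSB} and the inductive hypothesis. So your sketch correctly names the obstacle (the interaction of the cross terms with $\kappa$) but mis-diagnoses its nature, and the three lemmas that actually resolve it are not supplied. Also note the paper peels off one block at a time — it does not need a simultaneous factorization of $\dbK$ or $S$ over all blocks, which you gesture at and which would be considerably harder to arrange because the depths $\llr$ interlace in the Moy--Prasad filtration.
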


\begin{remark}
  More generally, if $\Sigma'$ a theta lift of $\Sigma$ as in
    \Cref{def:LDC}, then the same proof in this section would show that
    \[
      \Hom_{\dbK}(\dbeta, \sSB_{\Omega})\neq 0.
    \]
\end{remark}
The rest of this section is devoted to proving the proposition by induction on
the number of blocks.  We first state the induction hypothesis in
\Cref{sec:indhyp}. Then we prove some lemmas on the block decomposition in
\Cref{sec:decSigma,sec:HW.blocks}. The proof is completed in
\Cref{sec:CKtype.fini}.

\def\sm#1{\dot{#1}}
\def\smG{\sm{G}}
\def\smdbK{\sm{\dbK}}
\def\smV{\sm{V}}
\def\smVp{\sm{V}'}
\def\smTp{\sm{\cT}'}
\def\smD{\sm{D}}
\def\smtD{\sm{\tD}}
\def\smbb{\sm{b}}
\def\smdbeta{\sm{\dbeta}}
\def\smOmega{\sm{\Omega}}
\def\smSigma{\sm{\Sigma}}
\def\smSigmap{\sm{\Sigma}'}
\def\smsB{\sm{\sB}}
\def\smdthetaVT{\dtheta_{\smV,\smTp}}

\subsection{Induction hypothesis} \label{sec:indhyp}
Let 
\begin{enumerate}[(a)]
\item $\smV$ be an $\epsilon$-Hermitian space such that
  $\dim_D \smV \leq \dim_D V$;

\item $\smTp$ be a Witt tower of $\epsilon'$-Hermitian spaces;

\item $\smSigma$ be a supercuspidal data for $\smG:=\rU(\smV)$ such that 
$\smSigma$ has $\smbb$ blocks and $\smbb<b$;

\item $\smSigma' := \smdthetaVT(\smSigma)$ be a supercuspidal data of $\smVp$
  where $[\smVp] \in \smTp$.
\end{enumerate}
We extend all the notations to this dual pair
by adding ``$\sm{\hspace{1em}}$''. 
We assume that \Cref{prop:CKtype} holds for $(\smSigma,\smSigmap)$, i.e.
\[
\dim \Hom_{\smdbK}(\smdbeta, \sS(\smsB_0)_{\smOmega})=1. 
\]

\medskip 

Note that the Hypothesis holds when $\smbb = 0$, i.e. the
depth-zero case (cf. \cite{Pan02J} and \Cref{sec:TL.zero}). 

\subsection{Block decomposition of vector spaces}\label{sec:decSigma}
We have already treated the depth zero case, so we assume that $b\geq 1$. 
\subsubsection{}
Let $\Sigma = \bigoplus_{l=0}^b \llSigma$ and
$\Gamma = \bigoplus_{l=0}^b \llGamma$ be the decomposition of datum $\Sigma$
according to \Cref{prop:decSigma} where $\llGamma$ has depth $-\llr$.  We denote
$\aaSigma := \bigoplus_{i=0}^{b-1}\llSigma$ so that
$\Sigma = \bbSigma \bigoplus \aaSigma$.  In the rest of the section, the index
$i$ is reserved specially for $i = b, a$.


\begin{definition}\label{def:sskappa}
We collect the following definitions and facts.
\begin{enumerate}[(i)]
\item Let $r = {}^br$ be the depth of $\Sigma$ and $s = r/2$ as usual. 
\item We have $V = \bbV \oplus \aaV$ where $\aaV := \bigoplus_{l=0}^{b-1} \llV$.

\item Let $\iiG := \rU(\iiV)$. Then
  $\bbG\times \aaG \subseteq \End_D(\bbV)\oplus \End_D(\aaV)$ sitting in
  $G\subseteq \End_D(V)$ block diagonally.

\item We have $\Gamma = \bbGamma \oplus \aaGamma$ where
  $\aaGamma := \bigoplus_{l=0}^{b-1} \llGamma$.

\item We have $x = (\bbxx,\aaxx) \in \BTB{\bbG} \times \BTB{\aaG}$ and
  $\sL = \bbsL \oplus \aasL$ gives the decomposition in terms of the
  corresponding lattice functions.

\item We have $G_x^0 = \bbG_{\bbxx}^0\times \aaG_{\aaxx}^0$,
  $\rho= \bbrho\boxtimes \aarho$ and $\phi = \bbphi\boxtimes \aaphi$.

\item Let $\iiK := K\cap \iiG$, $\iiK_{0^+} := K_{0^+} \cap \iiG$,
  $\iiK_+ := K_+ \cap \iiG$ and $\dgK := \bbK \times \aaK$.


\item Let $\ssfgg := \fgg \cap \ssEnd$ and $\ssfgg_{x,s} := \ssfgg \cap
  \fgg_{x,s}$
  where 
  \[
    \ssEnd := \Hom_D(\bbV,\aaV) \oplus \Hom_D(\aaV,\bbV) \subseteq \End_D(V).
  \]

\item Let $\ssJ := G_{x,r}\exp(\ssfgg_{x,s})$ and
  $\ssJ_+ := \ssJ\cap K_+ = G_{x,r} \exp(\ssfgg_{x,s^+})$.  Obviously
  \[
  \exp \colon  \ssfgg_{x,s:s^+} \iso \ssJ/\ssJ_+ =: \ssbfW
  \]
  is a $\bbG_{\bbxx}\times \aaG_{\aaxx}$-equivariant isomorphism between abelian
  groups.
\item We have
  \begin{equation}\label{eq:KK}
    K = \dgK\ssJ = \bbK \; \aaK\;  \ssJ \quad 
    \text{and}\quad K_+  =\dgK_+\ssJ_+ =  \bbK_+ \aaK_+ \ssJ_+. 
  \end{equation}
  \trivial[h]{ Clearly the left hand sides contain the right hand sides. We will
    prove the opposite containment.  Let $\ckGamma$ be a good element congruent
    to $\Gamma$. The $\ckGamma$ acts on $\aaV$ by zero and invertible on $\bbV$.
    By \Cref{sec:Ktype} $K = K^{d-1} G^d_{x,s}$ where
    $K^{d-1} = G^0_{x} G^1_{x,s_0} \cdots G^{d-1}_{x,s_{d-2}}$.  Now $K^{d-1}$
    commutes with $\ckGamma$ so it stabilizes the direct sum
    $V = \bbV \oplus \aaV$. Hence $K^{d-1}$ is contained in $\bbK \times
    \aaK$.
    Since $G_{x,s} = \bbG_{x,s} \aaG_{x,s} \ssJ$, we have
    $K \subseteq (\bbK \aaK) G_{x,s} = \bbK \aaK \ssJ$.  }

\item \label{it:sskappa.def} 
Let $\sskappa$ be the pull back of $\bomega_{\ssbfW}$ via
  \[
  \dgK\ltimes \ssJ \longrightarrow \bfSp(\ssbfW)\ltimes \bfH(\ssbfW)
  \]
  where $\ssJ\rightarrow \bfH(\ssbfW)$ is the restriction of special morphism
  given by \eqref{eq:Special}.

\item Let $\dgkappa$ be the Heisenberg-Weil representation of
  $\dgK= \bbK\times \aaK$ determined by $(\bbGamma, \aaGamma)$ and let
  $\dgeta := \bbeta \boxtimes \aaeta$ where $\iieta$ is the $\iiK$-module
  defined by the datum $\iiSigma$.  
\item We identify $\dgkappa$ and $\dgeta$ with their inflations to
  $\dgK\ltimes \ssJ$. Directly from the construction of $\kappa$ and $\eta$, we
  get
  \begin{equation}\label{eq:kappa.2}
    \kappa = \dgkappa \otimes \sskappa \quad \text{and} \quad \eta = \dgeta
    \otimes \sskappa.
  \end{equation}
  as $\dgK\ltimes \ssJ$-modules.
\end{enumerate}
For the data $\Sigma'$, we define similar notations by adding ``prime'' and get
corresponding conclusions.
\end{definition}

\subsubsection{} \label{sec:Vnotations} 
We now consider the block decomposition in the context of theta correspondence.  

\begin{definition}\label{def:block.theta}
  We collect the following notations and facts. 
\begin{enumerate}[(i)]
\item \label{it:blk.1} Let $\iiW := \iiV\otimesD \iiVp$,
  $\baW := \bbV\otimesD\aaVp$, $\abW := \aaV\otimesD \bbVp$,
  $\dgW := \bbW \oplus \aaW$ and $\ssW := \baW \oplus \abW$. Then we have
  following orthogonal decompositions of the symplectic spaces:
\begin{equation}\label{eq:decW}
W = V\otimesD V'  = \dgW \oplus \ssW = (\bbW \oplus \aaW) \oplus (\baW \oplus \abW).
\end{equation}

\item \label{it:blk.2} We have irreducible reductive dual pairs $(\iiG,\iiG')$
  in $\Sp(\iiW)$ for $i = b$ and $a$.  They form a reducible reductive dual pair
  $(\dgG,\dgG') := (\bbG\times \aaG,\bbG'\times \aaG')$ in $\Sp(\dgW)$.

\item \label{it:blk.3} By the construction of lifting of datum (see
  \Cref{def:LD}), $\bbSigmap = \dthetap(\bbSigma)$ and
  $\aaSigmap = \dthetaaaVT(\aaSigma)$ where $\aacTp := \cT' - [\bbV']$.  In
  addition, we have $w = \bbww \oplus \aaww \in \bbW \oplus \aaW$ so that
  $\aaww = \bigoplus_{l=0}^{b-1}\llww$, $M(\iiww) = \iiGamma$ and
  $M'(\iiww) = \iiGammap$.

\item  \label{it:blk.4}
Define lattice functions 
\[
\anysB_t := \anyW \cap \sB_t \text{ and } \anybfbb =
  \anysB_0/\anysB_{0^+} \text{ for }\any = b,a,ba,ab,\boxbackslash, \boxslash.
\]
We have $\iisB = \iisL \otimes \iisL'$, 
  $\basB = \bbsL \otimes \aasL'$,  
  $\absB = \aasL \otimes \bbsL'$,
  \[
  \begin{split}
    \sB & = \dgsB \oplus \sssB =  (\bbsB \oplus \aasB) \oplus (\basB \oplus \absB)
    \quad \text{and} \\
    \bfbb &  = \dgbfbb \oplus \ssbfbb = (\bbbfbb \oplus \aabfbb) \oplus (\babfbb
    \oplus \abbfbb).
  \end{split}
  \]  

\item 
  \label{it:blk.5}
  Define doubled objects $\dbK := K \times K'$, $\iidbK := \iiK \times \iiK'$,
  $\ssdbJ := \ssJ\times \ssJ'$, $\dgdbeta := \dgeta\boxtimes \dgeta'$,
  $\ssdbkappa := \sskappa\boxtimes \sskappa'$ etc. as usual.

\item 
   \label{it:blk.6}
   Let $\iiOmega := \iidbK\iiww + \iisB_{0}$ and
   $\iiS := \Stab_{\iidbK}(\iiww+\iisB_{0})$ for $i = a, b$. We have
   \[
   \dgS := \Stab_{\bbdbK\times \aadbK}(w+\sB_{0})=\bbS \times \aaS \text{ and }
   S := \Stab_{\dbK}(w+\sB_{0}) = \dgS \ssdbJ.
   \]

\end{enumerate}

\end{definition}

\subsubsection{}
Similar to \eqref{eq:bfbbz.iso}, we have following lemma. 
\begin{lemma}\label{lem:TB1}
  Consider the map
  $\ssbiota\colon \ssfgg_{x,s:s^+} \oplus \ssfggp_{x',s:s^+} \rightarrow
  \sssB_{0:0^+} = \ssbfbb$ induced by
  \[
  (X,X') \mapsto (X,X') \cdot w = -wX + X'w \qquad \forall X\in \ssfgg_{x,s},
  X'\in \ssfggp_{x',s}.
  \]
  Both the domain and codomin of $\ssbiota$ have natural $\dgdbK$-module
  structures and the actions factor through
  $\dgdbK/\dgdbK_{0^+}$. Moreover $\ssbiota$ is an $\dgS$-equivariant
  isomorphism between $\fffF$-vector spaces.
\end{lemma}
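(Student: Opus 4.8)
The plan is to split $\ssbiota$ along the polarisation $\ssbfbb=\babfbb\oplus\abbfbb$ of \Cref{def:block.theta}~\cref{it:blk.4} and to show that on each summand it coincides, modulo terms of strictly larger depth, with the ``shift--by--$s$'' isomorphism supplied by the single--block element $\bbww$; the contribution of $\aaww$ will turn out to be negligible, because its constituents $\llww$ carry the smaller shifts $\llr/2<s$.

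First I would record the linear algebra. An element $X\in\ssfgg=\fgg\cap\ssEnd$ is determined by its two off--diagonal blocks $X=X_{ba}+X_{ab}$, with $X_{ba}\in\Hom_D(\bbV,\aaV)$, $X_{ab}\in\Hom_D(\aaV,\bbV)$ and $X_{ab}=-X_{ba}^*$; since $\sL=\bbsL\oplus\aasL$ is orthogonal and self--dual, $X\in\fgg_{x,t}$ is equivalent to $X_{ab}\in\fgl_{x,t}$, so $X\mapsto X_{ab}$ identifies $\ssfgg_{x,s:s^+}$ with the corresponding subquotient of $\Hom_D(\aaV,\bbV)$, and symmetrically $X'\mapsto X'_{ba}$ identifies $\ssfggp_{x',s:s^+}$ with the analogous subquotient of $\Hom_D(\bbVp,\aaVp)$. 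Using the actions of \Cref{sec:dualpair} and $w=\bbww\oplus\aaww$, one computes (the cross terms $\bbww X_{ba}$, $\aaww X_{ab}$, $X'_{ba}\aaww$, $X'_{ab}\bbww$ all vanish)
\[
(X,X')\cdot w\;=\;-wX+X'w\;=\;\bigl(-\aaww X_{ba}+X'_{ba}\bbww\bigr)+\bigl(-\bbww X_{ab}+X'_{ab}\aaww\bigr),
\]
the first summand lying in $\baW$ and the second in $\abW$, whence the image lies in $\ssW$, and (by the lattice relations below) in $\sssB_0$.

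The crux is the depth bookkeeping. By \Cref{lem:L'} applied to the single block $\bbSigma$ (of positive depth $r$), $\bbww$ is a $D$--module isomorphism with $\bbww\bbsL_u=\bbsL'_{u-s}$ \emph{exactly}; hence pre-- and post--composition with $\bbww$ are ``shift by $s$'' bijections, which give $X'_{ba}\bbww\in\basB_0$ and $\bbww X_{ab}\in\absB_0$ and induce isomorphisms $\ssfggp_{x',s:s^+}\xrightarrow{\ \sim\ }\babfbb$ and $\ssfgg_{x,s:s^+}\xrightarrow{\ \sim\ }\abbfbb$. On the other hand $\aaww=\bigoplus_{l<b}\llww$ with $\llww\llsL_v=\llsL'_{v-\llr/2}$ and $\llr<\bbr=r$ for $l<b$ by \Cref{prop:decSigma}~\cref{it:BDD.2}; setting $m:=\max_{l<b}(\llr/2)<s$ one gets $\aaww\aasL_v\subseteq\aasL'_{v-m}$, so the extra $+s$ carried by $X_{ba}\in\fgg_{x,s}$ (resp.\ by $X'_{ab}\in\fgg'_{x',s}$) is only partially absorbed, giving $\aaww X_{ba}\in\basB_{0^+}$ and $X'_{ab}\aaww\in\absB_{0^+}$. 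Consequently, modulo $\sssB_{0^+}$,
\[
\ssbiota(X,X')\;\equiv\;\bigl(\,\overline{X'_{ba}\bbww}\,,\ -\,\overline{\bbww X_{ab}}\,\bigr)\;\in\;\babfbb\oplus\abbfbb=\ssbfbb,
\]
which at once shows that $\ssbiota$ is well defined on the quotients and, being the direct sum of the two isomorphisms above, is an isomorphism of $\fffF$--vector spaces. (As in \Cref{lem:isometry} and \Cref{lem:S1}~\cref{it:lem.S1.6}, it is in fact an isometry for the forms $\binnGa{}{}\oplus\binnGap{}{}$ restricted to the $\ss$--parts; we shall not need this.)

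It remains to handle equivariance, which is routine. The module structures are the adjoint action of $\dgK$ (resp.\ $\dgK'$) on $\ssfgg_{x,s:s^+}$ (resp.\ $\ssfggp_{x',s:s^+}$) and the translation action $(h,h')\cdot a=h'ah^{-1}$ on $\ssbfbb$; they factor through $\dgdbK_{0^+}$ because $[\fgg_{x,0^+},\fgg_{x,s}]\subseteq\fgg_{x,s^+}$ and $\fgl_{x,0^+}\,\sssB_0\subseteq\sssB_{0^+}$. Since $\dgS\subseteq S$ is a group, $(h,h')\in\dgS$ satisfies $h'w-wh\in\sB_0\cap\dgW=\dgsB_0$ and likewise $h'^{-1}wh-w\in\dgsB_0$; substituting this into the identity
\[
\ssbiota(\Ad_{h}(X),\Ad_{h'}(X'))-(h,h')\cdot\ssbiota(X,X')=(h'w-wh)Xh^{-1}+h'X'(h'^{-1}wh-w)h^{-1},
\]
and using $\dgW\cdot\ssEnd\subseteq\ssW$ (and symmetrically on the $V'$ side) together with $\sB_0\,\fgg_{x,s}\subseteq\sB_s$ and $\fgg'_{x',s}\,\sB_0\subseteq\sB_s$, both terms on the right land in $\sssB_s\subseteq\sssB_{0^+}$; hence $\ssbiota$ is $\dgS$--equivariant. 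The step I expect to be the main obstacle is the depth bookkeeping of the third paragraph: tracking the per--block shifts $\llr/2$ hidden inside $\aaww$ and checking that, once the $+s$ from $\fgg_{x,s}$ is accounted for, the $\aaww$--terms die in $\ssbfbb$. This relies essentially on the strict inequalities $\llr<r$ ($l<b$) from the block decomposition and on the compatibility of $V=\bbV\oplus\aaV$, $V'=\bbVp\oplus\aaVp$ with all the lattice functions in play.
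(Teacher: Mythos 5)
Your proof is correct and reaches the same conclusion, but it takes a genuinely different route from the paper's. The paper argues in two steps: first a direct injectivity claim, obtained by multiplying the two congruences $-\bbww A+A'\aaww\equiv 0$ and $A\aaww^\mstar-\bbww^\mstar A'\equiv 0$ by $\bbww^\mstar$ and $\aaww$ respectively and adding to get $-\bbGamma A+A\aaGamma\equiv 0\pmod{\Hom(\aasL,\bbsL)_{-s^+}}$, then using that $\bbGamma$ acts as a bijective shift by $r$ while $\aaGamma\in\aafgg_{\aaxx,-r^+}$; second a dimension count via the shift-by-$s$ isomorphism induced by $\bbww$. You instead observe at the outset that, because each $\llww$ ($0<l<b$) carries only the strictly smaller shift $\llr/2<s$, all terms involving $\aaww$ fall into $\sssB_{0^+}$, so that $\ssbiota$ is literally equal mod $\sssB_{0^+}$ to the direct sum of the two $\bbww$-shift bijections; isomorphism then drops out without a separate injectivity argument. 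Your route is more transparent, and the equivariance identity and the $\sssB_s\subseteq\sssB_{0^+}$ bookkeeping you supply are both correct. One thing worth noting, however: the paper's injectivity step is deliberately formulated so as to use only the weaker hypothesis $\aaww\in\aasB_{\aaxx,-s}$ (see the remark following the lemma), because the same argument is re-invoked in \Cref{sec:Exhaust}, where $\aaww$ is merely some element with $M(\aaww)\in\aaGamma+\aafgg_{\aaxx,-s}$ rather than a direct sum of block lifts with exact shifts $\llr/2$. Your cross-term-vanishing argument depends essentially on the exact equalities $\llww\llsL_v=\llsL'_{v-\llr/2}$ and the strict inequalities $\llr<r$, so it proves the lemma as stated but would not transfer to the setting of \Cref{sec:Exhaust}; if you wanted a self-contained replacement for the paper's proof you would still need a version of the injectivity-from-$\Gamma$ computation there.
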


\begin{proof}
  Let $(X,X') \in \ssfgg_{x,s} \oplus \ssfggp_{x',s}$.  In
  terms of matrices with respect to the decomposition
  $V = \bbV \oplus \aaV$ and $V' = \bbV' \oplus \aaV'$, we write
  \[
  X = \begin{pmatrix}0 & A\\ -A^* & 0\end{pmatrix},\quad X' = \begin{pmatrix}0 &
    A'\\ -A'^* & 0\end{pmatrix} \quad \text{and} \quad w = \begin{pmatrix} \bbww
    & 0 \\ 0 & \aaww\end{pmatrix}.
  \]
  Here $A\in \Hom(\aasL,\bbsL)_s\subseteq \Hom_D(\aaV,\bbV)$ and
  $* \colon \Hom_D(\aaV, \bbV) \iso \Hom_D(\bbV,\aaV)$ is defined by
  $\inn{A v_1}{v_2}_{\bbV} = \inn{v_1}{A^* v_2}_{\aaV}$ for all
  $v_1\in \aaV, v_2 \in \bbV$. The notation for $V'$ is defined similarly. Then
  \[
  (X,X')\cdot w = -w X + X'w = \begin{pmatrix}
    0 & -\bbww A+ A'\aaww\\
    \aaww A^* - A'^* \bbww & 0
  \end{pmatrix}.
  \]

\setcounter{claim}{0}
\begin{claim}\label{claim:ssinj}
Then map  $\ssbiota$ is injective. 
\end{claim}

\begin{proof}
Suppose $\ssiota(X,X') \in \sssB_{0^+}$, i.e. 
\begin{eqnarray}
-\bbww A+ A'\aaww & \equiv & 0 \pmod{\absB_{0^+}} \label{eq:ss1} 
 \quad \text{and}  \\
\aaww A^* - A'^* \bbww & \equiv & 0 \pmod{\basB_{0^+}} \label{eq:ss2}.
\end{eqnarray}
Applying $\mstar$ to \eqref{eq:ss2}  gives
\begin{equation}\label{eq:ss3}
A \aaww^\mstar  -\bbww^\mstar A'  \equiv 0 \pmod{\absB_{0^+}}.
\end{equation}
Note that $\iiww\in \Hom(\iisL,\iisL)_{-s}$ for $i=a,b$. Hence
$\bbww^\mstar \eqref{eq:ss1} + \eqref{eq:ss3}\aaww$ yields
\[
-\bbGamma A + A\aaGamma \equiv 0 \pmod{\Hom(\aasL,\bbsL)_{-s^+}}.
\]
By the definition of block decomposition,  $\aaGamma \in
\aafgg_{\aaxx,-(^{b-1}r)^+}\subseteq \aafgg_{\aaxx,-r^+}$
and so 
$-\bbGamma A \in \Hom(\aasL,\bbsL)_{-s^+}$. 
On the other hand, the datum $\bbSigma$ is a single positive depth block so
multiplying by $\bbGamma$ induces an
isomorphism  $\bbsL_{x,t} \xrightarrow{\bbGamma \cdot \_\ } \bbsL_{x,t-r}$.
Hence $A \in \Hom(\aasL,\bbsL)_{s^+}$, i.e. $X\in \ssfgg_{x,s^+}$.
A similar argument yeilds $X'\in \ssfgg'_{x',s^+}$. 
This proves \Cref{claim:ssinj}.
\end{proof}

\begin{claim}\label{claim:dimss}
We have $\dim_\fffF \ssfgg_{x,s} = \dim_\fffF \abbfbb$ and
$\dim_\fffF \ssfggp_{x',s} = \dim_\fffF \babfbb$.
\end{claim}

\begin{proof}
We recall that $\bbww \bbsL_{t} = \bbsL'_{t-s}$ by the construction of
  lift of datum.
Hence $A \mapsto \bbww A$ induces an isomorphism
\[\ssfgg_{x,s:s^+} \cong \Hom(\aasL,\bbsL)_{s:s^+}
\simrightarrow \Hom(\aasL,\bbsL')_{0:0^+} = \abbfbb.
\] 
Similarly, $A' \mapsto A'^* \bbww$ induces an isomorphism
\[
\ssfgg'_{x',s:s^+}\cong \Hom(\aasL',\bbsL')_{s:s^+}\iso
\Hom(\bbsL,\aasL')_{0:0^+} = \babfbb. \qedhere
\]
\end{proof}

\Cref{claim:ssinj} and \Cref{claim:dimss} prove that $\ssbiota$ is an
isomorphism of $\fffF$-vector spaces.

The group $\dgS$ stabilizes the coset $w+ \dgsB_0 \in \dgW/\dgsB_0$.  Using this
fact and a direct computation show that $\ssbiota$ is $\dgS$-equivariant.
\end{proof}

\begin{remark} We only use the fact that $\aaww \in \aasB_{\aaxx,-s}$ when
    we prove the injectivity of $\ssbiota|_{\ssfgg_{x,s:s^+}}$.
		Therefore we could and will reuse this proof in \Cref{sec:Exhaust}.
\end{remark}


\subsection{Block decomposition of representations}\label{sec:HW.blocks}

As in the one block case, we consider the space $\sSB_{\Omega}$.  By
\Cref{lem:Saction},
$\sSB_{\Omega} = \Ind_S^\dbK \sS(\sB_0)_w = \Ind_S^\dbK \bomegaww$.
We now decompose $\bomegaww$ according to the decomposition of data. 

\subsubsection{}\label{sec:ssbomega}
\def\SH{\zeta}
\def\SHssbb{\SH^{\ssbfbb}} 
Using the formula in \eqref{eq:SH.b0}, we get
a morphism $\SHssbb \colon \ssdbJ \longrightarrow \bfH(\ssbfbb)$. Its natural
extension to $\dgS\ltimes \ssdbJ\longrightarrow \bfSH(\ssbfbb)$ is again denoted
by $\SHssbb$.

Let $\ssbomega$ denote the pull back of $\bomega_{\ssbfbb}$ via $\SHssbb$ which
is an $\dgS\ltimes \ssdbJ$-module realized on $\bS(\ssbfbb)$. More precisely,
for $h = (u,(\exp(X),\exp(X')) \in \dgS\ltimes \ssJ$, 
\begin{equation}\label{eq:ssbomega.def}
\ssbomega(h) := \bomega_{\ssbfbb}(u) \bomega_{\ssbfbb}(- (X,X')\cdot w)
\psi(\half \innw{w}{ (\exp(X),\exp(X'))^{-1}\cdot w - w}). 
\end{equation}

\begin{lemma} 
\label{lem:iso.KO}
We have  $\ssdbkappa \cong \ssbomega$ as $\dgS\ltimes \ssdbJ$-modules. 
\end{lemma}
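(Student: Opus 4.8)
The assertion is that $\ssdbkappa := \sskappa \boxtimes \sskappa'$ and $\ssbomega$ are isomorphic as $\dgS \ltimes \ssdbJ$-modules. Both are realized on (or pulled back from) the oscillator representation of a Heisenberg group built from the symplectic space $\ssbfbb = \sssB_{0:0^+}$, so the strategy is to identify the two Heisenberg--Weil data and then invoke uniqueness of the oscillator representation (Stone--von Neumann over $\fffF$, cf. \Cref{sec:HW}). The plan is to compare the two sides factor by factor, exactly as in the one-block computation culminating in \eqref{eq:bomega.d} and the commuting diagram just before it.

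First I would recall the definitions. On one side, $\sskappa$ is by \Cref{def:sskappa}~\cref{it:sskappa.def} the pull-back of $\bomega_{\ssbfW}$ via $\dgK \ltimes \ssJ \to \bfSp(\ssbfW)\ltimes \bfH(\ssbfW)$, where $\ssbfW = \ssJ/\ssJ_+ \cong \ssfgg_{x,s:s^+}$ and the map $\ssJ \to \bfH(\ssbfW)$ is the special morphism of Yu (\Cref{sec:Special}); similarly for $\sskappa'$. Tensoring the two and restricting to the subgroup $\dgS \ltimes \ssdbJ \subseteq (\dgK\ltimes\ssJ)\times(\dgK'\ltimes\ssJ')$, one sees $\ssdbkappa$ is the pull-back of $\bomega_{\ssbfW}\boxtimes\bomega_{\ssbfW'} = \bomega_{\ssbfW\oplus\ssbfW'}$ along a morphism $\dgS\ltimes\ssdbJ \to \bfSH(\ssbfW\oplus\ssbfW')$. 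On the other side, $\ssbomega$ is defined in \eqref{eq:ssbomega.def} as the pull-back of $\bomega_{\ssbfbb}$ via $\SHssbb \colon \dgS\ltimes\ssdbJ \to \bfSH(\ssbfbb)$. By \Cref{lem:TB1}, the map $\ssbiota\colon \ssfgg_{x,s:s^+}\oplus\ssfggp_{x',s:s^+} \to \ssbfbb$, $(X,X')\mapsto -wX+X'w$, is a $\dgS$-equivariant isomorphism of $\fffF$-symplectic spaces (the symplectic compatibility being the analogue of \Cref{lem:isometry}/\Cref{lem:S1}, which one should check: the form $\innGa{}{}\oplus\innGap{}{}$ restricted to the $\ss$-part matches the $\sB$-form, using \Cref{lem:inn}). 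So the key point is that the diagram
\[
\xymatrix@R=-.3em{
  & \bfSH(\ssbfW\oplus\ssbfW') \ar[dd]^{\cong} \\
\dgS\ltimes\ssdbJ \ar[ru] \ar[rd]_{\SHssbb} & \\
  & \bfSH(\ssbfbb) \\
}
\]
commutes, where the vertical isomorphism is induced by $\ssbiota$ (after identifying $\ssbfW\oplus\ssbfW'$ with $\ssfgg_{x,s:s^+}\oplus\ssfggp_{x',s:s^+}$ via $\exp$). Granting this, both representations are pull-backs of the same oscillator representation along the same homomorphism, hence isomorphic.

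The commutativity of that diagram is the main thing to verify, and it is where I expect the real work. On the Heisenberg part ($\ssdbJ$), I would compare the special-morphism formula for $\ssJ\times\ssJ' \to \bfH(\ssbfW)\times\bfH(\ssbfW')$ with the explicit formula $h = (\exp(X),\exp(X'))\mapsto (-(X,X')\cdot w,\ \overline{\tfrac12\innw{w}{h^{-1}\cdot w - w}})$ defining $\SHssbb$; the vector components match via $\ssbiota$ by construction, and the $\fffF$-valued cocycle parts agree because both compute the same quadratic expression, using the expansion \eqref{eq:Sym.s} of $\tfrac12\innw{w}{h^{-1}\cdot w - w}$ together with $M(w)=\Gamma$, $M'(w)=\Gamma'$ — this is precisely the computation carried out in \Cref{sec:SKhom} for the $J^d$ block, now done for the $\ss$-block. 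For the $\bfSp$ part ($\dgS$, acting by conjugation), $\dgS$-equivariance of $\ssbiota$ is exactly \Cref{lem:TB1}, and one must also check that the metaplectic (Weil-representation) cocycle is respected; since $\dgS$ normalizes $\ssdbJ$ and acts symplectically through $\ssbiota$, this reduces to the fact that Yu's special isomorphisms are compatible with the symplectic group action (\Cref{sec:Special}), plus the triviality of the relevant determinant character — the analogue of \Cref{lem:chib}, which should either follow from it directly or be reduced to it since $\dgS$ acts on $\ssbfbb$ preserving a symplectic form. The hard part will be assembling these compatibilities cleanly; once the diagram commutes, Stone--von Neumann finishes the proof.
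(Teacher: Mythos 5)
Your proposal is correct and follows essentially the paper's route: \eqref{eq:Sym.s} to match the Heisenberg cocycles, \Cref{lem:TB1} for the $\dgS$-equivariant symplectic identification, and the commuting-diagram argument used for \eqref{eq:bomega.d}. Note, however, that the determinant-character concern (\Cref{lem:chib}) you flag at the end does not actually arise here: both $\ssdbkappa$ and $\ssbomega$ are defined directly as pullbacks of oscillator representations along homomorphisms to $\bfSH$ (no restriction to a parabolic-invariant subspace is taken, unlike in the proof of \Cref{lem:IT1}~\cref{it:IT1.1}~\Cref{it:IT1.c}), so once $\ssbiota$ intertwines the two maps to $\bfSH(\ssbfbb)$, uniqueness of the Heisenberg--Weil representation (\Cref{sec:HW}~\cref{it:fWeil.1}) finishes the proof with no twist to account for.
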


\begin{proof}
  By \eqref{eq:Sym.s}, we have
  $\half \innw{w}{ h^{-1}\cdot w - w} \equiv \bB(X,\Gamma)+\bB(X',-\Gamma') \ 
  \pmod{\fpp}$
  for all $h =(\exp(X), \exp(X')) \in \ssdbJ\subseteq G_{x,s}\times G'_{x',s}$.
  Now the lemma follows immediately
  from \Cref{lem:TB1} with the same proof of \eqref{eq:bomega.d}.
\end{proof}

Since $\bfbb = \dgbfbb \oplus \ssbfbb$, we have
$\bomega_{\bfbb} = \bomega_{\dgbfbb} \boxtimes \bomega_{\ssbfbb}$ as
$\bfSH(\dgbfbb) \times \bfSH(\ssbfbb)$-module, realized on
\begin{equation}\label{eq:bS.dec}
\bS(\bfbb) = \bS(\dgbfbb)\boxtimes \bS(\ssbfbb)
\end{equation}

Note that $w = (\bbww,\aaww)\in \dgW$. Evaluation at $w$ gives an isomorphism 
of $\bC$-vector spaces 
 \begin{equation} \label{eq:bbsSB}
\xymatrix@R=0em{
 \bbsSB_{\bbww} \boxtimes \aasSB_{\aaww}  = \dgsSB_{w} \ar[r]^<>(.5){\sim} &
 \bS(\dgbfbb) = \bS(\bbbfbb)\boxtimes \bS(\aabfbb).
}
 \end{equation}
 Translating the $\dgS$-module (resp. $\iiS$-module for $i=a,b$) structure via
 \eqref{eq:bbsSB}, we let $\bomegadgS$ (resp. $\bomegaiiS$) be the resulting
 module acting on $\bS(\dgbfbb)$ (resp. $\bS(\iibfbb)$).  Clearly, by
 \Cref{lem:Saction},
\begin{equation}\label{eq:bomegadgS}
\begin{split}
\bomegadgS(h) 
 & =   \bomega_{\dgbfbb}(h)\bomega_{\dgbfbb}(h^{-1}\cdot w-w) \psi (
\half\inn{w}{h^{-1}\cdot w -w}_{\dgW} ) \\
& = \bomegabbS(\bbhh)\boxtimes
\bomegaaaS(\aahh)
\qquad \forall h = (\bbhh,\aahh)  \in \dgS.
\end{split}
\end{equation}

We state a key lemma for the induction process.

\begin{lemma} 
\label{lem:de.S}
By an abuse of notation, let $\bomega_{\dgS}$ also denote its inflation to
$\dgS\ltimes \ssdbJ$. 
Then  
\[
\bomega_w = \bomegadgS \otimes \ssbomega
\]
as $\dgS\ltimes \ssdbJ$-module under the  factorization \eqref{eq:bS.dec}.  
\end{lemma}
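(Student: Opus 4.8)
\textbf{Proof plan for Lemma~\ref{lem:de.S}.}
The statement is an identity of $\dgS\ltimes\ssdbJ$-modules on the space $\bS(\bfbb)=\bS(\dgbfbb)\boxtimes\bS(\ssbfbb)$, namely $\bomega_w=\bomegadgS\otimes\ssbomega$. Since both sides are realized on the \emph{same} underlying vector space via the factorization \eqref{eq:bS.dec}, the plan is simply to compare the two operators applied to a decomposable tensor $f=f_1\boxtimes f_2$ with $f_1\in\bS(\dgbfbb)$, $f_2\in\bS(\ssbfbb)$, for each group element $h\in\dgS\ltimes\ssdbJ$. By \Cref{lem:Saction} (restricted to $S$, cf.\ \Cref{lem:SSaction}) we have
\[
\bomegaww(h)=\dbpsi(h)\,\bomegab(h)\,\bomegab(h^{-1}\cdot w-w),
\]
so the task is to split each of the three factors $\dbpsi(h)$, $\bomegab(h)$ and $\bomegab(h^{-1}\cdot w-w)$ according to $\bfbb=\dgbfbb\oplus\ssbfbb$ and to match the result with $\bomegadgS\otimes\ssbomega$.

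First I would treat the two generating types of elements separately, using \Cref{lem:SS}~\cref{it:lem.SS.5} which writes $S=\dgS\,\ssdbJ$ (with $\dgS=\bbS\times\aaS$). For $h=(u,u_{\boxslash})\in\dgS\ltimes\ssdbJ$ I would first handle $u_{\boxslash}\in\ssdbJ$: here $u_{\boxslash}\in G_{x,s}\times G'_{x',s}$, so $\bomegab(u_{\boxslash})$ acts trivially on $\bfbb$ (it lies in the pro-unipotent part acting trivially on $\sB_{0:0^+}$), and $u_{\boxslash}^{-1}\cdot w-w$ lands in $\sssB_0$ modulo $\sssB_{0^+}$ because the ``diagonal block'' contributions $-wX$, $X'w$ for $X,X'\in\ssfgg_{x,s}$ lie in $\ssW$ by the matrix computation in the proof of \Cref{lem:TB1}. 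Combined with the congruence $\tfrac12\innw{w}{u_{\boxslash}^{-1}\cdot w-w}\equiv\bB(X,\Gamma)+\bB(X',-\Gamma')\pmod{\fpp}$ from \eqref{eq:Sym.s}, this shows $\bomegaww(u_{\boxslash})$ acts on $\bS(\dgbfbb)$ trivially and on $\bS(\ssbfbb)$ exactly by $\ssbomega(u_{\boxslash})$ as defined in \eqref{eq:ssbomega.def} — i.e.\ it is $\id\otimes\ssbomega(u_{\boxslash})$, matching $\bomegadgS(1)\otimes\ssbomega(u_{\boxslash})$.

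Next I would handle $h=u\in\dgS$. For such $u=(\bbhh,\aahh)$ the element $u^{-1}\cdot w-w$ is already in $\dgW$ (the off-diagonal blocks vanish since $u$ preserves the block structure), so all three factors $\dbpsi$, $\bomegab(u)$, $\bomegab(u^{-1}\cdot w-w)$ respect the orthogonal splitting $\bfbb=\dgbfbb\oplus\ssbfbb$; the $\dgbfbb$-parts reassemble, via \eqref{eq:bbsSB} and \eqref{eq:bomegadgS}, into $\bomegadgS(u)=\bomegabbS(\bbhh)\boxtimes\bomegaaaS(\aahh)$, while the action on $\bS(\ssbfbb)$ is by $\bomega_{\ssbfbb}(u)=\ssbomega(u)$ (this is exactly the $u$-part of \eqref{eq:ssbomega.def}). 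The only point needing a little care is that $\dgS$ \emph{normalizes} $\ssdbJ$, so that the two actions are compatible as a semidirect-product module; this follows because $\dgS$ acts on $\ssbfbb$ symplectically (it preserves $\sB_0$ and the symplectic form) and \Cref{lem:TB1} already records that $\ssbiota$ is $\dgS$-equivariant, which is precisely the intertwining property needed to glue $\bomegadgS$ and $\ssbomega$ into the tensor-product $\dgS\ltimes\ssdbJ$-module.

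The main obstacle is bookkeeping rather than conceptual: one must verify that the cocycle $\psi(\tfrac12\innw{w}{h^{-1}\cdot w-w})$, which a priori mixes the $\dg$ and $\ss$ directions when $h=u\,u_{\boxslash}$ is a product, in fact splits additively. For a product $h=u u_{\boxslash}$ one computes $h^{-1}\cdot w-w=(u_{\boxslash}^{-1}u^{-1})\cdot w-w=u_{\boxslash}^{-1}\cdot(u^{-1}\cdot w)-w$ and expands; the cross term is $\innw{(u^{-1}-1)\cdot w}{(u_{\boxslash}^{-1}-1)\cdot w}$-type, which vanishes modulo $\fpp$ because $(u^{-1}-1)\cdot w\in\dgW$ while $(u_{\boxslash}^{-1}-1)\cdot w\in\ssW+\sB_{0^+}$ and $\dgW\perp\ssW$. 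Thus the cocycle is additive across the decomposition and the two sides agree on all products, completing the proof. I would present this as a short direct verification, invoking \Cref{lem:Saction}, \eqref{eq:Sym.s}, \Cref{lem:TB1}, \eqref{eq:bbsSB} and \eqref{eq:bomegadgS}, and relegating the matrix manipulations to ``a direct computation''.
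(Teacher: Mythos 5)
Your proposal is correct and follows essentially the same route as the paper: verify $\bomega_w=\bomegadgS\otimes\ssbomega$ separately on the two generating pieces $\dgS$ and $\ssdbJ$, using \Cref{lem:Saction}, \eqref{eq:Sym.s}, \Cref{lem:TB1}, and the orthogonality $\dgW\perp\ssW$. Your final paragraph on the additivity of the cocycle across products $h=u\,u_{\boxslash}$ is a sound sanity check but is not needed once one observes that both sides are bona fide representations of $\dgS\ltimes\ssdbJ$ (the left side by pullback from $S$, the right side by the inflation/tensor construction), so agreement on generators already forces agreement everywhere; the paper omits it accordingly.
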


\begin{proof}
  Suppose $h \in \dgS$. Then $h^{-1}\cdot w -w \in \dgsB_0$, i.e. its component
  in $\sssB_0$ is zero. Therefore by \Cref{lem:Saction}, \eqref{eq:ssbomega.def}
  and \eqref{eq:bomegadgS},
\[
\begin{split}
  \bomegaww(h) & = \bomegab(h)\bomega_{\bfbb}(h^{-1}\cdot w-w)
  \psi\left(\half\innw{w}{h^{-1}\cdot w-w} \right) \\
  & = (\bomega_{\dgbfbb}\boxtimes \bomega_{\ssbfbb})(h)
  \bomega_{\dgbfbb}(h^{-1}\cdot w-w)
  \psi\left(\half\inn{w}{h^{-1}\cdot w-w}_{\dgW} \right)\\
  & = \bomegadgS(h) \otimes \ssbomega(h). \quad
\end{split}
\]

\def\dbxx{\breve{x}}
Suppose  $h = (\exp(X),\exp(X')) \in \ssdbJ\subseteq
G_{x,s}\times G_{x,s}$. Then $\bomega_{\bfbb}(h) = \id$. Since 
\[h^{-1}\cdot w - w \in  -(X,X')\cdot w + \sB_{0^+} \subseteq \dgsB_{0^+}\oplus \sssB_{0}
\]
we see that $\bomega_\bfbb(h^{-1}\cdot w - w) = \id_{\bS(\dgbfbb)}\boxtimes
\bomega_{\ssbfbb}(-(X,X')\cdot w)$. 
Putting the above into \eqref{eq:OS} gives 
\[\bomegaww(h) =
\bomega_{\ssbfbb}(-(X,X')\cdot w) \psi(\half\inn{w}{h^{-1}\cdot w-w}_{W})
 = \ssbomega(h).
\]
This completes the proof of the lemma. 
\end{proof}

\subsection{Proof of \mbox{\Cref{prop:CKtype}}}
\label{sec:CKtype.fini}
Note that $\aaSigma$ has $b-1$ blocks and the data $(\aaSigma, \aaSigmap)$
satisfies the induction hypothesis in \Cref{sec:indhyp}. Also note that
$\ssbomega$ is an irreducible $\ssdbJ$-module since it is a Heisenberg
representation.  Now
\[
\begin{split}
  & \Hom_{\dbK}(\dbeta, \sSB_{\Omega})  \\
  & = \Hom_{\dbK}(\dbeta, \Ind_{S}^{\dbK} \bomegaww) = \Hom_{S}(\dbeta,
  \bomegaww)   \\
  & = \Hom_{\dgS \ltimes \ssJ }(\dgdbeta\otimes \ssdbkappa, \bomegadgS \otimes
  \ssbomega) \hspace{4em} \text{(by \eqref{eq:kappa.2} and \Cref{lem:de.S})}\\
  & = \Hom_{\dgS} \left(\dgdbeta,\bomegadgS\otimes
    \Hom_{\ssJ}(\ssdbkappa,\ssbomega) \right) \\
  & = \Hom_{\dgS} ( \dgdbeta, \bomegadgS)  \hspace{8em} \text{(by \Cref{lem:iso.KO})}\\
  & = \Hom_{\bbS}(\bbdbeta, \bomega_{\bbS})\otimes
  \Hom_{\aaS}(\aadbeta,\bomega_{\aaS}).
\end{split}
\]
It has dimension $1$ by \Cref{prop:sSB} and the Induction Hypothesis.
This completes the induction process and proves the proposition. \qed

\section{Proof of the main theorem II: Exhaustion}\label{sec:exhaust}
In this section, we prove \Cref{it:Main.2} of the \Cref{thm:main}.

\subsection{Occurrence of  refined $K$-types}\label{sec:pf.main.2}
Part~\cref{it:Main.2} of the \Cref{thm:main} is a easy consequence of following
\Cref{prop:exhaust}. Its proof consists of the whole \Cref{sec:I.exhaust} which
uses the key identity \cref{eq:Pan}.

Recall the notion of $K$-type data in the remark of \Cref{def:SC.D} and its
extension to covering groups in \Cref{rmk:dt.genD} of \Cref{def:LD}.

\begin{prop}\label{prop:exhaust}
  Let $(G,G') = (\rU(V),\rU(V'))$ be a type~I reductive dual pair.  Let
  $\tD = (x,\Gamma,\phi,\rho,\xi) $ be a supercuspidal datum of $\wtG$.  Suppose
  that $\theta_{V,V'}(\tpiD)\neq 0$ (or equivalently $\omega[\tetaD]\neq 0$) with
  respect to $(G,G')$.  Then there exist a supercuspidal datum $\Sigma$ for $G$
  and a $K$-type datum $\Sigma'$ for $G'$ such that
\begin{enumerate}[(i)]
\item $\Sigmap$ is a theta lift of $\Sigma$
  (cf. \Cref{def:LDC}), and the pair $(\Sigma,\Sigmap)$ defines a splitting
  $\spt_{x,x'}$;

\item $\tD$ is equivalent to $\tSigma := (\Sigma,\spt_{x,x'})$, i.e. $\pi_{\tD}
  \simeq \pi_{\tSigma}$ and
   
\item $\omega[\etaSigma \boxtimes \etaSigmap] \neq 0$ under the splitting
  $\spt_{x,x'}$ where $\etaSigma$ and $\etaSigmap$ are the refined $K$ and
  $K'$-types defined by $\Sigma$ and $\Sigma'$ respectively.
\end{enumerate}
\end{prop}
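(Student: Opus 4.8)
\textbf{Proof proposal for \Cref{prop:exhaust}.}

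The plan is to run an induction on the number of blocks of the datum $\tD$, mirroring the structure of the proof of \Cref{prop:CKtype} in \Cref{sec:CKtype.gen} but in the reverse direction: instead of starting from a lift of data and producing a $K$-type occurrence, I start from an occurrence in $\omega$ and reconstruct both data together with the geometric datum $w$ that makes them a theta lift pair. First I would reduce everything to a statement about the generalized lattice model. Since $\tpiD = \cInd_{\wtK}^{\wtG}\tetaD$ is supercuspidal, Frobenius reciprocity gives $\omega[\tetaD]\neq 0$ iff there is some point $x'\in\BTB{G'}$ and some self-dual lattice function $\sB = \sB_{x,x'}$ such that $\Hom_{\wtK}(\tetaD, \omega)\neq 0$ can be detected on $\sS(\sB_0)$; concretely one wants a coset $w+\sB_0$ with $\tetaD$ occurring in $\sS(\sB_0)_{\wtK\cdot w + \sB_0}$. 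The decomposition of $\omega|_{\wtK}$ over the $\wtK$-orbits in $W/\sB_0$ then forces $\tetaD$ to occur in some $\Ind_S^{\wtK}\bomega_w$ where $S=\Stab_{\wtK}(w+\sB_0)$. The first real step is therefore to analyze, using \Cref{lem:Saction} and the structure of $\etaD = \rho\otimes\kappa$ (with $\kappa|_{K_+}$ being $\psi_\Gamma$-isotypic), what the element $w$ must look like: restricting the occurrence to $K_+$ and using \cref{eq:bomegaS.sp} one sees that $\psi(\tfrac12\innw{w}{h^{-1}w - w}) = \psi_\Gamma(g)$ for $h=(g,\cdot)\in S\cap \wtK_+$, which by \Cref{lem:inn} forces $M(w) \in \Gamma + \fgg_{x,0}$ modulo a lower-order term; a Hensel-type argument as in the proof of \Cref{lem:LD.wdef} lets one adjust $w$ within its class so that $M(w)$ matches $\Gamma$ on the positive-depth blocks exactly.

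Next, with $w$ in hand, I would define $\Gamma' := M'(w)$ and $x'$ via the lattice function $\sL'_t := w\sL_{t+s}$ block by block (using \Cref{lem:L'} on each positive depth block and the depth-zero analysis of \Cref{sec:TL.zero} on the zero block), and set $\Sigma' := (x', -\Gamma', \phi', \rho')$ with $\phi' = \phi^*\circ\alpha^{-1}$ and $\rho' := \Hom_{K'_{0^+}}(\kappa', \text{(relevant piece of }\omega))$ — i.e.\ $\rho'$ is \emph{defined} as the multiplicity space that the occurrence produces, which is exactly where the construction of $\dtheta$ in \Cref{sec:DTL.pos} and \Cref{def:LD} comes from. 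By \Cref{lem:LiftGD} and \Cref{cor:cVpGamma}, the Witt class of the positive-depth part of $V'$ is forced, and by Pan's theorem (\Cref{thm:Pan0}) the depth-zero block of $\Sigma'$ is a theta lift of $\zzSigma$ over the finite field in the sense of \Cref{def:liftzerodatum}; together these say $\Sigma'$ is a theta lift of $\Sigma$ in the sense of \Cref{def:LDC}, giving (i). For (ii), the pair $(\Sigma,\Sigma')$ determines the canonical splitting $\xi_{x,x'}$ from \eqref{eq:SPT}, and one must check that the original splitting $\xi$ in $\tD$ agrees with $\xi_{x,x'}|_K$ up to the equivalence of \Cref{def:cSC.eq} — this is where the key identity generalizing \cite[Proposition~6.3]{Pan02D} (proved in \Cref{sec:IPD}, i.e.\ \cref{eq:Pan}) enters: it pins down the character $\MU{\dotxi}{\xi^g}$ that could a priori obstruct the equivalence, forcing it to be trivial (or absorbing it into $\rho$). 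Finally (iii) is essentially \Cref{prop:CKtype} read in the forward direction once we know $\Sigma'$ is a theta lift of $\Sigma$: the occurrence $\omega[\etaSigma\boxtimes\etaSigmap]\neq 0$ follows from the remark after \Cref{prop:CKtype}, since $\dim\Hom_{\dbK}(\dbeta,\sS(\sB_0)_\Omega) = 1$.

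The induction itself proceeds as in \Cref{sec:CKtype.gen}: peel off the top (largest-depth) block $\bbSigma$ of $\Gamma$, so $\Gamma = \bbGamma\oplus\aaGamma$ and $V = \bbV\oplus\aaV$; apply \Cref{lem:de.S}, which factors $\bomega_w = \bomega_{\dgS}\otimes\ssbomega$ and $\bomega_{\dgS} = \bomega_{\bbS}\boxtimes\bomega_{\aaS}$, to split the occurrence into an occurrence for the single positive-depth block $(\bbSigma, \bbSigmap)$ — handled by the orbit analysis of \Cref{sec:OB1,sec:OB2}, in particular \Cref{prop:sSB} — and an occurrence for the $(b-1)$-block datum $\aaSigma$, to which the induction hypothesis applies (with $\aacTp = \cT' - [\bbV']$); the base case $b=0$ is Pan's \Cref{thm:Pan0}. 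I expect the main obstacle to be the bookkeeping in (ii): showing that the splitting recorded in $\tD$ is forced to be the canonical one up to equivalence, and that the multiplicity space $\rho'$ one extracts is genuinely irreducible and cuspidal (so that $\Sigma'$ is a bona fide $K$-type datum whose depth-zero block is controlled). This is precisely the point at which the generalization of Pan's identity is indispensable — without it the comparison of splittings on $K_+\cap\dot K_+$ does not close — and carrying it through uniformly across all the blocks, rather than one block at a time, is the delicate part of the argument.
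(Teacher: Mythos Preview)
Your inductive strategy is correct and matches the paper's, but there is a genuine gap at the very first step, and you have misplaced where \cref{eq:Pan} enters.

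The problem is finding $x'$ and $w$. You write that the occurrence of $\tetaD$ in $\omega$ ``can be detected on $\sS(\sB_0)$'' for \emph{some} $x'\in\BTB{G'}$, and then analyze a coset $w+\sB_0$. But for \cref{eq:bomegaS.sp} to apply (and hence to conclude anything about $M(w)$ from the $K_+$-action) you need $w\in\sB_{-s}$, i.e.\ depth control on $w$ relative to $\sB_{x,x'}$. Nothing in your argument produces such an $x'$; a generic choice of $x'$ gives no bound on the support of vectors with nontrivial $\tetaD$-component. This is exactly what \cref{eq:Pan} does: the identity $\sS^{G_{x,r^+}} = \sum_{y\in\BTB{G'}}\sS_{\sB_{x,y,-s}}$ guarantees an $x''$ and a $w\in\sB_{x,x'',-s}$ with $\pretaD(\sS(\sB_{x,x'',0})_w)\neq 0$. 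So \cref{eq:Pan} is used at the \emph{start} (the paper's \Cref{lem:ww}), not for the splitting comparison in (ii). The splitting comparison is handled much more simply: one just absorbs the character $\MU{\xi}{\xi_{x,x'}}$ into $\rho$ and records the equivalence directly.

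Two further points. First, the character constraint from $G_{x,s^+}$ gives $M(w)\in\Gamma+\fgg_{x,-s}$, not $\fgg_{x,0}$; and to upgrade this to block-diagonal form $M(w)\in(\bbGamma+\bbfgg_{\bbxx,-s})\oplus\aafgg_{\aaxx,-s}$ the paper conjugates by $G_{x,s}$ using Kim--Murnaghan's topological Kostant section \cite[Lemma~5.1.3~(3)]{KM2}, not a Hensel argument. Second, you need the additional (non-obvious) facts that $w$ itself is block-diagonal, $w=\bbww\oplus\aaww$ with $\iiww\in\Hom_D(\iiV,\iiVp)$, and that the lattice function $\sL'_{x''}$ splits as $\bbsL'\oplus\aasL'$ (the paper's \Cref{lem:wbbwaa} and \Cref{lem:decompLp}); without these you cannot separate the $b$-block problem into a single-block piece and a $(b-1)$-block piece on the $G'$ side, so the induction does not run. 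Note also that the point $x'$ in the final datum is \emph{not} the $x''$ you find first: only its $\bbVp$-component survives, while the $\aaVp$-component is supplied by the induction hypothesis applied to $\aatnD$.
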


\begin{proof}[Proof of \Cref{thm:main}~\Cref{it:Main.2}]\label{pf:main.2}
Let $\tD$ be a supercuspidal datum such that $\tpi  = \tpi_{\tD}$. By
\Cref{prop:exhaust}, we have $\Sigma$ and $\Sigmap$ such that $\tpi =
\tpiSigma:= \cInd_{\wtK}^{\wtG}\tetaSigma$ and 
\[
\begin{split}
0 &\neq \Hom_{\wtK\times\wtK'}(\tetaSigma\boxtimes \tetaSigmap, \omega)
= \Hom_{\wtG\times\wtK'}(\tpi \boxtimes \tetaSigmap, \omega)\\ 
&= \Hom_{\wtG\times \wtK'}(\tpi\boxtimes \tetaSigmap, \tpi\boxtimes \thetaVVp(\tpi)).
\end{split}
\] 
Therefore $\tetaSigmap$ occurs in $\thetaVVp(\tpi)$.  By
\cite[Proposition~17.2~(2)]{Kim}, we conclude that $\rho'$ in $\tSigmap$
is cuspidal since
$\tpi'= \thetaVVp(\tpi)$ is supercuspidal by assumption.  Hence
$\Sigmap = \dthetaVT(\Sigma)$ by definition, $\tSigma'$ is a supercuspidal datum
and $\tpi' = \tpiSigmap$.
\end{proof}

\begin{remark}
  In the proof of \cite[Proposition~17.2]{Kim}, \cite[Lemma~15.4]{Kim} is used
  to treat the depth-zero case (see \cite[p~315]{Kim}). The covering group
  version of this lemma also holds since ``The proof of Proposition 6.7 in
  \cite{MP2} goes through without changes'' as stated in \cite[proof of
  Theorem~3.10]{HW}.  Meanwhile the other parts of the proof of
  \cite[Proposition~17.2~(2)]{Kim} only involves subgroups of $G$ which split
  canonically (because they are either unipotent or pro-$p$). Thus the proof
  \cite[Proposition~17.2]{Kim} also adapts ``mutatis mutandis''.
\end{remark}

\def\pretaD{{\pr_{\tetaD}}}
\def\prr{{\pr_{G_{x,r^+}}}}
\def\prGamma{{\pr_{[\Gamma]}}}


\subsection{Proof of \Cref{prop:exhaust}} \label{sec:I.exhaust} When $\tD$ has
no positive depth block, i.e. $\tD$ is a depth zero data, this is proved by Pan
in \cite{Pan02J}. See \Cref{sec:TL.zero} and in particular \Cref{thm:Pan0}.

We prove by induction on the number of blocks similar to \Cref{sec:CKtype.gen}.  We
now assume $\tD$ has $b$ positive depth blocks with $b > 0$.

\begin{IndH}
  Assume \Cref{prop:exhaust} holds for $(\smtD,\smV,\smVp)$ where
  $\dim \smV\leq \dim V$, $\dim \smVp \leq \dim V'$ and $\smtD$ has $\smbb$
  positive depth blocks with $b>\smbb$.
\end{IndH}

\subsubsection{} 
Suppose $\tD = (x,\Gamma,\phi,\rho, \spt)$. Let $\lD = (x,\Gamma,\phi,\rho)$ and
$\lD = \bblD\oplus \aalD$ be the decomposition of $\lD$ as that for $\Sigma$ in
\Cref{sec:decSigma} so that $\depth(\bblD)=r$ and $\depth(\aalD)<r$.  We adopt
the notation defined in \Cref{def:sskappa} with respect to $\lD$.

Let $\prr$ (resp. $\prGamma$ and $\pretaD$) be the projection operator to the
$G_{x,r+}$-invariant spaces (resp. the $\psi_{\Gamma}|_{G_{x,s^+}}$-isotypic
component and the $\tetaD$-isotypic component). Clearly,
$\prGamma = \prGamma \circ \prr$ and $\pretaD\circ \prGamma = \pretaD$.


\begin{lemma} \label{lem:ww}
There exist an $x''\in \BTB{G'}$ and a $w \in \sB_{x,x'',-s}$ such that 
\begin{enumerate}[(i)]
\item \label{it:ww.1} $\pretaD(\sS(\sB_{x,x'',0})_w) \neq 0$ and
	
\item \label{it:ww.2} $M(w) \in \Gamma + \bbfgg_{\bbxx,-s} \oplus
  \aafgg_{\aaxx,-s} \subseteq \bbfgg\oplus \aafgg$.
\end{enumerate}
\end{lemma}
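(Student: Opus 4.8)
The goal is to produce a point $x''$ in the building of $G'$ and an element $w\in \sB_{x,x'',-s}$ whose associated line model space $\sS(\sB_{x,x'',0})_w$ carries a nonzero $\tetaD$-isotypic vector, \emph{and} whose moment map $M(w)$ lands in the correct coset, namely $\Gamma$ modulo the ``diagonal block'' filtration $\bbfgg_{\bbxx,-s}\oplus\aafgg_{\aaxx,-s}$. The starting point is the hypothesis $\omega[\tetaD]\neq 0$, which by the generalized lattice model (\Cref{sec:LM}) and \Cref{lem:Saction} says that, for a suitable self-dual $\sB'$ in $W$ of the form $\sB_{x,y}$ with $y\in\BTB{G'}$, the space $\sS(\sB'_0)_{\Omega'}[\tetaD]$ is nonzero for some $\dbK$-orbit $\Omega' = \dbK\cdot w_0 + \sB'_0$; here one uses that $\tpiD = \cInd_{\wtK}^{\wtG}\tetaD$ occurs in $\omega$, restricts $\omega$ to $\wtK\times\wtK'$ (or rather to $\wtK$ times a compact open on the $G'$ side), and decomposes the oscillator representation over the $G_x\times G'_y$-orbits of cosets in $W/\sB'_0$. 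So the existence of \emph{some} $(x'',w)$ satisfying \cref{it:ww.1} is essentially a repackaging of the nonvanishing hypothesis; the real content is arranging \cref{it:ww.2} simultaneously.

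The plan for \cref{it:ww.2} is to run a successive-approximation / filtration argument on $w$ driven by the surjectivity of the moment map on cosets, in the spirit of \Cref{lem:surj}. Concretely: first use $\pretaD = \pretaD\circ\prGamma$ to see that the $\tetaD$-isotypic vector is automatically $\psi_\Gamma|_{G_{x,s^+}}$-isotypic, and then translate the condition ``$f(w)\neq 0$ and $G_{x,s^+}$ acts by $\psi_\Gamma$'' into a congruence on $M(w)$ modulo $\fgg_{x,-s^+}$ via \Cref{lem:inn} and the computation \eqref{eq:Sym.s}; this forces $M(w)\equiv\Gamma\pmod{\fgg_{x,-s^+}}$ on the relevant graded piece, i.e. gives the congruence \cref{it:ww.2} at the top level. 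One then needs to promote this to the stated coset $\Gamma+\bbfgg_{\bbxx,-s}\oplus\aafgg_{\aaxx,-s}$: the point is that, after conjugating $x''$ by $G'$, one can move $w$ within its $\Stab$-orbit so that the off-diagonal (i.e. $\ssfgg$) components of $M(w)$ are killed and only the block-diagonal part survives. Here the block decomposition machinery of \Cref{sec:decSigma} (especially $W = \dgW\oplus\ssW$ and the identification $\ssbiota$ of \Cref{lem:TB1}) is the tool: the $\ssfgg$-part of $M(w)$ is controlled by the off-diagonal part of $w$, which can be adjusted. This is a finite induction terminating because $\Jump(\fgg_x)$ is discrete, exactly as in the proof of \Cref{lem:surj}.

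I expect the main obstacle to be the bookkeeping that ties \cref{it:ww.1} and \cref{it:ww.2} together rather than either in isolation: moving $w$ to fix up $M(w)$ must not destroy the nonvanishing of $\pretaD(\sS(\sB_{x,x'',0})_w)$, and changing $x''$ (equivalently, the self-dual lattice function $\sL''$ on $V'$) changes the lattice $\sB_{x,x''}$ and hence the whole model space. The clean way to handle this is to first fix the coset $\Gamma+\fgg_{x,-s^+}$ using the $\tetaD$-isotypy (which is forced and rigid), then invoke \Cref{lem:L'} and \Cref{cor:Gporbit} to see that, up to $G'$-conjugacy, $x''$ and $w$ are essentially pinned down on the positive-depth-$b$ block, and finally use the induction hypothesis on the lower-rank dual pair $(\rU(\aaV),\rU(\aaV'))$ — via the decomposition $\bomegaww = \bomegadgS\otimes\ssbomega$ of \Cref{lem:de.S} and \Cref{lem:iso.KO} — to obtain the datum on the remaining $a$-blocks with the matching moment-map condition there. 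The subtlety is purely in verifying that the two halves are compatible, i.e. that the $w$ produced by the $a$-block induction and the $w$ forced by the $b$-block rigidity can be assembled into a single element of $\sB_{x,x'',-s}$ with the asserted $M(w)$; this is precisely where the orthogonal direct-sum structure $\sB = \dgsB\oplus\sssB$ and the $\dgS$-equivariance in \Cref{lem:TB1} do the work.
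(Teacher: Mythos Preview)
Your plan has two genuine gaps, one in each part.

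For \cref{it:ww.1}, you assert that ``for a suitable self-dual $\sB'=\sB_{x,y}$ the space $\sS(\sB'_0)_{\Omega'}[\tetaD]$ is nonzero'' as a repackaging of $\omega[\tetaD]\neq 0$. But in the generalized lattice model, a $\tetaD$-isotypic vector is a priori just some function on $W$; nothing in \Cref{sec:LM} or \Cref{lem:Saction} tells you its support lies in $\sB_{x,y,-s}$ for \emph{any} $y$. The paper uses the nontrivial identity \eqref{eq:Pan} (proved in \Cref{sec:IPD}, generalizing Pan) which says precisely that $\sS^{G_{x,r^+}}=\sum_{y'\in\BTB{G'}}\sS_{\sB_{x,y',-s}}$. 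Only with this in hand can one pick $x''$ so that the $\tetaD$-component survives on $\sB_{x,x'',-s}$ and then choose $w$ in a single coset.

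For \cref{it:ww.2}, your step of getting $M(w)\in\Gamma+\fgg_{x,-s}$ from $\psi_\Gamma|_{G_{x,s^+}}$-isotypy is correct and matches the paper. But the promotion to the block-diagonal coset is where your plan breaks down. You propose ``conjugating $x''$ by $G'$'' to kill the off-diagonal part of $M(w)$, but $M(g'\cdot w)=(g'w)^\mstar(g'w)=w^\mstar w=M(w)$ for $g'\in G'$, so $G'$-action does nothing to $M(w)$. The tools you invoke (\Cref{lem:TB1}, \Cref{lem:de.S}, the induction hypothesis) all presuppose a $w$ already in block form with $M(w)=\Gamma$; using them here is circular, and in any case they belong to the \emph{later} steps of \Cref{prop:exhaust}, not to this lemma. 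The paper instead observes that a $\GL(\bbV)$-good element $\ckGamma\in\bbGamma+\bbfgg_{\bbxx,-r^+}$ has $\ckG=\Cent{G}{\ckGamma}\subseteq\bbG\times\aaG$, and then applies the Kim--Murnaghan ``topological Kostant section'' lemma \cite[Lemma~5.1.3~(3)]{KM2}: $(\Ad G_{x,s})(\Gamma+\ckfgg_{x,-s})=\Gamma+\fgg_{x,-s}$. This gives a single $h\in G_{x,s}$ with $M(h\cdot w)\in\Gamma+\ckfgg_{x,-s}\subseteq(\bbGamma+\bbfgg_{\bbxx,-s})\oplus\aafgg_{\aaxx,-s}$, and since $G_{x,s}\subseteq K$ normalizes $\tetaD$, replacing $w$ by $h\cdot w$ preserves \cref{it:ww.1}.
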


\begin{proof}
  \def\sBxys{\sS_{\sB_{x,y',-s}}} 
  Since $\Jump(\sL_x)\subseteq \bQ$, we have
  \begin{equation}\label{eq:Pan}
    \sS^{G_{x,r^+}} = \sum_{y'\in \BTB{G'}} \sBxys.  
  \end{equation}
  See \Cref{sec:IPD} for the notation and a quick proof of \cref{eq:Pan}.

  Note that $K\subseteq G_x$ preserves $\sBxys$.  Therefore, we can find an
  $x'' \in \BTB{G'}$ such that $\pretaD(\sS_{\sB_{x,x'',-s}}) \neq 0$ since
  $\pretaD(\sS)\neq 0$.

We denote $\sB_{x,x'',t}$ by $\sB_{t}$. 
Since $\sB_{-s} = \bigcup_{w \in \sB_{-s}} (w + \sB_0)$, there is a $w \in
\sB_{-s}$ such that
$\pretaD(\sSB_{w}) \neq 0$. Clearly $\prGamma(\sSB_w)\neq 0$.  

\setcounter{claim}{0}
\begin{claim*}
  If $\prGamma(\sSB_w)\neq 0$, then $M(w) \in \Gamma + \fgg_{x,-s}$.
\end{claim*}

\begin{proof}
  Let $g = \exp(X) \in G_{x,s^+}$ with $X \in \fgg_{x,s^+}$.  By
  \eqref{eq:bomegaS.sp},  $\omega(g) f = \psi_{M(w)}(g) f$ for each
  $f\in \sSB_{w+\sB_{0}}$.  Hence $\prGamma(\sSB_{w+\sB_{0}}) \neq 0$ is
  equivalent to $\psi_{M(w)}(g) = \psi_\Gamma(g)$ for all $g\in G_{x,s^+}$. This
  is equivalent to
\[
\psi(\bB(M(w)-\Gamma,X)) = 1 \quad \forall X \in \fgg_{x,s^+}.
\]
Since $\psi$ is a non-trivial character with conductor $\fppF$, the above
condition is equivalent to $\bB(M(w)-\Gamma,\fgg_{x,s^+}) \subseteq \fppF$,
i.e. $M(w) \in \Gamma + \fgg_{x,-s}$.  
\end{proof}

\smallskip

By \Cref{prop:GD}, let $\ckGamma$ be a $\GL(\bbV)$-good element in
$\bbfgg_{\bbxx,-r}$ representing $\bbGamma+ \bbfgg_{\bbxx,-r^+}$. Then $\ckGamma$
is also a good element in $\fgg_{x,r}$ representing
$\Gamma+\fgg_{x,-r^+}$. Clearly
$\ckG := \Cent{G}{\ckGamma} \subseteq \bbG\times \aaG$.

We now recall a result of Kim-Murnaghan.
\begin{lemma*}[{\cite[Lemma~5.1.3~(3)]{KM2}}]
Let $x\in \BTB{\ckG}$ and $X\in \ckfgg_{x,-r}\cap \ckfgg_{-r^+}$. 
Then for $t>-r$ we have
\[
(\Ad G_{x,r+t})(\ckGamma+X + \ckfgg_{x,t}) = \ckGamma + X +
\fgg_{x,t}.\qedhere
\]
\end{lemma*}

Setting $X = \Gamma- \ckGamma$, and $t = -s$, the above lemma gives
$(\Ad G_{x,s})(\Gamma + \ckfgg_{x,-s}) = \Gamma + \fgg_{x,-s}$.
In other words, there is an $h\in G_{x,s}$ such that
\[
M(h\cdot w) = h\cdot M(w) \in \Gamma + \ckfgg_{x,-s} \subseteq (\bbGamma +
\bbfgg_{\bbxx,-s}) \oplus \aafgg_{\aaxx,-s}.
\]
Since $G_{x,s}$ normalizes $\tetaD$,
$\pretaD(\sSB_{h\cdot w}) = h\cdot \pretaD(\sSB_w) \neq 0$. Therefore by
replacing~$w$ with $h\cdot w$, we may assume that $w$ satisfies
\Cref{lem:ww}~\cref{it:ww.2}.

This completes the proof of \Cref{lem:ww}.
\end{proof}

\subsubsection{} Let $x''$ and $w$ satisfy \Cref{lem:ww}. Let $\sB :=
\sB_{x,x''}$ and 
\begin{equation} \label{eq:bbXaaX}
M(w) = \bbX + \aaX
\end{equation}
where $\bbX \in \bbGamma + \bbfgg_{\bbxx,-s}$ and $\aaX \in \aaGamma + \aafgg_{\aaxx,-s}$.
 
We define $\bbV' = w (\bbV)$ and $\aaV' = \bbV'^\perp$. Let $\iiww := w|_{\iiV}$
for $i = a$ and $b$.

\begin{lemma} 
  \label{lem:wbbwaa} 
The following statements hold:
  \begin{enumerate}[(i)]
  \item \label{it:wba.1} Restricting on $\bbV$, the map
    $w|_{\bbV} \colon \bbV \iso \bbV'$ is an isomorphism.
    
  \item \label{it:wba.2} The restriction of $\inn{}{}_{V'}$ to $\bbV'$ is
    non-degenerate. In particular, $V' = \bbV' \oplus \aaV'$. 
    
  \item \label{it:wba.3} The image $w(\aaV) \subseteq \aaV'$, i.e.
    $\aaww\in \Hom(\aaV,\aaV')$.
  \end{enumerate}
\end{lemma}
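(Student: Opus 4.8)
\textbf{Proof plan for Lemma~\ref{lem:wbbwaa}.}
The plan is to extract the three claims from the structure already set up in \Cref{lem:ww}, exploiting that $\bbX = \prl_{\bbfgg}(M(w))$ is a block of the moment map with controlled depth. First I would prove \cref{it:wba.1}. We know from \Cref{lem:ww}~\cref{it:ww.2} that $M(w) = w^\mstar w$ has block decomposition $\bbX \oplus \aaX$ with $\bbX \in \bbGamma + \bbfgg_{\bbxx,-s}$; since $\bbGamma$ has depth $-r$ and represents a good coset, \cref{it:Ga.2} (applied inside $\bbfgg$, i.e. for the single block of depth $r$) says every element of $\bbGamma + \bbfgg_{\bbxx,-r^+}$ is invertible on $\bbV$. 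As $\bbfgg_{\bbxx,-s} \subseteq \bbfgg_{\bbxx,-r^+}$ because $s = r/2 > r \cdot(\text{nothing})$... more precisely $-s > -r$ so $\bbfgg_{\bbxx,-s}\subseteq \bbfgg_{\bbxx,-r^+}$, we get that $\bbX$ is invertible on $\bbV$. Writing $M(w)$ as a block matrix in $\End_D(\bbV)\oplus\End_D(\aaV)$, the $\bbV$-diagonal block is $(w|_\bbV)^\mstar (w|_\bbV) + (\text{off-diagonal contributions})$; I would need to observe that the block decomposition of $M(w)$ forces the relevant off-diagonal pieces of $w$ to be small enough that the $\bbV$-block of $M(w)$ is exactly $(w|_{\bbV})^\mstar(w|_{\bbV})$ up to something that does not affect invertibility — hence $w|_\bbV$ is injective, and since $\dim_D \bbV = \dim_D(w\bbV)$ by definition of $\bbV' := w(\bbV)$, it is an isomorphism onto $\bbV'$.

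For \cref{it:wba.2}, the restriction of $\inn{}{}_{V'}$ to $\bbV'$ is computed by pulling back along the isomorphism $w|_\bbV$: for $v_1, v_2 \in \bbV$ we have $\innvp{w v_1}{w v_2} = \innv{v_1}{w^\mstar w v_2} = \innv{v_1}{M(w) v_2}$, and the $\bbV$-component of $M(w) v_2$ is $\bbX v_2$ with $\bbX$ invertible. So the form on $\bbV'$ corresponds under $w|_\bbV$ to $(v_1, v_2)\mapsto \innv{v_1}{\bbX v_2}$, which is non-degenerate because $\innv{}{}$ is non-degenerate on $\bbV$ and $\bbX$ is invertible (this is exactly the twist defining $\bbV_{\bbGamma}$ in \Cref{def:VGa}, up to the correction term). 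Non-degeneracy of $\inn{}{}_{V'}|_{\bbV'}$ then gives the orthogonal direct sum decomposition $V' = \bbV' \oplus \aaV'$ with $\aaV' := \bbV'^\perp$, since $\dim_D \bbV' + \dim_D \bbV'^\perp = \dim_D V'$ whenever the form restricted to $\bbV'$ is non-degenerate.

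For \cref{it:wba.3}, I would show $w(\aaV)\perp \bbV'$, i.e. $\innvp{w v}{w v'} \in$ (the defining condition of $\aaV'$) for all $v\in\aaV$, $v'\in\bbV$. Again $\innvp{w v}{w v'} = \innv{v}{M(w) v'}$, and since $v'\in\bbV$ and $M(w)$ respects the block decomposition $V = \bbV\oplus\aaV$ modulo the filtration but its off-block part lies in $\fgg_{x,-s}$ which pairs trivially against... here one must be careful: $M(w) v' = \bbX v' + (\text{off-block term in }\aaV)$, and $\innv{v}{\bbX v'} = 0$ since $v\in\aaV$, $\bbX v'\in\bbV$ and the decomposition is orthogonal; the off-block term pairs with $v\in\aaV$ possibly nontrivially, so strictly $w(\aaV)$ is not exactly orthogonal to $\bbV'$ — but the conclusion wanted is $w(\aaV)\subseteq \aaV'$, which after the correction I expect to follow because $\aaV'$ is characterized not by a strict orthogonality but because $w(\bbV)\oplus w(\aaV) = V'$ and $w(\bbV) = \bbV'$, forcing $w(\aaV)$ to be a complement, which combined with a dimension count and \cref{it:wba.2} pins it down as $\aaV' = \bbV'^\perp$. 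I expect the main obstacle to be precisely this bookkeeping of the off-diagonal blocks of $M(w)$: one needs to argue that although $M(w)$ is not literally block-diagonal, the off-diagonal blocks lie deep enough in the Moy--Prasad filtration ($\fgg_{x,-s}$ versus the $-r$-depth of the diagonal $\bbX$-block) that they do not obstruct invertibility of $w|_\bbV$, non-degeneracy of the form on $\bbV'$, nor the final complement statement. Once that filtration estimate is in hand, all three parts are short linear-algebra arguments of the same flavour as \Cref{lem:L'} and \Cref{cor:cVpGamma}.
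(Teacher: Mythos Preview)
Your proposal overcomplicates the situation because of a misreading of \Cref{lem:ww}~\cref{it:ww.2}. That lemma says $M(w)\in \Gamma + \bbfgg_{\bbxx,-s}\oplus\aafgg_{\aaxx,-s}\subseteq \bbfgg\oplus\aafgg$, i.e.\ $M(w)=w^\mstar w$ is \emph{literally} block-diagonal: its off-diagonal parts in $\Hom_D(\bbV,\aaV)\oplus\Hom_D(\aaV,\bbV)$ vanish. So the ``main obstacle'' you anticipate --- controlling off-diagonal blocks of $M(w)$ via the Moy--Prasad filtration --- never arises. With this in hand, all three parts are one-line computations exactly as the paper does: $w^\mstar w|_{\bbV}=\bbX$ is invertible, giving \cref{it:wba.1}; $\innvp{wv_1}{wv_2}=\innv{v_1}{\bbX v_2}$ on the nose, giving \cref{it:wba.2}; and for $u\in\bbV$, $v\in\aaV$ one has $\innvp{wu}{wv}=\innv{u}{M(w)v}=\innv{u}{\aaX v}=0$ since $\aaX v\in\aaV\perp\bbV$, giving \cref{it:wba.3}.

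Your fallback dimension argument for \cref{it:wba.3} (``$w(\bbV)\oplus w(\aaV)=V'$ forces $w(\aaV)$ to be the complement'') does not work here: in \Cref{sec:exhaust} we have no control over $\dim_D V'$ relative to $\dim_D V$, so $w$ need not be surjective and $w(\aaV)$ need not have the right dimension to be identified with $\aaV'$. The orthogonality argument is the correct route.
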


\begin{proof}
\begin{asparaenum}[(i)]
\item All elements in $\bbGamma + \bbfgg_{\bbxx,-s}$ are invertible elements in
  $\End(\bbV)$. In particular $\bbX$ is invertible. Since $w^\mstar w = \bbX$
  restricted on $\bbV$, $w \colon \bbV \rightarrow \bbV'$ is an injection and
  hence an isomorphism.

\item Let $v_1, v_2 \in \bbV$. Then
  $\inn{wv_1}{wv_2}_{V'} = \inn{v_1}{M(w) v_2}_{V} = \inn{v_1}{\bbX
    v_2}_{\bbV}$. Since $\bbX$ is invertible, the claim follows.

\item Suppose $v \in \aaV$. Then $\inn{wu}{wv}_{V'} = \inn{-\bbX u}{v}_{V}
  = 0$ for all $u \in \bbV$. Hence $wv \in \bbVp^\perp = \aaV'$. \qedhere

\end{asparaenum}
\end{proof}

By \Cref{lem:wbbwaa}, we see that $\iiww\in \Hom(\iiV,\iiV')$ and
$w = \bbww \oplus \aaww$ is a block diagonal decomposition.  Moreover
$\iiM(\iiww) = \iiX$ for $\iiX$ in \eqref{eq:bbXaaX} where $\iiM$ is the moment
map defined with respect to the dual pair $(\rU(\iiV),\rU(\iiVp))$.

Let $\iisL = \sL \cap \iiV$ which is the lattice function corresponding to
$\iixx\in \BTB{\iiG}$ and $\sL = \bbsL \oplus \aasL$.  The following lemma says
that the lattice function $\sL'$ corresponding to $x''$ is split under
$V' = \bbVp\oplus \aaVp$.

\begin{lemma} \label{lem:decompLp}
Let  $\iisLp_t = \sL'_t \cap \iiVp$ for $i = b,a$. Then 
\begin{enumerate}[(i)] 
\item \label{it:dec.Lp.1} $\bbsL'_{t} = \bbww(\bbsL_{t+s})$ and it is self-dual
  in $\bbV'$,

\item \label{it:dec.Lp.2} $\sL' = \bbsL' \oplus \aasL'$ and

\item \label{it:dec.Lp.3} $\aasL'$ is self-dual\footnote{We warn that
    $\aasL_t' \neq w_a \aasL_{t+s}$.}.

\end{enumerate}
\end{lemma}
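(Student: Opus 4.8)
\textbf{Proof plan for Lemma~\ref{lem:decompLp}.}
The goal is to show that the self-dual lattice function $\sL'$ corresponding to $x''$ respects the orthogonal decomposition $V' = \bbV'\oplus \aaV'$, and to identify its $\bbV'$-component explicitly in terms of $w$ and $\sL$. The three assertions are logically dependent: \cref{it:dec.Lp.1} gives the explicit description and self-duality of the $b$-part, \cref{it:dec.Lp.2} is the splitting statement, and \cref{it:dec.Lp.3} follows from the first two by an orthogonality-of-duals argument. I would prove them roughly in that order, but \cref{it:dec.Lp.2} is really the crux and should be tackled first in spirit.

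First I would establish \cref{it:dec.Lp.1}. Since $\bbX = \bbM(\bbww) = \bbww^\mstar \bbww \in \bbGamma + \bbfgg_{\bbxx,-s}$ and $\bblD$ is a single block of positive depth $\bbr = 2s$, the element $\bbX$ satisfies the analogue of \cref{it:Ga.1}: $\bbX\,\bbsL_{t} = \bbsL_{t-\bbr}$ for all $t$ (this is just \cref{it:Ga.1} applied at the point $\bbxx\in\BTB{\bbG^0}$, using that $\bbX\in\bbGamma+\bbfgg_{\bbxx,-\bbr^+}$ — here I need $s$ to be half the depth so that $\bbfgg_{\bbxx,-s}$ sits inside the relevant filtration step; this is exactly the setup of \Cref{lem:L'}). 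Then, verbatim as in the proof of \Cref{lem:L'}~\cref{it:L'.2}, the lattice function $t\mapsto \bbww(\bbsL_{t+s})$ in $\bbV'$ is self-dual: for $v\in\bbV$, $\inn{\bbww v}{\bbww\bbsL_{t+s}}_{\bbV'} = \inn{v}{\bbX\,\bbsL_{t+s}}_{\bbV} = \inn{v}{\bbsL_{t-s}}_{\bbV}$, whence $(\bbww\bbsL_{t+s})^* = \bbww\bbsL_{-(t-s)^+} = \bbww\bbsL_{(-t+s)^+}$, which is the value at $-t^+$. It remains to check $\bbww(\bbsL_{t+s}) = \sL'_t\cap\bbV'$; the inclusion ``$\subseteq$'' needs that $\bbww\bbsL_{t+s}\subseteq \sL'_t$, which should follow from $w\in\sB_{x,x'',-s} = (\sL_x\otimes_D\sL'_{x''})_{-s}$ by testing against $\sL_{x,s+t}\supseteq\bbsL_{s+t}$; the reverse inclusion then follows from self-duality of both sides inside $\bbV'$ (a dual-lattice sandwich, as in the proof of \Cref{lem:L'}~\cref{it:L'.3}).

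Next, for \cref{it:dec.Lp.2}, I would argue that $\sL'_t = (\sL'_t\cap\bbV')\oplus(\sL'_t\cap\aaV')$. One inclusion is trivial. For the other, the key point is that the orthogonal projection $V'\to\bbV'$ (along $\aaV'$, well-defined by \Cref{lem:wbbwaa}~\cref{it:wba.2}) preserves $\sL'$: since $\sL'$ is self-dual and $\bbV' = \bbww(\bbV)$ is $w$-related to $\bbV$, one shows the projector lands $\sL'_t$ into $\bbsL'_t = \bbww\bbsL_{t+s}$. Concretely, for $v'\in\sL'_t$ write $v' = v'_b + v'_a$; to see $v'_b\in\sL'_t$ it suffices (by self-duality, i.e. $\sL'_t = ((\sL'_{-t^+})^*$) to check $\inn{v'_b}{\sL'_{-t^+}}_{V'}\subseteq\fppD$, and since $\inn{v'_b}{\aaV'}_{V'} = 0$ this reduces to $\inn{v'_b}{\sL'_{-t^+}\cap\bbV'}_{V'} = \inn{v'}{\bbsL'_{-t^+}}_{V'}\subseteq\inn{v'}{\sL'_{-t^+}}_{V'}\subseteq\fppD$, using part \cref{it:dec.Lp.1} to identify $\sL'_{-t^+}\cap\bbV' = \bbsL'_{-t^+}$. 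Finally \cref{it:dec.Lp.3}: from \cref{it:dec.Lp.2}, $\aasL'_t = \sL'_t\cap\aaV'$, and dualizing inside $\aaV'$ — noting that $\bbV'$ and $\aaV'$ are mutually orthogonal so the dual of $\sL'_t$ inside $V'$ restricts to the dual of $\aasL'_t$ inside $\aaV'$ — gives $(\aasL'_t)^* = (\sL'_t)^*\cap\aaV' = \sL'_{-t^+}\cap\aaV' = \aasL'_{-t^+}$, as required.

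The main obstacle is \cref{it:dec.Lp.2}: showing the self-dual lattice function genuinely splits, rather than merely containing the direct sum of its intersections. The projection-preserves-$\sL'$ argument is the heart of it, and the delicate point is the interplay between the three self-dualities (of $\sL'$ in $V'$, of $\bbsL'$ in $\bbV'$, of the not-yet-known $\aasL'$ in $\aaV'$) together with the identification from part \cref{it:dec.Lp.1}. I expect this to go through cleanly once the explicit formula $\bbsL'_t = \bbww\bbsL_{t+s}$ is in hand, but it requires care because, as the footnote to \cref{it:dec.Lp.3} warns, the analogous naive formula $\aasL'_t = \aaww\aasL_{t+s}$ is \emph{false} — the $a$-block lattice function is pinned down only by self-duality and the splitting, not by a moment-map relation, since $M(w)$ restricted to $\aaV$ is a perturbation of $\aaGamma$ of large relative depth rather than an exact good element.
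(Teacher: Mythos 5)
Your overall strategy matches the paper's: identify $\bbsL'_t := \sL'_t\cap\bbV'$ with $\bbww(\bbsL_{t+s})$ by a dual-lattice sandwich modelled on \Cref{lem:L'}~\cref{it:L'.3}, then prove the splitting, then deduce self-duality of $\aasL'$ from the other two parts. One caveat on part~\cref{it:dec.Lp.1}: your phrase ``self-duality of both sides'' is misleading, since $\sL'_t\cap\bbV'$ is not known to be self-dual in advance --- that is part of the conclusion. What closes the sandwich is the self-duality of $\bbww(\bbsL_{t+s})$ (given by \Cref{lem:L'}) together with the general containment $(\sL'_t\cap\bbV')^*\supseteq(\sL'_t)^*\cap\bbV' = \sL'_{-t^+}\cap\bbV'$, the dual taken inside $\bbV'$ and the last equality by self-duality of $\sL'$ in $V'$.

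The real gap is in part~\cref{it:dec.Lp.2}. You want $v'_b\in\sL'_t = (\sL'_{-t^+})^*$ and claim that the orthogonality $\inn{v'_b}{\aaV'} = 0$ reduces the test set from $\sL'_{-t^+}$ to $\sL'_{-t^+}\cap\bbV'$. It does not. For $u = u_b + u_a\in\sL'_{-t^+}$ one has $\inn{v'_b}{u} = \inn{v'_b}{u_b}$, so the effective test set is the \emph{projection} of $\sL'_{-t^+}$ to $\bbV'$, which a priori is strictly larger than $\sL'_{-t^+}\cap\bbV'$ --- that they coincide is exactly the splitting you are trying to establish, so the reduction is circular. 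The repair (which is the paper's argument) is to show instead $v'_b\in\bbsL'_t = (\bbsL'_{-t^+})^*$, the dual now taken inside $\bbV'$, using the self-duality of $\bbsL'$ from part~\cref{it:dec.Lp.1}. Then the test set $\bbsL'_{-t^+}$ lies in $\bbV'$, orthogonality lets you replace $v'_b$ by $v'$ (since $v'_a\perp\bbV'$), and $\bbsL'_{-t^+}\subseteq\sL'_{-t^+}$ gives $\inn{v'}{\bbsL'_{-t^+}}\subseteq\inn{v'}{\sL'_{-t^+}}\subseteq\fppD$. Part~\cref{it:dec.Lp.3} is then correct as you wrote it.
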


\begin{proof}
\begin{asparaenum}[(i)]
\item Let $\bbsL''_{t} := \bbww(\bbsL_{t+s}) = w(\bbsL_{t+s})$. By
  \Cref{lem:L'}, it is a self-dual lattice function in $\bbV'$. Since
  $w\in \sB_{-s}$, $\bbsL''_t \subseteq \sL'_{t}\cap \bbV' = \bbsL'_{t}$ for all
  $t\in \bR$.  Taking dual lattice in $\bbV$, we have
  $\bbsL''_{-t^+} = (\bbsL''_t)^* \supseteq (\bbsL'_{t})^* \supseteq
  (\sL'_{t})^* \cap \bbV' = \bbsL'_{-t^+}$ for all $t\in \bR$. Hence \cref{it:dec.Lp.1} holds.
\item Obviously $\sL'_t \supseteq \bbsL'_t \oplus \aasL'_t$.  Conversely
    let $v = \bbvv + \aavv\in \sL'_{-t^+} = (\sL'_{t})^*$ with
    $\iivv \in \iiVp$.  Then $\bbvv\in \bbsLp_{-t^+} = (\bbsLp_{t})^*$ since
    $\inn{\bbvv}{\bbsLp_{t}}_{\bbV} = \inn{v}{\bbsLp_{t}}_{V'} \subseteq
    \inn{v}{\sLp_{t}}_{V'} \subseteq \fppD$. Now
    $\aavv = v - \bbvv \in (\sL'_{-t^+} + \bbsL'_{-t^+})\cap \aaV' =
    \aasL'_{-t^+}$.
    Therefore, $\sL'_{-t^+} \subseteq \bbsL'_{-t^+}  \oplus \aasL'_{-t^+}$ for
    any $t\in \bR$. This proves \Cref{it:dec.Lp.2}.
  \item By \cref{it:dec.Lp.1,it:dec.Lp.2},
    $\bbsL'_{-t^+}\oplus \aasL'_{-t^+} =\sL'_{-t^+} = (\sL'_{t})^* =
    (\bbsL'_{t})^* \oplus (\aasL'_{t})^* = \bbsL'_{-t^+} \oplus (\aasL'_{t})^*$.
    Hence $\aasL'_{-t^+} = (\aasL'_{t})^*$, i.e. $\aasL'$ is self-dual. \qedhere
\end{asparaenum}
\end{proof}

Note that $\bbM(\bbww) \in \bbGamma + \bbfgg_{x,-s}$.  By \Cref{lem:surj}, there
is a $\bbww_0 \in \bbww+\bbsB_0$ such that $\bbM(\bbww_0) = \bbGamma$. Replacing
$w$ with $\bbww_0 \oplus \aaww\in w+\bbsB_0\subseteq w+ \sB_0$, we assume that
$\bbM(\bbww) = \bbGamma$ from now on.

\subsubsection{} \label{sec:Exhaust}

We retain all the notations in
\Cref{def:block.theta}~\cref{it:blk.1,it:blk.2,it:blk.4}.  On the other hand, we
do not have enough information about $\Sigma'$ to define $K'$ at the moment.
Instead, we replace $\dbK$ by $K$ in \Cref{def:block.theta}~\cref{it:blk.6} and
define the following notations.
\begin{enumerate}
\item[(\ref{it:blk.6}')] Let $\OmegaK := K w + \sB_0$ and
  $\SK := \Stab_{K}(w+\sB_{0})$.  \\
  Let $\iiOmegaK := \iiK \iiww + \iisB_{0}$,
  $\iiSK := \Stab_{\iiK}(\iiww+\iisB_{0})$ for $i = a, b$. \\
  Let $\dgSK := \bbSK\times \aaSK = \Stab_{\bbK\times \aaK}(w+\sB_{0})$ so that
  $\SK = \dgSK \ssJ$.
\end{enumerate}
By the remark of \Cref{lem:TB1}, we see that the map


\def\ssbfbbKp{\ssbfbb_K^\perp}
\[
\xymatrix@R=0em{
\ssbiota_K\colon \ssfgg_{x,s:s^+} \ar[r]&
\sssB_{0:0^+} = \ssbfbb 
}
\]
induced by $ X\mapsto X\cdot w$ is an $\dgS_K$-equivariant injection between
$\fffF$-modules. Moreover $\ssbiota_K$ is an isometry with respect to natural
symplectic forms of the domain and codomain. Let $\ssbfbbKp$ be the orthogonal
complement of the image of $\ssbiota_K$ in $\ssbfbb$.  

The next lemma is a variation of \Cref{lem:iso.KO} and \Cref{lem:de.S} which
follows by the same arguments. 
\begin{lemma} \label{lem:iso.e} Let $\sskappa$ be the Heisenberg-Weil
  representation of $\dgK \ltimes \ssJ$ defined in \linebreak[4]
  \Cref{def:sskappa}~\cref{it:sskappa.def}.  Let $\bomegadgSK$ and $\ssbomega$
  be the $\dgSK\ltimes \ssJ$-modules realized on $\bS(\dgbfbb)$ and
  $\bS(\ssbfbb)$ respectively as in \Cref{sec:ssbomega} (see
  \eqref{eq:ssbomega.def} and \eqref{eq:bomegadgS}).  Then, as
  $\dgS_K\ltimes\ssJ$-modules,
\begin{enumerate}[(i)]
\item
$
\ssbomega \cong \sskappa\otimes \bS(\ssbfbbKp)
\cong (\sskappa)^{\oplus c}
$
where $c = \dim \bS(\ssbfbbKp) =
\left(\#\ssbfbbKp\right)^\half$,
\item $\bomegadgSK = \bomegabbSK\boxtimes \bomegaaaSK$, and

\item 
$
\bomegaSK \cong \bomegadgSK \otimes \ssbomega. 
$\qed
\end{enumerate}
\end{lemma}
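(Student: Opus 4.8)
The statement collects three facts about how the Heisenberg--Weil modules attached to the two blocks interact, and each follows from a result already established in the excerpt for the one-block case together with the block decomposition lemmas. The plan is to prove (ii) first, then (iii), and finally (i), since (i) requires the geometry of $\ssbiota_K$ as an isometry onto its image.

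First I would prove (ii). By construction (see \Cref{def:sskappa} and \eqref{eq:bbsSB}) the space $\bS(\dgbfbb)$ factors as $\bS(\bbbfbb)\boxtimes\bS(\aabfbb)$, and $\dgSK = \bbSK\times\aaSK$ acts blockwise. The formula \eqref{eq:bomegadgS} already displays $\bomegadgSK(h) = \bomegabbSK(\bbhh)\boxtimes\bomegaaaSK(\aahh)$ for $h = (\bbhh,\aahh)\in\dgSK$, exactly as in the proof of \Cref{lem:de.S}; the only thing to check is that the extra scalar $\psi(\half\inn{w}{h^{-1}\cdot w-w})$ also factors, which holds because $w = \bbww\oplus\aaww$ lies in $\dgW$ and the symplectic form on $\dgW$ is the orthogonal sum $\inn{}{}_{\bbW}\oplus\inn{}{}_{\aaW}$. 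This gives (ii) immediately.

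Next, (iii) is the analogue of \Cref{lem:de.S} with $\dgS$ replaced by $\dgSK$: one repeats verbatim the two-case computation in the proof of \Cref{lem:de.S}, using \Cref{lem:Saction} and \eqref{eq:ssbomega.def}. For $h\in\dgSK$ the element $h^{-1}\cdot w - w$ has zero component in $\sssB_0$, so $\bomega_\bfbb(h^{-1}\cdot w-w)$ only hits the $\bS(\dgbfbb)$ factor and the $\psi$-scalar is computed inside $\dgW$, yielding $\bomegadgSK(h)\otimes\ssbomega(h)$; for $h = (\exp X,\exp X')\in\ssJ$ one has $\bomega_\bfbb(h) = \id$, $h^{-1}\cdot w-w\in -(X,X')\cdot w + \sB_{0^+}\subseteq\dgsB_{0^+}\oplus\sssB_0$, and \eqref{eq:OS} collapses to $\ssbomega(h)$. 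Since the two kinds of elements generate $\dgSK\ltimes\ssJ$, and both sides are visibly actions (no cocycle ambiguity, as the lattice-model splitting is a genuine splitting on these pro-$p$ and Levi parts), the identity extends to the whole group.

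Finally, for (i): by \Cref{lem:TB1}'s remark (which notes that the injectivity proof for $\ssbiota|_{\ssfgg_{x,s:s^+}}$ uses only $\aaww\in\aasB_{\aaxx,-s}$) and the fact that $\bbM(\bbww) = \bbGamma$, the map $\ssbiota_K$ is an $\dgSK$-equivariant injection of $\fffF$-vector spaces, and the same trace-form computation as in \Cref{lem:isometry}/\Cref{lem:Di} shows it is an isometry onto its image; let $\ssbfbbKp$ be the orthogonal complement. Then $\ssbfbb = \ssbiota_K(\ssfgg_{x,s:s^+})\oplus\ssbfbbKp$ as symplectic $\fffF$-spaces (with $\dgSK$ acting trivially on $\ssbfbbKp$ through $\triangle^0$, since the $\dgSK$-action on $\ssbfbb$ factors through the reductive quotient and preserves the form), so $\bomega_{\ssbfbb}\cong\bomega_{\ssbiota_K(\ssfgg_{x,s:s^+})}\boxtimes\bS(\ssbfbbKp)$, and comparing with the construction of $\sskappa$ (via the special morphism $\ssJ\to\bfH(\ssbfbb)$ landing in the sub-Heisenberg group on $\ssbiota_K(\ssfgg_{x,s:s^+})$, exactly as $\SHssbb$ in \Cref{sec:ssbomega}) identifies $\ssbomega\cong\sskappa\otimes\bS(\ssbfbbKp)$; since $\dgSK\ltimes\ssJ$ acts trivially on the factor $\bS(\ssbfbbKp)$ this is $(\sskappa)^{\oplus c}$ with $c = \dim\bS(\ssbfbbKp) = (\#\ssbfbbKp)^{1/2}$. \textbf{The main obstacle} is the bookkeeping in (i): one must verify that the image of the special morphism $\ssJ\to\bfH(\ssbfbb)$ really is the Heisenberg group on the subspace $\ssbiota_K(\ssfgg_{x,s:s^+})$ and that the resulting Weil action on that subspace is the one defining $\sskappa$ — this is where the explicit description of Yu's special isomorphisms in \Cref{sec:Special} and the compatibility \eqref{eq:Sym.s} for the $\psi$-scalar do the real work, just as in the proof of \eqref{eq:bomega.d}.
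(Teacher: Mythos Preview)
Your proposal is correct and matches the paper's approach exactly: the paper states just before \Cref{lem:iso.e} that it ``is a variation of \Cref{lem:iso.KO} and \Cref{lem:de.S} which follows by the same arguments'' and then places a \qed, and you have correctly unpacked this by identifying (ii) as already contained in \eqref{eq:bomegadgS}, (iii) as a verbatim repetition of the proof of \Cref{lem:de.S} with $\dgS$ replaced by $\dgSK$, and (i) as the analogue of \Cref{lem:iso.KO} with the new feature that $\ssbiota_K$ is only an injective isometry, producing the orthogonal complement $\ssbfbbKp$ and hence the multiplicity $c$. Your sketch is in fact more detailed than what the paper supplies; the only place to tighten is your justification that $\dgSK\ltimes\ssJ$ acts trivially on $\bS(\ssbfbbKp)$ --- ``factors through the reductive quotient and preserves the form'' does not by itself force triviality, so you should make this step explicit (the paper does not spell it out either).
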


\subsubsection{}

Let $\nD = (x,\Gamma, \phi, \nrho)$ where
$\nrho := \rho\otimes \MU{\xi}{\xi_{x,x''}}$ and let $\nD = \bbnD\oplus \aanD$
be the corresponding block decomposition.  Note that
$(\nD, \xi_{x,x''})\sim \tD$ by definition and $\pretaD(\sSB_{w})$ is nonzero
(see \Cref{def:cSC.eq} and \Cref{lem:ww}~\cref{it:ww.1}).  By
\Cref{lem:iso.e}, we have
\[
\begin{split}
  0 & \neq \Hom_{\wtK}(\tetaD,\sSB_{\Omega_K}) = \Hom_{K}(\etanD,
  \sSB_{\Omega_K})\\
  & = \Hom_{K}(\etanD, \Ind_{\SK}^K \bomegaSK) = \Hom_{\SK}(\etanD, \bomegaSK)\\
  & = \Hom_{\dgS\ltimes \ssJ}(\dgetanD\otimes \sskappa,\bomegadgSK \otimes
  \ssbomega)
  \\
  & = \left(\Hom_{\dgSK}(\dgeta, \bomega_{\dgSK})\right)^{\oplus c} \\
  & = \left(\Hom_{\bbSK}(\etabbnD, \bomegabbSK) \boxtimes \Hom_{\aaSK}(\etaaanD,
    \bomegaaaSK)\right)^{\oplus c}.
\end{split}
\]

In particular,
$ 0\neq \Hom_{\aaSK}(\etaaanD, \bomega_{\aaSK}) \subseteq \Hom_{\aaK}(\etaaanD,
\sS(\aasB_0))$.

Let $\aatnD := (\aaxx, \aaGamma, \aaphi, \aanrho, \spt_{\aaxx,\aaxx''})$. Then
$\aatnD$ has $b-1$ blocks. Applying the induction hypothesis to
$(\aatnD,\aaV,\aaV')$, we get $\aaSigma$ and
$\aaSigmap = \dtheta_{\aaV,\aaV'}(\aaSigma)$.

Now we define 
\begin{enumerate}[(a)]
\item
  $x' := (\bbxx'',\aaxx')\in \BTB[F]{\bbG'}\times \BTB[F]{\aaG'}\subseteq
  \BTB[F]{G'}$;

\item $\bbSigma := (\bbxx, \bbGamma, \bbphi, \bbnnrho)$ where
  $\bbnnrho := \bbrho\otimes (\MU{\xi}{\xi_{x,x'}}|_{\bbK}) = \bbnrho\otimes
  (\MU{\xi_{x,x''}}{\xi_{x,x'}}|_{\bbK})$;
  
\item $\Sigma := \bbSigma \oplus \aaSigma$, $\bbSigmap := \dthetap(\bbSigma)$
  with respect to $\bbww$ and $\Sigmap := \bbSigmap \oplus \aaSigmap$.
\end{enumerate}
Obviously, $\Sigma'$ is a lift of $\Sigma$ and $x'$ occur as a part of the datum
$\Sigma'$.  By the functoriality of the construction of lattice model, one can
see that
\[
\MU{\xi_{\aaxx,\aaxx''}}{\xi_{\aaxx,\aaxx'}} =
\MU{\xi_{(\bbxx,\aaxx),(\bbxx'',\aaxx'')}}{\xi_{(\bbxx,\aaxx),(\bbxx'',\aaxx')}}|_{\aaG_{\aaxx}}
= \MU{\xi_{x,x''}}{\xi_{x,x'}}|_{\aaG_{\aaxx}}.
\]
Hence, we conclude that $(\Sigma, \xi_{x,x'})\sim \tD$. 
Applying the argument in \Cref{sec:CKtype.gen}, we conclude that
\[
0\neq \Hom_{K\times K'}(\etaSigma \boxtimes
\etaSigmap,\sS(\sB_{x,x';0})_{KK\cdot w}). 
\]
This finishes the proof of \Cref{prop:exhaust} and hence also completes the proof
of \Cref{it:Main.2} of the
\Cref{thm:main}. \qed

\appendix
\section{Heisenberg-Weil representations} \label{sec:AppendixA}

\def\bfSH{\mathbf{SH}}

\def\psiGa#1{\overline{\bB(#1,\Gamma)}}
\def\SH{\zeta}

In this appendix, we collect some facts about Heisenberg-Weil representations. 

\subsection{Heisenberg-Weil representation after G\'erardin}\label{sec:HW}
Let $\fff$ be a finite field with $q$ elements and let $\bpsi$ be a nontrivial
character of $\fff$.  Let $\bfW$ be a non-degenerate symplectic space over
$\fff$. Let $\bfH(\bfW) = \bfW \times \fff$ and $\bfSp(\bfW)$ denote the
corresponding Heisenberg group and symplectic group as usual.  We let
$(\bomega_\bfW,\bS(\bfW))$ denote the space of the Heisenberg-Weil representation of
$\bfSH(\bfW):= \bfSp(\bfW)\ltimes \bfH(\bfW)$ with central character $\bpsi$
realizing on $\bS(\bfW)$.  In \cite{Ger}, G\'erardin carefully studied the
isomorphism class of $\bomega_{\bfW}$.  We now recall the mixed model of this
representation. See \cite[\Sec{2}]{Ger} for details.

For any subspace $\bfV\subseteq \bfW$, let $\bfH(\bfV)$ be its inverse image in
$\bfH(\bfW)$ under the projection $\bfH(\bfW)\rightarrow \bfH(\bfW)/\fff=\bfW$.
Let $\bfW_+$ be a non-trivial totally isotropic subspace of $\bfW$.  Then
$\bfW_0 := \bfW_+^\perp/\bfW_+$ is naturally a non-degenerate symplectic space.
Let $\bfP(\bfW_+)$ be the parabolic subgroup stabilizing $\bfW_+$.  By an abuse
of notation, we let $\bomega_{\bfW_0}$ denote the pull back of $\bomega_{\bfW_0}$ to
$\bfP(\bfW_+)\ltimes \bfH(\bfW_+^\perp)$ via the natural quotient
\[
\xymatrix{ \bfP(\bfW_+)\ltimes \bfH(\bfW_+^\perp) \ar@{->>}[r]&
  \bfSp(\bfW_0) \ltimes \bfH(\bfW_0). }
\]
Let $\chi^{\bfW_+}$ be the (unique real) character of $\bfP(\bfW_+)$ given by
$g\mapsto (\det g|_{\bfW_+})^{(q-1)/2} \in \{ \pm 1 \}$ for all
$g \in \bfP(\bfW_+)$.  Then
\begin{enumerate}[(i)]
\item \label{it:fWeil.1}
  $\bomega_\bfW$ is the unique $\bfSH(\bfW)$-module extending
$\Ind_{\bfP(\bfW_+)\ltimes\bfH(\bfW_+^\perp)}^{\bfP(\bfW_+)\ltimes \bfH(\bfW)}
(\chi^{\bfW_+} \otimes \bomega_{\bfW_0})$.
\item 
  \label{it:fWeil.2}
  Fix a totally isotropic subspace $\bfW_-$ such that
  $\bfW = \bfW_- \oplus \bfW_+^\perp$, then the induced module in
  \cref{it:fWeil.1} could be identified with the set of functions on $\bfW_-$
  with values in $\bS(\bfW_0)$. The group actions could be easily work out.
  
\item \label{it:fWeil.3} The space $(\bomega_{\bfW})^{\bfW_+}$ of
  $\bfW_+$-invariants in $\bomega_{\bfW}$ is isomorphic to $\bomega_{\bfW_0}$ as
  an $\bfH(\bfW_+^{\perp})$-module. Moreover $\bfP(\bfW_+)$ acts by
  $\chi^{\bfW_+}\otimes \bomega_{\bfW_0}$.

\item \label{it:fWeil.4} The module $\bomega_{\bfW}$ has dimension
  $\sqrt{\# \bfW} = q^{\half \dim_\fff \bfW}$.
\end{enumerate}

Note that when $\bfW_+$ is a maximal isotropic subspace in $\bfW$, we have
$\bfH(\bfW_0) = \fff$ and $\bomega_{\bfW_0} = \bpsi$ so that we get the
Schr\"odinger model of $\bomega_{\bfW}$.

\subsection{Construction of $\kappa$.}
Following \cite{Yu}, we discuss the construction of the $K^i$-module $\kappa^i$
which extends $\psi_\Gamma|_{K^i_+}$.

\subsubsection{Special isomorphism}\label{sec:Special}
As Yu \cite{Yu} has pointed out, the extension of a Heisenberg
representation to a ``Weil representation'' of $K$ is subtle.
The problem is that, $\bfH(\bfW)$ has a large subgroup of the automorphism group
(isomorphic to $\bfW$) whose
action on the center $\fff$ and on $\bfH(\bfW)/\fff$ are identity. 
Therefore, $J\rightarrow \bfH(\bfW)$ in \cite[\Sec{11}]{Yu} is far from
unique and, Yu gives a canonical construction from root datum. 

We retain the notation and situation in \cite[\Sec{11}]{Yu} and \cite{Kim}:
\begin{enumerate}[(i)]
\item $\Gamma\in \fgg$ is a good element of depth $-r$;
\item $(\ckG,G)$ is a tamely ramified twisted Levi sequence with
  $\ckG = \Cent{G}{\Gamma}$;
\item $\BTB{\ckG} \hookrightarrow \BTB{G}$ is a fixed embedding of buildings;
\item  $x\in
  \sB(\ckG)$;
\item $\fgg = \ckfgg\oplus \ckfgg^\perp$ is an orthogonal decomposition with
  respect to the form $\bB$ in \Cref{sec:CG};
\item $J = (\ckG,G)(F)_{x,(r,s)}$ and $J_+ = (\ckG,G)(F)_{x,(r,s^+)}$. See
    \cite[p. 586]{Yu}\footnote{In our cases,
      $(\ckG,G)(F)_{x,(r_1,r_2)} = \exp(\ckfgg_{x,r_1} \oplus
      \ckfgg_{x,r_2}^\perp)$ for $0 < \half r_1 \leq r_2$.}.
\end{enumerate}

Taking a clue from \Cref{lem:Saction}, we could defined a ``canonical'' morphism
for $J$ below.  The symplectic space $\bfW = J/J_+$ is identified with
$\ckfgg^{\perp}_{x,s:s^+}$ via the exponential map. Suppose
$\barX_1,\barX_2\in \ckfgg^{\perp}_{x,s:s^+}$ with lifts
$X_1,X_2\in \ckfgg^{\perp}_{x,s}$ respectively. Then we have a non-degenerate
symplectic form on $\bfW$ given by
$\inn{\barX_1}{\barX_2} =\psiGa{[X_1,X_2]} \in \fff$ (cf. \cite[Lemma
11.1]{Yu}).  By the Baker-Campbell-Hausdorff formula, we have a group
homomorphism\footnote{ We check that $\SH|_J$ is a group homomorphism.  Indeed
  by the Baker-Campbell-Hausdorff formula
  $\log(\exp(X)\exp(Y)) \equiv X +Y + \half[X,Y] \pmod{\fgl_{x,r^+}}$ and 
\[
\SH(e^X) \SH(e^Y) = (\bar{X},\psiGa{X}) \cdot (\bar{Y}, \psiGa{Y}) = (\bar{X} +
\bar{Y},\psiGa{X+Y+\half[X,Y]}) = \SH(e^Xe^Y).
\] 
}
\begin{equation}\label{eq:Special}
  \SH\colon J \longrightarrow \bfH(\bfW) = \bfW \times \fff 
  \quad \text{given by} \quad \exp(X) \mapsto
  (\barX,\psiGa{X}). 
\end{equation}

Note that $\SH$ agrees with the special morphism defined in
\cite[Section~11]{Yu} since they agree on root subgroups.  By an abuse of
notation, we also let $\SH$ denote its natural extension
\[\SH \colon \ckG_{x}\ltimes J \longrightarrow \bfSp(\bfW)\ltimes \bfH(\bfW) =
\bfSH(\bfW).
\] 

\subsubsection{}\label{sec:kappa}
We fix a good factorization $\Gamma = \sum_{i=0}^d \Gamma_i$ and therefore get a
sequence of subgroups~$G^i$ as in \Cref{def:GD}.  We follow the notation in
\cite[p.591]{Yu}:
$K^i := K \cap G^i = G_x^0 G_{x,s_0}^1\cdots G_{x,s_{i-1}}^{i}$,
$J^i := (G^{i-1}, G^i)_{r_{i-1},s_{i-1}}$,
$J^i_+ := (G^{i-1},G^i)_{r_{i-1}, s_{i-1}^+}$ and $\bfW^i := J^i/J^i_+$.

Now we define a sequence of representations $\kappa^i$ of $K^i$ inductively such
that $K^i_+$ acts by the character $\psi_{\Gamma}$.
This is essentially Yu's construction in
\cite[\Sec{4}]{Yu}.

\begin{enumerate}[1.]
\setcounter{enumi}{-1}
\item 
First we set $\kappa^0 = \phi$ (cf. \Cref{def:SC.N}~\cref{it:SC.N.6}). 

\item [] Suppose we have constructed $\kappa^{i-1}$. We now construct the
  $K^i$-module $\kappa^i$:
\item Note that $K^i = K^{i-1} J^{i}$.  Let
  $\zeta^i \colon K^{i-1} \ltimes J^i \longrightarrow \bfSH(\bfW^i)$ denote the
  special homomorphism with respect to the Levi sequence $(G^{i-1},G^i)$ and the
  good element $\Gamma_{i-1}$ (cf. \Cref{sec:Special}).
  Let $\bomega^i_{\Gamma_{i-1}}$ denote the $K^{i-1} \ltimes J^i$-module
  obtained by pulling back of $\bomega_{\bfW^i}$ (cf. \Cref{sec:HW}) via
  $\SH^i$.

\item We set $\Gamma^i := \sum_{l=i}^d \Gamma_l$ which is in the center of $\fgg^i$. We
  see that $\psi_{\Gamma^i}$ is a character of $G^i_{x,0^+}\supseteq J^i$.  Let
  $\tv\times \psi_{\Gamma^i}$ be its extension to $K^{i-1}\ltimes J^i$ such that
  $K^{i-1}$ acts trivially.  As a subgroup of $J^i$, $J^{i-1}_+$ acts by the
  character $\psi_{\Gamma}$ on
  $\bomega^i_{\Gamma_{i-1}} \otimes (\tv \times \psi_{\Gamma^i})$.

\item We inflate $\kappa^{i-1}$ to a $K^{i-1}\ltimes J^i$-module. Since
  $K^{i-1}\cap J^i = G^{i-1}_{x,r_{i-1}} \subseteq J^i_+\cap K^{i-1}_+$, the
  $K^{i-1}\ltimes J^i$-module
  $\kappa^{i-1} \otimes \bomega^i_{\Gamma_{i-1}} \otimes (\tv \times
  \psi_{\Gamma^i})$
  factors through $K^{i-1} \ltimes J^i \longrightarrow K^{i-1}J^i = K^i$. Let 
  $\kappa^i$ be the corresponding $K^i$-module. It is clear that $K^i_+ =
  K^{i-1}_+J^i_+$ acts by $\psi_{\Gamma}$. 
\end{enumerate}

\section{A quick proof of a result of Pan}\label{sec:IPD}
\def\fracmm{\frac{1}{m}} 
\def\fracdmm{\frac{1}{2m}} 

As the reader may notice, \cref{eq:Pan} is a generalization of \cite[Proposition
6.3]{Pan02D}. Our proof follows Pan's idea.  Although we use the exponential map
to identify $\fgg_{x,r}$ with $G_{x,r}$, the statements and proofs in this
appendix also hold if we replace the exponential map by the ``Cayley transform''
and in which case we only assume the residual characteristic $p\neq 2$. To ease
notation, we normalize the valuation map such that $\val(\varpiD) = 1$.

\subsection{Invariant vectors under the action of lattices}
\label{sec:IPD.supp}
Let $\sS$ be any realization of the oscillator representation of
$\wtSp(W)\ltimes \rH(W)$. Here $\rH(W):=W\times F$ denotes the Heisenberg group
of $W$ and we identify $W$ as a subset of $\rH(W)$.

Suppose $L$ is a lattice in $W$ such that $L \supseteq A^*$ for a certain good
lattice $A$. Let $\sS^{L^*}$ denote the space of ${L^*}$-fixed vectors in $\sS$
under the Heisenberg group $\rH(W)$ action.  Let $\sS(A^*)$ be the generalized
lattice model of the oscillator representation with respect to $A^*$ and let
$\sS(A^*)_L$ be the subspace of functions in $\sS(A^*)$ supported on $L$.  We
identify $\sS$ with $\sS(A^*)$ via a fixed intertwining map.  It is easy to
see that $\sS^{L^*}$ is exactly the image of $\sS(A^*)_L$
(cf. \cite[Lemma~8.2]{Pan02D}).  Since $\sS^{L^*}$ neither depends on the choice
of $A$ nor the choice of intertwining map, it makes sense to let $\sS_L$ denote
$\sS^{L^*}$ to emphasis that it is the space of functions with support on $L$
under the generalized lattice module with respect to any $A^*\subseteq L$.  In
particular, for a self-dual lattice function $\sB$ in $W$, we identify
  $\sS^{\sB_{s^+}}$ with $\sS(\sB_0)_{\sB_{-s}}$ and denote it by
  $\sS_{\sB_{-s}}$.

\subsection{Proof of \cref{eq:Pan} and depth preservation}
We only need to consider rational points in the building for our study of
minimal $K$-types. These points correspond to lattice functions with rational
jumps.

In the rest of this section, we will prove the following theorem which is a
slightly stronger version of \cref{eq:Pan}.

\begin{thm}\label{thm:IPD}
Suppose $\Jump(\sL_x) \subseteq \fracmm\bZ$ so that $\Jump(\fgg_{x}) \subseteq
\fracmm\bZ$ for certain positive integer $m$. 
Let
\[
\BTB{G'}_{2m} := \set{y\in \BTB{G'}|\Jump(\sL'_y)\subseteq
  \fracdmm\bZ}.
\] 
Then for all $0\leq r \in \fracmm \bZ$,
\[
\sS^{G_{x,r^+}} = \sum_{y\in \BTB{G'}_{2m}} \sS_{\sB_{x,y,-r/2}}.
\]
\end{thm}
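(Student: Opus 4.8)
\textbf{Proof plan for \Cref{thm:IPD}.}
The plan is to reduce the identity to a purely lattice-theoretic statement about the symplectic space $W$ and then invoke \Cref{sec:IPD.supp}. First I would translate the left-hand side: by the discussion in \Cref{sec:IPD.supp}, $\sS^{G_{x,r^+}}$ is the space of vectors fixed by the image of $\exp(\fgg_{x,r^+})$ in the metaplectic group, and since we are dealing with pro-$p$ unipotent subgroups the splitting is canonical (via $\CSPT$), so this is intrinsically defined. The key observation — Pan's idea — is that a vector $f\in\sS$ is fixed by $G_{x,r^+}$ acting through the Weil representation if and only if $f$ lies in $\sS_L$ for some lattice $L\subseteq W$ which is ``large enough'' relative to $\fgg_{x,r^+}$; concretely, the condition is that $L$ be stable under $G_{x,r^+}$ and that $\inn{L}{\fgg_{x,r^+}\cdot L}_W$ lands in $\fppF$, i.e. $\fgg_{x,r^+}$ acts trivially on the finite Heisenberg quotient attached to $L$.

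Next I would set $s=r/2$ and, for $y\in\BTB{G'}$, consider $\sB_{x,y}$ and the lattice $\sB_{x,y,-s}$. Using \Cref{lem:inn} (parts (ii)–(iii)) together with the computation in \eqref{eq:Sym.s}, the action of $\exp(X)$ for $X\in\fgg_{x,r^+}$ on a function supported near $w\in\sB_{x,y,-s}$ is through $\psi_{M(w)}$, and $M(w)\in\fgg_{x,-r}+\fgg_{x,-s+(\text{higher})}$; the point is that $\exp(\fgg_{x,r^+})$ preserves each coset $w+\sB_{x,y,0}$ and acts on $\sS(\sB_{x,y,0})_{\sB_{x,y,-s}}$ by a character that is trivial precisely because $\bB(\fgg_{x,r^+},\fgg_{x,r})\subseteq\fppF$. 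This gives the inclusion $\supseteq$. For the reverse inclusion $\subseteq$, I would argue that any $f\in\sS^{G_{x,r^+}}$, written in a generalized lattice model with respect to some self-dual $\sB$ with $\sB\subseteq\sB_{x,y,0}$, has support contained in some $G_{x,r^+}$-stable lattice; then I need to produce $y\in\BTB{G'}_{2m}$ such that this support lies in $\sB_{x,y,-s}$. This is where the rationality hypothesis $\Jump(\sL_x)\subseteq\frac1m\bZ$ enters: the $G_{x,r^+}$-stable lattices in $W$ that are relevant are cofinal among lattices of the form $\sB_{x,y,-s}$ with $\Jump(\sL'_y)\subseteq\frac1{2m}\bZ$, because tensoring $\sL_x$ (jumps in $\frac1m\bZ$) with a lattice function on $V'$ with jumps in $\frac1{2m}\bZ$ produces jumps in $\frac1{2m}\bZ$, and every $G_{x,r^+}$-stable lattice sandwiched appropriately is of this shape; here one uses that $\sB_{x,y}=\bigcap_t\Hom_{\fooD}(\sL_{x,t},\sL'_{y,-s+t})$ from \Cref{sec:Lattice} to recognize the relevant lattices as moment-map preimages of self-dual lattice functions on $V'$.

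The main obstacle will be the reverse inclusion, specifically the combinatorial bookkeeping showing that an arbitrary $G_{x,r^+}$-stable lattice $L$ with $L\supseteq A^*$ for a good lattice $A$ can be enlarged to one of the form $\sB_{x,y,-s}$ for a genuine point $y$ of the building with jumps in $\frac1{2m}\bZ$ — i.e. that the ``support lattice'' of $f$ is dominated by such a $\sB_{x,y,-s}$ without losing $G_{x,r^+}$-invariance. I expect to handle this by working one $\fooD$-isotypic slot at a time and using that $\sL_x$ having jumps in $\frac1m\bZ$ forces $\fgg_{x,r^+}$ (and hence its stable lattices) to be described by filtration steps in $\frac1m\bZ$, so the ``dual'' lattice function on $V'$ that one extracts automatically has jumps in $\frac1{2m}\bZ$; self-duality of the resulting lattice function follows as in the proof of \Cref{lem:L'}\cref{it:L'.2} by a direct adjoint computation. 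Once the dominating $\sB_{x,y,-s}$ is found, $f\in\sS_L\subseteq\sS_{\sB_{x,y,-s}}$ and we are done; the statement \cref{eq:Pan} is then the special case $m=1$, and the depth-preservation consequence follows as in \cite{Pan02D} by comparing the minimal $r$ on each side.
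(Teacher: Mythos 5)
Your $\supseteq$ direction is essentially correct (modulo a sign slip: the pairing you want is $\bB(\fgg_{x,r^+},\fgg_{x,-r})\subseteq\fppF$, not $\bB(\fgg_{x,r^+},\fgg_{x,r})$), and the reformulation of $\sS_L$ in terms of Heisenberg-group invariance is the right starting point. The problem is the reverse inclusion $\subseteq$, which is the whole content of the theorem, and where your plan does not actually close. You propose to take a $G_{x,r^+}$-invariant $f$, find a single $G_{x,r^+}$-stable lattice containing its support, and then dominate that lattice by one $\sB_{x,y,-s}$ with $y\in\BTB{G'}_{2m}$. But the theorem asserts an equality with a \emph{sum} over $y$, precisely because different pieces of the support require different $y$'s: the right object to control is not the full support of $f$ but the individual cosets $w+\sB_{x,x',0}$ on which $f$ is concentrated. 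On each such coset, $G_{x,r^+}$-invariance (via $\psi_{M(w)}$ as in \eqref{eq:bomegaS.sp}) forces $M(w)\in\fgg_{x,-r^+}$, and it is from this \emph{coset-level} moment-map constraint that one extracts the new point $y$.

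The second thing missing is the iterative descent. Even for a single coset, you cannot jump directly from an arbitrary support lattice to a $\sB_{x,y,-s}$: the paper proceeds by induction on the filtration level, shrinking the support radius by $\frac{1}{2m}$ at each step (Lemma~\ref{lem:PD.B}), using the successively finer invariances $G_{x,j/m}$ for $j$ decreasing down to $mr$. The engine of that step is Lemma~\ref{lem:SLattice}: from the coset $w+\sB_{x,x',0}$ and the moment-map bound $M(w)\in\fgg_{x,-r^+}$ one builds an $\fooD$-\emph{module} function $\sN_t=(w+\sB_{x,x',0})\sL_{x,t+s}$ in $V'$ — which need not be a lattice function — satisfying a sesquilinear pairing estimate, and Lemma~\ref{lem:SLattice} rounds $\sN$ up to an honest self-dual lattice function $\sL'_y$ with $\Jump(\sL'_y)\subseteq\frac{1}{2m}\bZ$ and $\sN_t\subseteq\sL'_{y,t+\frac{1}{2m}}$. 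Your appeal to self-duality ``as in Lemma~\ref{lem:L'}'' does not substitute for this: that lemma handles the case where $M(w)$ is invertible of exact depth $-r$, whereas here $M(w)$ is only known to lie in $\fgg_{x,-r^+}$ and may degenerate, which is exactly why you only gain $\frac{1}{2m}$ per step and why a good lattice $R$ has to be fed into the construction. Without the coset decomposition, the module-function-to-lattice-function lemma, and the $\frac{1}{2m}$-at-a-time induction, the reverse inclusion does not go through.
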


\subsubsection{}
We will call $\sL$ an $\fooD$-module function in $V$ if $\sL_s$ is only an
$\fooD$-submodule in $V$ in \Cref{def:latticefn}.  In this case,
$\sL_s \otimes_{\fooD} D$ may not equal to $V$ and
\[
\sL_s^* := \set{v\in V|\innv{v}{\sL_s}\subseteq \fppD}
\]
 may not be a lattice.

The following are the key lemmas:
\begin{lemma}[c.f. {\cite[Lemma~10.1]{Pan02D}}] \label{lem:SLattice} 
  Suppose
  $\sN$ is an $\fooD$-module function in $V$ such that
  $\Jump(\sN) \subseteq \fracdmm \bZ$ and
  $\innv{\sN_{t_1}}{\sN_{t_2}}\subseteq \fppD^{\ceil{t_1+t_2+\fracmm}}$.  Then
  there is a self-dual lattice function $\sL$ such that
  $\sN_t \subseteq \sL_{t+\fracdmm}$ and $\Jump(\sL)\subseteq \fracdmm \bZ$.
\end{lemma}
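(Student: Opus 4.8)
The plan is to construct the desired self-dual lattice function $\sL$ from $\sN$ by a saturation-and-symmetrization procedure, following the strategy of \cite[Lemma~10.1]{Pan02D}. First I would set, for each $t\in\bR\sqcup\bR^+$,
\[
\sL_t := \sum_{u} \sN_u\, \varpiD^{\,j}
\]
taken over all $u$ and integers $j$ with $u+\fracdmm - j \geq t$ (equivalently, take the $\fooD$-span of all shifts of the $\sN_u$ that are forced to lie in level $t$), so that $\sL$ is the smallest lattice function with jumps in $\fracdmm\bZ$ satisfying $\sN_t\subseteq \sL_{t+\fracdmm}$; since $\Jump(\sN)\subseteq\fracdmm\bZ$ this is a finite sum at each level and $\sL$ is an honest $\fooD$-lattice function. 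The containment $\sN_t\subseteq \sL_{t+\fracdmm}$ and $\Jump(\sL)\subseteq\fracdmm\bZ$ are then immediate from the construction. What remains is self-duality: $(\sL_t)^* = \sL_{-t^+}$.

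The heart of the argument is to verify the duality from the hypothesis $\innv{\sN_{t_1}}{\sN_{t_2}}\subseteq\fppD^{\ceil{t_1+t_2+\fracmm}}$. I would proceed in two steps. First, one inclusion: the pairing hypothesis says precisely that for $t_1+t_2 \geq -\fracmm$ (more precisely, once $\ceil{t_1+t_2+\fracmm}\geq 1$, i.e.\ $t_1+t_2 > -\fracmm$ up to the ceiling convention) we get $\innv{\sN_{t_1}}{\sN_{t_2}}\subseteq\fppD$; translating through the definition of $\sL$ and keeping track of the $\varpiD$-shifts, this yields $\innv{\sL_t}{\sL_{-t+\fracdmm}}\subseteq \fooD$ for the appropriate indices, hence $\sL_{-t^+}\subseteq (\sL_t)^*$ after adjusting the half-integer offsets and using that $\Jump(\sL)\subseteq\fracdmm\bZ$ so that $\sL_{-t^+} = \sL_{-t+\fracdmm}$ at jump points. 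Second, the reverse inclusion $(\sL_t)^*\subseteq\sL_{-t^+}$: here I would use that $\sL$ was built as the \emph{smallest} lattice function with jumps in $\fracdmm\bZ$ dominating $\sN$ in the prescribed way, combined with the observation that $t\mapsto (\sL_{-t^+})^*$ is itself a lattice function with jumps in $\fracdmm\bZ$ whose shift also dominates $\sN$ (this is exactly the content of the first inclusion applied symmetrically). Minimality then forces $\sL_t \subseteq (\sL_{-t^+})^*$, i.e.\ $(\sL_t)^*\subseteq\sL_{-t^+}$ (taking duals reverses inclusions). Combining the two inclusions gives self-duality.

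I expect the main obstacle to be bookkeeping with the two different scales $\fracmm$ and $\fracdmm$ and the ceiling function: the hypothesis is stated with a $\fracmm$ shift while the conclusion involves a $\fracdmm$ shift, and one must check that the half-step offset is consistent so that self-duality $(\sL_t)^* = \sL_{-t^+}$ holds on the nose rather than with an error of one jump. The clean way to manage this is to rescale once at the start --- replace the valuation so that $\fracdmm\bZ$ becomes $\bZ$ --- so that $\sN$ has integer jumps, $\sL$ has integer jumps, and the pairing condition reads $\innv{\sN_{t_1}}{\sN_{t_2}}\subseteq\fppD^{\,t_1+t_2+1}$; in that normalization the duality statement $(\sL_t)^*=\sL_{-t-0}=\sL_{-(t-1)^+}$ matches the usual convention and the verification is the standard one for self-dual lattice chains. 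A secondary point to be careful about is that $\sN_t\otimes_{\fooD}D$ need not be all of $V$, so one should confirm that $\sL$ as defined does span $V$; this follows because $\sL_t$ for $t\to-\infty$ contains arbitrarily large shifts $\sN_u\varpiD^{-N}$, whose union is a lattice spanning $V$ provided $\sN_u\neq 0$ for some $u$, which we may assume (otherwise $V=0$ and there is nothing to prove).
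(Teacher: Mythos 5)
The construction in your proposal does not get off the ground: there is no ``smallest lattice function with jumps in $\fracdmm\bZ$ satisfying $\sN_t\subseteq\sL_{t+\fracdmm}$.'' The hypothesis is only that $\sN$ is an $\fooD$-\emph{module} function, and the paper explicitly flags that $\sN_s\otimes_{\fooD}D$ need not be all of $V$. If $\sN$ spans only a proper $D$-subspace $V_0\subsetneq V$, then your candidate $\sL_t=\sum\sN_u\varpiD^j$ is contained in $V_0$ for every $t$, so it is never a lattice in $V$; and among genuine lattice functions dominating the shift of $\sN$ there is no minimum, because the constraint says nothing about a complement to $V_0$ and one can shrink there indefinitely. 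Your final remark tries to repair this by letting $N\to\infty$ in $\sN_u\varpiD^{-N}$, but that union is exactly $\sN_u\otimes_{\fooD}D\subseteq V_0$, still not all of $V$. Since the existence and minimality of $\sL$ are the load-bearing inputs to your duality argument, the whole second half is vacuous as written.

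The paper's proof resolves precisely this obstruction by injecting an auxiliary \emph{good lattice} $R$ with $\sN_0\subseteq R$. It then defines $\sL_{i/2m}$ piecewise on a fundamental interval: on $(-\tfrac12,0]$ as $\sN_{(i-1)/2m}+R^*$, and on $(0,\tfrac12]$ as the dual $\sL_{(-i+1)/2m}^*=\sN_{-i/2m}^*\cap R$. The summand $R^*$ (resp.\ intersection with $R$) guarantees each $\sL_{i/2m}$ is an honest lattice in $V$ regardless of what $\sN$ spans, and self-duality holds \emph{by construction} rather than by a minimality argument. The pairing hypothesis $\innv{\sN_{t_1}}{\sN_{t_2}}\subseteq\fppD^{\ceil{t_1+t_2+\fracmm}}$ is then used only to verify the containment $\sL_{(-m+1)/2m}\subseteq\sL_{m/2m}\varpiD^{-1}$ that makes the piecewise definition glue into a decreasing lattice function, and to check $\sN_t\subseteq\sL_{t+\fracdmm}$. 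A secondary issue with your route, even modulo the spanning problem: to run the minimality argument you need $t\mapsto(\sL_{-t^+})^*$ to also dominate $\sN$, and that requires bounding pairings $\innv{\sN_t}{R^*}$ against the auxiliary piece of $\sL$, which the stated hypothesis does not supply; the paper sidesteps this by making $(\cdot)^*$ part of the definition of $\sL$ itself. If you want to salvage your approach, you would first have to introduce $R$ (or some other mechanism) to make the lattice condition unconditional, at which point you are essentially doing the paper's construction.
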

\begin{proof}
  Since
  $\innv{\sN_{\frac{i}{2m}}}{\sN_{\frac{-i-1}{2m}}} \subseteq
  \fppD^{\ceil{\frac{i-i-1}{2m}+\frac{1}{m}}} = \fppD$,
  we have $\sN_{\frac{i}{2m}} \subseteq \sN_{\frac{-i-1}{2m}}^*$.  In
  particular, we have $\sN_0 \subseteq \sN_{-\fracdmm}\subseteq \sN_{0}^*$.  Fix
  a good lattice $R$ which contains $\sN_0$.
  Define
  \begin{equation}\label{eq:NLattice.def}
    \sL_{\frac{i}{2m}} :=
    \begin{cases}
      \sN_{\frac{i-1}{2m}}+R^* & \text{when } 
      - \half < \frac{i}{2m}\leq 0;\\
      \sL_{\frac{-i+1}{2m}}^* = \sN_{\frac{-i}{2m}}^* \cap R &  \text{when }
      0 < \frac{i}{2m}\leq \half.  
    \end{cases}
  \end{equation}
  Observe that
  $\innv{\sN_{-\half}}{R^*} = \varpiD^{-1} \innv{\sN_{\half}}{R^*} \subseteq
  \varpiD^{-1} \innv{\sN_0}{R^*}\subseteq \varpiD^{-1} \innv{R}{R^*} = \fooD$.
  Therefore we have
  $\innv{\sL_{\frac{-m+1}{2m}}}{\sL_{\frac{-m+1}{2m}}}
  =\innv{\sN_{-\half}+R^*}{\sN_{-\half}+R^*}\subseteq \fooD$
  which is equivalent to
  \[
\sL_{\frac{-m+1}{2m}} \subseteq (\sL_{\frac{-m+1}{2m}})^*\varpiD^{-1} =
  \sL_{\frac{m}{2m}}\varpiD^{-1}.
  \]
  Hence \cref{eq:NLattice.def} determines a lattice function $\sL$ such
  that $\Jump(\sL)\subseteq \fracdmm \bZ$. Moreover, $\sL$ is self-dual since
  $\sL_{\frac{i}{2m}}^* = \sL_{\frac{-i+1}{2m}}$ by definition.
  
  Note that
 \begin{align*}
   \sL_{\frac{i+1}{2m}} &= \sN_{\frac{i}{2m}} + R^* \supseteq \sN_{\frac{i}{2m}}
   & &\text{when} &-\half \leq &\frac{i}{2m}< 0;\\
   \sL_{\frac{i+1}{2m}} &= \sL_{\frac{-i}{2m}}^* = \sN_{\frac{-i-1}{2m}}^*\cap
                          R\supseteq \sN_{\frac{i}{2m}}\cap \sN_0 =
                          \sN_{\frac{i}{2m}} & &\text{when} &0\leq &\frac{i}{2m} <\half.
 \end{align*}
 Therefore $\sL_{t+\fracdmm} \supseteq \sN_t$ for all $t\in \bR$ by the definition of
 $\sL$.
\end{proof}

\begin{lemma}[cf. {\cite[Proposition~10.5]{Pan02D}}]\label{lem:PD.B}
Suppose $x'$ is a point in $\BTB{G'}$ and $j$ is a positive integer. Then 
\[
\sS_{\sB_{x,x',-j/2m}}^{G_{x,j/m}} \subseteq \sum_{y\in
  \BTB{G'}_{2m}}\sS_{\sB_{x,y,\frac{-j+1}{2m}}}.
\]
\end{lemma}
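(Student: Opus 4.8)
\textbf{Proof proposal for \Cref{lem:PD.B}.}

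The plan is to follow Pan's strategy from \cite[Proposition~10.5]{Pan02D}, adapting it to our generalized lattice model setup. The key point is that an element of $\sS_{\sB_{x,x',-j/2m}}^{G_{x,j/m}}$ is a function on $W$ supported (modulo $\sB_{x,x',0}$) on the coset $\sB_{x,x',-j/2m}$, and moreover it is invariant under the Heisenberg action of $\sB_{x,x',(j/2m)^+}$ (this is the reformulation of $G_{x,j/m}$-invariance in terms of the moment map, using \Cref{lem:inn} and \Cref{sec:IPD.supp}). First I would translate the $G_{x,j/m}$-invariance into a concrete vanishing condition: a function $f\in\sS_{\sB_{x,x',-j/2m}}$ is fixed by $G_{x,j/m}$ precisely when its support modulo $\sB_{x,x',0}$ lies in the subset $\Omega\subseteq \sB_{x,x',-j/2m}/\sB_{x,x',0}$ consisting of cosets $w+\sB_{x,x',0}$ such that $\psi(\half\innw{w}{(g-1)\cdot w})=1$ for all $g\in G_{x,j/m}$, i.e. $\bB(M(w),\fgg_{x,j/m})\subseteq\fppF$, i.e. $M(w)\in\fgg_{x,-j/m+0^+}$. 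In other words, the relevant $w$ satisfy $M(w)=w^\mstar w\in\fgg_{x,(-j/m)^+}$ — the image of such $w$ under the moment map has depth strictly greater than $-j/m$ at $x$.

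The next step is the geometric heart: given such a $w\in\sB_{x,x',-j/2m}$ with $M(w)\in\fgg_{x,(-j/m)^+}$, I want to produce a point $y\in\BTB{G'}_{2m}$ such that $w\in\sB_{x,y,\frac{-j+1}{2m}}$. The construction of $y$ goes through \Cref{lem:SLattice}: one builds an $\fooD$-module function $\sN$ in $V'$ out of $w$ and $\sL_x$ — roughly $\sN_t := w(\sL_{x,t+(j-1)/2m})$ — or more precisely one takes the $\fooD$-module function whose terms record the images $w(\sL_{x,\bullet})$ shifted appropriately. Using $w\in\sB_{x,x',-j/2m}=\bigcap_r\Hom_{\fooD}(\sL_{x,r},\sL'_{x',r-j/2m})$ and the depth bound on $M(w)$, one checks the hypothesis $\innvp{\sN_{t_1}}{\sN_{t_2}}\subseteq\fppD^{\ceil{t_1+t_2+\frac1m}}$ of \Cref{lem:SLattice}: this amounts to the computation $\innvp{wv_1}{wv_2}=\innv{v_1}{M(w)v_2}$ together with $M(w)\sL_{x,r}\subseteq\sL_{x,r+j/m+1/m}$ (the extra $\frac1m$ coming from strict inequality in the depth of $M(w)$, since $\Jump(\fgg_x)\subseteq\frac1m\bZ$). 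Also the jump-set condition $\Jump(\sN)\subseteq\frac{1}{2m}\bZ$ is inherited from $\Jump(\sL_x)\subseteq\frac1m\bZ$. Then \Cref{lem:SLattice} yields a self-dual lattice function $\sL'$ with $\Jump(\sL')\subseteq\frac{1}{2m}\bZ$ and $\sN_t\subseteq\sL'_{t+\frac{1}{2m}}$; let $y\in\BTB{G'}_{2m}$ be the corresponding point. Unwinding, $\sN_t\subseteq\sL'_{y,t+\frac1{2m}}$ translates into $w\in\Hom_{\fooD}(\sL_{x,r},\sL'_{y,r+\frac{-j+1}{2m}})$ for all $r$, i.e. $w\in\sB_{x,y,\frac{-j+1}{2m}}$, as desired.

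Finally I would assemble the pieces: any $f\in\sS_{\sB_{x,x',-j/2m}}^{G_{x,j/m}}$ is a finite sum of functions each supported on a single coset $w+\sB_{x,x',0}$ with $M(w)\in\fgg_{x,(-j/m)^+}$; for each such $w$ pick $y=y(w)\in\BTB{G'}_{2m}$ as above, and note that the corresponding summand, viewed through the identification $\sS^{\sB_{x,y,(\frac{-j+1}{2m})^+}}=\sS_{\sB_{x,y,\frac{-j+1}{2m}}}$ of \Cref{sec:IPD.supp}, lies in $\sS_{\sB_{x,y,\frac{-j+1}{2m}}}$ since $\sB_{x,x',0}\subseteq\sB_{x,y,\frac{-j+1}{2m}}$ forces the support condition (one should double-check $\sB_{x,x',0}\subseteq\sB_{x,y,\frac{-j+1}{2m}}$, which follows because $w\in\sB_{x,y,\frac{-j+1}{2m}}$ and $\frac{-j+1}{2m}\le 0$, so this lattice contains a self-dual piece dominating $\sB_{x,x',0}$; if not literally true, one re-decomposes $f$ relative to the finer lattice). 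This gives the claimed containment. I expect the main obstacle to be the careful bookkeeping in the second step — verifying that the $\fooD$-module function $\sN$ satisfies exactly the sesquilinear-pairing hypothesis of \Cref{lem:SLattice} with the correct exponent $\ceil{t_1+t_2+\frac1m}$, since this is where the discreteness of $\Jump(\fgg_x)$ in $\frac1m\bZ$ is converted into the crucial strict gain of $\frac1m$ in the valuation, and getting the half-integer shifts $\frac{1}{2m}$ versus $\frac{-j+1}{2m}$ aligned correctly requires genuine care.
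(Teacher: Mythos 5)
Your overall strategy is the correct one and matches the paper: decompose $\sS_{\sB_{x,x',-j/2m}}$ into single-coset pieces $\sS(\sB_{x,x',0})_w$, translate $G_{x,j/m}$-invariance into the condition $M(w)\in\fgg_{x,(-j/m)^+}=\fgg_{x,(-j+1)/m}$, build an $\fooD$-module function $\sN$ in $V'$, and invoke \Cref{lem:SLattice}. But there is a genuine gap at exactly the point you flag as uncertain, and it is not repairable by the workaround you suggest.

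The paper defines $\sN_t := (w+\sB_{x,x',0})\,\sL_{x,t+s}$ with $s=j/2m$ — i.e.\ the whole coset is applied to $\sL_x$, not just $w$. This is not a cosmetic choice: after \Cref{lem:SLattice} produces $\sL'_y$ with $\sN_t\subseteq\sL'_{y,t+\frac1{2m}}$, one immediately gets $w_1\in\sB_{x,y,-s+\frac1{2m}}$ for \emph{every} $w_1\in w+\sB_{x,x',0}$, and taking differences yields $\sB_{x,x',0}\subseteq\sB_{x,y,\frac{-j+1}{2m}}$ for free. That containment is precisely what lets you conclude $\sS(\sB_{x,x',0})_w\subseteq\sS_{\sB_{x,y,\frac{-j+1}{2m}}}$ via the model-independent identification in \Cref{sec:IPD.supp}. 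Your version — $\sN_t:=w\sL_{x,\cdot}$ — only gives $w\in\sB_{x,y,\frac{-j+1}{2m}}$, which says nothing about $\sB_{x,x',0}$. The point $y$ was constructed from $w$, not from $x'$, so there is no reason for $\sB_{x,x',0}\subseteq\sB_{x,y,\frac{-j+1}{2m}}$ to hold a priori; and ``re-decomposing $f$ relative to the finer lattice'' does not make sense when $\sB_{x,y,\frac{-j+1}{2m}}$ does not contain $\sB_{x,x',0}$ — the function $f_w$ is simply not $\sB_{x,y,(\frac{j-1}{2m})^+}$-invariant, so it cannot land in $\sS_{\sB_{x,y,\frac{-j+1}{2m}}}$ under any re-decomposition. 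The pairing estimate also uses the coset: one needs $w_1^\mstar w_2\in\fgl(V)_{x,-r+\frac1m}$ for \emph{all} $w_1,w_2\in w+\sB_{x,x',0}$ (not just $M(w)=w^\mstar w$), which is where $\Jump(\fgl(V)_x)\subseteq\frac1m\bZ$ is invoked to control the cross terms $w^\mstar b,\,b^\mstar w,\,b^\mstar b$.

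There are also two smaller arithmetic slips. Your shift $t+(j-1)/2m$ is off by $\frac1{2m}$: plugging it into the pairing check gives $\innvp{\sN_{t_1}}{\sN_{t_2}}\subseteq\fppD^{\ceil{t_1+t_2}}$, not the required $\fppD^{\ceil{t_1+t_2+\frac1m}}$, so \Cref{lem:SLattice} does not apply; with the correct shift $t+s=t+j/2m$ the gain of $\frac1m$ does appear, and the conclusion $\sN_t\subseteq\sL'_{y,t+\frac1{2m}}$ unwinds to exactly $\sB_{x,y,\frac{-j+1}{2m}}$. And the displayed depth bound should read $M(w)\sL_{x,t}\subseteq\sL_{x,t-j/m+1/m}$ (a sign error in your write-up: $M(w)$ has depth $\ge(-j+1)/m$, a \emph{negative} number, so it lowers, not raises, the filtration index).
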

\begin{proof}
Let $r := j/m$ and $s := r/2 = j/2m$.
Note that $G_{x,r}\subseteq G_{x,s^+}$ and 
\[
\sS_{\sB_{x,x',-j/2m}} = \bigoplus_{w} \sS(\sB_{x,x',0})_w 
\]
as $G_{x,r}$-module where $w$ is running over representatives of
$\sB_{x,x',{-s}:0}$.  By \Cref{rmk:Sact.2} of \Cref{lem:Saction}, the summand
$\sS(\sB_{x,x',0})_w$ is $G_{x,r^+}$-isotypic and $\exp(X)\in G_{x,r}$ acts by the
scalar $\psi(\bB(X,w^\mstar w))$.  Now fix a $w$ such that
$\sS(\sB_{x,x',0})_w^{G_{x,r}} \neq 0$. Since $\psi|_{\fooF}$ is non-degenerate,
$\bB(\fgg_{x,r},w^\mstar w)=0$, i.e.  $M(w) = w^\mstar w \in
\fgg_{x,-r^+}$. Clearly, $\sS(\sB_{x,x',0})_w \subseteq \sS^{G_{x,r}}$.

Define $\sN_{t} := (w+\sB_{x,x',0})\sL_{x,t+s}$. It is clear that $\sN_t$ is an
$\fooD$-module function in $V'$ and $\Jump(\sN_t) \subseteq \fracdmm\bZ$.  On the
other hand, $w_1^\mstar w_2 \equiv M(w) \equiv 0 \pmod{\fgl(V)_{x,-r^+}}$ for
any $w_1,w_2\in w+\sB_{x,x',0}$ and
$\fgl(V)_{x,-r^+} = \fgl(V)_{x,-r+\fracmm}$. Therefore,
\[
\begin{split}
\innvp{\sN_{t_1}}{\sN_{t_2}} &\subseteq
\innv{\sL_{x,t_1+s}}{\fgl(V)_{x,-r+\fracmm}\cdot 
  \sL_{x,t_2+s}}\\
&\subseteq \innv{\sL_{x,t_1+s}}{\sL_{x,t_2-s+\fracmm}}  \subseteq
\fppD^{\ceil{t_1+t_2+\fracmm}}.
\end{split}
\]
By \Cref{lem:SLattice}, there is a self-dual lattice function $\sL'_{y}$ such
that $\sN_{t}\subseteq \sL'_{y,t+\fracdmm}$.  Hence we have
\[
w+\sB_{x,x',0} \subseteq \bigcap_{t\in \fracdmm\bZ}
\Hom_{\fooD}(\sL_{x,t+s},\sL'_{y,t+\fracdmm}) = \sB_{x,y,-s+\fracdmm}
\]
and
$\sB_{x,x',0} = \set{w_1-w_2| w_1,w_2\in w+\sB_{x,x',0}} \subseteq
\sB_{x,y,-s+\fracdmm}$.
This means $\sS(\sB_{x,x',0})_w \subseteq \sS_{\sB_{x,y,\frac{-j+1}{2m}}}$ and
proves the lemma.
\end{proof}

\subsubsection{Proof of {\Cref{thm:IPD}}} 
The ``$\supseteq$'' direction is obvious. We now prove the ``$\subseteq$''
direction.  Let $r = k/m$ and fix any $x'\in \BTB{G'}_{2m}$.  For each integer
$j>k$, we have
\[
\begin{split}
\sS_{\sB_{x,x',\frac{-j}{2m}}}^{G_{x,r^+}} & =
(\sS_{\sB_{x,x',\frac{-j}{2m}}})^{G_{x,\frac{k+1}{m}}}
= ((\sS_{\sB_{x,x',\frac{-j}{2m}}})^{G_{x,\frac{j}{m}}})^{G_{x,\frac{k+1}{m}}}\\
&\subseteq \sum_{y\in
  \BTB{G'}_{2m}}(\sS_{\sB_{x,y,\frac{-(j-1)}{2m}}})^{G_{x,\frac{k+1}{m}}}
\subseteq \cdots 
 \subseteq \sum_{y\in \BTB{G'}_{2m}} \sS_{\sB_{x,y,\frac{-k}{2m}}} 
\end{split}
\]
by \Cref{lem:PD.B}.
Now the theorem follows from the fact that
$\sS = \bigcup_{j\geq k} \sS_{\sB_{x,x',\frac{-j}{2m}}}$. \qed

\subsubsection{}
By the argument in the proof of \cite[Theorem~6.6]{Pan02D}, ``depth
preservation'' of theta correspondence is an immediate consequence of
\Cref{thm:IPD}. Indeed let $\Phi \colon \sS \surj \pi\boxtimes \theta(\pi)$ be the
$\wtG\times \wtG'$-intertwining map. Suppose $\pi$ has depth $r$. Then there is
a certain $x\in \BTB{G}$ such that $\Jump(\sL_x)\in \bQ$ and
$\pi^{G_{x,r^+}}\neq 0$.  By \Cref{thm:IPD}, $\Phi(\sS_{\sB_{x,y,-r/2}}) \neq 0$
for some $y\in \BTB{G'}$ so $\left(\theta(\pi)\right)^{G'_{y,r^+}} \neq 0$.
Hence $\depth(\theta(\pi)) \leq \depth(\pi)$. Since the roles of $\pi$ and
$\theta(\pi)$ are symmetric, we have $\depth(\pi)\leq \depth(\theta(\pi))$ as
well which proves the ``depth preservation''.  


We remark that in proving his result \cite[Theorem~5.5]{Pan03}, Pan uses the
fact that an irreducible representation of a classical group of positive depth
has an unrefined minimal $K$-types of the form $(G_{\cL,r},\zeta)$ where $\zeta$
is a character of $G_{\cL,r:r^+}$ and $\cL$ is some regular small admissible
lattice chain.  See \cite[Proposition~3.4]{Pan03}.  By the result in this
appendix, this could be circumvented and we could replace ``a regular small
admissible lattice chain $\cL$ in $\cV$'' by ``a rational point in the building
of $\rU(\cV)$'' or simply ``a point in the building of $\rU(\cV)$'' (since
  unrefined minimal $K$-type always could be achieved at a rational point) in
the statement of \cite[Theorem~5.5]{Pan03}.

\begin{bibdiv}
\begin{biblist}

\bib{AD}{article}{
author = {Adler, Jeffrey D.},
author = {DeBacker, Stephen},
fjournal = {The Michigan Mathematical Journal},
journal = {Michigan Math. J.},
month = {08},
number = {2},
pages = {263--286},
title = {Some applications of Bruhat-Tits theory to harmonic analysis on the Lie algebra of a reductive $p$-adic group},
volume = {50},
year = {2002}
}

\bib{Aubert}{article}{
author = {Aubert, Anne-Marie},
journal = {Bull. Soc. Math. France},
month = {08},
number = {2},
pages = {297-303},
title = {Conservation de la ramification mod\'{e}r\'{e}e par la correspondance de Howe},
volume = {117},
year = {1989}
}

\bib{BS}{article}{
  title={Buildings of classical groups and centralizers of Lie algebra elements},
  author={Broussous, Paul},
  author={Stevens, Shaun},
  journal={Journal of Lie Theory},
  volume = {19},
  pages = {55-78},
  year={2009}
}

\bib{BT}{article}{
  title={Sch\'{e}mas en groupes et immeubles des groupes classiques
    sur un corps local, II. Groupes unitaires},
  author={F. Bruhat},
  author={J. Tits},
  journal={Bull. Math. Soc. France},
  volume = {115},
  pages = {141-195},
  year={1987}
}


\bib{BK}{book}{
  title={The admissible dual of $\mathrm{GL}(N)$ via compact open subgroups},
  author={Bushnell, Colin John},
  author = {Kutzko, Philip C.},
  number={129},
  year={1993},
  publisher={Princeton University Press},
}

\bib{DB.Nil}{article}{
  title={Parametrizing nilpotent orbits via Bruhat-Tits theory},
  author={DeBacker, Stephen},
  journal={Annals of mathematics},
  pages={295-332},
	volume={156},
  year={2002},
}

\bib{DebackerReeder}{article}{
  title={Depth-zero supercuspidal $L$-packets and their stability},
  author={DeBacker, Stephen},
	author={Reeder, Mark},
  journal={Annals of mathematics},
  pages={795-901},
	volume={169},
  year={2009},
}

\bib{Dieu}{book}{
   title={La g\'{e}om\'{e}trie des groupes classiques},
   author={Dieudonn\'{e}, Jean},
   year={1963},
	publisher={Springer},
 }

\bib{GanKim}{article}{
  author = {Gan, Wee Teck},
  author = {Kim, Ju-Lee}
  title = {Tame Types of Nonlinear Covering Groups},
  note = {In preparation}
}



\bib{Ger}{article}{
  title = {Weil representations associated to finite fields},
author = {G\'erardin, Paul},
 journal = {Journal of Algebra},
volume = {46},
number = {1},
pages = {54-101},
year = {1977},
}


\bib{HM}{article}{
author = {Hakim, Jeffrey},
author = {Murnaghan, Fiona}, 
title = {Distinguished Tame Supercuspidal Representations},
volume = {2008}, 
year = {2008}, 
journal = {IMRP} 
}

\bib{HW}{article}{
 author = {Howard, Tatiana K.},
 author = {Weissman, Martin H.},
  title = {Depth-Zero Representations of Nonlinear Covers of $p$-Adic Groups}, 
journal = {Int Math Res Notices},
volume = {21}, 
year = {2009},
pages = {3979-3995},
doi={10.1093/imrn/rnp076},
}


\bib{Howe}{article}{
  author = {Howe, Roger}, 
  title = {Tamely ramified supercuspidal representations of ${\rm Gl}_{n}$},
	journal = {Pacific J. Math.},
	volume = {73},
	number = {2},
	pages = {437-460},
	year = {73},
	url = {http://projecteuclid.org/euclid.pjm/1102810618},
}

\bib{Howe0}{article}{
  title={$\theta$-series and invariant theory},
  author={Howe, Roger},
  book = {
    title={Automorphic Forms, Representations and $L$-functions},
    series={Proc. Symp. Pure Math},
    volume={33},
    year={1979},
  },
  pages={275-285},
}

\bib{Howe95}{article}{,
  author = {Howe, Roger},
  title = {Perspectives on invariant theory: Schur duality, multiplicity-free actions and beyond},
  journal = {Piatetski-Shapiro, Ilya (ed.) et al., The Schur lectures (1992). Ramat-Gan: Bar-Ilan University, Isr. Math. Conf. Proc. 8,},
  year = {1995},
  pages = {1-182},
}

\bib{HoweMoy}{article}{
  title={Harish-Chandra homomorphisms for $p$-adic groups},
  author={Howe, Roger},
  author={Moy, Allen},
journal={CBMS Regional Conference Series in Mathematics}
  volume={59},
  year={1985},
  publisher={AMS}
}

\bib{Kim}{article}{
   title = {Supercuspidal Representations: An Exhaustion Theorem},
   author = {Kim, Ju-Lee},
   journal = {Journal of the American Mathematical Society},
   volume = {20},
   number = {2},
   pages = {pp. 273-320},
   ISSN = {08940347},
   year = {2007},
   publisher = {American Mathematical Society},
}

\bib{KM1}{article}{
     title = {Character Expansions and Unrefined Minimal $K$-Types},
     author = {Kim, Ju-Lee}
     author = {Murnaghan, Fiona},
     journal = {American Journal of Mathematics},
     volume = {125},
     number = {6},
     pages = {pp. 1199-1234},
     year = {2003},
}

\bib{KM2}{article}{
    author = {Kim, Ju-Lee},
    author = {Murnaghan, Fiona},
     title = {$K$-types and {$\Gamma$}-asymptotic expansions},
   journal = {J. Reine Angew. Math.}, 
  fjournal = {Journal f\"ur die Reine und Angewandte Mathematik},
    volume = {592},
      year = {2006},
     pages = {189-236},
}



\bib{Li1989}{article}{,
  author = {Li, Jian-Shu},
  title = {Singular unitary representations of classical groups},
  journal = {Invent. Math.},
  year = {1989},
  volume = {97},
  pages = {237-255},
  number = {2},
  url = {http://dx.doi.org/10.1007/BF01389041},
}

\bib{LiMin}{article}{
  author = {Li, Jian-Shu}, 
  title = {Minimal representations and reductive dual pairs},
  pages = {293--340},
  conference = {
    title = {Representation theory of Lie groups},
    address = {Park City, UT},
    year = {1998},
  },
  book = {
    series = {IAS/Park City Math. Ser.},
    volume = {8},
    publisher = {Amer. Math. Soc.},
    year = {2000},
  }
}

\bib{LMS}{article}{
   author = {Loke, Hung Yean},
   author = {Ma, Jia-jun},
   author = {Savin, Gordan}
   title = {Local theta correspondences between epipelagic supercuspidal representations},
    journal = {Math. Z.},
    volume={283},
    number={1},
    pages={169-196},
}

\bib{MVW}{book}{
     volume={1291},
   title={Correspondances de Howe sur un corps $p$-adique},
   author={M{\oe}glin, Colette},
	author={Vign\'eras, Marie-France},
	author={Waldspurger, Jean-Loup},
  series={Lecture Notes in Mathematics},
	publisher={Springer}
	ISBN={978-3-540-18699-1},
	date={1987},
}


\bib{MP1}{article}{
 author = {Moy, A},
 author = {Prasad, G},
  title = {Unrefined minimal K-types for p-adic groups}, 
journal = {Inventiones Math.},
volume = {116}, 
year = {1994},
pages = {393-408},
}

\bib{MP2}{article}{
 author = {Moy, A},
 author = {Prasad, G},
  title = {Jacquet functors and unrefined minimal $K$-types}, 
journal = {Comment. Math. Helvetici},
volume = {71}, 
year = {1996},
pages = {98–121},
}


\bib{Pan01}{article}{
author = {Pan, Shu-Yen},
journal = {Pacific J. Math.}, 
title = {Splittings of the metaplectic covers of some reductive dual pairs},
volume = {199},
year = {2001},
number = {1},
pages = {163-226},
} 

\bib{Pan02J}{article}{
journal = {J. Math. Soc. Japan},
author = {Pan, Shu-Yen},
doi = {10.2969/jmsj/1191591993},
month = {10},
number = {4},
pages = {793--845},
publisher = {Mathematical Society of Japan},
title = {Local theta correspondence of depth zero representations and theta dichotomy},
url = {http://dx.doi.org/10.2969/jmsj/1191591993},
volume = {54},
year = {2002},
}

\bib{Pan02D}{article}{
journal = {Duke Math. J.},
author = {Pan, Shu-Yen},
doi = {10.1215/S0012-7094-02-11334-9},
month = {06},
number = {3},
pages = {531--592},
publisher = {Duke University Press},
title = {Depth preservation in local theta correspondence},
url = {http://dx.doi.org/10.1215/S0012-7094-02-11334-9},
volume = {113},
year = {2002},
}

\bib{Pan03}{article}{
  year={2003},
  issn={0021-2172},
  journal={Israel Journal of Mathematics},
  volume={138},
  number={1},
  doi={10.1007/BF02783431},
  title={Local theta correspondence and minimal $K$-types of positive depth},
  url={http://dx.doi.org/10.1007/BF02783431},
  publisher={Springer-Verlag},
  author={Pan, Shu-Yen},
  pages={317-352},
}

\bib{Pan16}{article}{
  year={2016},
  journal={Journal f\"{u}r die reine und angewandte Mathematik},
  title={Supercuspidal representations and preservation principle of theta correspondence},
  author={Pan, Shu-Yen},
  doi={10.1515/crelle-2016-0050},
}

\bib{PanU}{article}{
    author={Pan, Shu-Yen},
    title = {Supercuspidal Representations and Theta Correspondence}, 
    journal= {unpublished},
    eprint = {http://ir.lib.nthu.edu.tw/handle/987654321/52348},
}

\bib{S01d}{article}{
  year={2001},
  issn={0025-2611},
  journal={Manuscripta Mathematica},
  volume={106},
  number={3},
  doi={10.1007/PL00005887},
  title={Double coset decompositions and intertwining},
  publisher={Springer-Verlag},
  author={Stevens, Shaun},
  pages={349-364},
}

\bib{S01i}{article}{
author = {Stevens, Shaun}, 
title = {Intertwining and Supercuspidal Types for $p$-adic
  Classical Groups},
volume = {83}, 
number = {1}, 
pages = {120-140}, 
year = {2001}, 
doi = {10.1112/plms/83.1.120}, 
journal = {Proceedings of the London Mathematical Society} 
}

\bib{S05}{article}{
journal = {Duke Math. J.},
author = {Stevens, Shaun},
doi = {10.1215/S0012-7094-04-12714-9},
month = {03},
number = {1},
pages = {123--173},
publisher = {Duke University Press},
title = {Semisimple characters for p-adic classical groups},
volume = {127},
year = {2005}
}

\bib{S08}{article}{
year={2008},
issn={0020-9910},
journal={Inventiones mathematicae},
volume={172},
number={2},
doi={10.1007/s00222-007-0099-1},
title={The supercuspidal representations of p-adic classical groups},
url={http://dx.doi.org/10.1007/s00222-007-0099-1},
publisher={Springer-Verlag},
author={Stevens, Shaun},
pages={289--352},
}

\bib{SZ}{article}{
  title={Conservation relations for local theta correspondence},
  author={Sun, Binyong},
  author={Zhu, Chengbo},
  journal={Amer. Math. Soc},
  pages = {939--983},
  volume = {28},
  year={2015}
}

\bib{Wa}{article}{
   author = {Waldspurger, J.-L.},
    title = {D\'{e}monstration d'une conjecture de dualit\'{e} de Howe dans le cas $p$-adique, $p \neq 2$ in Festschrift in honor of I. I. Piatetski-Shapiro on the occasion of his sixtieth birthday},
  journal = {Israel Math. Conf. Proc., 2, Weizmann, Jerusalem},
 year = {1990},
pages = {267-324},
}

\bib{Yu}{article}{
  title={Construction of tame supercuspidal representations},
  author={Yu, Jiu-Kang},
  journal = {J. Amer. Math. Soc. },
  volume={14},
  number={3},
  pages={579--622},
  year={2001}
}

\bib{YuB}{article}{
  title = {Bruhat-Tits Theory and Buildings},
  author = {Yu, Jiu-Kang},
  book = {
    title={Ottawa lectures on admissible representations of reductive $p$-adic groups},
    volume={26},
    year={2009},
    publisher={American Mathematical Soc.}
  }
}

\end{biblist}
\end{bibdiv}

\end{document}